\tikzset{axis/.style={&lt;-&gt;}}
\newcommand\reallywidehat[1]{%
\savestack{\tmpbox}{\stretchto{%
  \scaleto{%
    \scalerel*[\widthof{\ensuremath{#1}}]{\kern-.6pt\bigwedge\kern-.6pt}%
    {\rule[-\textheight/2]{1ex}{\textheight}}%WIDTH-LIMITED BIG WEDGE
  }{\textheight}% 
}{0.5ex}}%
\stackon[1pt]{#1}{\tmpbox}%
}
 \definecolor{MyBlue}{rgb}{0.05, 0.25, 0.65}
 \definecolor{MyRed}{rgb}{0.90, 0.05, 0.05}
\definecolor{MyGreen}{rgb}{0.05, 0.90, 0.05}
\newcommand{\B}{\boldsymbol}
\newcommand{\C}[1]{\mathcal{#1}}
\newcommand{\X}[1]{\mathbb{#1}}
\newtheorem{theorem}{Theorem}[section]
\newtheorem{proposition}[theorem]{Proposition}
\newtheorem{lemma}[theorem]{Lemma}
\newtheorem{corollary}[theorem]{Corollary}
\newtheorem{remark}[theorem]{Remark}
\newtheorem{example}[theorem]{Example}
\newtheorem{definition}[theorem]{Definition}
\newcommand{\Nat}{{\mathbb N}}
\newcommand{\Real}{{\mathbb R}}
\newcommand{\id}{\mathrm{id}}
\newcommand{\Bic}{\mathrm{Bic}}
\newcommand{\BISH}{\mathrm{BISH}}
\newcommand{\dom}{\mathrm{dom}}
\newcommand{\MP}{\mathrm{MP}}
\newcommand{\Mod}{\mathrm{Mod}}
\newcommand{\SN}{\mathrm{SN}}
\newcommand{\LPO}{\mathrm{LPO}}
\newcommand{\Top}{\mathrm{\mathbf{Top}}}
\newcommand{\INT}{\mathrm{INT}}
\newcommand{\CLASS}{\mathrm{CLASS}}
\newcommand{\TOT}{\Leftrightarrow}
\newcommand{\To}{\Rightarrow}
\newcommand{\sto}{\rightsquigarrow}
\newcommand{\MLTT}{\mathrm{MLTT}}
\newcommand{\CM}{\mathrm{CM}}
\newcommand{\pr}{\textnormal{\texttt{pr}}}
\newcommand{\BST}{\mathrm{BST}}
\newcommand{\Disj}{\B ) \B (}
\newcommand{\Set}{\mathrm{\mathbf{Set}}}
\newcommand{\eto}{\hookrightarrow}
\newcommand{\op}{\mathrm{op}}
\newcommand{\Fun}{\textnormal{\textbf{Fun}}}
\newcommand{\emptys}{\slash{\hspace{-2.05mm}}\square}
\newcommand{\MIN}{\mathrm{MIN}}
\newcommand{\Ineq}{\textnormal{\texttt{Ineq}}}
\newcommand{\SetIneq}{\textnormal{\textbf{SetIneq}}}
\newcommand{\SetExtIneq}{\textnormal{\textbf{SetExtIneq}}}
\newcommand{\fXY}{f \colon X \to Y}
\newcommand{\Eisj}{\B ] \B [}
\newcommand{\swapa}{\textnormal{\texttt{swapa}}}
\newcommand{\ti}{\mathrm{I}}
\newcommand{\tii}{\mathrm{II}}
\newcommand{\cTop}{\textnormal{\texttt{csTop}}}
\newcommand{\cin}{ \ \varepsilon  \ } 
\newcommand{\ccin}{ \varepsilon } 
\newcommand{\ncin}{\slash{\hspace{-3.15mm}} \cin}
\newcommand{\nncin}{ \ \slash{\hspace{-3.15mm}} \cin}
\newcommand{\Clop}{\mathrm{Clop}} 
\newcommand{\Point}{\mathrm{Point}}
\newcommand{\TotClop}{\mathrm{TotClop}}
\newcommand{\cPoint}{\mathrm{cPoint}}
\newcommand{\csTop}{\textnormal{\textbf{csTop}}}
\newcommand{\one}{\X 1}
\newcommand{\two}{\X 2}
\newcommand{\EEmpty}{\mathrm{Empty}} 
\newcommand{\coEmpty}{\mathrm{coEmpty}}
\newcommand{\cs}{\mathrm{cs}}
\newcommand{\CI}{\mathrm{CI}}
\newcommand{\StrExtFun}{\textnormal{\textbf{StrExtFun}}}
\newcommand{\Emb}{\textnormal{\textbf{Emb}}}
\newcommand{\Inj}{\textnormal{\textbf{Inj}}}
\newcommand{\StrInj}{\textnormal{\textbf{StrInj}}}
\newcommand{\RightInv}{\textnormal{\textbf{RightInv}}}
\newcommand{\LeftInv}{\textnormal{\textbf{LeftInv}}}
\newcommand{\Inv}{\textnormal{\textbf{Inv}}}
\newcommand{\Adj}{\textnormal{\textbf{Adj}}}
\newcommand{\Surj}{\textnormal{\textbf{Surj}}}
\newcommand{\No}[1]{#1^\mathbf{N}}
\newcommand{\NN}[1]{#1^\mathbf{NN}}
\newcommand{\Cont}{\textnormal{\textbf{Cont}}}
\newcommand{\csCont}{\textnormal{\textbf{csCont}}}
\newcommand{\csC}{\textnormal{csC}}
\newcommand{\Sto}{\Rrightarrow}
\newcommand{\Metr}{\textnormal{\textbf{Metr}}}
\newcommand{\pContMod}{\textnormal{\textbf{pContMod}}}
\newcommand{\uContMod}{\textnormal{\textbf{uContMod}}}
\newcommand{\Base}{\textnormal{\texttt{Base}}}
\newcommand{\Op}{\textnormal{Op}}
\newcommand{\csb}{\textnormal{csb}}
\newcommand{\csbTop}{\textnormal{\textbf{csbTop}}}
\newcommand{\csbC}{\textnormal{csbC}}
\newcommand{\csbCont}{\textnormal{\textbf{csbCont}}}
\newcommand{\csbPointCont}{\textnormal{\textbf{csbPointCont}}}
\newcommand{\csbUnifCont}{\textnormal{\textbf{csbUnifCont}}}
\newcommand{\csbPointC}{\textnormal{csbPointC}}
\newcommand{\csbUnifC}{\textnormal{csbUnifC}}
\newcommand{\Si}{\textnormal{Si}}
\begin{document}

\date{}

\title{\textbf{Topologies of open complemented subsets}}
%\titlecomment{{\lsuper*}This paper is a major extension of~\cite{Pe17}.}

\author{Iosif Petrakis\\	%required
Department of Computer Science, University of Verona\\
iosif.petrakis@univr.it}  %optional
%\thanks{thanks 1, optional.}	%optional

% \author[B.~Name2]{Bob Name2}	%optional
% \address{address2; addresses should initially be duplicated, even if
%   authors share an affiliation}	%optional
% \email{name2@email2; ditto for email addresses}  %optional
% \thanks{thanks 2, optional.}	%optional
% 
% \author[C.~Name3]{Carla Name3}	%optional
% \address{address 3}	%optional
% \urladdr{name3@url3\quad\rm{(optionally, a web-page can be specified)}}  %optional
% \thanks{thanks 3, optional.}	%optional

%% etc.

%% required for running head on odd and even pages, use suitable
%% abbreviations in case of long titles and many authors:

%%%%%%%%%%%%%%%%%%%%%%%%%%%%%%%%%%%%%%%%%%%%%%%%%%%%%%%%%%%%%%%%%%%%%%%%%%%

%% the abstract has to PRECEDE the command \maketitle:
%% be sure not to issue the \maketitle command twice!

%\pagecolor{gray}

\maketitle

\begin{abstract}
	\noindent 
	We introduce $\cs$-topologies, or topologies of open complemented subsets, as a new approach to constructive topology that preserves the duality between open and closed subsets of classical topology. Complemented subsets were used successfully by Bishop in his constructive formulation of the Daniell approach to measure and integration. Here we use complemented subsets in topology, in order to describe simultaneously an open set, the first-component of an open complemented subset, together with its given complement as a closed set, the second component of an open complemented subset. We analyse the canonical $\cs$-topology induced by a metric, and we introduce the notion of a modulus of openness for a $\cs$-open subset of a metric space. Pointwise and uniform continuity of functions between metric spaces are formulated with respect to the way these functions inverse open complemented subsets together with their moduli of openness. The addition of moduli of openness in the concept of a complemented open subset, given a base for the $\cs$-topology, makes possible to define the notions of pointwise-like and uniform-like continuity of functions between $\csb$-spaces, that is $\cs$-spaces with a given base. 
	Constructions and facts from the classical theory of topologies of open subsets are translated to the constructive theory of topologies of open complemented subsets. \\[2mm]
	\textit{Keywords}: topological spaces, constructive mathematics, complemented subsets, pointwise and uniform continuity
\end{abstract}

\section{Introduction}
\label{sec: intro}

Roughly speaking, there are two main ways to constructivise the classical notion of a topological space. The first is to ``mimic'' the definition of a topological space as a starting point, and then to develop topology according to the chosen constructive requirements. Certainly one will use intuitionisitic logic, one may also want to use a point-free approach, or start with a base of a topology, instead of a topology, and study neighborhood spaces. 
The second way is to start from a notion of space other than that of a topological space and generate one a posteriori. E.g., one can start from a Bishop space, a constructive notion of $C(X)$, or from a constructive uniform space. For a comprehensive account of the various approaches to constructive topology see~\cite{Pe23a}. 

In a series of papers (\cite{Pe15}-\cite{Pe19b}, \cite{Pe20b, Pe21}, and~\cite{Pe24, Pe23a}) we have elaborated the theory of Bishop spaces. A  motivation for the study of Bishop spaces, an approach within the second way, was the realisation that the classical duality between open and closed subsets cannot be preserved constructively. The standard, weak complement $F^c$ of a closed subset $F$ need not be open constructively (take e.g., $F := \{0\} \subset \Real$). If one starts from a topology of open subsets and define a closed subset to be the strong complement $F^{\neq_X} := \{x \in X \mid \forall_{y \in F}(x \neq_X y)\}$, where $x \neq_X x{'}$ is an extensional inequality on a set $(X, =_X)$, then the finite intersection of closed subsets may not be provable constructively to be closed.
%(see...Waaldijk, PM23...).  
All approaches to constructive topology within the first way start from the properties of the open subsets, which, in our view, is a ``historical accident''. Classically, one could start equivalently from a topology of closed subsets and define the open subsets as certain complements of closed subsets. If one would start constructively from the properties of closed subsets and define the open subsets as weak or strong complements of closed ones, then some classical properties of open subsets wouldn't hold constructively.
% (does the finite union of opens fail to be open in general???). 
Although classically the two starting points lead to the same theory, constructively this is not the case, but at the same time there is no conceptual reason, other than the power of the classical tradition of beginning with a topology of open subsets, to base constructive topology to open or closed subsets. While within classical logic the difference between open and closed subsets ``collapses'', within constructive logic this is not the case.

Here we explain how to develop constructive topology within the aforementioned first way and preserve the classical duality between open and closed subsets. The distinctive feature of our approach here is that we propose a shift from a topology of open (or closed) subsets to a topology of open \textit{complemented} subsets. Moreover, we reformulate the notion of a point in a complemented subsets-way and define \textit{complemented points} as certain complemented subsets. Topology in classical mathematics $\CLASS$ is usually described as point-set topology. Here we introduce an approach to topology within constructive mathematics $(\CM)$ that can be described as complemented point-complemented set topology.
$$\frac{\mbox{(Point-Set)  Topology}}{\CLASS} \   \sim \    \frac{\mbox{(Complemented  Point-Complemented  Set)  Topology}}{\CM}$$
Bishop's central tool to the development of constructive measure theory was the use of complemented subsets, in order to recover constructively the classical duality between subsets and boolean-valued  total functions, a fundamental duality in the classical Daniell approach to measure theory.
He developed what we call \textit{Bishop Measure Theory} (BMT) in~\cite{Bi67}, and together with Cheng in~\cite{BC72}, what we call \textit{Bishop-Cheng Measure Theory} (BCMT). In both theories complemented subsets are first-class citizens. 
Recently, a programme of a predicative reformulation of BCMT is undertaken in~\cite{Pe20, Ze19, PZ22, Gr22, GP23}. Some connections between 
topology of Bishop spaces and constructive measure theory, especially BMT, are studied in~\cite{Pe19a, Pe20a}.
In order to avoid the use of the principle of the excluded middle (PEM) in the definition of the characteristic function $\chi_A$ of a subset $A$ of $X$, Bishop employed \emph{complemented subsets} \(\boldsymbol{A} \coloneqq (A^1, A^0)\) of \(X\), where $A^1, A^0$ are disjoint subsets of $X$ in a strong sense: $a^1 \neq a^0$, for every $a^1 \in A^1$ and $a^0 \in A^0$. The characteristic function $\chi_{\boldsymbol{A}} \colon A^1 \cup A^0 \to \{0, 1\}$ of a complemented subset \(\boldsymbol{A}\), defined by 
	$$\chi_{\B A}(x) := \left\{ \begin{array}{ll}
	1   &\mbox{, $x \in A^1$}\\
	0             &\mbox{, $x \in A^0$,}
\end{array}
\right. $$
 %$\chi_{\boldsymbol{A}}(a) \coloneqq 1$, if $a \in A^1$, and $\chi_{\boldsymbol{A}}(a) \coloneqq 0$, if $a \in A^0$, 
 is a partial, boolean-valued function on $X$, as it is defined on the subset $A^1 \cup A^0$ of $X$, without though, the use of PEM! The totality of complemented subsets of a set with an inequality $X$ is a \textit{swap algebra}, a generalisation of a Boolean algebra (see Definition~\ref{def: swapalgebra} in this paper), while the totality of boolean-valued, partial functions on $X$ is a \textit{swap ring}, a generalisation of a Boolean ring (see~\cite{PW22, MWP23}). The main category of sets in constructive measure theory is the category\footnote{We denote categories as pairs (\textbf{Objects},\textbf{ Arrows}), as we define later various subcategories of the category $(\Set, \Fun)$ of sets and functions.} $(\SetExtIneq, \StrExtFun)$ of sets with an extensional inequality and strongly extensional functions (see Definition~\ref{def: apartness}). For such sets the extensional empty set $\emptys_X$ of an object $X$ is positively defined. 

Here we introduce the notions of a $\cs$-\textit{topological space} and $\cs$-\textit{continuous function} between them, as constructive counterparts to the notions of a topological space and continuous function between them\footnote{The letters $\cs$ refer to complemented subsets.}. The introduced $\cs$-spaces are sets in $\SetExtIneq$ together with a so-called \textit{topology of open complemented subsets}, or a $\cs$-topology. The main advantage of these concepts is the preservation of the dualities found in classical point-set topology. The category $(\csTop, \csCont)$ of $\cs$-spaces and $\cs$-continuous functions is the constructive counterpart to the category $(\Top, \Cont)$ of topological spaces and continuous functions and a subcategory of $(\SetExtIneq, \StrExtFun)$.

The study of the canonical $\cs$-topology induced by a metric on a set $X$ in section~\ref{sec: csmetric} reveales the importance of ``concept-mining'' in constructive mathematics. By that we mean the accommodation of the definition of many classical concepts, such as the notion of a base for a topology, with ``witnesses'' of the related properties. These witnesses are moduli of continuity, of convergence etc., that instantiate what usually we get classically from the axiom of choice. The notion of an open set in the $\cs$-topology induced by a metric (see Definition~\ref{def: canonicalcs}) comes with a modulus of openness that witnesses the main property of a set being open. This enriched concept of open set in a metric space allows more informative proofs, from the computational point of view, of facts on metric topologies. Moreover, the presentation of pointwise and uniform continuity between metric spaces in section~\ref{sec: csmetriccont}, is strongly influenced by the presence of the moduli of openness associated with open sets. Pointwise continuous functions between metric spaces inverse open sets together with their moduli of openness in a certain ``pointwise'' way, while uniformly continuous functions between metric spaces inverse open sets together with their moduli of openness in a certain ``uniform'' way. Generalising these concepts within the theory of $\cs$-topological spaces with a given base, the so-called here $\csb$-spaces, gives us a way to translate pointwise and uniform continuity of functions between metric spaces to abstract $\csb$-spaces. In this way, the theory of $\cs$-topological spaces, or of $\csb$-spaces, is not only a constructive counterpart to classical topology that reveals its computational content, but also a new theory that is  interesting also from the classical point of view.
Pointwise-like and uniform-like continuity  of functions between topological spaces can be formulated  for standard topologies of open subsets with a base, in a way similar to that found in section~\ref{sec: csbcont}.
The theory of $\cs$-spaces introduced here is another major application of Bishop's deep idea to work with complemented subsets instead of subsets. Moreover, it is a non-trivial example of a theory that has its origin to constructivism and at the same time reveals new concepts, useful to classical mathematics too. In our view, another such example is the theory of swap algebras and swap rings, which is also rooted to Bishop's concept of a complemented subset and generalises in a new way the notion of a Boolean algebra.

We structure this paper as follows:
%\vspace{-3mm}
\begin{itemize}
	
	\item In section~\ref{sec: setineq} we include a brief introduction to $\BST$, the theory of sets that accommodates Bishop-style constructive mathematics $(\BISH)$. We also introduce the basic categories of sets with an extensional inequality and strongly extensional functions that accommodate the theory of $\cs$-topologies. 
	
	\item In section~\ref{sec: sub} we develop the basic theory of (extensional) subsets of a given set with an extensional inequality. Here we restrict our attention to extensional subsets i.e., subsets of a set $X$ defined by separation through an extensional property on $X$. The use of extensional subsets of $X$ only, and not categorical ones (i.e., pairs $(A, i_A)$, where $A$ is a set and $i _A \colon A \eto X$ is an embedding), allows a smoother constructive treatment of the empty subset and a calculus of subsets closer to the classical one. Moreover, we introduce various notions of tightness for an extensional subset of a set with an extensional inequality $(X, =_X, \neq_X)$. Tight subsets are formed by connecting (weak) negation of an equality and an inequality with the given inequality and equality.

	\item In section~\ref{sec: csub} we present the basic theory of (extensional) complemented subsets of a given set  with an extensional inequality. The basic operations on them are defined, and their abstraction, the notion of a swap algebra of type (I), is included.

	\item In section~\ref{sec: cp} we introduce the notion of a complemented point, which is the complemented version of a singleton, and the notion of a canonical complemented point. The latter is the notion of complemented point that suits better to the formulation of local versions of topological notions related to $\cs$-spaces.

\item In section~\ref{sec: csmetric} we define the canonical $\cs$-topology induced by a metric on a set $X$, and we introduce moduli of openness for the open subsets of $X$ with respect to its canonical base of complemented balls. Moduli of openness are going to be generalised to $\cs$-topological spaces with a given base and are crucial to our study of continuity.

\item In section~\ref{sec: csmetriccont} we explain how pointwise and uniformly continuous functions between metric spaces inverse open sets together with their given moduli of openness  (Proposition~\ref{prp: pcont1} and Proposition~\ref{prp: ucont1}, respectively). Conversely, the inversion of opens together with their moduli implies pointwise continuity (Proposition~\ref{prp: pcont2}), and under a certain condition, also uniform continuity (Proposition~\ref{prp: ucont2}). The corresponding constructions are inverse to each other (Proposition~\ref{prp: pcont3} and Proposition~\ref{prp: ucont3}, respectively).

\item In section~\ref{sec: cstop} we define the notions of a $\cs$-topological space, and of  a $\cs$-continous function between them, the arrow in the category $(\csTop, \csCont)$ of $\cs$-topological spaces. The clopen complemented subsets of a $\cs$-space form a swap algebra of type (I) (Corollary~\ref{cor: clopswap}). This fact is the complemented counterpart to the Boolean algebra-structure of the clopen subsets of a standard topological space. The relative $\cs$ topology and the quotient $\cs$-topology are presented. The formulation of the relative $\cs$-topology is not identical to the standard relative topology of open subsets, and the reason behind this slight difference is that a certain distributivity property of complemented subsets (this is property $(D_I)$, given in section~\ref{sec: csub}) cannot be accepted constructively. Instead, Bishop's distributivity property for complemented subsets is used to the proof of Proposition~\ref{prp: relcstop}.

\item In section~\ref{sec: csbtop} we define the notion of a $\cs$-topology induced by a base, in a way that generalises the canonical $\cs$-topology induced by a metric and the base of the complemented balls in section~\ref{sec: csmetric} (Remark~\ref{rem: metricbase}). A base for a $\cs$-topology is a proof-relevant version of the classical notion of a topology of open subsets, as the definitional properties of a base are witnessed by the corresponding base-moduli (Definition~\ref{def: csbase}). We call $\cs$-spaces with a base $\csb$-spaces, and these are the $\cs$-spaces that generalise metric spaces.

\item In section~\ref{sec: csbcont} we translate the notions of pointwise and uniform continuity of functions between metric spaces to pointwise-like and uniformly-like continuous functions between $\csb$-spaces. Uniform continuity between metric spaces has no direct formulation within topological spaces, and one has to define other structures, such as uniform spaces, in order to express uniform continuity. This is one of the reasons why Bishop didn't expect to find a useful constructive counterpart to classical topology (see also section~\ref{sec: concl}). Our constructive reformulation though, of a base for a topology and an open set with respect to that base, which
reveals all necessary witnessing data and avoids the axiom of choice, offers an alternative approach to this problem. We introduce $\csb$-continuous functions i.e., $\cs$-continuous functions that inverse also the corresponding moduli of openness, and we define pointwise-like and uniformly-like $\csb$-continuous functions similarly to pointwise and uniformly continuous functions between metric spaces, respectively (Definition~\ref{def: csbpcont}). 	
	
\item In section~\ref{sec: csbprod} we define the product of two $\csb$-spaces, and we explain how our choice of bases affects the pointwise-like or the uniform-like continuity of the projection-functions (Proposition~\ref{prp:  pcontproj}, and Proposition~\ref{prp:  ucontproj}). We  also define the weak $\csb$-topology and show that it is the least $\csb$-topology that turns all given functions into uniformly-like continuous ones (Proposition~\ref{prp: weak}).

\item In section~\ref{sec: concl} we provide a list of open questions and topics for future work within the theory of $\cs$-topological spaces.

\end{itemize}

We work within the extensional framework of \emph{Bishop Set Theory} (BST), a minimal extension of Bishop's theory of sets\footnote{The relation of BST to Bishop's original theory of sets is discussed in~\cite{Pe20}, section 1.2.} that behaves like a high-level programming language. In the extensional fragment of BST only extensional subsets are considered, which are defined by separation on a given set through an extensional property. Moreover, the totality of extensional subsets of a set, or the extensional powerset, is considered to be an impredicative set\footnote{The categorical subsets of a set $X$ i.e., arbitrary sets with an embedding in $X$, form the categorical powerset of $X$, which is a proper class, as its membership-condition requires quantification over the universe of predicative sets $\X V_0$. The relation between the two powersets involves Myhill's axiom of non-choice and it is studied in~\cite{MWP23}.}, in accordance to the (impredicative) \textit{Bridges-Richman Set Theory} (BRST) in~\cite{BR87, MRR88, BV06}.
The type-theoretic interpretation of Bishop sets as setoids is developed mainly by Palmgren (see e.g.,~\cite{Pa13}-\cite{Pa19}). Other formal systems for $\BISH$ are Myhill's system CST~\cite{My75} and Aczel's system CZF~\cite{AR10}.  Categorical approaches to Bishop sets are found e.g., in the work of Palmgren~\cite{Pa12} and Coquand~\cite{Co17}.
For all notions and results of BST that we use without explanation or proof we refer to~\cite{Pe20, Pe21, Pe22a, Pe24}. For all notions and facts from constructive analysis that we use without explanation or proof, we refer to~\cite{Bi67, BB85, BR87}. For all notions and facts from classical topology that we use without explanation or proof, we refer to~\cite{Du66}. In a proof of a theorem we may write $(\INT)$ to indicate that intuitionisitic logic i.e., Ex Falso quodlibet, is needed. If in a proof the Ex Falso quodlibet is not used, then our proof is within minimal logic $(\MIN)$ (see~\cite{SW12}).  If a proof employs Myhill's unique choice principle, a very weak form of choice, this is also indicated accordingly.

\section{Sets with an extensional inequality and strongly extensional functions}
\label{sec: setineq}

Bishop Set Theory (BST) is an informal, constructive theory of \emph{totalities} and \emph{assignment routines} between totalities that accommodates Bishop's informal system BISH and serves as an intermediate step between Bishop's original theory of sets and an adequate and faithful formalisation of BISH in Feferman's sense~\cite{Fe79}. Totalities, apart from the basic, undefined set of natural numbers $\mathbb{N}$, are defined through a membership-condition. The \emph{universe} $\mathbb{V}_0$ of predicative sets is an open-ended totality, which is not considered a set itself, and every totality the membership-condition of which involves quantification over the universe is not considered a set, but a proper class. \emph{Sets} are totalities the membership-condition of which does not involve quantification over $\mathbb{V}_0$, and are equipped with an equality relation i.e., an equivalence relation. An equality relation $x =_X x{'}$ on a defined set $X$ is defined through a formula $E_X(x, x{'})$ (see Definition~\ref{def: formulas}). Assignment routines are of two kinds: \emph{non-dependent} ones and \emph{dependent} ones. 

\begin{definition}\label{def: function}
If $(X, =_X)$ and $(Y, =_Y)$ are sets, a function $\fXY$ is a non-dependent assignment routine $f \colon X \sto Y$ i.e., for every $x \in X$ we have that $f(x) \in Y$, such that $x =_X x{'} \To f(x) =_Y f(x{'})$, for every $x, x{'} \in X$. A function \(f:X\to Y\) is an embedding, if for all \(x,x'\) in \(X\) we have that \(f(x) =_Yf(x')\) implies \(x =_X x'\), and we write \(f:X \eto Y\), while it is a surjection, if $\forall_{y \in Y}\exists_{x \in X}(f(x) =_Y y )$. We denote by $\X F(X, Y)$ the set of functions from $X$ to $Y$, and by $\id_X$ the identity function on $X$. Let $(\Set, \Fun)$ be the category of sets and functions, $(\Set, \Emb)$ its subcategory with embeddings and $(\Set, \Surj)$ its subcategory with surjections. Two sets are equal in $\mathbb{V}_0$ if there is $g \colon Y \to X$, such that\footnote{With this equality the universe in $\BST$ can be called \emph{univalent} in the sense of Homotopy Type Theory~\cite{Ri22, Ho13}, as, by definition, an equivalence between sets is an equality. Similarly, the type-theoretic function-extensionality axiom is incorporated in BST as the canonical equality of the function space.} $g \circ f =_{\X F(X, X)} \id_X$ and $f\circ g =_{\X F(Y, Y)} \id_Y$.
\end{definition}

In $\BST$ a set is usually equipped not only with a given equality, but also with a given inequality relation, 
$x \neq_X x{'}$ defined through a formula $I_X(x, x{'})$ (see Definition~\ref{def: formulas}). 
The primitive set of natural numbers $\Nat$ has a given equality $=_{\Nat}$ and inequality $\neq_{\Nat}$.
We consider $s \neq_{\Nat} t$ as primitive, rather than defined, as one could, through weak negation, as we want to avoid the use of weak negation in the foundations of BISH. For every basic or defined set in $\BST$ we have variables of that set. Next we define formulas within $\BST$. In the unpublished work~\cite{Bi69}  of Bishop formulas of $\BISH$ are translated in his fofrmal system $\Sigma$ in almost exactly the same way.
%$0 \neq_{\Nat} 1$ in a primitive way.

\begin{definition}[Formulas in $\BST$]\label{def: formulas}
	Prime formulas: $s =_{\Nat} t$, $s \neq_{\Nat} t$, where $s, t \in \Nat$.\\[1mm] 
	Complex formulas: If $A, B$ are formulas, then $A \vee B, A \wedge B, A \To B$ are 
	formulas, and if $S$ is a set and $\phi(x)$ is a  formula, for every variable $x$ of set $S$,
	then $\exists_{x \in S} \big( \phi(x)\big)$ and $\forall_{x \in S} \big(\phi(x)\big)$ are formulas. 
	Let $\top := 0 \neq_{\Nat} 1, \bot := 0 =_{\Nat} 1$ and $\neg A := A \To \bot$. 
\end{definition}

Constructively, $\neg A$ has a weak behaviour, 
as the negation of a conjunction  
$\neg(A \wedge B)$ does not generally imply the disjunction $\neg A \vee \neg B$, and similarly the negation of
a universal
formula $\neg\forall_x A(x)$ does not generally imply the existential formula $\exists_x \neg A(x)$.
Even if $A$ and  $B$ are stable i.e., $\neg \neg A \to A$ and $\neg \neg B \to B$, we only get  
$\neg(A \wedge B) \vdash \neg \neg (\neg A \vee \neg B)$ and $\neg\forall_x A(x) 
\vdash \neg \neg \exists_x \neg A(x)$, where $\vdash$ is the derivability relation in minimal logic. 
For this reason, $\neg A$ is called the \textit{weak} negation of $A$, which, according to Rasiowa~\cite{Ra74}, p.~276, 
is not constructive, exactly due to its aforementioned behaviour. The critique of weak negation in constructive mathematics $(\CM)$ goes back to Griss~\cite{Gr46}. Despite the use of weak negation in Brouwer's intuitionistic mathematics and Bishop's system $\BISH$, both, Brouwer and Bishop, developed a positive and strong approach to many classically negatively defined concepts.
Recently, Shulman~\cite{Sh22} showed
that most of the strong concepts of $\BISH$ ``arise automatically from an ``antithesis'' translation
of affine logic into intuitionistic logic via a ``Chu/Dialectica construction''.
Motivated by this work of Shulman, and in relation to our reconstruction of the theory of sets underlying 
$\BISH$ (see~\cite{Pe20, Pe21, Pe22a} and~\cite{PW22, MWP23}),
we develop in~\cite{Pe23b} a treatment of strong negation within
$\BST$, arriving in a heuristic method for the definition of strong concepts 
in $\BISH$, similar to Shulman's. For that, the equivalences occurring in the axiom schemata related to strong negation in formal theories of strong negation
become the definitional clauses of the recursive definition of $\No{A}$, with respect to the above
inductive definition of formulas $A$ in $\BST$. 
In~\cite{KP23}, strong negation is incorporated in the theory of computable functionals TCF, extending the following recursive definition of strong negation of formulas to inductively and coinductively defined predicates.

\begin{definition}[Strong negation in $\BST$]\label{def: strongneg}
	Strong negation $\No{A}$ of a formula $A$ in $\BST$ is defined by the following clauses:\\[1mm]
	$(\SN_1)$ For prime formulas related to the primitive set $(\Nat, =_{\Nat}, \neq_{\Nat})$ let
	%	and for every $s, t \in \Nat$ we define
	$$\No{\big(s =_{\Nat} t\big)} := s\neq_{\Nat} t \ \ \& \ \ \No{\big(s \neq_{\Nat} t\big)} := s =_{\Nat} t.$$
	If $A, B$ are complex formulas in $\BST$, let\\[1mm]
	$(\SN_2) \ \ \ \ \ \ \ \ \ \ \ \ \ \ \ \ \ \ \ \ \ \ \ \ \ \ \ \ \ \ \ \ \ \ \ \ \ 
	\No{(A \vee B)} := \No{A} \wedge \No{B}$,\\[1mm]
	$(\SN_3) \ \ \ \ \ \ \ \ \ \ \ \ \ \ \ \ \ \ \ \ \ \ \ \ \ \ \ \ \ \ \ \ \ \ \ \ \ 
	\No{(A \wedge B)} :=  \No{A} \vee \No{B}$,\\[1mm]
	$(\SN_4) \ \ \ \ \ \ \ \ \ \ \ \ \ \ \ \ \ \ \ \ \ \ \ \ \ \ \ \ \ \ \ \ \ \ \ \ \ 
	\No{(A \To B)} := A \wedge \No{B}$.\\[1mm]
	If $S$ is a set and $\phi(x)$ is a formula in $\BST$, where $x$ is a  varriable of $S$, let
	$$(\SN_5) \ \ \ \ \ \ \ \ \ \ \ \ \ \ \ \ \ \ \ \ \ \ \ \ \ \ \ \ \ \ \ \ \ \ \ \ \ 
	\No{\big(\exists_{x \in S} \phi(x)\big)} := \forall_{x \in S}\big(\No{\phi(x)}\big), \ \ \ \ \ \ \ \ \ \ \ \ \ 
	\ \ \ \ \ \ \ \ \ \ \ \ \ \ \ \ \ \ \ \ \ \ \ \ \ \ \ \ \ \ \ $$
	$$(\SN_6) \ \ \ \ \ \ \ \ \ \ \ \ \ \ \ \ \ \ \ \ \ \ \ \ \ \ \ \ \ \ \ \ \ \ \ \ \ \  
	\No{\big(\forall_{x \in S} \phi(x)\big)} := \exists_{x \in S}\big(\No{\phi(x)}\big). \ \ \ \ \ \ \ 
	\ \ \ \ \ \ \ \ \ \ \ \ \ \ \ \ \ \ \ \ \ \ \ \ \ \ \ \ \ \ \ \ \ \ \ \ \ \ $$
	We also define $\NN{A}  := \No{(\No{A})}$. If $A, B$ are formulas in $\BST$, their strong implication\footnote{Strong implication is defined by Rasiowa in~\cite{Ra74} within a formal system of constructive logic with strong negation.} is defined by
	$$ A \Sto B := (A \To B) \wedge (\No{B} \To \No{A}).$$
\end{definition}

 %which is a tight apartness relation, a concept defined next.
%on $\Nat$.

\begin{definition}\label{def: apartness}
	If $(X, =_X)$ is a set, let the following formulas with respect to a relation $x \neq_X y:$\\[1mm]
	$(\Ineq_1) \  \forall_{x, y \in X}\big(x =_X y \ \& \ x \neq_X y \To \bot \big)$,\\[1mm]
	$(\Ineq_2) \ \forall_{x, x{'}, y, y{'} \in X}\big(x =_X x{'} \ \& \ 
	y =_X y{'} \ \& \ x \neq_X y \To x{'} \neq_X y{'}\big)$,\\[1mm]
	$(\Ineq_3) \ \forall_{x, y \in X}\big(\neg(x \neq_X y) \To x =_X y\big)$,\\[1mm]
	$(\Ineq_4)  \forall_{x, y \in X}\big(x \neq_X y \To y \neq_X x\big)$,\\[1mm]
	$(\Ineq_5) \ \forall_{x, y \in X}\big(x \neq_X y \To \forall_{z \in X}(z \neq_X x \ \vee \ z \neq_X y)\big)$,\\[1mm]
	$(\Ineq_6) \ \forall_{x, y \in X}\big(x =_X y \vee x \neq_X y\big)$.\\[1mm]
	If $\Ineq_1$ is satisfied, we call $\neq_X$ an 
	%$($abstract$)$ 
	inequality on $X$, and the 
	structure $\C X := (X, =_X, \neq_X)$ a set with an inequality. 
	%We also write $|\B X| := X$.
	If $(\Ineq_6)$ is satisfied, then $\C X$ is 
	%set with inequality $(X,=_X, \neq_X)$ is 
	\textit{discrete}, if $(\Ineq_2)$ holds, $\neq_X$ is 
	called \textit{extensional}, and  if $(\Ineq_3)$ holds, it is \textit{tight}. If it satisfies $(\Ineq_4)$ and $(\Ineq_5)$, it is called an \textit{apartness relation} on $X$.
	\end{definition}
	
	The primitive inequality $\neq_{\Nat}$ is a discrete, tight apartness relation. In many cases,
	 %(see~\cite{Pe23d}), 
	 the inequality on a set is equivalent to the strong negation of its equality and vice versa.
	
	\begin{definition}\label{def: setineq}
	If $\C X := (X, =_X, \neq_X)$ and $\C Y := (Y, =_Y, \neq_Y)$ are sets with inequality, a function $f \colon X \to Y$ is \textit{strongly extensional},
	if $f(x) \neq_Y f(x{'}) \To x \neq_X x{'}$, for every $x, x{'} \in X$, and it is an injection, if the converse implication (also\footnote{Here we consider injections that are already strongly extensional functions.}) holds i.e., $x \neq_X x{'} \To f(x) \neq_Y f(x{'})$, for every $x, x{'} \in X$. A \textit{strong injection} is an injection that is also an embedding\footnote{Classically, the negation of equality $\neg(x =_X x{'})$ is used instead of $x \neq_X x{'}$, and the corresponding implications are equivalent with the use of double negation elimination. Constructively, we need to treat these notions separately. Notice though, that each (constructive) implication follows from the other by using contraposition with respect to \textit{strong negation}, if the inequality on $X$ is the strong negation of its equality and vice versa.} i.e., $x \neq_X x{'} \To f(x) \neq_Y f(x{'})$ and $f(x) =_Y f(x{'}) \To x =_X x{'}$, for every $x, x{'} \in X$. 
	Let $(\SetIneq, \StrExtFun)$ be the category of sets with an inequality and strongly extensional functions, let	$(\SetExtIneq, \StrExtFun)$ be its subcategory with sets with an extensional inequality, and $(\SetIneq, \Inj)$ and $(\SetExtIneq, \Inj)$ the corresponding subcategories with $($strongly extensional$)$ injections.
	Moreover, let $(\SetIneq, \StrInj)$ and $(\SetExtIneq, \StrInj)$ be the corresponding subcategories with strong injections.
	\end{definition}

	\begin{definition}\label{def: canonicalineq}
	Let $\C X := (X, =_X, \neq_X)$ and $\C Y := (Y, =_Y, \neq_Y)$ be sets with inequalities.
	The canonical inequalities on the product $X \times Y$ and the function space
	%\footnote{If $X, Y$ are totalities 
	%	an \textit{assignment routine} $f$
	%	from $X$ to $Y$ is denoted by $f \colon X \sto Y$. If $X, Y$ are sets, $f \colon X \sto Y$
	%	is a \textit{function}, if it respects their equalities. A function is an embedding, if it is an injection.}
	$\X F(X, Y)$ are given, respectively, by
	$$(x, y) \neq_{X \times Y} (x{'}, y{'}) :\TOT x \neq_X x{'} \vee y \neq_Y y{'},$$
	$$f \neq_{\X F(X, Y)} g :\TOT \exists_{x \in X} \big[f(x) \neq_Y g(x)\big].$$
	\end{definition}
	% -inequality structure of a
	% set\index{equality-inequality structure of a set} $X$.
%\end{definition}
Clearly, the canonical inequalities on $X \times Y$ and $\X F(X, Y)$ are formed by the strong negation of their canonical equalities, respectively.
The projections $\pr_X, \pr_Y$ associated to $X \times Y$ 
are strongly extensional functions.
It is not easy to give examples of non strongly extensional functions, although we cannot accept
in $\BISH$ that all functions are strongly extensional. E.g., the strong extensionality of 
all functions from a metric space to itself is equivalent to Markov's principle (see~\cite{Di20}, p.~40).
Even to show that a constant function between sets with an inequality is strongly extensional, one needs 
intuitionistic,  and not minimal, logic. An apartness relation $\neq_X$ on a set $(X, =_X)$ is always
extensional (see~\cite{Pe20}, Remark 2.2.6, p.~11).
%There are two notions of subsets in $\BST$, categorical and extensional subsets, the equality of which dependes on Myhill's axiom of non-choice (see~\cite{Pe20, MWP23}). 

\begin{definition}\label{def: canonical}
	The canonical inequality on the set of reals $\Real$ is given by 
	$a \neq_{\Real} b :\TOT |a - b| > 0 \TOT a > b \vee a < b$, which is  
	a special case of the canonical inequality on a metric space $(Z, d)$, given by 
	$z \neq_{(Z,d)} z{'} :\TOT d(z, z{'}) > 0$. 
	If $(X, =_X)$ is a set and $F \subseteq \X F(X)$, the set of real-valued functions on $X$,
	the inequality on $X$ induced by $F$ is defined by
	$x \neq_{(X, F)} x{'} :\TOT \exists_{f \in F}\big(f(x) \neq_{\Real} f(x{'})\big).$
\end{definition}

The inequalities $a \neq_{\Real} b$ and $z \neq_{(Z,d)} z{'}$ are tight apartness relations. E.g., cotransitivity of $z \neq_{(Z,d)} z{'}$ is shown by the fact that $d(z, z{''}) + d(z{''}, z{'}) \geq d(z, z{'}) > 0$, and if $a + b > 0$ in $\Real$, then $a > 0$ or $b > 0$ (see~\cite{BB85}, p.~26). The inequality 
$x \neq_{(X, F)} y$ is an apartness relation, which is tight if and only if 
%we have that 
$$\forall_{f \in F}\big(f(x) =_{\Real} f(x{'})\big) \To x =_X x{'},$$
for every $x, x{'} \in X$.
No negation of some sort is used in the definition of the inequality $x \neq_{(X, F)} x{'}$, in 
the proof of its extensionality, while the last implication above can be seen as completely positive formulation of its tightness. 
If $F$ is a Bishop topology of functions 
(see~\cite{Pe15,Pe20b,Pe21,Pe22b, Pe23a}), then $x \neq_{(X, F)} x{'}$ is the canonical inequality of a Bishop space.
The inequality $a \neq_{\Real} b$ is equivalent to such an inequality induced by functions, as
$a \neq_{\Real} b \TOT a \neq_{(\Real, \Bic(\Real))} b$,
where $\Bic(\Real)$ is the topology of Bishop-continuous functions of type $\Real \to \Real$ 
(see~\cite{Pe15}, Proposition 5.1.2.). Notice that all Bishop-continuous functions of type $\Real \to \Real$, actually all pointwise continuous functions of this type, are
strongly extensional (see~\cite{Pe15}, 2.3.12(i), p.~26), a fact that is generalised in Remark~\ref{rem: secont}.
By its extensionality,
%Using Proposition~\ref{prp: apartness1}, 
an inequality induced by functions
provides a (fully) positive definition of the 
extensional empty subset, and of the extensional complement
%\footnote{Notice that it is not clear how to define a 
	%	complement of a categorical subset as a categorical subset, and an arbitrary categorical complemented subset
	%	determines such a complement through its $0$-component.} 
$X_P^{\neq_X}$ of an extensional subset $X_P$ of $X$ (see Definition~\ref{def: extempty} and Definition~\ref{def: extunion}). The following facts are used in section~\ref{sec: cp} and are straightforward to prove.

\begin{proposition}\label{prp: leftinv}
	Let $\C X, \C Y, \fXY$ in $(\SetExtIneq, \StrExtFun)$ with a left inverse $g_l \colon Y \to X$ 
	%i.e., the following diagram commutes
	\begin{center}
		\begin{tikzpicture}
			
			\node (E) at (0,0) {$Y$};
			\node[right=of E] (X) {$X$};
			\node[right=of X] (Y) {$Y$.};

			\draw[->] (E)--(X) node [midway,above] {$g_l$};
			\draw[->] (X)--(Y) node [midway,above] {$f $};
			\draw[->,bend right] (E) to node [midway,below] {$\id_Y$} (Y);
			
		\end{tikzpicture}
	\end{center}
\normalfont (i)
\itshape 
$f$ is a surjection,
% i.e., $\forall_{y \in Y}\exists_{x \in X}(f(x) =_Y y)$, 
and $g_l$ is a strong injection.\\[1mm]
\normalfont (ii)
\itshape  If $f$ is an injection, then $g_l$ is strongly extensional.\\[1mm]
\normalfont (iii)
\itshape If $g_l$ is strongly extensional and surjective, then $f$ is an injection.
	
\end{proposition}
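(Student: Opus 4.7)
My overall strategy is driven by a single algebraic identity read off the diagram, namely $f \circ g_l = \id_Y$, together with the strong extensionality of $f$ and the extensionality of the two inequalities $\neq_X$ and $\neq_Y$. In each of the three clauses, the pattern is to compose with $f$ (or to use a preimage along $g_l$) in order to transport a statement between $X$ and $Y$, and then to appeal to whichever direction of the (strong) extensionality or injectivity hypothesis is currently available.

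For (i), surjectivity of $f$ is immediate by taking $x := g_l(y)$, for which $f(x) =_Y y$ by the identity. To see that $g_l$ is a strong injection, I plan to verify its two defining clauses separately. The implication $y \neq_Y y' \To g_l(y) \neq_X g_l(y')$ follows by rewriting the hypothesis as $f(g_l(y)) \neq_Y f(g_l(y'))$ via extensionality of $\neq_Y$, and then descending through strong extensionality of $f$. The embedding clause $g_l(y) =_X g_l(y') \To y =_Y y'$ follows by applying $f$ to both sides and again using $f \circ g_l = \id_Y$.

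For (ii), the extra hypothesis that $f$ is an injection, that is $x \neq_X x' \To f(x) \neq_Y f(x')$, is exactly what makes the reverse transport possible: from $g_l(y) \neq_X g_l(y')$ I obtain $f(g_l(y)) \neq_Y f(g_l(y'))$, and $f\circ g_l = \id_Y$ reduces this to $y \neq_Y y'$. For (iii), I begin with $x \neq_X x'$ in $X$ and use surjectivity of $g_l$ to choose $y, y' \in Y$ with $g_l(y) =_X x$ and $g_l(y') =_X x'$; extensionality of $\neq_X$ transports to $g_l(y) \neq_X g_l(y')$, strong extensionality of $g_l$ gives $y \neq_Y y'$, and the identity together with extensionality of $\neq_Y$ delivers $f(x) \neq_Y f(x')$. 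Combined with the ambient strong extensionality of $f$, this exhibits $f$ as an injection.

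The main obstacle is bookkeeping rather than mathematical depth: one must keep straight which of the three possible clauses --- strong extensionality ($f(x)\neq_Y f(x') \To x \neq_X x'$), inequality-preservation ($x \neq_X x' \To f(x)\neq_Y f(x')$), and embedding ($f(x)=_Y f(x') \To x =_X x'$) --- is genuinely in scope at each step. In particular, strong extensionality of $g_l$ is \emph{not} derivable from the hypotheses of (i) alone, which is precisely why part (ii) has to add the injection hypothesis on $f$: the strong extensionality of $f$ only transports inequalities downward from $Y$ to $X$, and upward transport requires either $f$ itself to be an injection or, as in (iii), $g_l$ to be surjective and already strongly extensional.
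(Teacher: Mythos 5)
Your proof is correct. The paper itself gives no argument for this proposition (it is stated as ``straightforward to prove'' just before the statement), and your verification of each clause --- surjectivity via $x := g_l(y)$, the two defining conditions of a strong injection via $f \circ g_l = \id_Y$ together with extensionality of $\neq_Y$ and strong extensionality of $f$, and the transports in (ii) and (iii) --- is exactly the intended routine argument; your closing remark correctly identifies why strong extensionality of $g_l$ cannot be part of (i) and must be deferred to (ii).
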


%\begin{proof}
%	(i) 
%	
%	
%\end{proof}

\begin{proposition}\label{prp: rightinv}
	Let $\C X, \C Y, \fXY$ in $(\SetExtIneq, \StrExtFun)$ with a right inverse $g_r \colon Y \to X$ 
	%i.e., the following diagram commutes
	\begin{center}
		\begin{tikzpicture}
			
			\node (E) at (0,0) {$X$};
			\node[right=of E] (X) {$Y$};
			\node[right=of X] (Y) {$X$.};

			\draw[->] (E)--(X) node [midway,above] {$f$};
			\draw[->] (X)--(Y) node [midway,above] {$g_r $};
			\draw[->,bend right] (E) to node [midway,below] {$\id_X$} (Y);
			
		\end{tikzpicture}
	\end{center}
	\normalfont (i)
	\itshape 
	$g_r$ is a surjection, and if $f$ is a surjection, then $g_r$ is an embedding.\\[1mm]
	\normalfont (ii)
	\itshape If $g_r$ is strongly extensional, then $f$ is an injection.\\[1mm]
	\normalfont (iii)
	\itshape If $f$ is a surjection and an injection, then $g_r$ is strongly extensional.
	
\end{proposition}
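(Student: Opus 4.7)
The plan is to unpack the retraction identity $g_r \circ f =_{\D F(X,X)} \id_X$ encoded by the diagram and combine it at each step with the extensionality of the two inequalities (axiom $\Ineq_2$, in force throughout $\SetExtIneq$) and the strong extensionality of $f$ or $g_r$. No Ex Falso or weak negation enters, so all three items should stay within minimal logic $(\MIN)$. Structurally, the argument mirrors that of the previous proposition on left inverses, with the roles of $f$ and its one-sided inverse swapped.

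For item (i), surjectivity of $g_r$ is witnessed by $y := f(x)$ for each $x \in X$, since then $g_r(y) =_X x$. If moreover $f$ is a surjection and $g_r(y_1) =_X g_r(y_2)$, I choose $x_i \in X$ with $f(x_i) =_Y y_i$ and chase the equalities $x_1 =_X g_r(f(x_1)) =_X g_r(y_1) =_X g_r(y_2) =_X g_r(f(x_2)) =_X x_2$; applying $f$ then gives $y_1 =_Y f(x_1) =_Y f(x_2) =_Y y_2$, so $g_r$ is an embedding. For item (ii), given $x_1 \neq_X x_2$ I would rewrite both sides through $g_r(f(x_i)) =_X x_i$ using the extensionality of $\neq_X$, obtaining $g_r(f(x_1)) \neq_X g_r(f(x_2))$; strong extensionality of $g_r$ then delivers $f(x_1) \neq_Y f(x_2)$, which is exactly the injectivity of $f$.

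For item (iii), given $g_r(y_1) \neq_X g_r(y_2)$, I use the surjectivity of $f$ to pick $x_i$ with $f(x_i) =_Y y_i$; since $g_r(y_i) =_X g_r(f(x_i)) =_X x_i$, the extensionality of $\neq_X$ converts the hypothesis into $x_1 \neq_X x_2$. The injectivity of $f$ then yields $f(x_1) \neq_Y f(x_2)$, and one final application of the extensionality of $\neq_Y$ together with $f(x_i) =_Y y_i$ gives $y_1 \neq_Y y_2$. There is no serious obstacle here; the only point demanding care is in (iii), where one must not confuse the injectivity of $f$ (the implication $x_1 \neq_X x_2 \To f(x_1) \neq_Y f(x_2)$ actually used) with its strong extensionality, which points in the opposite direction and is already built into $f$ being an arrow of $\SetExtIneq$.
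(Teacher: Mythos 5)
Your proof is correct: each item follows by chasing the retraction identity $g_r(f(x)) =_X x$ through the extensionality of $=_X$, $\neq_X$, $\neq_Y$ and the relevant strong extensionality or injectivity hypothesis, exactly as intended, and you rightly keep the injectivity of $f$ (used in (iii)) separate from its strong extensionality. The paper itself omits the proof, declaring these facts straightforward, and your argument is the natural one it has in mind, carried out within minimal logic.
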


If a function $f$ has a left $g_l$ and a right inverse $g_r$, then it is immediate to show that $g_l =_{\X F(Y, X)} g_r$.
% and hence without loss of generality we assume that they are identical.

\begin{proposition}\label{prp: inv}
	Let $\C X, \C Y, \fXY$ in $(\SetExtIneq, \StrExtFun)$ with an inverse $g \colon Y \to X$ 
	%i.e., the following diagram commutes
	\begin{center}
		\begin{tikzpicture}
			
			\node (E) at (0,0) {$Y$};
			\node[right=of E] (X) {$X$};
			\node[right=of X] (Y) {$Y$};
			\node[right=of Y] (Z) {$X$.};
			
			\draw[->] (E)--(X) node [midway,above] {$g$};
			\draw[->] (X)--(Y) node [midway,above] {$f $};
			\draw[->] (Y)--(Z) node [midway,above] {$g$};
			\draw[->,bend right] (E) to node [midway,below] {$\id_Y$} (Y);
			\draw[->,bend left=40] (X) to node [midway,above] {$\id_X$} (Z);
			
		\end{tikzpicture}
	\end{center}
	\normalfont (i)
	\itshape 
	$f$ is a surjection and $g$ is a surjective strong injection.\\[1mm]
	\normalfont (ii)
	\itshape  $f$ is an injection if and only if $g$ is strongly extensional.
\end{proposition}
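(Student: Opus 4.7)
The plan is to exploit the fact that the inverse $g$ simultaneously satisfies the two inverse equations $f \circ g =_{\D F(Y, Y)} \id_Y$ and $g \circ f =_{\D F(X, X)} \id_X$, and therefore plays the role of $g_l$ in Proposition~\ref{prp: leftinv} and the role of $g_r$ in Proposition~\ref{prp: rightinv} at the same time. The entire proposition is then obtained by quoting the appropriate clauses of these two already-established results with $g_l := g$ and $g_r := g$.

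For item (i), I would first invoke Proposition~\ref{prp: leftinv}(i) with $g_l := g$ to deduce that $f$ is a surjection and that $g$ is a strong injection. It remains to see that $g$ is itself a surjection, which is given directly by the first conclusion of Proposition~\ref{prp: rightinv}(i) applied with $g_r := g$. Concatenating these conclusions delivers that $f$ is a surjection while $g$ is a surjective strong injection.

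For item (ii), the forward implication is an immediate application of Proposition~\ref{prp: leftinv}(ii) with $g_l := g$: from $f$ being an injection one reads off that $g$ is strongly extensional. For the converse, two routes are available. The more direct one is Proposition~\ref{prp: rightinv}(ii) with $g_r := g$, which from the strong extensionality of $g$ yields that $f$ is an injection. Alternatively, since item (i) has already established that $g$ is surjective, one can equally well invoke Proposition~\ref{prp: leftinv}(iii) with $g_l := g$ to reach the same conclusion, which provides a useful internal consistency check.

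I do not expect a genuine obstacle: the statement is a repackaging of the one-sided lemmas, and no new argument regarding inequalities is needed beyond what those lemmas already contain. The only point requiring attention is bookkeeping, namely keeping straight which clauses concern preservation of $\neq$ (strong extensionality / injectivity of $f$) and which concern its reflection; once the identifications $g_l := g$ and $g_r := g$ are fixed, each clause of Proposition~\ref{prp: inv} corresponds to a single clause of one of the two preceding propositions.
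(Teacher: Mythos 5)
Your derivation is correct: with $f\circ g =_{\D F(Y,Y)} \id_Y$ and $g\circ f =_{\D F(X,X)} \id_X$, the function $g$ is simultaneously a left inverse in the sense of Proposition~\ref{prp: leftinv} and a right inverse in the sense of Proposition~\ref{prp: rightinv}, and each clause of Proposition~\ref{prp: inv} follows by quoting the clauses you cite. The paper gives no explicit proof (it declares these facts straightforward immediately after stating the two one-sided propositions), and your assembly is exactly the intended route.
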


\begin{proposition}\label{prp: adj}
	Let $\C X, \C Y, \fXY$ in $(\SetExtIneq, \Fun)$ with a left inequality-adjoint $g \colon Y \to X$, in symbols $g \dashv f$ i.e., for every $x \in X$ and $y \in Y$ the following condition is satisfied:
	$$g(y) \neq_X x \TOT y \neq_Y f(x).$$
		\normalfont (i)
	\itshape 
	If $f$ is a surjection and an injection, then $g$ is an injection, and, dually, if $g$ is a surjection and an injection, then $f$ is an injection.\\[1mm]
	\normalfont (ii)
	\itshape  If $f$ is a strongly extensional surjection, then  $g$ is strongly extensional, and, dually, if $g$ is a strongly extensional surjection, then $f$ is strongly extensional.
\end{proposition}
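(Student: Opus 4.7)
The plan is to use the adjoint equivalence
\[g(y) \neq_X x \ \Leftrightarrow \ y \neq_Y f(x)\]
as a translation device between inequalities on $X$ and inequalities on $Y$. Given a goal about $g$, or dually about $f$, one picks an appropriate preimage via surjectivity and then applies the adjoint together with the injectivity or strong extensionality hypothesis to close the argument. Throughout, symmetry of $\neq_X$ and $\neq_Y$ is used at a few places, which one should read as the apartness condition of Definition~\ref{def: apartness} or as a property of the concrete inequalities in use.

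For (i), assume $f$ is a surjection and an injection. I would prove $g$ is an injection by establishing the biconditional $g(y) \neq_X g(y') \Leftrightarrow y \neq_Y y'$ through the following chain. First, the adjoint with $x := g(y')$ converts the left-hand side to $y \neq_Y f(g(y'))$. Next, surjectivity of $f$ supplies $x_0 \in X$ with $f(x_0) =_Y y$, so by extensionality of $\neq_Y$ this becomes $f(x_0) \neq_Y f(g(y'))$. Since $f$ is an injection in the sense of Definition~\ref{def: setineq} (preserving and reflecting inequality), this is equivalent to $x_0 \neq_X g(y')$, and by symmetry of $\neq_X$ to $g(y') \neq_X x_0$. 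Finally, the adjoint applied to $(x_0, y')$ yields $g(y') \neq_X x_0 \Leftrightarrow y' \neq_Y f(x_0) =_Y y$, which is $y \neq_Y y'$ by symmetry of $\neq_Y$. The dual statement (if $g$ is a surjection and an injection, then $f$ is an injection) follows by the mirror argument: using surjectivity of $g$ to pick $g$-preimages of given points of $X$, and using $g$'s injection to cancel $g$ and transfer the given $x \neq_X x'$ to $f(x) \neq_Y f(x')$ through the adjoint.

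For (ii), only strong extensionality of $f$ is required, and the chain collapses to one direction. Given $g(y) \neq_X g(y')$, the adjoint gives $y \neq_Y f(g(y'))$. Surjectivity provides $x_0$ with $f(x_0) =_Y y$, and extensionality yields $f(x_0) \neq_Y f(g(y'))$. Strong extensionality of $f$ then gives $x_0 \neq_X g(y')$, and the adjoint applied to $(x_0, y')$, after a symmetry step, gives $y' \neq_Y f(x_0) =_Y y$, hence $y \neq_Y y'$. Exactly one direction of $f$'s interaction with $\neq$ is used at the crucial step, matching the weaker hypothesis. The dual goes symmetrically, swapping the roles of $f$ and $g$.

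The main obstacle I expect is not in any single step, each being a direct application of the adjoint, extensionality, or the hypothesis on $f$, but in the bookkeeping around argument order of the inequalities: the adjoint condition is stated asymmetrically, whereas the chain naturally wants $g(y) \neq_X x$ and $x \neq_X g(y)$ to be interchangeable, and similarly on the $Y$-side. This forces an implicit use of symmetry of $\neq_X$ and $\neq_Y$ at two or three points, and is what distinguishes the proof from the cleaner arguments of Propositions~\ref{prp: leftinv}--\ref{prp: inv}, where the unit/counit identities $f \circ g_l =_Y \id_Y$ or $g_r \circ f =_X \id_X$ directly mediate the translation between $X$ and $Y$ and dispense with the need to operate entirely at the level of inequalities.
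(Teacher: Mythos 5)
The paper states this proposition without proof (it is bundled with Propositions~\ref{prp: leftinv}--\ref{prp: inv} as ``straightforward to prove''), so there is no official argument to compare against; your chain through the adjoint, extensionality, surjectivity, and the injection/strong-extensionality hypothesis on $f$ is surely the intended one, and each individual step is justified. What I want to push back on is your closing suggestion that the symmetry steps are mere bookkeeping. They are not: symmetry is load-bearing, and without it the proposition is false as literally stated. Definition~\ref{def: apartness} asks only $(\Ineq_1)$ and $(\Ineq_2)$ of an extensional inequality, not $(\Ineq_4)$. Take $X = Y = \{a,b,c\}$ with the identity equality and with $a \neq c$ the \emph{only} instance of the inequality; this satisfies $(\Ineq_1)$ and $(\Ineq_2)$ but is not symmetric. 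Let $f := \id$, which is a surjection, an injection and strongly extensional, and let $g(a) := a$, $g(b) := c$, $g(c) := c$. The adjoint condition $g(y) \neq x \TOT y \neq f(x)$ holds: for $y \in \{b,c\}$ both sides are false for every $x$, since neither $b$ nor $c$ ever occurs as the left argument of the inequality, and for $y = a$ the two sides coincide literally. Yet $g(a) \neq g(b)$ is $a \neq c$, which holds, while $a \neq b$ fails, so $g$ is neither strongly extensional nor an injection in the sense of Definition~\ref{def: setineq}. This refutes both (i) and (ii) over $\SetExtIneq$ as defined.

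So your proof is correct exactly when $\neq_X$ and $\neq_Y$ are symmetric, and that should be promoted from an aside to an explicit hypothesis (or the proposition read as tacitly restricted to symmetric inequalities, which covers every concrete inequality in the paper: $\neq_{\Nat}$, $\neq_{\Real}$, the metric, product, function-space and $F$-induced inequalities are all symmetric, indeed apartness relations). Once symmetry is granted, the two ``dual'' halves can be packaged more cleanly than by re-running the mirror chain: symmetry makes the adjunction self-dual, since $f(x) \neq_Y y \TOT y \neq_Y f(x) \TOT g(y) \neq_X x \TOT x \neq_X g(y)$ shows that $g \dashv f$ implies $f \dashv g$, so each dual statement is literally the already-proved one with the roles of $f$ and $g$ exchanged. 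Your main chains for (i) and (ii) are otherwise complete, and you are right that the biconditional version of (i) delivers both the injection and the strong extensionality of $g$ that the paper's notion of injection requires.
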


Clearly, if $\fXY$ has a left (right) inverse one of the two implications of the adjointness-condition holds, and if $f$ has an inverse $g$, then $g \dashv f$. We can organise the above notions of functions into certain subcategories of $(\SetIneq, \Fun)$.

\begin{definition}\label{def: invcats}
Let $(\Set, \LeftInv)$ be the category of sets with a left inverse, $(\Set, \RightInv)$ the category of sets with functions with a right inverse, and $(\Set, \Inv)$ the category of sets with invertible functions. The categories 
$(\SetIneq, \LeftInv)$, $(\SetIneq, \RightInv)$, $(\SetIneq, \Inv)$, and their subcategories $(\SetExtIneq, \LeftInv)$, $(\SetExtIneq, \RightInv)$, and $(\SetExtIneq, \Inv)$ are defined accordingly. Let $(\SetIneq, \Adj)$ be the category of sets with inequality and strongly extensional functions with a left inequality-adjoint. The category $(\SetExtIneq, \Adj)$ is defined similarly. 
\end{definition}

\section{Extensional subsets}
\label{sec: sub}

Here we work only with extensional subsets, as in Bridges-Richman Set Theory. The use of extensional subsets only allows a smoother treatment of the (extensional) empty subset of a set, and hence a calculus of subsets closer to that of subsets within classical logic. The role of Myhill's axiom of non-choice to the relation between extensional and categorical subsets is studied in~\cite{MWP23}.

\begin{definition}[Extensional subsets]\label{def: extsubset}
	A formula $P(x)$, where $x$ is a variable of set $(X, =_X)$,
	is an \textit{extensional property}\footnote{Notice that in $\MLTT$ a property on a 
		type $X$ is described as a type family $P \colon X \to \C U$ over $X$, where $\C U$ is a universe and $X \colon \C U$, and it
		is always extensional through the corresponding transport-term (see~\cite{Ho13}, section 2.3).}
	on $X$\index{extensional property on a set}, if 
	$$\forall_{x, y \in X}\big([x =_{X } y \ \& \ P(x)] \To P(y)\big).$$
	The corresponding totality $X_P$\index{$P_X$} defined by the separation scheme is called an \textit{extensional subset} of $X$, if 
	its equality is inherited from $=_{X}$. If $\neq_X$ is a given inequality on $X$, then the canonical inequality
	$\neq_{X_P}$ on $X_P$ is inherited from $\neq_{X}$. The totality of extensional subsets of $X$ is denoted by $\C E(X)$.
	If $X_P, X_Q \in \C E(X)$, let $X_P \subseteq X_Q :\TOT \forall_{x \in X}\big(P(x) \To Q(x)\big)$ and $X_P =_{\C E(X)} X_Q :\TOT
	X_P \subseteq X_Q \ \& \ X_Q \subseteq X_P$. We may avoid writing the extensional property $P(x)$ determining an extensional
	subset $A$ of $X$, and in this case we simply write $A \subseteq X$. 
%	We define an inequality on $\C E (X)$ as follows:
%	$$X_P \neq_{\mathsmaller{\C E(X)}} X_Q :\TOT \exists_{x \in X}\big((P(x) \wedge \neg {Q(x)}) \vee (Q(x) \wedge \neg{P(x)})\big).$$
\end{definition}

Notice that no quantification over the universe $\X V_0$ is required in the membership condition of 
$\C E(X)$. In accordance to the fundamental distinction between \textit{judgments} and \textit{propositions} due to Martin-L\"of, 
if $A, B \in \C E(X)$, the inclusion $A \subseteq B$ is a formula that does
not employ the membership symbol 
i.e., we do not say that for all $x \in X$ if $x \in A$, then $x \in B$, as such a definition would interpret the judgments
$x \in A, x \in B$ as formulas. If $A := X_P$ and $B := X_Q$, we only need to prove $P(x) \To Q(x)$, for every $x \in X$. 
There are non-extensional properties on a set i.e., formulas $P(x)$, where $x$ is a variable of the set $X$, 
such that $P(x)$ and $x =_X y$ do not imply $P(y)$. 
For example, let $n \in \Nat$, $q \in \X Q$ and $P_q(x)$, where $x$ is a variable of set $\Real$, defined by
$P_q(x) := x_n =_{\X Q} q.$ If $y \in \Real$ such that $y =_{\Real} x$, then it is not necessary
that $y_n =_{\X Q} q$, if $ x_n =_{\X Q} q$.
A set $X$ is an extensional subset of itself, in the sense that it is equal
%\footnote{Two (predicative) 
%	sets $X, Y$ are equal in $\X V_0$ if 
%	there is a bijection (or better a pair of bijections)
%	between them, and in this case we write $X =_{\X V_0} Y$. The bijection between $X$ and $X_{=_{X}}$ is
%	defined by the identity rule.} 
in $\X V_0$ to the extensional subset 
$X_{=_{X}} := \{x \in X \mid x =_X x\}.$
The sets $\one$ and $\two$ are defined as extensional subsets 
of the primitive set  $\Nat$:
$$\one := \{x \in \Nat \mid x =_{\Nat} 0\} =: \{0\}, \ \ \ \ \ 
\two := \{x \in \Nat \mid x =_{\Nat} 0 \ \vee x =_{\Nat} 1\} =: \{0, 1\}$$
As an equality $=_X$ is extensional on $X \times X$, the diagonal $D(X, =_X) := \{(x, y) \in X \times X \mid x =_X y\}$
of $X$ is an extensional subset of $X \times X$. In the following definition of the extensional empty subset of a set $X$ we do not require $X$ to be inhabited, as Bishop does in~\cite{Bi67}, p.~65, for the categorical empty subset. We only require that $X$ is equipped with an extensional inequality.

\begin{definition}[Extensional empty subset]\label{def: extempty}
	If $\C X := (X, =_{X}, \neq_{X})$ in in $\SetExtIneq$, then 
	$$\emptys_X := \{x \in X \mid x \neq_X x\}$$
	 is the extensional empty subset of $\C X$. 
	We call	$X_P \in \C E(X)$ \textit{empty}, if $X_P \subseteq \emptys_X$.
\end{definition}

Notice that within $\INT$ the inclusion $\ \emptys_X \subseteq X_P$ holds, for every extensional property $P(x)$ on $X$. The extensionality of an equality $=_X$ implies that the inequality $\neg(x =_X y)$ on $(X, =_X)$ is extensional, with associated \textit{weak}
empty subset the extensional subset 
$$\{x \in X \mid \neg(x =_X x)\}.$$ Although this definition employs negation, we can define inequalities on a set, as we saw in Definition~\ref{def: apartness}, and hence associated empty subsets, in a completely positive way. 
%Through this property we can define a stronger inequality on $\C E (X)$ as follows:
%$$X_P \stackrel{s} \neq_{\mathsmaller{\C E(X)}} X_Q :\TOT \exists_{x \in X}\big((P(x) \wedge [Q^{\neq_X}](x)) \vee (Q(x) \wedge [P^{\neq_X}](x))\big).$$
%
%\begin{remark}\label{rem: emptywhole2}
%	If $(X, a, =_X, \neq_X)$ is an inhabited set with an inequality satisfying $(\Ineq_5)$, then $\ \emptys_X \stackrel{s} \neq_{\mathsmaller{\C E(X)}} X$.
%\end{remark}	
%
%\begin{proof}
%	Clearly, $a =_X a$ and if $x \neq_X x$, then by cotransitivity we get $a \neq_X x$.
%\end{proof}	
Next we define the basic operations on extensional subsets\footnote{In~\cite{MWP23} a different notation is used for these operations, in order to be distinguished from the corresponding operations on categorical subsets. As here we employ only extensional subsets, we use for them the standard notation for the opeartions on subsets.}.

\begin{definition}\label{def: extunion}
	If $(X, =_X), (Y, =_Y) \in \Set$, $A := X_P, B := X_Q \in \C E(X)$, $C := Y_R \in \C E(Y)$ and $f \colon X \to Y$, let the following operations on extensional subsets:
	$$A \cup B := \{x \in X \mid P(x) \vee Q(x)\}, \ \ \ \ \ A \cap B := \{x \in X \mid P(x) \wedge Q(x)\},$$
	$$f(A) := \big\{y \in Y \mid \exists_{x \in X_P}\big(f(x) =_Y y\big)\big\}, \ \ \ \ \ 
	f^{-1}(C) := \big\{x \in X \mid \exists_{y \in Y_R}\big(f(x) =_Y y\big)\big\},$$
	$$A \times C := \big\{((x{'}, y{'}) \in X \times Y \mid \exists_{x \in X_P}\exists_{y \in Y_R}\big(x{'} =_X x 
	\wedge y{'} =_Y y\big)\big\}.$$
	%$$X_P^{\neq_X} := \big\{x \in X \mid \forall_{y \in X_P}\big(x \neq_X y)\big)\big\}.$$
	If $(I, =_I)$ is a set, a family of extensional subsets of $X$ indexed by $I$ is an assignment routine 
	$\lambda_0 : I \sto \X V_0$, such that for every $i, j \in I$ we have that $\lambda_0(i) := X_{P_i}$, where
	$P_i(x)$ is an extensional property on $X$, and if $i =_I j$, then $X_{P_i} =_{\C E(X)} X_{P_j}$. For simplicity we 
	denote such a family by $\big(X_{P_i}\big)_{i \in I}$. Its union and intersection are given by
	%defined, respectively, as follows:
	$$\bigcup_{i \in I}\lambda_0(i) := \big\{x \in X \mid \exists_{i \in I}\big(P_i(x)\big)\big\}, \ \ \ \ \ 
	\bigcap_{i \in I}\lambda_0(i) := \big\{x \in X \mid \forall_{i \in I}\big(P_i(x)\big)\big\}.$$
Alternatively, if $F(X_S)$ is an extensional property on $\C E(X)$ and
	$\C F := \{X_S \in \C E(X) \mid F(X_S)\},$
	let
	$$\bigcup \C F := \big\{x \in X \mid \exists_{X_S \in \C F}\big(S(x)\big)\big\}, \ \ \ \ \ 
	\bigcap \C F := \big\{x \in X \mid \forall_{X_S \in \C F}\big(S(x)\big)\big\}.$$
	If $\neq_X$ is an extensional inequality on $X$, let the extensional complement $A^{\neq_X}$ of $A$, defined by
	$$A^{\neq_X} := \big\{x \in X \mid \forall_{y \in X_P}\big(x \neq_X y)\big)\big\}.$$
\end{definition}

Notice that the extensionality of $P(x), Q(x)$ and $R(y)$, together with the preservation 
of equalities by a function, imply the extensionality of the formulas 
$$(P \vee Q)(x) := P(x) \vee Q(x), \ \ \ \ \ 
(P \wedge Q)(x) := P(x) \wedge Q(x),$$
$$[f[P]](y) := \exists_{x \in X_P}\big(f(x) =_Y y\big), \ \ \ \ \ 
[f^{-1}[R]](x) := \exists_{y \in Y_R}\big(f(x) =_Y y\big),$$
$$[P \times R](x, y) := \exists_{x{'} \in X_P}\exists_{y{'} \in Y_R}\big(x{'} =_X x 
\wedge y{'} =_Y y\big).$$
The extensionality of $P_i(x)$, for every $i \in I$, implies the extensionality of the formulas 
$$\bigg(\bigcup_{i \in I}P_i\bigg)(x) := \exists_{i \in I}\big(P_i(x)\big), \ \ \ \ \ 
\bigg(\bigcap_{i \in I}P_i\bigg)(x) := \forall_{i \in I}\big(P_i(x)\big).$$
Similalrly, the extensionality of $S$, for every $X_S \in \C F$, implies the extensionality of the formulas
$$\bigg(\bigcup \C F\bigg)(x) := \exists_{X_S \in \C F}\big(S(x)\big),   \ \ \ \ \ 
\bigg(\bigcap \C F \bigg)(x) := \forall_{X_S \in \C F}\big(S(x)\big).$$
Clearly, the extensionality of $\neq_X$ implies the extensionality of the property 
$$[P^{\neq_X}](x) := \forall_{y \in X_P}\big(x \neq_X y)\big).$$
The usual properties of the inverse and direct image of subsets hold. Next we mention only a few.
%, and for the rest see~\cite{Pe20}, section 2.6.

\begin{proposition}\label{prp: extempty1}
	Let $\C X, \C Y, \C Z$ in $\SetExtIneq$, $A, B \subseteq X$, $C, D \subseteq Y$, $\fXY$, and $g \colon Y \to Z$.\\[1mm]
	\normalfont (i) 
	\itshape If $f$ is strongly extensional, then $f^{-1}(\emptys_Y) =_{\C E(X)} \emptys_X$. \\[1mm]
	\normalfont (ii) 
	\itshape If $f$ is an injection, then $f(\emptys_X) =_{\C E(Y)} \emptys_Y$.\\[1mm]
	\normalfont (iii) 
	\itshape $f^{-1}(C \cup D) =_{\C E(X)} f^{-1}(C) \cup f^{-1}(D)$.\\[1mm]
	\normalfont (iv) 
	\itshape $f^{-1}(C \cap D) =_{\C E(X)} f^{-1}(C) \cap f^{-1}(D)$.\\[1mm]
	\normalfont (v)
	\itshape If $ {\C H}$ is an extensional subset of $\C E(Y)$ and $ f^{-1}({\C H}) := \{f^{-1}(H) \mid H \in {\C H}\}$, then 
	$$f^{-1}\bigg(\bigcup {\C H}\bigg) =_{\mathsmaller{\C E (X)}} \bigcup f^{-1}({\C H}), \ \ \ \ 
	f^{-1}\bigg(\bigcap {\C H}\bigg) =_{\mathsmaller{\C E (X)}} \bigcap f^{-1}({\C H}).$$
	\normalfont (vi) 
	\itshape $f(A \cup B) =_{\C E(Y)} f(A) \cup f(B)$.\\[1mm]
	\normalfont (vii) 
	\itshape $f(A \cap B) \subseteq f(A) \cap f(B)$.\\[1mm]
\normalfont (viii)
\itshape If $ {\C G}$ is an extensional subset of $\C E(X)$ and $ f({\C G}) := \{f(G) \mid G \in {\C G}\}$, then 
$$f\bigg(\bigcup {\C G}\bigg) =_{\mathsmaller{\C E (Y)}} \bigcup f({\C G}), \ \ \ \ 
f\bigg(\bigcap {\C G}\bigg) \subseteq \bigcap f({\C G}).$$
	\normalfont (ix) 
	\itshape $A \subseteq f^{-1}(f(A))$ and $f(f^{-1}(C)) \subseteq C$.\\[1mm]
	\normalfont (x) 
	\itshape $f(f^{-1}(C) \cap A) =_{\C E(Y)} C \cap f(A)$, and $f(f^{-1}(C)) =_{\C E(Y)} C \cap f(X)$.\\[1mm]
	\normalfont (xi) 
	\itshape  $A \times (C \cup D) =_{\C E(X \times Y)} (A \times C) \cup (A \times D)$.\\[1mm]
	\normalfont (xii) 
	\itshape  $A \times (C \cap D) =_{\C E(X \times Y)} (A \times C) \cap (A \cap D)$.\\[1mm]
	\normalfont (xiii) 
	\itshape  $A \times (C - D) =_{\C E(X \times Y)} (A \times C) - (A \cap D)$.\\[1mm]
		\normalfont (xiv) 
	\itshape $A =_{\C E(X)} f^{-1}(f(A))$, for every $A \in \C E(X)$, if and only if $f$ is an embedding.\\[1mm]
		\normalfont (xv) 
	\itshape $C =_{\C E(Y)} f(f^{-1}(C))$, for every $C \in \C E(Y)$,  if and only if $f$ is a surjection.\\[1mm]
		\normalfont (xvi) 
	\itshape $f(A \cap B) =_{\C E(Y)} f(A) \cap f(B)$, for every $A, B \in \C E(X)$, if and only if $f$ is an embedding.\\[1mm]
	\normalfont (xvii) 
	\itshape If $f$ is an embedding and $ {\C G}$ is an extensional subset of $\C E(X)$, then 
	$f\big(\bigcap {\C G}\big) =_{\C E(Y)} \bigcap f({\C G})$.\\[1mm]
		\normalfont (xviii) 
	\itshape If $H \subseteq Z$, then $(g \circ f)^{-1}(H) =_{\C E(X)} f^{-1}(g^{-1}(H))$.\\[1mm]
		\normalfont (xix) 
	\itshape If $(g \circ f)(A) =_{\C E(Z)} g(f(A))$.
	\end{proposition}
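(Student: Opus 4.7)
The plan is to verify the nineteen clauses by direct unfolding of the definitions given in Definition~\ref{def: extempty} and Definition~\ref{def: extunion}, grouping them according to the underlying technique. Throughout, two inclusions are established in every equality, and I use the fact (which follows from $\Ineq_1$ and ex falso) that within $\INT$ the extensional empty subset $\emptys_X$ is contained in every $X_P \in \C E(X)$; this is what makes one half of (i) and (ii) trivial.

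First I would handle the empty-subset clauses (i)--(ii). For (i), the non-trivial inclusion $f^{-1}(\emptys_Y) \subseteq \emptys_X$ goes: if $x \in f^{-1}(\emptys_Y)$ there exists $y \in Y$ with $y \neq_Y y$ and $f(x) =_Y y$, so by extensionality of $\neq_Y$ we get $f(x) \neq_Y f(x)$, and strong extensionality of $f$ yields $x \neq_X x$. For (ii), the non-trivial inclusion $f(\emptys_X) \subseteq \emptys_Y$ uses the injection property: from $x \neq_X x$ (for some $x$ with $f(x) =_Y y$) we obtain $f(x) \neq_Y f(x)$, and hence $y \neq_Y y$ by extensionality of $\neq_Y$. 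Next I would dispatch the routine clauses (iii)--(xiii) and (xviii)--(xix) by chasing quantifiers; the only subtlety is that in (vii) and the second half of (viii) only the inclusion $\subseteq$ holds in general, because the existential in the definition of $f(A)$ does not distribute over conjunction without information about injectivity.

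The more substantial work is in the characterisation clauses (xiv)--(xvii). For each, the "if" direction is a direct calculation using the defining property of embedding or surjection (recall that $A \subseteq f^{-1}(f(A))$ and $f(f^{-1}(C)) \subseteq C$ from (ix) are always available); for instance, for (xiv), if $f$ is an embedding and $x \in f^{-1}(f(A))$, there is $a \in A$ with $f(x) =_Y f(a)$, so $x =_X a$, and extensionality of the defining property of $A$ gives $x \in A$. The "only if" directions require an instantiation with a well-chosen extensional subset: for (xiv) one uses a singleton $\{a\} := \{x \in X \mid x =_X a\}$, whose image $f(\{a\}) = \{y \in Y \mid \exists_{x \in \{a\}}(f(x) =_Y y)\}$ is equal (in $\C E(Y)$) to $\{f(a)\}$, so that the hypothesis $\{a\} =_{\C E(X)} f^{-1}(\{f(a)\})$ immediately yields $f(x) =_Y f(a) \To x =_X a$. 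For (xv), taking $C := Y$ gives $Y =_{\C E(Y)} f(f^{-1}(Y)) =_{\C E(Y)} f(X)$, i.e., $f$ is a surjection. For (xvi), one tests with singletons $\{x\}, \{x'\}$ and analyses $f(\{x\} \cap \{x'\})$ against $f(\{x\}) \cap f(\{x'\})$ to recover the embedding property; (xvii) then follows by the same pattern, intersecting over a family.

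The hard part I expect is making the singleton-based arguments for (xiv), (xvi) and (xvii) clean, since subtleties about extensional equality of singletons and the precise form of $f(\{a\})$ must be verified constructively without appeal to complements or classical disjunctions; once the identification $f(\{a\}) =_{\C E(Y)} \{f(a)\}$ is in place (which is a short lemma from the definitions) all remaining arguments reduce to unfolding $=_{\C E(\cdot)}$ into mutual inclusion and applying the hypotheses symmetrically.
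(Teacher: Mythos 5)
Your proposal is correct and matches the paper's approach: the paper proves only (i) and (ii) explicitly, using exactly your argument (extensionality of $\neq_Y$ plus strong extensionality, resp.\ injectivity, of $f$), and declares the remaining clauses straightforward. Your singleton-based fill-ins for the characterisation clauses (xiv)--(xvii) are sound and simply supply details the paper omits.
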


\begin{proof}
	We show only (i) and (ii), and the rest follow in a straightforward way.\\
	(i) Clearly, within $\INT$ we have that $\emptys_X \subseteq f^{-1}(\emptys_Y) $. Actually, if $f$ is an injection this inclusion is shown trivially within $\MIN$. The converse inclusion is shown within $\MIN$ as follows: if $x \in f^{-1}(\emptys_Y)$, let $y \in Y$ such that $y \neq_Y y$ and $f(x) =_Y y$. By the extensionality of $\neq_Y$ we get $f(x) \neq_Y f(x)$, and by strong extensionality of $f$ we conclude that $x \neq_X x$ i.e., $x \in \emptys_X$.\\
	(ii) Within $\INT$ we have that $\emptys_Y \subseteq f(\emptys_X)$. Actually, if $f$ is only a strongly extensional surjection, then this inclusion is shown easily within $\MIN$. The converse inclusion is shown within $\MIN$ as follows: by definition $f(\emptys_X) := \{y \in Y \mid \exists_{x \in \  {\emptys_X}}(f(x) =_Y y)\}$. Let $y \in Y$ and $x \in \emptys_X$, such that $f(x) =_Y y$. As $x \neq_X x$ and $f$ is an injection, we get $f(x) \neq_Y f(x)$, and by the extensionality of $\neq_Y$ we get $y \neq_Y y$.
\end{proof}

The inclusions in Proposition~\ref{prp: extempty2}(i, ii) hold trivially within $\INT$, but under certain conditions on $\neq_X$ they are also provable within $\MIN$.

\begin{proposition}\label{prp: extempty2}
	Let $\C X, \C Y$ in $\SetExtIneq$, $A, B, A_i \subseteq X$, for every $i \in I$, $C, D \subseteq Y$, and $\fXY$.\\[1mm]
	\normalfont (i) 
	\itshape $(\MIN)$ $X^{\neq_X} \subseteq \emptys_X$, and if $\neq_X$ is an apartness relation, or discrete,  then $\emptys_X \subseteq X^{\neq_X}$. \\[1mm]
	\normalfont (ii) 
	\itshape $(\MIN)$ $\emptys_X^{\neq_X} \subseteq X$, and if $\neq_X$ is an apartness relation, or discrete,  then $X \subseteq \emptys_X^{\neq_X}$.\\[1mm]
	\normalfont (iii) 
	\itshape $\big(\bigcup_{i \in I}A_i\big)^{\neq_X} =_{\C E(X)} \bigcap_{i \in I}A_i^{\neq_X}$. \\[1mm]
	\normalfont (iv) 
	\itshape $\big(\bigcap_{i \in I}A_i\big)^{\neq_X} \supseteq_{\C E(X)} \bigcup_{i \in I}A_i^{\neq_X}$.\\[1mm]
	\normalfont (v)
	\itshape $A \subseteq B^{\neq_X} \TOT B \subseteq A^{\neq_X}$.\\[1mm]
	\normalfont (vi) 
	\itshape $A \subseteq (A^{\neq_X})^{\neq_X}$ and $A^{\neq_X} =_{\C E(X)} [(A^{\neq_X})^{\neq_X}]^{\neq_X}$.\\[1mm]
	\normalfont (vii) 
	\itshape If $f$ is strongly extensional, then $f^{-1}\big(C^{\neq_X}\big) \subseteq [f^{-1}(C)]^{\neq_X}$.\\[1mm]
	\normalfont (viii)
	\itshape If $f$ is an injection and a surjection, then $[f^{-1}(C)]^{\neq_X} \subseteq f^{-1}\big(C^{\neq_X}\big)$.\\[1mm]
	\normalfont (ix) 
	\itshape If $f$ is an injection, then $f\big(A^{\neq_X}\big) \subseteq (f(A))^{\neq_Y}$.\\[1mm]
	\normalfont (x) 
	\itshape If $f$ is a strongly extensional surjection, then $(f(A))^{\neq_Y} \subseteq f\big(A^{\neq_X}\big)$.
	%\\[1mm]
%	\normalfont (xi) 
%	\itshape  $A \times (C \cup D) =_{\C E(X \times Y)} (A \times C) \cup (A \times D)$.\\[1mm]
%	\normalfont (xii) 
%	\itshape  $A \times (C \cap D) =_{\C E(X \times Y)} (A \times C) \cap (A \cap D)$.\\[1mm]
%	\normalfont (xiii) 
%	\itshape  $A \times (C - D) =_{\C E(X \times Y)} (A \times C) - (A \cap D)$.\\[1mm]
%	\normalfont (xiv) 
%	\itshape $A =_{\C E(X)} f^{-1}(f(A))$, for every $A \in \C E(X)$, if and only if $f$ is an embedding.\\[1mm]
%	\normalfont (xv) 
%	\itshape $C =_{\C E(Y)} f(f^{-1}(C))$, for every $C \in \C E(Y)$,  if and only if $f$ is a surjection.\\[1mm]
%	\normalfont (xvi) 
%	\itshape $f(A \cap B) =_{\C E(Y)} f(A) \cap f(B)$, for every $A, B \in \C E(X)$, if and only if $f$ is an embedding.\\[1mm]
%	\normalfont (xvii) 
%	\itshape If $f$ is an embedding and $ {\C G}$ is an extensional subset of $\C E(X)$, then 
%	$f\big(\bigcap {\C G}\big) =_{\C E(Y)} \bigcap f({\C G})$.\\[1mm]
%	\normalfont (xviii) 
%	\itshape If $H \subseteq Z$, then $(g \circ f)^{-1}(H) =_{\C E(X)} f^{-1}(g^{-1}(H))$.\\[1mm]
%	\normalfont (xix) 
%	\itshape If $(g \circ f)(A) =_{\C E(Z)} g(f(A))$.
\end{proposition}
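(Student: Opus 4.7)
For (i), the inclusion $X^{\neq_X} \subseteq \emptys_X$ comes from instantiating the universal quantifier in the definition of $A^{\neq_X}$ at $x$ itself: any $x \in X^{\neq_X}$ satisfies $x \neq_X x$, hence lies in $\emptys_X$. For the reverse inclusion, I would fix $x \in \emptys_X$ (so $x \neq_X x$) and an arbitrary $y \in X$, then split cases: under apartness, cotransitivity ($\Ineq_5$) applied to $x \neq_X x$ yields $y \neq_X x$, whence symmetry ($\Ineq_4$) gives $x \neq_X y$; under discreteness, decide $x =_X y \vee x \neq_X y$ and in the first alternative invoke extensionality ($\Ineq_2$) on $x \neq_X x$ to conclude $x \neq_X y$. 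Part (ii) is dual: $\emptys_X^{\neq_X} \subseteq X$ is immediate, and for the converse, given $x \in X$ and $y \in \emptys_X$, the same two strategies produce $x \neq_X y$.

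Items (iii) and (iv) are pure unfolding of the definitions of union, intersection and complement: membership on either side of (iii) rewrites to the single formula $\forall i \forall y \, (y \in A_i \To x \neq_X y)$, and the one-sided inclusion in (iv) reflects the fact that the exchange of $\exists i$ with $\forall y$ is valid only in that direction. For (v), both sides unfold to the symmetric-looking statements $\forall x \forall y \, (x \in A \wedge y \in B \To x \neq_X y)$ and $\forall x \forall y \, (x \in A \wedge y \in B \To y \neq_X x)$, equivalent under $\Ineq_4$. Item (vi) is then a formal consequence: applying (v) to the tautological inclusion $A^{\neq_X} \subseteq A^{\neq_X}$ gives $A \subseteq (A^{\neq_X})^{\neq_X}$, and for the reverse inclusion $((A^{\neq_X})^{\neq_X})^{\neq_X} \subseteq A^{\neq_X}$ I would pick $x$ in the left-hand side and, for $z \in A$, note that $z \in (A^{\neq_X})^{\neq_X}$ by what was just shown, so the hypothesis on $x$ yields $x \neq_X z$ directly.

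Parts (vii)--(x) all follow the classical template but use the hypotheses on $f$ to transport inequalities between target and source. In (vii), a witness $y \in C^{\neq_Y}$ with $f(x) =_Y y$ gives via $\Ineq_2$ that $f(x) \neq_Y z$ for every $z \in C$; strong extensionality of $f$ then promotes this to $x \neq_X w$ for any $w \in f^{-1}(C)$. Part (viii) reverses this: surjectivity produces a preimage $w$ of each $z \in C$, and injectivity lifts $x \neq_X w$ to $f(x) \neq_Y z$; taking $y := f(x)$ witnesses membership in $f^{-1}(C^{\neq_Y})$. Parts (ix) and (x) mirror (vii) and (viii) for direct images, with injectivity handling (ix) and strong extensionality combined with surjectivity handling (x). The main bookkeeping obstacle is tracking which direction of propagation of $\neq$ through $f$ each hypothesis (strong extensionality versus injection) controls; a secondary subtlety is that (v) and (vi) tacitly rely on the symmetry clause $\Ineq_4$, so I would make that symmetry assumption explicit at the start of the proof of this cluster.
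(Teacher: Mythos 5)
Your proof is correct and follows essentially the same route as the paper, which writes out only (i) in detail (instantiating the universal quantifier at the point itself for the first inclusion, and using cotransitivity plus symmetry, respectively decidability plus extensionality, for the second) and declares (ii)--(x) straightforward. Your remark that (v) and (vi) tacitly invoke the symmetry clause $(\Ineq_4)$, which is not part of the definition of $\SetExtIneq$, is a fair observation that the paper itself leaves implicit.
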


\begin{proof}
	We show only (i), and for (ii) we work similarly. The rest cases follow in a straightforward way.
	If $x{'} \in X^{\neq_X} := \{x{'} \in X \mid \forall_{x \in X}(x{'} \neq_X x)\}$, then $x{'} \neq_X x{'}$ too i.e., $x{'} \in \emptys_X$. To show $\emptys_X \subseteq X^{\neq_X}$ under the apartness assumption for $\neq_X$, let $x{'} \in X$ with $x{'} \neq_X x{'}$. If $x \in X$, by cotransitivity$(\Ineq_5)$ we get $x{'} \neq_X x$ or $x \neq_X x{'}$. If $\neq_X$ is discrete, then it suffices to treat the case $x{'} =_X x$. By the extensionality of $\neq_X$ and the hypothesis $x{'} \neq_X x{'}$ we conclude that $x{'} \neq_X x$.
\end{proof}

%
%
%
%\section{Tight subsets}
%\label{sec: tight}

Next we introduce various notions of tight (extensional) subsets of a set with an extensional inequality $(X, =_X, \neq_X)$. Clearly, the \textit{logical} or \textit{weak complement} of an extensional subset $A$ of $X$
$$A^c := \{x \in X \mid \forall_{a \in A}(\neg(x =_X a))\}$$
includes the $\neq_X$-complement $A^{\neq_X}$. If the converse inclusion holds, we call $A$ $1$-tight.

\begin{definition}
	If $\C X := (X, =_X, \neq_X) \in \SetExtIneq$, and $A \subseteq X$, we call $A$:\\[1mm]
	\normalfont (a)
	\itshape $1$-tight, if $A^c \subseteq A^{\neq_X}$,\\[1mm]
	\normalfont (b)
	\itshape $0$-tight, if $(A^{\neq_X})^c \subseteq A$,\\[1mm]
	\normalfont (c)
\itshape $\neq_X$-stable, if $A^{\neq_X \neq_X} \subseteq A$,\\[1mm]
	\normalfont (d)
\itshape $\neg$-stable, if $A^{c c} \subseteq A$,\\[1mm]
	\normalfont (d)
\itshape left-cotight, if $A \subseteq (A^c)^{\neq_X}$,\\[1mm]
	\normalfont (e)
\itshape right-cotight, if $(A^c)^{\neq_X} \subseteq A$.
\end{definition}

The term tight subset is explained as follows. If $x \neq_X y := \No{(x=_X y)}$, then tightness of $\neq_X$ (condition $(\Ineq_3)$ of Definition~\ref{def: apartness}) takes the form 
$$\neg\big[\No{(x=_X y)}\big] \To x =_X y,$$
which is an instance of the implication 
$$\neg\big[\No{A}\big] \To A.$$
We call the formulas of $\BST$ that satisfy the above implication \textit{tight} (see also~\cite{KP23}). There are formulas of $\BST$ for which we cannot show to be tight. For example, let the implication $\neg(x \leq y) \To x > y$, where $x, y \in \Real$, which is equivalent to Markov's Principle $(\MP)$, and hence not provable in $\BISH$ (see~\cite{BR87}). As we explain in Remark~\ref{rem: order1} we can show in $(\MIN)$ that $x \leq y \TOT \No{(x > y)}$, and hence the above implication in $\Real$ is written as 
$$\neg\big(\No{(x > y)}\big) \To x > y.$$ 
Consequently, the tightness of the formula corresponding to $x > y$ cannot be shown in $\BISH$.
The dual implication $\No{(\neg A)} \To A$ holds, as by Definition~\ref{def: strongneg} we have that $\No{(\neg A)} := \No{(A \to \bot)} := A \wedge \top \TOT A$. Now, as $A^{\neq_X} := \{x \in X \mid \forall_{a \in A}(x \neq_X a)\},$ and as 
$$A =_{\C E(X)} \{x \in X \mid \exists_{a \in A}(x =_X a)\},$$
the inclusion $A^c \subseteq A^{\neq_X}$ implies that for every $x \in A^c$ i.e., $\neg( \exists_{a \in A}(x =_X a))$, we have that $x \in A^{\neq_X}$ i.e., $\forall_{a \in A}(x \neq_X a)$, the strong negation 
of which is $\exists_{a \in A}\No{(x \neq_X a)}$. If we have that $\No{(x \neq_X a)} \To x =_X a$, a form of strong tightness, then the implication $x \in A^c \To x \in A^{\neq_X}$ expresses the following tightness 
$$\neg{\big[\No{\big(\forall_{a \in A}(x \neq_X a)\big)}\big]} \To \forall_{a \in A}(x \neq_X a).$$
A similar explanation can be offered for $0$-tightness as follows. The inclusion $(A^{\neq_X})^c \subseteq A$ implies for every $x \in X$ the implication $\neg{(x \in A^{\neq_X})} \To x \in A$ i.e., $\neg{(\forall_{a \in A}(x \neq_X a)} \To x \in A$. Again, as $x \in A \TOT \exists_{a \in A}(x =_X a)$, $0$-tightness of $A$ takes the form 
$$\neg{\big[\No{(x \in A)}\big]} \To x \in A.$$

The following facts are straightforward to show.

\begin{proposition}\label{prp: tight1}
	Let $\C X \in \SetExtIneq$, and $A, B, A_i$ subsets of $X$, for every $i \in I$.\\[1mm]
		\normalfont (i)
	\itshape $\emptys_X$ is $1$-tight.\\[1mm]
	\normalfont (ii) 
	\itshape If $\neq_X$ is discrete, then $X$ is $1$-tight.\\[1mm]
		\normalfont (iii)
	\itshape If $A_i$ is $1$-tight, for every $i \in I$, then $\bigcup_{i \in I}A_i$ is $1$-tight.\\[1mm]
		\normalfont (iv)
	\itshape If $A$ is $0$-tight, then $A$ is $\neg$-stable.\\[1mm]
		\normalfont (v)
	\itshape If $A$ is $1$-tight and $\neg$-stable, then $A$ is $0$-tight.\\[1mm]
		\normalfont (vi)
	\itshape If $A$ is $\neq_X$-stable, then $A$ is left-cotight.\\[1mm]
	\normalfont (vii)
	\itshape If $A$ is right-cotight and $1$-tight, then $A$ is $\neq_X$-stable.\\[1mm]
	\normalfont (viii)
	\itshape If $A$ is $1$-tight, then $A$ is left-cotight.

\end{proposition}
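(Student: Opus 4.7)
My approach is to derive all eight parts from a small toolkit and then chain its entries. The toolkit consists of: (a) the inclusion $A^{\neq_X}\subseteq A^c$, immediate from $(\Ineq_1)$; (b) contravariance of both complements, i.e.\ $B\subseteq C$ implies $C^c\subseteq B^c$ and $C^{\neq_X}\subseteq B^{\neq_X}$; (c) the duality $A\subseteq B^{\neq_X}\TOT B\subseteq A^{\neq_X}$ from Proposition~\ref{prp: extempty2}(v); and (d) the double-complement facts $A\subseteq A^{\neq_X\neq_X}$ and $A^{\neq_X}=A^{\neq_X\neq_X\neq_X}$ from Proposition~\ref{prp: extempty2}(vi).

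For the ``vacuous'' cases (i)--(iii), the hypothesis forces a contradiction. In (i), any $y\in\emptys_X$ satisfies $y\neq_X y$, which clashes with reflexivity by $(\Ineq_1)$, so ex falso ($\INT$) closes $\emptys_X^c\subseteq\emptys_X^{\neq_X}$. In (ii), if $x\in X^c$ then instantiating the defining universal at $a:=x$ gives $\neg(x=_X x)$; discreteness then supplies, for each $y\in X$, the case split $x=_X y\vee x\neq_X y$, in which the first branch is closed by the same contradiction (this time applied to $a:=y$) and the second is the desired conclusion. Part (iii) reduces to the de Morgan identities $(\bigcup_iA_i)^c=\bigcap_iA_i^c$ and $(\bigcup_iA_i)^{\neq_X}=\bigcap_iA_i^{\neq_X}$ (the latter being Proposition~\ref{prp: extempty2}(iii)), after which $1$-tightness of the $A_i$ transfers pointwise.

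Parts (iv) and (v) are dual one-line diagram chases. For (iv), apply contravariant $c$-complementation to (a) to obtain $A^{cc}\subseteq (A^{\neq_X})^c$, then chain with $0$-tightness $(A^{\neq_X})^c\subseteq A$. For (v), apply contravariant $c$-complementation to the $1$-tightness inclusion $A^c\subseteq A^{\neq_X}$ to obtain $(A^{\neq_X})^c\subseteq A^{cc}$, then chain with $\neg$-stability. The symmetry of the two arguments is essentially the point: the same move, read in opposite directions.

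The cotightness items (vi)--(viii) hinge on the duality (c): applied with $B:=A^c$, it identifies left-cotightness $A\subseteq (A^c)^{\neq_X}$ with the $1$-tightness inclusion $A^c\subseteq A^{\neq_X}$, which makes (viii) immediate. For (vii), contravariant $\neq_X$-complementation of the $1$-tightness inclusion yields $A^{\neq_X\neq_X}\subseteq (A^c)^{\neq_X}$, and chaining with right-cotightness $(A^c)^{\neq_X}\subseteq A$ delivers $\neq_X$-stability. Part (vi) is the step I expect to be the main obstacle: the shortest chain available from the stability hypothesis, namely $(A^c)^{\neq_X}\subseteq (A^{\neq_X})^{\neq_X}=A^{\neq_X\neq_X}\subseteq A$ obtained from (a), contravariance, and stability, produces \emph{right}-cotightness rather than the stated left-cotightness; to reach the conclusion as written one has to read this back through the duality (c) in the correct direction, using symmetry of $\neq_X$, and this is the orientation issue on which I will concentrate, double-checking the statement against the direction of each inclusion produced by (a)--(d).
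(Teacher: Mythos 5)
Your toolkit (a)--(d) and the chains you build from it are sound, and parts (i)--(v), (vii) and (viii) are correct as you give them; the paper itself offers no proof (it declares the proposition ``straightforward''), so there is nothing to diverge from. Two minor remarks: in (ii) the first branch of the discreteness case split still needs Ex Falso to pass from $\bot$ to $x \neq_X y$, so your proof of (ii), like that of (i), is an $\INT$ proof rather than a $\MIN$ one; and in (iii) it is worth saying explicitly that the de Morgan law $(\bigcup_{i}A_i)^c =_{\C E(X)} \bigcap_{i}A_i^c$ you invoke is the constructively unproblematic direction $\neg\exists \TOT \forall\neg$.

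Your unease about (vi) is justified, and you should not expect to repair it by ``reading the chain back through the duality'': the statement as printed is not provable. By Proposition~\ref{prp: extempty2}(v), left-cotightness $A \subseteq (A^c)^{\neq_X}$ is \emph{equivalent} to $1$-tightness $A^c \subseteq A^{\neq_X}$, so (vi) would assert that every $\neq_X$-stable subset is $1$-tight. Take $A := \{0\} \subseteq \Real$ with the canonical inequality: $A$ is $\neq_{\Real}$-stable, since for $y$ in the double complement the assumption $y \neq_{\Real} 0$ puts $y$ into $A^{\neq_{\Real}}$ and hence forces $y \neq_{\Real} y$, so $\neg(y \neq_{\Real} 0)$ and tightness of $\neq_{\Real}$ gives $y =_{\Real} 0$; but $1$-tightness of $\{0\}$ is the implication $\neg(x =_{\Real} 0) \To |x| > 0$, i.e.\ Markov's principle, which the paper itself singles out as unprovable in $\BISH$. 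The chain you do obtain, $(A^c)^{\neq_X} \subseteq A^{\neq_X\neq_X} \subseteq A$, proves \emph{right}-cotightness, and that is almost certainly the intended conclusion of (vi): it is a genuinely new fact that pairs naturally with (vii), whereas ``left-cotight'' would make (vi) a consequence-free detour around (viii). Record your argument as a proof of ``$\neq_X$-stable implies right-cotight'' and flag the apparent typo in the statement.
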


%\begin{proof}
%(i)
%	
%\end{proof}

Notice that te intersection of $1$-tight sets need not be $1$-tight. The open ball $[d_x < \epsilon] := \{y \in X \mid d(x, y) < \epsilon\}$ of a metric space $(X, d)$ is $1$-tight, but, in general, it is not $0$-tight.
%Inverse and direct image of tight subsets...??????
A plethora of $1$-tight subsets of a metric space is provided by Remark~\ref{rem: ball1}(ii).

\section{Complemented subsets}
\label{sec: csub}

Complemented subsets are pairs of positively disjoint subsets of a set with an inequality. As the definition of the empty subset requires that $X$ is equipped with an extensional inequality, we work within the category $\SetExtIneq$, although many related concepts are definable also within $\SetIneq$. 
%In this section, unless otherwise stated, 

\begin{definition}\label{def: extapartsubsets}
	If $\C X := (X, =_X, \neq_X)$ is in $\SetExtIneq$ and 
	$A := X_P, B := X_Q \in \C E(X)$, then the overlapping relation $A \between B$ is defined by
	$$A \between B :\TOT \exists_{x \in X_P}\exists_{x{'} \in X_Q}\big(x =_X x{'}\big).$$
	Strongly negating the above notion, we say that $A$ and $B$ are disjoint with respect to $\neq_X$,
	%in symbols $A \Disj_{\neq_X} B$, or simpler 	$A \Disj B$, 
	if
	$$A \Disj B :\TOT \forall_{x \in X_P}\forall_{x{'} \in X_Q}\big(x \neq_X x{'}\big).$$
	A \textit{complemented subset}\index{complemented subset} of $\C X$ is a pair
	$\B A := (A^1, A^0)$\index{$\B A := (A^1, A^0)$}, 
	where $A^1 := X_{P_1}, A^0 := X_{P_0} \in \C E(X)$ such that $A^1 \Disj A^0$. Its characteristic function %$\chi_{\B A}$ 	is the function 
	$\chi_{\B A} \colon \dom(\B A) := \{x \in X \mid P_1(x) \vee P_0(x)\} \to \two$ is defined by 
	$$\chi_{\B A}(x) := \left\{ \begin{array}{ll}
		1   &\mbox{, $P_1(x)$}\\
		0             &\mbox{, $P_0(x)$.}
	\end{array}
	\right. $$
	If $\B A, \B B$ are complemented extensional subsets of $X$, let
	$\B A \subseteq \B B : \TOT A^1 \subseteq B^1 \ \& \ B^0 \subseteq A^0.$ We say that $\B A$ is inhabited, if $A^1$ is inhabited, coinhabited, if $A^0$ is inhabited, and total, if $\dom(\B A) =_{\C E(X)} X$. We call $\B A$ $1$-tight, if for every $x \in X$, we have that 
	$\neg{(x \in A^1)} \To x \in A^0$, and we call $\B A$ $0$-tight, if for every $x \in X$, we have that 
	$\neg{(x \in A^0)} \To x \in A^1$. 	
	Let $\C E^{\Disj}(X)$\index{$\C P^{\Disj}(X)$}\index{$\B A \subseteq \B B$} be their totality\footnote{The totality $\C E^{\Disj}(X)$ can also be defined by separation as follows:
		$\C E^{\Disj}(X) := \{(A^1, A^0) \in \C E (X) \times \C E (X) \mid A^1 \Disj A^0\}$. The relation $R(A^1, A^0) := A^1 \Disj A^0$ is extensional on $\C E (X) \times \C E (X)$, and as $\C E (X)$ is an 
		impredicative set, $\C E (X) \times \C E(X)$ and $\C E^{\Disj}(X)$ are also impredicative sets.}, 
	equipped with the equality 
	$\B A =_{\C E^{\mathsmaller{\Disj}} (X)} \B B : \TOT \B A \subseteq \B B \ \& \ \B B \subseteq \B A$.  
\end{definition}

If $A := X_P \in \C E(X)$, then $\B {\mu} A := (A, A^{\neq_X})$ is in $\C E^{\Disj}(X)$, and 
%Moreover, notice that
if $\B A$ is $1$-tight, then the inclusions
$$(A^1)^c \subseteq A^0 \subseteq (A^1)^{\neq_X} \subseteq (A^1)^c $$
show that the $1$-tightness of $\B A$ is equivalent to the $1$-tightness of $A^1$.

\begin{proposition}\label{prp: isvoid}
	Let $\C X := (X, =_X, \neq_X)$ be in $\SetExtIneq$ and 
	$A := X_P, B := X_Q \in \C E(X)$.\\[1mm]
	\normalfont (i) 
	\itshape 
	$(\MIN)$ If $A \Disj B$, then $A \cap B \subseteq \emptys_X$, and $(\INT)$ $\emptys_X \subseteq A \cap B$. \\[1mm]
	\normalfont (ii) 
	\itshape
	$(\MIN)$ If $\neq_X$ is discrete and $A \cap B \subseteq \emptys_X$, then $A \Disj B $.\\[1mm]
	\normalfont (iii) 
	\itshape $(\MIN)$ If $\neq_X$ is cotransitive $(\Ineq_5)$ and symmetric $(\Ineq_4)$, then $\emptys_X \Disj A$.\\[1mm]
		\normalfont (iv) 
	\itshape $(\INT)$ $\emptys_X \Disj A$.
\end{proposition}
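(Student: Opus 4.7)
The plan is to handle the four parts separately, each by a short argument that unfolds the definition of $\Disj$, $\emptys_X$, and $\cap$ and then invokes one or two of the axioms $(\Ineq_1)$--$(\Ineq_5)$.

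For part (i), the first inclusion: suppose $A \Disj B$, and let $x \in A \cap B$, so that $P(x)$ and $Q(x)$ both hold. Then $x$ witnesses membership in $X_P$ and also in $X_Q$, so instantiating $A \Disj B$ with both witnesses equal to $x$ yields $x \neq_X x$, i.e.\ $x \in \emptys_X$. No use of Ex Falso is made. For the second inclusion, let $x \in \emptys_X$, so $x \neq_X x$. Combined with the reflexive equality $x =_X x$ and axiom $(\Ineq_1)$, this produces $\bot$; from here Ex Falso gives both $P(x)$ and $Q(x)$, so $x \in A \cap B$. This is the only step that requires intuitionistic logic.

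For part (ii), assume $\neq_X$ is discrete and $A \cap B \subseteq \emptys_X$. Pick $x \in X_P$ and $x' \in X_Q$. By $(\Ineq_6)$, either $x =_X x'$ or $x \neq_X x'$. In the second case we are done. In the first, the extensionality of $Q$ together with $x =_X x'$ and $Q(x')$ yields $Q(x)$, so $x \in A \cap B \subseteq \emptys_X$, i.e.\ $x \neq_X x$. Applying extensionality $(\Ineq_2)$ with the equalities $x =_X x$ and $x =_X x'$ converts this into $x \neq_X x'$, as required.

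For part (iii), the key observation is to instantiate cotransitivity with a reflexive-looking pair. Let $x \in \emptys_X$, so $x \neq_X x$, and let $x' \in A$. Applying $(\Ineq_5)$ with $y := x$ to $x \neq_X x$ gives, for the particular $z := x'$, the disjunction $x' \neq_X x \vee x' \neq_X x$, which is just $x' \neq_X x$. Symmetry $(\Ineq_4)$ then produces $x \neq_X x'$. Note that both branches of the disjunction collapse to the same conclusion, which is what lets us avoid any appeal to $\bot$.

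For part (iv), let $x \in \emptys_X$ and $x' \in A$. Since $x \neq_X x$ and $x =_X x$, axiom $(\Ineq_1)$ yields $\bot$, and Ex Falso then gives $x \neq_X x'$. The main conceptual point across the four parts is the distinction between (iii) and (iv): the same conclusion $\emptys_X \Disj A$ holds in minimal logic under the apartness-like hypotheses but otherwise requires intuitionistic logic; the delicate step is the use of $(\Ineq_5)$ with $y := x$ in (iii), which is the only nonroutine move and the most likely source of error in the write-up.
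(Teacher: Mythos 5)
Your proposal is correct and follows essentially the same route as the paper's proof in all four parts: unfolding $\Disj$ for (i) with Ex Falso only for the reverse inclusion, case-splitting on discreteness plus extensionality for (ii), instantiating cotransitivity at the "reflexive" inequality $x \neq_X x$ and then using symmetry for (iii), and Ex Falso for (iv). Your write-up of (ii) and (iii) is in fact slightly more explicit than the paper's about which instance of $(\Ineq_2)$ and $(\Ineq_5)$ is being used, but the arguments coincide.
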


\begin{proof}
	(i) By the definition of $A \Disj B$ we get $\forall_{x \in X}\big(P(x) \wedge Q(x) \To x \neq_X x\big)$, which is the required inclusion. The converse inclusion holds trivisally by Ex Falso.\\
	(ii) If $P(a)$ and $Q(b)$, for some $a, b \in X$, then if $a =_X b$, by extensionality of $Q(x)$ we get $Q(a)$. Since
	than $P(a) \wedge Q(a)$, we get $a \neq_X a$. Hence, by discreteness of $\neq_X$, we conclude that $a \neq_X b$.\\
	(iii) If $x \in \emptys_X$, then $x \neq_X x$, and by cotransitivity $x \neq_X y$ or $y \neq_X x$, where $y \in A$. By symmetry of $\neq_X$ we get in both cases the required inequality $x \neq_X y$.\\
	(iv) If $x \in \emptys_X$, then $x \neq_X x$, and as $x =_X x$ by $(\Ineq_1)$ we get $\bot$ and by Ex Falso we conclude $x \neq_X y$, where $y \in A$.
\end{proof}

All operations between extensional complemented subsets that appear in this section
are determined by the corresponding definitions of operations between subsets in the previous section.
Here we shall use only the operations of the so-called in~\cite{PW22} \textit{first algebra of complemented subsets} $\C B_1^e(X) := \big(\C E^{\Disj}(X), \cup, \cap, -)$ of $X$, as the second algebra $\C B_2^e(X) := \big(\C E^{\Disj}(X), \vee, \wedge, -)$ does not suit well to topology (it suits well though, to Bishop-Cheng Measure Theory).
% we have 

\begin{definition}\label{def: opers}
Let $\C X := (X, =_X, \neq_X), \C Y := (Y, =_Y, \neq_Y) \in \SetIneq$.
If $\B A, \B B \in \C E^{\Disj}(X)$, $(\B \lambda(i))_{i \in I}$ is a family in $\C E^{\Disj}(X)$ indexed by some set $I$, and if $\B C \in \C E^{\Disj}(Y)$ , let 
$$\B A \cup \B B  := (A^1 \cup B^1, A^0 \cap B^0), \ \ \ \ 
\bigcup_{i \in I} \B \lambda (i) := \bigg(\bigcup_{i \in I}\lambda^1(i), \bigcap_{i \in I}\lambda^0(i)\bigg),
$$ 
$$\B A \cap \B B := (A^1 \cap B^1, A^0 \cup B^0), \ \ \ \ \ \  \bigcap_{i \in I} \B \lambda (i) := 
\bigg(\bigcap_{i \in I}\lambda^1(i), \bigcup_{i \in I}\lambda^0(i)\bigg),$$
$$ \ \ \ - \B A := (A^0, A^1), \ \ \ \ \ \ \B A - \B B := \B A \cap (-\B B),$$
$$ \B A \times \B C := \big(A^1 \times C^1, \ [A^0 \times Y] \cup [X \times C^0]\big).$$
\end{definition}

With the above notion of complement of a complemented subset we recover constructively properties of classical subsets that are based on classical logic. Namely, we have that 
$$\B A \subseteq \B B \TOT -\B B \subseteq - \B A, \ \ \ \ -(-\B A) := \B A, \ \ \ \ (X, \emptys_X) - \B A =_{\C E^{\Disj}(X)} \B A.$$
%Gather all basic properties: of a swap algebra .....
%It is immediate that 
If $\B \lambda(0) := \B A$ and $\B \lambda(1) := \B B$ defines a family of complemented subsets
over $\two$, then 
$$\bigcup_{i \in \two} \B \lambda (i) =_{\mathsmaller{\C E^{\Eisj}(X)}} \B A \cup \B B \ \ \ \& \ \ \ \bigcap_{i \in \two} \B \lambda (i) =_{\mathsmaller{\C E^{\Eisj}(X)}} \B A \cap \B B.$$
Alternatively, we can define the union and the intersection of a set of complemented subsets as follows.

\begin{definition}\label{def: union}
If $F(X_{P^1}, X_{P^0})$ is an extensional property on $\C E^{\Disj}(X)$, let 
$$\B {\C F} := \{(X_{P^1}, X_{P^0}) \in \C E^{\Disj}(X) \mid F(X_{P^1}, X_{P^0})\},$$
$$\C F^1 := \big\{ X_P \in \C E(X) \mid \exists_{X_Q \in \C E(X)}\big(F(X_P, X_Q)\big)\big\},$$
$$\C F^0 := \big\{ X_Q \in \C E(X) \mid \exists_{X_P \in \C E(X)}\big(F(X_P, X_Q)\big)\big\}.$$
We define
$$\bigcup \B {\C F} := \bigg(\bigcup \C F^1, \bigcap \C F^0\bigg), \ \ \ \ \ \bigcap \B {\C F} := \bigg(\bigcap \C F^1, \bigcup \C F^0\bigg).$$

\end{definition}

Notice that the extensionality of $F$ implies the extensionality of the properties
$$F^1(X_P) := \exists_{X_Q \in \C E(X)}\big(F(X_P, X_Q)\big), \ \ \ \ \ 
F^0(X_Q) := \exists_{X_P \in \C E(X)}\big(F(X_P, X_Q)\big).$$

The following facts are straightforward to show and are used later in the paper.

\begin{corollary}\label{cor: csoperations1}

	\normalfont (i)
	\itshape If $\B A \subseteq \B B$ and $\B A \subseteq \B C$, then $\B A \subseteq \B B \cap \B C$.\\[1mm]
	\normalfont (ii)
	\itshape If $\B B \subseteq \B A_i$, then $\B B \subseteq \bigcup_{i \in I}\B A_i$.
\end{corollary}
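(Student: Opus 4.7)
The plan is to unfold the definition $\B A \subseteq \B B \TOT A^1 \subseteq B^1 \ \& \ B^0 \subseteq A^0$ for complemented subsets together with the definitions of $\cap$ and $\bigcup$ on $\C E^{\Disj}(X)$ from Definition~\ref{def: opers}, and then reduce each claim to elementary inclusions between the underlying extensional subsets. Only minimal logic is needed, as we are merely manipulating inclusions.

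For (i), the hypotheses $\B A \subseteq \B B$ and $\B A \subseteq \B C$ yield the four inclusions $A^1 \subseteq B^1$, $A^1 \subseteq C^1$, $B^0 \subseteq A^0$, and $C^0 \subseteq A^0$. Since $\B B \cap \B C := (B^1 \cap C^1,\ B^0 \cup C^0)$, the required conclusion $\B A \subseteq \B B \cap \B C$ unfolds to $A^1 \subseteq B^1 \cap C^1$ and $B^0 \cup C^0 \subseteq A^0$. The first is the standard fact that a subset contained in two subsets is contained in their intersection, which is immediate from the conjunctive definition of $\cap$ on $\C E(X)$. The second is dual and equally immediate: given $x$ in $B^0 \cup C^0$, the defining disjunction $P_{B^0}(x) \vee P_{C^0}(x)$ together with the two inclusions $B^0 \subseteq A^0$ and $C^0 \subseteq A^0$ places $x$ into $A^0$ in either case.

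For (ii), interpret the hypothesis existentially, fixing some $i_0 \in I$ with $\B B \subseteq \B A_{i_0}$. It then suffices to prove the auxiliary inclusion $\B A_{i_0} \subseteq \bigcup_{i \in I}\B A_i$ and appeal to the (trivial) transitivity of $\subseteq$ on $\C E^{\Disj}(X)$. Unfolding, this auxiliary inclusion asks for $A_{i_0}^1 \subseteq \bigcup_{i \in I} A_i^1$ and $\bigcap_{i \in I} A_i^0 \subseteq A_{i_0}^0$. Both are direct from Definition~\ref{def: opers}: any $x \in A_{i_0}^1$ witnesses the existential $\exists_{i \in I}\big(x \in A_i^1\big)$ that defines the union, while any $x$ satisfying the universal $\forall_{i \in I}\big(x \in A_i^0\big)$ that defines the intersection satisfies, in particular, $x \in A_{i_0}^0$.

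There is no substantive obstacle; the only subtlety is the contravariance of the $0$-component in the definition of $\subseteq$, which is precisely why $\cap$ on $\C E^{\Disj}(X)$ involves $\cup$ on the $0$-components and $\bigcup$ involves $\bigcap$. Once this bookkeeping is correctly tracked, each claim reduces to routine properties of extensional subsets already implicit in Definition~\ref{def: extunion}.
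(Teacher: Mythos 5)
Your proof is correct, and the paper itself offers no argument here — it simply labels these facts ``straightforward to show'' — so your unfolding of the definitions of $\subseteq$, $\cap$, and $\bigcup$ on $\C E^{\Disj}(X)$ is exactly the routine verification the author leaves implicit. The contravariance of the $0$-components is tracked correctly in both parts.
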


In contrast to the standard one-dimensional subset theory, where only one empty subset $\emptys_X$ of a set $X$ exists, in the two dimensional complemented subset theory there are many ``empty''
complemenetd subsets and equally many ``coempty'' subsets. The proof of subsequent Proposition~\ref{prp: isswapa1} is straightforward.

\begin{definition}\label{def: zeros}
If $\C X := (X, =_X, \neq_X)$ is in $\SetExtIneq$, and if $\B A \in \C E^{\Disj}(X)$, let 
$$0_{\B X} := ( \emptys_X, X), \ \ \ \ 1_{\B X} := (X,  \emptys_X), \ \ \ \ 
1_{\B A} :=  (\dom(\B A), \emptys_X), \ \ \ \  0_{\B A} := \big(\emptys_X, \dom(\B A)\big).$$
The complemented subsets $0_{\B X} $ and of the form $0_{\B A}$ are called empty, while the 
complemented subsets $1_{\B X} $ and of the form $1_{\B A}$ are called coempty. Let their corresponding sets
$$\EEmpty(\C X) := \{\B A := (A^1, A^0) \in \C E^{\Disj}(X) \mid A^1 =_{\C E(X)} \emptys_X\},$$
$$\coEmpty(\C X) := \{\B A := (A^1, A^0) \in \C E^{\Disj}(X) \mid A^0 =_{\C E(X)} \emptys_X\}.$$
\end{definition}

\begin{proposition}[$\INT$]\label{prp: isswapa1} The following hold:\\[1mm]
	%$\F B_1^e(X) := \big(\C E^{\Eisj}(X), \sqcup, \sqcap, -)$
	\normalfont(i)
	%\itshape 
	$0_{\B X}$ and $1_{\B X}$ are the bottom and top elements of $\C E^{\Disj}(X)$, respectively.\\[1mm]
	\normalfont(ii)
	%\itshape
	$-0_{\B X} := 1_{\B X}$ and $-0_{\B A} := 1_{\B A}$.\\[1mm]
	\normalfont(iii)
	%\itshape
	$\B A \cup (- \B A) =_{\mathsmaller{\C E^{\Disj}(X)}} 1_{\B A}$ and 
	$\B A \cap (- \B A) =_{\mathsmaller{\C E^{\Disj}(X)}} 0_{\B A}$.\\[1mm]
	\normalfont(iv)
	%\itshape
	$0_{\B X} \cup \B A =_{\mathsmaller{\C E^{\Disj}(X)}} \B A =_{\mathsmaller{\C E^{\Disj}(X)}} 
	0_{\B A} \cup \B A$.\\[1mm]
	\normalfont(v)
	%\itshape
	$0_{\B X} \cap \B A =_{\mathsmaller{\C E^{\Disj}(X)}} 0_{\B X}$ and
	$0_{\B A} \cap \B A =_{\mathsmaller{\C E^{\Disj}(X)}} 0_{\B A}$.\\[1mm]
	\normalfont(vi)
	%\itshape
	$1_{\B X} \cup \B A =_{\mathsmaller{\C E^{\Disj}(X)}} 1_{\B X}$ and $1_{\B A} \cup
	\B A =_{\mathsmaller{\C E^{\Disj}(X)}} \B 1_A$.\\[1mm]
	\normalfont(vii)
	%\itshape  
	$1_{\B X} \cap \B A =_{\mathsmaller{\C E^{\Disj}(X)}} \B A =_{\mathsmaller{\C E^{\Disj}(X)}} 1_{\B A} \cap 	\B A$.\\[1mm]
	\normalfont(viii)
	%\itshape 
	$0_{-\B A} =_{\mathsmaller{\C E^{\Disj}(X)}} 0_{\B A}$.\\[1mm]
	\normalfont(ix)
	%\itshape
	$0_{0_{\B X}} =_{\mathsmaller{\C E^{\Disj}(X)}} 0_{\B X}$ and
	$1_{1_{\B X}} =_{\mathsmaller{\C E^{\Disj}(X)}} 1_{\B X}$.\\[1mm]
	\normalfont(x)
	%\itshape
	$0_{0_{\B A}} =_{\mathsmaller{\C E^{\Disj}(X)}} 0_{\B A} =_{\mathsmaller{\C E^{\Disj}(X)}} 0_{1_{\B A}}$.
	%\\[1mm]
%	\normalfont(xi)
	%\itshape
%	If $X$ is inhabited, then $0_{\B X} \neq_{\mathsmaller{\C E^{\Disj}(X)}} 1_{\B X}$.
\end{proposition}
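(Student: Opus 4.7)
The plan is to verify each clause by unfolding the definitions of $\cup, \cap, -, 0_{\B X}, 1_{\B X}, 0_{\B A}, 1_{\B A}$ from Definition~\ref{def: opers} and Definition~\ref{def: zeros}, and then appealing to Proposition~\ref{prp: isvoid} for the interactions with the extensional empty subset $\emptys_X$. The use of $\INT$ arises because we repeatedly need the inclusion $\emptys_X \subseteq C$ for an arbitrary extensional subset $C \subseteq X$, which requires Ex Falso; this is exactly the reason the hypothesis $(\INT)$ is stated.

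For (i), unfolding $\B A \subseteq \B B$ reduces $0_{\B X} \subseteq \B A$ to the conjunction $\emptys_X \subseteq A^1$ (by $\INT$) and $A^0 \subseteq X$ (trivial), and dually for $\B A \subseteq 1_{\B X}$. Item (ii) is pure unfolding of $-$. For (iii), which is the most substantial, I compute
\[
\B A \cup (-\B A) = (A^1 \cup A^0,\ A^0 \cap A^1)
\]
and
\[
\B A \cap (-\B A) = (A^1 \cap A^0,\ A^0 \cup A^1).
\]
The first components of both coincide with $\dom(\B A)$ up to the commutativity of $\cup$. For the second components, Proposition~\ref{prp: isvoid}(i) applied to the given disjointness $A^1 \Disj A^0$ gives $A^1 \cap A^0 \subseteq \emptys_X$ in $\MIN$, while the reverse inclusion $\emptys_X \subseteq A^1 \cap A^0$ uses Ex Falso. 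This is the one place where $(\INT)$ is genuinely essential, rather than merely convenient.

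Items (iv)-(vii) follow by the same pattern: every occurrence of $\emptys_X \cup C$ collapses to $C$ using $\emptys_X \subseteq C$ via $\INT$, every $\emptys_X \cap C$ is $\emptys_X$ trivially, every $X \cup C$ is $X$ trivially, and every $X \cap C$ is $C$ trivially. One then just reads off the components. For (viii), $0_{-\B A} = (\emptys_X, \dom(-\B A)) = (\emptys_X, A^0 \cup A^1) =_{\C E^{\Disj}(X)} (\emptys_X, A^1 \cup A^0) = 0_{\B A}$ by commutativity of $\cup$. Items (ix) and (x) are obtained by direct computation of $\dom(0_{\B X}) = \emptys_X \cup X =_{\C E(X)} X$ and $\dom(0_{\B A}) = \emptys_X \cup \dom(\B A) =_{\C E(X)} \dom(\B A)$, each using $\emptys_X \subseteq X$ and $\emptys_X \subseteq \dom(\B A)$, respectively.

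I expect no serious obstacle: the only nontrivial step is the absorption identity in (iii), where the two disjointness conditions on $\B A$ and on $-\B A$ (which is the same condition) feed into Proposition~\ref{prp: isvoid}(i) to collapse the second component to $\emptys_X$. Everything else is bookkeeping on top of the lattice-theoretic facts $\emptys_X \subseteq C$ and $C \subseteq X$, both available once $\INT$ is at hand.
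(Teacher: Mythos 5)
Your proof is correct and is exactly the direct unfolding the paper has in mind (the paper omits the proof, declaring it straightforward, and your computation supplies it). One small imprecision: you call (iii) ``the one place where $(\INT)$ is genuinely essential,'' but the inclusion $\emptys_X \subseteq C$ for an arbitrary extensional $C$ already requires Ex Falso in (i), (iv) and (v) as well, so $\INT$ is equally essential there --- what is special about (iii) is only that it additionally invokes the disjointness $A^1 \Disj A^0$ via Proposition~\ref{prp: isvoid}(i).
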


Proposition~\ref{prp: isswapa1}(iii) shows that the algebra of complemented subsets cannot be, in geberal boolean. Actually, the totality of complemented subsets of some $\C X := (X, =_X, \neq_X)$ in $\SetExtIneq$ is a \textit{swap algebra} of type (I) (see~\cite{MWP23} for all necessary details). The abstract version of the common properties of the two algebras of complemented subsets $\C B_1^e(X)$ and $\C B_2^e(X)$ forms the notion of a swap algebra, while the axiom $(\swapa_{\ti})$ is satisfied by $\C B_1^e(X)$ and the axiom $(\swapa_{\tii})$ is satisfied by $\C B_2^e(X)$ (see Definition~\ref{def: swapalgebra}). A swap algebra of type (I) does not satisfy, in general, condition $(\swapa_{\tii})$, and a swap algebra of type (II) does not satisfy, in general, condition $(\swapa_{\tii})$. A swap agebra is a generalisation of a Boolean algebra, as every Boolean algebra is a swap algebra. In this case the two types of swap algebras collapse to the structure of a Boolean algebra. 
Moreover, the total elements of a swap algebra from a Boolean algebra. Swap algebras of type (II) are better related to boolean-valued partial functions rather than swap algebras of type (I) (see~\cite{MWP23}). The abstract version of the boolean-valued partial functions on a set with an extensional inequality is the notion of the so-called \textit{swap ring}. The theory of swap algebras and swap rings, which is under current development (see also~\cite{MWP24b} for a classical treatment of the Stone representation theorem for swap rings and~\cite{MWP25} for a constructive treatment of a Stone representation theorem for separated swap algebras of type $(\tii)$), is expected to be a generalisation of the theory of Boolean algebras and Boolean rings that originates from constructive mathematics. Their theory is one of the few examples 
%in the history of modern mathematics 
of mathematical theories that have non-trivial applications to classical mathematics and are motivated by constructive mathematics!

\begin{definition}\label{def: swapalgebra}
	A \textit{swap algebra} is a structure $\C A := \big(A, \vee, \wedge, 0, 1, -, 0_{-}, 1_{-}\big)$,
	where $(A, =_A, \neq_A)$ is a set with an inequality, $0, 1 \in A$, $\vee, \wedge, \colon A \times A \to A$ and $0_{-}, 1_{-} \colon A \to A$
	%$[-,-]$
	are functions, such that the following conditions hold:\\[1mm]
	$(\swapa_1)$ \ $a \vee a =_{\mathsmaller{A}} a$.\\[1mm]
	$(\swapa_2)$ \ $a \vee b =_{\mathsmaller{A}} b \vee a$.\\[1mm]
	$(\swapa_3)$ \ $a \vee (b  \vee c) =_{\mathsmaller{A}} (a \vee b) \vee c$.\\[1mm]
	$(\swapa_4)$ \ $a \vee (b  \wedge c) =_{\mathsmaller{A}} (a \vee b) \wedge (a \vee c)$.\\[1mm]
	$(\swapa_5)$ $0 \neq_{\mathsmaller{A}} 1$.\\[1mm]
	$(\swapa_5)$ \ $-0 =_{\mathsmaller{A}} 1$.\\[1mm]
	$(\swapa_7)$ \ $-(-a) =_{\mathsmaller{A}} a$.\\[1mm]
	$(\swapa_8)$ \ $-(a \vee b) =_{\mathsmaller{A}} (-a) \wedge (-b)$.\\[1mm]
	$(\swapa_9)$ \ $a \vee (-a) =_{\mathsmaller{A}} 1_a$ and $a \wedge (-a) =_{\mathsmaller{A}} 0_a$.\\[1mm]
	$(\swapa_{10})$  \ $a  =_{\mathsmaller{A}} 0_{a} \vee a$.\\[1mm]
	A swap algebra $\C A$ is \textit{of type} $($I$)$, if the following condition holds:\\[1mm]
	$(\swapa_{\ti})$ \ $(a \vee b) \wedge a =_{\mathsmaller{A}} a$.\\[1mm]
	%and $(a \wedge b) \vee a =_{\mathsmaller{A}} a$\\[1mm]
	A swap algebra  $\C A$ is \textit{of type} $($II$)$, if the following condition holds:\\[1mm]
	$(\swapa_{\tii})$ \ $0_a \vee b =_{\mathsmaller{A}} 1_a \wedge b$.\\[1mm]
	An element $a $ of $A$ is called \textit{total}, if $1_a =_{\mathsmaller{A}} 1$.
	%, and it is called \textit{inhabited}, if $a \neq_{\mathsmaller{A}}  0_a$. 
	%Let the (subclasses) subsets
	%$$T(A) := \{ x \in A \mid a \ \mbox{is total}\},$$
	%$$Z(A) := \big\{ x \in A \mid \exists_{y \in A}\big(x = _{\mathsmaller{A}} %0_y\big)\big\}, \ \ \ \ \ 
	%O(A) := \big\{ x \in A \mid \exists_{y \in A}\big(x = _{\mathsmaller{A}} %1_y\big)\big\}.$$
\end{definition}
%$0_{\B f_P} \cdot  \B g_Q =_{\mathsmaller{\C F(X, \due)}} \B 0_{P \wedge Q}$

The complemented subsets studied here form a swap algebra of type (I). To show $0_{\B X} \neq_{\mathsmaller{\C E^{\Disj}(X)}} 1_{\B X}$, we need to fix the notion of inequality between complemented subsets.
% The deinition used in~\cite{MWP23} allows to show this inequality for an inhabited set $X$ with an inequality. 
Using Definition~\ref{def: csineq} that involves complemented points, introduced in section 5, the inequality $0_{\B X} \neq_{\mathsmaller{\C E^{\Disj}(X)}} 1_{\B X}$ follows easily for an inhabited set with an inequality (Corollary~\ref{cor: csineq1}(i)) . 
Next follow some properties of the \textit{complemented inverse image} of a strongly extensional function.

\begin{proposition}\label{prp: finvproperties}
	Let $\C X, \C Y, \C Z,  \fXY, g \colon Y \to Z \in (\SetExtIneq, \StrExtFun)$.\\[1mm]
%	\normalfont (i) 
%	\itshape 
%		$f^{-1}(- \B B) := - f^{-1}(\B B)$.\\[1mm]
	\normalfont (i) 
\itshape 
If $\B B := (B^1, B^0) \in \C E^{\Disj}(Y)$, then 
$f^{-1}(\B B) := \big(f^{-1}(B^1), f^{-1}(B^0)\big) \in \C E^{\Disj} (X)$.\\[1mm]
\normalfont (ii) 
\itshape 
The assignment routine 
$f^* \colon \C E^{\Disj}(Y) \sto \C E^{\Disj}(X)$, $\B B \mapsto f^{-1}(\B B)$, is a monotone function.\\[1mm]
	\normalfont (iii) 
	\itshape 
	If $\B B$ is total, then $f^{-1}(\B B)$ is total.\\[1mm]
\normalfont (iv) 
\itshape $f^{-1}(\B A \cup \B B) =_{\mathsmaller{\C E^{\Disj} (X)}}
f^{-1}(\B A) \cup f^{-1}(\B B)$.\\[1mm]
\normalfont (v) 
\itshape $f^{-1}(\B A \cap \B B) =_{\mathsmaller{\C E^{\Disj} (X)}} f^{-1}(\B A) \cap f^{-1}(\B B)$.\\[1mm]
\normalfont (vi)
\itshape $f^{-1}(- \B A) =_{\mathsmaller{\C E^{\Disj} (X)}} - f^{-1}(\B A)$.\\[1mm]
\normalfont (vii)
\itshape $f^{-1}(\B A - \B B) =_{\mathsmaller{\C E^{\Disj} (X)}} f^{-1}(\B A) - f^{-1}(\B B)$.\\[1mm]
\normalfont (viii)
\itshape $f^{-1}(Y, \emptys_Y) =_{\mathsmaller{\C E^{\Disj} (X)}} (X, \emptys_X)$ and 
$f^{-1}(\emptys_Y, Y) =_{\mathsmaller{\C E^{\Disj} (X)}} (\emptys_X, X)$.\\[1mm]
\normalfont (ix)
\itshape If $\B {\C G}$ is an extensional subset of $\C E^{\Disj}(Y)$ and $ f^{-1}(\B {\C G}) := \{f^{-1}(\B G) \mid \B G \in \B {\C G}\}$, then 
$$f^{-1}\bigg(\bigcup \B {\C G}\bigg) =_{\mathsmaller{\C E^{\Disj} (X)}} \bigcup f^{-1}(\B {\C G}), \ \ \ \ 
f^{-1}\bigg(\bigcap \B {\C G}\bigg) =_{\mathsmaller{\C E^{\Disj} (X)}} \bigcap f^{-1}(\B {\C G}).$$
%The corresponding properties for $g$......not with equalities....and intersections?? CHECK Habil 
\normalfont (x)
\itshape If $\B C \in \C E^{\Disj}(Z)$, then $(g \circ f)^{-1}(\B C) =_{\mathsmaller{\C E^{\Disj} (X)}} f^{-1}(g^{-1}(\B C))$.
\end{proposition}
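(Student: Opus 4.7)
The plan is to reduce each of the ten items to the corresponding facts about extensional subsets established in Proposition~\ref{prp: extempty1}, combined with the explicit componentwise definitions in Definition~\ref{def: opers}. The critical new ingredient, used only in part (i), is the strong extensionality of $f$, which is needed to preserve the positive disjointness $\Disj$ under $f^{-1}$; once (i) is in place, all the later items are essentially bookkeeping about first versus second components and the swap between $\cup$ and $\cap$ in those components.

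First I would handle (i) in detail. Given $x \in f^{-1}(B^1)$ and $x{'} \in f^{-1}(B^0)$, by unfolding the definition of the inverse image of an extensional subset there exist $y \in B^1$ and $y{'} \in B^0$ with $f(x) =_Y y$ and $f(x{'}) =_Y y{'}$. Since $B^1 \Disj B^0$ we have $y \neq_Y y{'}$, and the extensionality of $\neq_Y$ gives $f(x) \neq_Y f(x{'})$. Strong extensionality of $f$ then yields $x \neq_X x{'}$, which is exactly $f^{-1}(B^1) \Disj f^{-1}(B^0)$. Item (ii) is immediate: if $\B A \subseteq \B B$, then $A^1 \subseteq B^1$ and $B^0 \subseteq A^0$, and the classical monotonicity of inverse image on subsets transfers these inclusions, reversing the second one so that $f^{-1}(\B A) \subseteq f^{-1}(\B B)$.

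For (iii) I would use that $\dom(\B B) =_{\C E(Y)} Y$ together with the fact that every $x \in X$ satisfies $f(x) \in Y$, hence $P_1(f(x)) \vee P_0(f(x))$, which by Proposition~\ref{prp: extempty1}(iii) gives $x \in f^{-1}(B^1) \cup f^{-1}(B^0) = \dom(f^{-1}(\B B))$. Items (iv)--(vii) are pure componentwise unfoldings: by Definition~\ref{def: opers}, the first components of $\B A \cup \B B$ and $\B A \cap \B B$ are $A^1 \cup B^1$ and $A^1 \cap B^1$, while the second components are $A^0 \cap B^0$ and $A^0 \cup B^0$; Proposition~\ref{prp: extempty1}(iii)--(iv) distributes $f^{-1}$ over both $\cup$ and $\cap$, so the first and second components match. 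Item (vi) is the tautological observation that $f^{-1}$ commutes with the swap $(A^1, A^0) \mapsto (A^0, A^1)$, and (vii) follows by combining (v) and (vi) with the definition $\B A - \B B := \B A \cap (-\B B)$. Item (viii) uses $f^{-1}(Y) =_{\C E(X)} X$ (trivial from the fact that $f$ is total with codomain $Y$) and $f^{-1}(\emptys_Y) =_{\C E(X)} \emptys_X$, which is exactly Proposition~\ref{prp: extempty1}(i), again invoking strong extensionality of $f$.

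Finally, (ix) is handled by applying Proposition~\ref{prp: extempty1}(v) to each of $\C G^1$ and $\C G^0$ separately, keeping track of the swap in Definition~\ref{def: union}: the first component of $\bigcup \B{\C G}$ is $\bigcup \C G^1$, whose inverse image is $\bigcup f^{-1}(\C G^1)$, and the second component is $\bigcap \C G^0$, whose inverse image is $\bigcap f^{-1}(\C G^0)$, matching the right-hand side componentwise. Item (x) follows from Proposition~\ref{prp: extempty1}(xviii) applied to $C^1$ and $C^0$ separately. The main obstacle throughout is really only in (i)---i.e., ensuring that the positive disjointness condition is preserved---because $\Disj$ is the one piece of structure that is not handled automatically by the subset-level proposition; everything else proceeds by componentwise reduction, with the only subtlety being the careful tracking of how the $\cup/\cap$ roles swap between the first and second components.
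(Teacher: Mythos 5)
Your proposal is correct and follows essentially the same route as the paper's own proof: part (i) is established via extensionality of $\neq_Y$ plus strong extensionality of $f$ to transport the disjointness $B^1 \Disj B^0$ back to $f^{-1}(B^1) \Disj f^{-1}(B^0)$, and the remaining items are reduced componentwise to Proposition~\ref{prp: extempty1}, with (vii) obtained from (v) and (vi), (viii) from Proposition~\ref{prp: extempty1}(i), (ix) from Proposition~\ref{prp: extempty1}(v), and (x) from Proposition~\ref{prp: extempty1}(xviii). No gaps.
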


\begin{proof}
(i) Let $x^1 \in X$ such that $f(x^1) =_Y b^1$, for some $b^1 \in B^1$, and let $x^0 \in X$ such that $f(x^0) =_Y b^0$, for some $b^0 \in B^0$. By extensionality of $\neq_Y$ we have that the inequality $b^1 \neq_Y b^0$ implies the inequality $f(x^1) \neq_Y f(x^0)$, hence by strong extensionality of $f$ we get %the required inequality 
$x^1 \neq_X x^0$.\\
(ii) Monotonicity of $f^*$
%the complemented inverse image 
follows from monotonicity of the inverse image.\\
% of a function.\\
(iii) $\dom(f^{-1}(\B B)) := f^{-1}(B^1) \cup f^{-1}(B^) =_{\C E(X)} f^{-1}(B^1 \cup B^0) 
=_{\C E(X)} f^{-1}(Y) =_{\C E(X)} X$.\\
(iv, v) We show only (iv) and for (v) we work similarly. We have that
%, and for (iii) we proceed similarly.
\begin{align*}
	f^{-1}(\B A \cup \B B) & := f^{-1}\big(A^1 \cup B^1, A^0 \cap B^0 \big)\\
	& := \big(f^{-1}(A^1 \cup B^1), f^{-1}(A^0 \cap B^0)\big)\\
	& =_{\mathsmaller{\C E^{\Disj} (X)}}  \big(f^{-1}(A^1) \cup f^{-1}(B^1), f^{-1}(A^0) \cap f^{-1}(B^0)\big)\\
	& =: f^{-1}(\B A) \cup f^{-1}(\B B).
\end{align*}
(vi) follows immediately, and (vii) follows from (v), (vi) and the definition of $\B A - \B B$.\\
(viii) follows from Proposition~\ref{prp: extempty1}(i), (ix) from Proposition~\ref{prp: extempty1}(v), and (x) from Proposition~\ref{prp: extempty1}(xviii).
\end{proof}

Next we prove the basic properties of the \textit{complemented direct image} of a strong injection.
% (see Definition~\ref{def: setineq}).
% (in this case the implication in the definition of an injection holds in the strong sense of Rasiowa ...).

\begin{proposition}\label{prp: fimageproperties}
	Let $\C X, \C Y, \C Z \in\SetExtIneq$, $\fXY$ and $g \colon Y \to Z$ strong injections.\\[1mm]
%	an injection and an embedding.\\[1mm]
	%	\normalfont (i) 
	%	\itshape 
	%		$f^{-1}(- \B B) := - f^{-1}(\B B)$.\\[1mm]
		\normalfont (i) 
	\itshape 
	If $\B A := (A^1, A^0) \in \C E^{\Disj}(X)$, then 
	$f(\B A) := \big(f(A^1), f(A^0)\big) \in \C E^{\Disj} (Y)$.\\[1mm]
	\normalfont (ii) 
	\itshape 
	The assignment routine 
	$f_* \colon \C E^{\Disj}(X) \sto \C E^{\Disj}(Y)$, $\B A \mapsto f(\B A)$, is a monotone function.\\[1mm]
	\normalfont (iii) 
	%	\itshape 
%	If $\B B$ is total, then $f^{-1}(\B B)$ is total.\\[1mm]
%	\normalfont (iv) 
	\itshape $f(\B A \cup \B B) =_{\mathsmaller{\C E^{\Disj} (Y)}}
	f(\B A) \cup f(\B B)$.\\[1mm]
	\normalfont (iv) 
	\itshape $f(\B A \cap \B B) =_{\mathsmaller{\C E^{\Disj} (Y)}} f(\B A) \cap f(\B B)$.\\[1mm]
	\normalfont (v)
	\itshape $f(- \B A) =_{\mathsmaller{\C E^{\Disj} (Y)}} - f(\B A)$.\\[1mm]
	\normalfont (vi)
	\itshape $f(\B A - \B B) =_{\mathsmaller{\C E^{\Disj} (Y)}} f(\B A) - f(\B B)$.\\[1mm]
	\normalfont (vii)
	\itshape $f(X, \emptys_X) =_{\mathsmaller{\C E^{\Disj} (Y)}} (f(X), \emptys_Y)$ and 
	$f(\emptys_X, X) =_{\mathsmaller{\C E^{\Disj} (Y)}} (\emptys_Y, f(X))$.\\[1mm]
	\normalfont (viii)
	\itshape If $\B {\C H}$ is an extensional subset of $\C E^{\Disj}(X)$ and $ f(\B {\C H}) := \{f(\B H) \mid \B H \in \B {\C H}\}$, then 
	$$f\bigg(\bigcup \B {\C H}\bigg) =_{\mathsmaller{\C E^{\Disj} (Y)}} \bigcup f(\B {\C H}), \ \ \ \ 
	f\bigg(\bigcap \B {\C H}\bigg) =_{\mathsmaller{\C E^{\Disj} (Y)}} \bigcap f(\B {\C H}).$$
		\normalfont (ix)
	\itshape $\B B \cap f(X, \emptys_X) \subseteq f(f^{-1}(\B B))$.\\[1mm]
		\normalfont (x)
	\itshape $(g \circ f)(\B A) =_{\C E^{\Disj}(Z)} g(f(\B A))$.
\end{proposition}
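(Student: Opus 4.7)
The strategy for the entire proposition is to reduce each clause to the corresponding clause for ordinary (non-complemented) extensional subsets, which has already been handled in Proposition~\ref{prp: extempty1}. The key observation is that a strong injection $f$ is by Definition~\ref{def: setineq} simultaneously strongly extensional, an injection (so it separates apart points), and an embedding (so it reflects equality). These three ingredients correspond exactly to the hypotheses needed in the various parts of Proposition~\ref{prp: extempty1}.

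For clause (i), the disjointness $f(A^1) \Disj f(A^0)$ is immediate: any two points $f(a^1), f(a^0)$ with $a^1 \in A^1, a^0 \in A^0$ satisfy $a^1 \neq_X a^0$ by the disjointness of $\B A$, and since $f$ is an injection this forces $f(a^1) \neq_Y f(a^0)$; extensionality of $\neq_Y$ then handles arbitrary representatives. Clause (ii) follows directly from monotonicity of the ordinary direct image, noting the contravariance on the $0$-component that is built into the definition of $\subseteq$ on $\C E^{\Disj}$.

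Clauses (iii)--(vi) are pure rewriting: unfold the definitions of $\cup, \cap, -$ on complemented subsets, then apply Proposition~\ref{prp: extempty1}(vi) for unions of direct images and, crucially, Proposition~\ref{prp: extempty1}(xvi) for intersections of direct images, which requires that $f$ be an embedding — and this is exactly what a strong injection gives. The minus clause (v) is trivial since $-$ merely swaps components, and (vi) is an immediate corollary of (iv) and (v) via the definition $\B A - \B B := \B A \cap (-\B B)$. Clause (vii) relies on Proposition~\ref{prp: extempty1}(ii), which uses that $f$ is an injection to obtain $f(\emptys_X) =_{\C E(Y)} \emptys_Y$. Clause (viii) combines Proposition~\ref{prp: extempty1}(viii) (for the union of a family) with Proposition~\ref{prp: extempty1}(xvii) (for the intersection of a family, which again requires the embedding property).

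Clause (ix) is the one substantive step. Unfolding, the first component of the left-hand side is $B^1 \cap f(X)$, while the first component of the right-hand side is $f(f^{-1}(B^1))$; by Proposition~\ref{prp: extempty1}(x) these are in fact \emph{equal}, not merely included. For the second components, one needs $f(f^{-1}(B^0)) = B^0 \cap f(X) \subseteq B^0 \cup \emptys_Y$, which is obvious since $B^0 \cap f(X) \subseteq B^0$. Note that the inclusion, not equality, is the best one can hope for here, because the left-hand side forces the second component to be enlarged by $\emptys_Y$ via the rule for $\cap$ on complemented subsets. Finally, clause (x) is a direct consequence of Proposition~\ref{prp: extempty1}(xix) applied componentwise. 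The only mildly subtle step throughout is remembering the contravariance in the $0$-coordinate under $\subseteq$ of $\C E^{\Disj}$; once that bookkeeping is in place, no step requires intuitionistic logic beyond what already appears in Proposition~\ref{prp: extempty1}.
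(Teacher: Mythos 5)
Your proposal is correct and follows essentially the same route as the paper: each clause is reduced componentwise to Proposition~\ref{prp: extempty1}, with the injection property supplying disjointness in (i) and the image of the empty set in (vii), and the embedding property supplying the intersection identities in (iv) and (viii). Your handling of (ix) is a slight (harmless) variant — you pass directly through $f(f^{-1}(B^0)) \subseteq B^0 \subseteq B^0 \cup \emptys_Y$ rather than first simplifying $B^0 \cup \emptys_Y$ to $B^0$ as the paper does — but the argument is the same in substance.
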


\begin{proof}
	(i) Let $y^1 \in Y$, such that $y^1 =_Y f(a^1)$, for some $a^1 \in A^1$, and let $y^0 \in Y$, such that $y^0 =_Y f(a^0)$, for some $a^0 \in A^0$. As $a^1 \neq_X a^0$, by injectivity of $f$ we have that $f(a^1) \neq_Y f(a^0)$, and by extensionality of $\neq_Y$ we get $y^1 \neq_Y y^0$.\\
	(ii) Monotonicity of the complemented direct image follows from monotonicity of the direct image.\\
	(iii, iv) We show only (iii) and for (iv) we work similarly. We have that
	%, and for (iii) we proceed similarly.
	\begin{align*}
		f(\B A \cup \B B) & := f\big(A^1 \cup B^1, A^0 \cap B^0 \big)\\
		& := \big(f(A^1 \cup B^1), f(A^0 \cap B^0)\big)\\
		& =_{\mathsmaller{\C E^{\Disj} (Y)}}  \big(f(A^1) \cup f(B^1), f(A^0) \cap f(B^0)\big)\\
		& =: f(\B A) \cup f(\B B).
	\end{align*} 
(v) follows immediately and (vi) follows from (iv) and (v).\\
(vii) follows from Proposition~\ref{prp: extempty1}(ii), and (viii) follows from Proposition~\ref{prp: extempty1}(viii) and (xvi).\\
(ix) By Proposition~\ref{prp: extempty1}(ix, x) we have that
%, and for (iii) we proceed similarly.
\begin{align*}
	\B B \cap f(X, \emptys_X) & := \big(B^1 \cap f(X), B^0 \cup {\emptys_Y} \big)\\
	& :=_{\mathsmaller{\C E^{\Disj} (Y)}} \big(B^1 \cap f(X), B^0 \big)\\
	& =_{\mathsmaller{\C E^{\Disj} (Y)}}  \big(f(f^{-1}(B^1)), B^0\big)\\
	& \subseteq   \big(f(f^{-1}(B^1)), f(f^{-1}(B^0))\big)\\
	& =: f(f^{-1}(\B B)).
\end{align*} 
(x) It follows from Proposition~\ref{prp: extempty1}(xix).
\end{proof}

Notice that for the proof of (i) and (ii) above we need only $f$ to be an injection, and that under our hypotheses on $f$ we get equality in Proposition~\ref{prp: fimageproperties}(iv) and (viii).
By the previous two propositions and by Proposition~\ref{prp: extempty1}(xiv, xv) we get the following corollary.

\begin{corollary}\label{cor: invimage1}
Let $\C X, \C Y \in  \SetExtIneq$ and $\fXY$ strongly extensional and a strong injection.\\[1mm]
%	\normalfont (i) 
%	\itshape 
%		$f^{-1}(- \B B) := - f^{-1}(\B B)$.\\[1mm]
\normalfont (i) 
\itshape $f^{-1}(f(\B A)) =_{\C E^{\Disj} (X)} \B A$, for every $\B A \in \C E^{\Disj} (X)$.\\[1mm]
\normalfont (ii) 
\itshape If $f$ is also a surjection, then $f(f^{-1}(\B B)) =_{\C E^{\Disj} (Y)} \B B$, for every $\B B \in \C E^{\Disj} (X)$.

\end{corollary}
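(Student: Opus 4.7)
The plan is to reduce both statements to their one-dimensional counterparts by unwinding the componentwise definitions of $f(\B A)$ and $f^{-1}(\B B)$ from Propositions~\ref{prp: finvproperties}(i) and~\ref{prp: fimageproperties}(i), and then to apply Proposition~\ref{prp: extempty1}(xiv, xv) to each component separately. Since a strong injection is, by Definition~\ref{def: setineq}, both an injection and an embedding, the hypotheses needed for both propositions on subsets (namely, $f$ being an embedding for (xiv), and a surjection for the ``only if'' direction of (xv)) are available.

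For (i), I would first note that the strong extensionality of $f$ makes $f^{-1}(\B C) \in \C E^{\Disj}(X)$ well-defined for any $\B C \in \C E^{\Disj}(Y)$ (Proposition~\ref{prp: finvproperties}(i)), and the fact that $f$ is an injection makes $f(\B A) \in \C E^{\Disj}(Y)$ well-defined (Proposition~\ref{prp: fimageproperties}(i)); so the composition $f^{-1}(f(\B A))$ is a legal complemented subset of $X$. Then I would unfold
\[
f^{-1}(f(\B A)) := f^{-1}\big(f(A^1),\, f(A^0)\big) := \big(f^{-1}(f(A^1)),\, f^{-1}(f(A^0))\big),
\]
and apply Proposition~\ref{prp: extempty1}(xiv), which gives $f^{-1}(f(A^i)) =_{\C E(X)} A^i$ for $i \in \{0,1\}$ because $f$ is an embedding. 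The equality $f^{-1}(f(\B A)) =_{\C E^{\Disj}(X)} \B A$ in $\C E^{\Disj}(X)$ then follows immediately from the componentwise equalities in $\C E(X)$.

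For (ii), I would argue symmetrically. The hypothesis that $f$ is (strongly extensional and) a strong injection and a surjection ensures that $f^{-1}(\B B)$ is a complemented subset of $X$ and that $f$ applied to it is a complemented subset of $Y$. Unfolding,
\[
f(f^{-1}(\B B)) := f\big(f^{-1}(B^1),\, f^{-1}(B^0)\big) := \big(f(f^{-1}(B^1)),\, f(f^{-1}(B^0))\big),
\]
and applying Proposition~\ref{prp: extempty1}(xv) componentwise (valid since $f$ is a surjection) yields $f(f^{-1}(B^i)) =_{\C E(Y)} B^i$ for $i \in \{0,1\}$, whence $f(f^{-1}(\B B)) =_{\C E^{\Disj}(Y)} \B B$.

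There is no real obstacle here; the only thing to double-check is that ``strong injection'' in Definition~\ref{def: setineq} packages together injectivity (used so that $f$ sends complemented subsets to complemented subsets) and embedding (used to invoke Proposition~\ref{prp: extempty1}(xiv)), so the statement is correctly set up to make the reduction to the one-dimensional results immediate.
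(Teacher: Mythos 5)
Your proof is correct and follows exactly the route the paper takes: the paper derives this corollary in one line from Propositions~\ref{prp: finvproperties} and~\ref{prp: fimageproperties} (the componentwise definitions of the complemented inverse and direct images) together with Proposition~\ref{prp: extempty1}(xiv, xv) applied to each component. Your observation that a strong injection packages injectivity (for well-definedness of $f(\B A)$) with the embedding property (for invoking (xiv)) is exactly the point that makes the reduction work.
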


In analogy to the characterisations of injectivity and surjectivity through Proposition~\ref{prp: extempty1}(xiv, xv), we may define $\fXY$ to be a $\cs$-\textit{injection}, if $f$ is strongly extensional and a strong injection, while it is a $\cs$-\textit{surjection}, if it is strongly extensional, (strong) injection and surjection. 

%(Monos epis in the corresponding categories???) \textbf{DIFF}. 
%
%Next corollary is used to the characterisation of a homeomorphism between $\cs$-topological spaces through open $\cs$-continuous functions (Proposition...).
%
%
%
%\begin{corollary}\label{cor: invimage2}
%	Let $\C X, \C Y \in  \SetExtIneq$ and $\fXY$ strongly extensional and a strong injection.\\[1mm]
%	%	\normalfont (i) 
%	%	\itshape 
%	%		$f^{-1}(- \B B) := - f^{-1}(\B B)$.\\[1mm]
%	\normalfont (i) 
%	\itshape for every .\\[1mm]
%	\normalfont (ii) 
%	\itshape If 
%	
%\end{corollary}

\begin{proposition}\label{prp: emptyprod}
If $\C X, \C Y \in  \SetExtIneq$, the following hold:\\[1mm]
	\normalfont (i) 
	\itshape $\emptys_{X \times Y} =_{\C E(X \times Y)} ({\emptys_X} \times Y) \cup (X \times \emptys_Y)$.\\[1mm]
	\normalfont (ii) 
	\itshape $(\INT)$ $\emptys_{X \times Y} =_{\C E(X \times Y)} {\emptys_X} \times  \emptys_Y$.\\[1mm]
	\normalfont (iii) 
	\itshape $(X, \emptys_X) \times (Y, \emptys_Y) =_{\C E^{\Disj}(X \times Y)} (X \times Y, \emptys_{X \times Y})$.\\[1mm]
	\normalfont (iv) 
	\itshape $(\INT)$ $(\emptys_X, X) \times (\emptys_Y, Y) =_{\C E^{\Disj}(X \times Y)} (\emptys_{X \times Y}, X \times Y)$.
\end{proposition}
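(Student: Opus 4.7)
The plan is to unpack the relevant definitions in order to reduce every item to a purely logical manipulation of the formulas $x \neq_X x$ and $y \neq_Y y$. By Definition~\ref{def: canonicalineq}, the canonical inequality on $X\times Y$ unfolds $\emptys_{X\times Y}$ as $\{(x,y)\in X\times Y \mid x \neq_X x \vee y \neq_Y y\}$. On the other hand, by Definition~\ref{def: extunion} together with the extensionality of $\neq_X$, the product $\emptys_X \times Y$ coincides in $\C E(X\times Y)$ with $\{(x',y')\mid x'\neq_X x'\}$: if there exist $x\in\emptys_X$ and $y\in Y$ with $x'=_X x$ and $y'=_Y y$, then extensionality of $\neq_X$ transports $x\neq_X x$ to $x'\neq_X x'$, and the converse just takes $x:=x'$, $y:=y'$. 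The analogous equality holds for $X\times \emptys_Y$.

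With this in hand, (i) is immediate in $\MIN$: the union $(\emptys_X\times Y)\cup(X\times\emptys_Y)$ is $\{(x,y)\mid x\neq_X x\vee y\neq_Y y\}$, which is exactly $\emptys_{X\times Y}$. For (ii), one inclusion is again straightforward in $\MIN$: from $(x,y)\in\emptys_X\times\emptys_Y$ we extract $x\neq_X x$ and $y\neq_Y y$, whence the disjunction. The reverse inclusion is the only place where intuitionistic logic enters, and is the main obstacle: given $(x,y)$ with $x\neq_X x\vee y\neq_Y y$, in either disjunct we combine $(\Ineq_1)$ with $x=_X x$ (respectively $y=_Y y$) to derive $\bot$, and then Ex Falso supplies the missing component, giving $x\neq_X x$ and $y\neq_Y y$ simultaneously; thus $(x,y)\in\emptys_X\times\emptys_Y$.

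Items (iii) and (iv) then reduce to (i) and (ii) by unfolding the definition of the product of complemented subsets from Definition~\ref{def: opers}, according to which $\B A\times \B C := (A^1\times C^1,\, [A^0\times Y]\cup[X\times C^0])$. For (iii), applying this to $(X,\emptys_X)\times(Y,\emptys_Y)$ yields first component $X\times Y$ and second component $(\emptys_X\times Y)\cup(X\times\emptys_Y)$, which equals $\emptys_{X\times Y}$ by (i) (so (iii) is entirely in $\MIN$). For (iv), applying it to $(\emptys_X,X)\times(\emptys_Y,Y)$ yields first component $\emptys_X\times\emptys_Y$, which equals $\emptys_{X\times Y}$ by (ii) (hence the need for $\INT$), and second component $(X\times Y)\cup(X\times Y) =_{\C E(X\times Y)} X\times Y$. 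A small final check to confirm is that both sides of (iii) and (iv) actually lie in $\C E^{\Disj}(X\times Y)$, but this is automatic once the first and second components match, since $\emptys_{X\times Y}\Disj X\times Y$ follows from $(\Ineq_1)$ and the description of $\emptys_{X\times Y}$ above.
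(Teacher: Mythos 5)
Your proposal is correct and follows essentially the same route as the paper: unfold $\emptys_{X\times Y}$ via the canonical inequality on the product, identify $\emptys_X\times Y$ and $X\times\emptys_Y$ using extensionality of the inequalities, prove (i) within $\MIN$ and the nontrivial inclusion of (ii) by Ex Falso, and then obtain (iii) and (iv) by unfolding the product of complemented subsets and invoking (i) and (ii), respectively. Your extra remarks --- spelling out how $(\Ineq_1)$ feeds Ex Falso in (ii) and checking disjointness of the components --- are harmless elaborations of steps the paper leaves implicit.
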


\begin{proof}
	(i) By Definitions~\ref{def: extempty} and~\ref{def: canonicalineq} we have that
	$\emptys_{X \times Y} := \{(x, y) \in X \times Y \mid x \neq_X x \vee y \neq_Y y\}.$
	Clearly, $\emptys_{X \times Y} \subseteq ({\emptys_X} \times Y) \cup (X \times \emptys_Y)$. For the converse inclusion, let $x \neq_X x$ and $y \in Y$. Then $(x, y) \in \emptys_{X \times Y}$. Similarly, if
	$y \neq_Y y$ and $x \in X$, then $(x, y) \in \emptys_{X \times Y}$.\\
	(ii) By extensionality of $\neq_X$ and $\neq_Y$ we have that
	\begin{align*}
	{\emptys_X} \times  \emptys_Y & := \{(u, w) \times X \times Y \mid \exists_{x \in {\ \emptys_X}}\exists_{y \in {\ \emptys_Y}}(u =_X x \wedge w =_Y y)\}\\
	& = \{(u, w) \times X \times Y \mid u \neq_X u \wedge w \neq_Y w)\},
	\end{align*}
		hence ${\emptys_X} \times  \emptys_Y \subseteq \emptys_{X \times Y}$. The converse inclusion follows from Ex Falso.\\
		(iii) By case (i) we have that
		$$(X, \emptys_X) \times (Y, \emptys_Y) := (X \times Y,  ({\emptys_X} \times Y) \cup (X \times \emptys_Y)) =_{\C E(X \times Y)} (X \times Y, \emptys_{X \times Y}).$$
		(iv) By case (ii) we have that
		$$\ \ \ \ \ \ \ \ \ \ \ \ \ \ \ \ \ \ \ \ \ \ \ \ \ (\emptys_X, X) \times (\emptys_Y, Y) := ({\emptys_X} \times  \emptys_Y,  (X \times Y) \cup (X \times Y)) =_{\C E(X \times Y)} (\emptys_{X \times Y}, X \times Y). \ \ \ \ \ \ \ \ \ \ \ \ \ \ \qedhere$$
\end{proof}

\begin{proposition}\label{prp: complemented4}
	Let $\B A, \B A_1, \B A_2 \in \C E^{\mathsmaller{\Disj}}(X)$ and $\B B, \B B_1, \B B_2, \B C \in \C E^{\mathsmaller{\Disj}}(Y)$.\\[1mm]
	\normalfont (i)
	\itshape $\B A \times (\B B \cup \B C) =_{\C E^{\mathsmaller{\Disj}}(X \times Y)} (\B A \times \B B) \cup (\B A \times \B C)$.\\[1mm]
	\normalfont (ii)
	\itshape $\B A \times (\B B \cap \B C) =_{\C E^{\mathsmaller{\Disj}}(X \times Y)} (\B A \times \B B) \cap (\B A \times \B C)$.\\[1mm]
	\normalfont (iii)
	\itshape $\B A \times (\B B - \B C) =_{\C E^{\mathsmaller{\Disj}}(X \times Y)} (\B A \times \B B) - (\B A \times \B C)$.\\[1mm]
		\normalfont (iv)
	\itshape $(\B A_1 \times \B B_1) \cap (\B A_2 \times \B B_2) =_{\C E^{\mathsmaller{\Disj}}(X \times Y)}
	(\B A_1 \cap \B A_2) \times (\B B_1 \cap \B B_2)$.
\end{proposition}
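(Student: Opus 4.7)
The plan is to verify each of the four equalities by unfolding the operations of product, union, intersection, and complement on complemented subsets according to Definition~\ref{def: opers}, and then reducing the resulting 1-parts and 0-parts to identities on ordinary extensional subsets supplied by Proposition~\ref{prp: extempty1}. Equality in $\C E^{\Disj}(X \times Y)$ is componentwise, so for each part I would separately establish that the 1-parts agree in $\C E(X \times Y)$ and that the 0-parts agree in $\C E(X \times Y)$.

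For (i), expanding yields left 1-part $A^1 \times (B^1 \cup C^1)$, which matches the right 1-part by Proposition~\ref{prp: extempty1}(xi). The left 0-part is $(A^0 \times Y) \cup (X \times (B^0 \cap C^0))$ and the right 0-part is $((A^0 \times Y) \cup (X \times B^0)) \cap ((A^0 \times Y) \cup (X \times C^0))$; using Proposition~\ref{prp: extempty1}(xii) to identify $X \times (B^0 \cap C^0)$ with $(X \times B^0) \cap (X \times C^0)$, the two expressions are related by the standard distributive law $D \cup (E \cap F) = (D \cup E) \cap (D \cup F)$ for subsets. Part (ii) is symmetric: Proposition~\ref{prp: extempty1}(xii) handles the 1-parts, and the 0-parts both collapse to $(A^0 \times Y) \cup (X \times B^0) \cup (X \times C^0)$ by Proposition~\ref{prp: extempty1}(xi) together with idempotence and associativity of $\cup$. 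Part (iv) is the most routine: unfolding both sides and invoking the elementary fact $(A \times B) \cap (C \times D) =_{\C E(X \times Y)} (A \cap C) \times (B \cap D)$ (together with distributivity of $\times$ over $\cup$ from Proposition~\ref{prp: extempty1}(xi)) shows that both 1-parts equal $(A_1^1 \cap A_2^1) \times (B_1^1 \cap B_2^1)$ and both 0-parts equal $((A_1^0 \cup A_2^0) \times Y) \cup (X \times (B_1^0 \cup B_2^0))$.

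The hardest part will be (iii). Unfolding $\B B - \B C := \B B \cap (-\B C)$ and $(\B A \times \B B) - (\B A \times \B C) := (\B A \times \B B) \cap (-(\B A \times \B C))$, and distributing, yields a right-hand 1-part of the form $[(A^1 \cap A^0) \times B^1] \cup [A^1 \times (B^1 \cap C^0)]$. Here the summand $(A^1 \cap A^0) \times B^1$ is contained in $\emptys_{X \times Y}$: by Proposition~\ref{prp: isvoid}(i) the disjointness $A^1 \Disj A^0$ gives $A^1 \cap A^0 \subseteq \emptys_X$, and then Proposition~\ref{prp: emptyprod}(i) together with Proposition~\ref{prp: extempty1}(i) places $(A^1 \cap A^0) \times B^1$ in $\emptys_{X \times Y}$. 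Applying Ex Falso ($\INT$) absorbs this empty summand and the right 1-part collapses to $A^1 \times (B^1 \cap C^0)$, matching the left 1-part. A strictly parallel absorption argument is required on the 0-side, where $\INT$ is again invoked to identify the spurious terms arising from the Cartesian factor $X$ (as opposed to $A^1$) with the corresponding $A^1$-terms, via the disjointness of $\B A$. I expect the main difficulty to lie in tracking these absorption steps carefully: delineating precisely where Ex Falso is used, confirming that every term not matched on the nose is provably contained in $\emptys_{X \times Y}$, and bookkeeping the distributive expansions so that the componentwise equality in $\C E^{\Disj}(X \times Y)$ emerges without further set-theoretic assumptions on $\B A$, $\B B$, or $\B C$.
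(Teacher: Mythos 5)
Your treatment of (i), (ii) and (iv) is correct and is exactly the paper's method: the paper writes out only (i), by unfolding the definitions and reducing the $1$-components and $0$-components to the distributivity identities for ordinary subsets, and your reductions for (ii) and (iv) go through on the nose without any absorption or Ex Falso.

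Part (iii) is where your proposal breaks down, and the problem is not where you located it. The absorption on the $1$-side is fine: $(A^1\cap A^0)\times B^1$ really is contained in $\emptys_{X\times Y}$ by disjointness of $\B A$, and $\INT$ swallows it. But on the $0$-side the two expressions are
$$(A^0\times Y)\cup(X\times B^0)\cup(X\times C^1) \quad\text{versus}\quad (A^0\times Y)\cup(X\times B^0)\cup(A^1\times C^1),$$
for the left- and right-hand sides respectively, and the discrepant term $X\times C^1$ is not spurious: it is not contained in $\emptys_{X\times Y}$, so neither Ex Falso nor the disjointness $A^1\Disj A^0$ can identify it with $A^1\times C^1$. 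Disjointness says nothing about points of $X$ lying outside $A^1\cup A^0$. Concretely, the inclusion you would need is $X\times C^1\subseteq(A^0\times Y)\cup(X\times B^0)\cup(A^1\times C^1)$, and for $(x,c)$ with $c\in C^1$ and $x$ an arbitrary element of $X$ this demands $x\in A^0$ or $c\in B^0$ or $x\in A^1$, which requires $\B A$ to be total, i.e.\ $X\subseteq A^1\cup A^0$. A two-point $X$ with $A^1$ a singleton and $A^0$ empty, and $B^0=C^0$ empty, already separates the two $0$-components, even classically. What your unfolding actually establishes, within $\INT$, is only the one inclusion $\B A\times(\B B-\B C)\subseteq(\B A\times\B B)-(\B A\times\B C)$; the converse inclusion, hence the stated equality, needs the additional hypothesis that $\B A$ is total (or a rebalancing by $0$-terms in the style of Bishop's distributivity law from section~\ref{sec: csub}). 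You should either add that hypothesis to (iii) or weaken the conclusion to the inclusion.
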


\begin{proof}
	We prove only (i). We have that
	\begin{align*}
		\B A \times (\B B \cup \B C) & := (A^1, A^0) \times (B^1 \cup C^1, B^0 \cap C^0)\\
		& := \big(A^1 \times (B^1 \cup C^1), (A^0 \times Y) \cup [X \times (B^0 \cap C^0)]\big)\\
		& =_{\C E^{\mathsmaller{\Disj}}(X \times Y)}  \big((A^1 \times B^1) \cup (A^1 \times C^1), [(A^0 \times Y) \cup (X \times B^0)] \ \cap\\
		& \ \ \ \ \ \ \ \ \ \ \ \ \ \ \ \  \cap  [(A^0 \times Y) \cup (X \times C^0)]\big)\\
		& := (\B A \times \B B) \times (\B A \times \B C).\qedhere
	\end{align*}
\end{proof}

The notion of a field of complemented subsets is defined by the same clauses as the notion of a field of subsets, but with respect to the operations on complemented subsets.

\begin{definition}\label{def: cfield}
If $F(\B A)$ is an extensional property on $\C E^{\Disj}(X)$, then 
$\B {\C F} := \big\{\B A \in \C E^{\Disj}(X) \mid F(\B A)\big\}$
is a field of complemented subsets, if $F(X, \emptys_X)$, and if $F(\B A)$ and $F(\B B)$, then $F(-\B A)$ and $F(\B A \cup \B B)$.
%A field of $c$-sets of type $(II)$ is defined similarly.	
\end{definition}

Clearly, if $\B {\C F}$ is a field of complemented subsets and $\B A, \B B \in \B {\C F}$, then $1_{\B A}, 0_{\B A}$ and $\B A \cap \B B \in \B {\C F}$. The following remark is immediate to show.

\begin{remark}\label{rem: cfield1}
If $\B {\C F}$ is a field of complemented subsets of $\C X$, then $\B {\C F}$ is a swap algebra of type $(\ti)$.
%$($of type $(II))$.
	
\end{remark}

Actually, what we defined above is a field of complemented subsets of type (I).
A notion of a field of complemented subsets of type (II) is defined similarly.
If $(\B \lambda(i))_{i \in I}$ is a family of complemented subsets indexed by some set $I$, the following distributivity property of complemented subsets was presented by Bishop in~\cite{Bi67}, p.~68:
\[ 0_{\B A} \cup \bigg(\B A \cap \bigcup_{i \in I} \B \lambda (i)\bigg) =_{\mathsmaller{\C E^{\Disj}(X)}}
0_{\B A} \cup \bigg[\bigcup_{i \in I} \big(\B A \cap \B \lambda (i)\big)\bigg],\]
or 
\[ 0_{\B A} \cup \bigg(\B A \cap \bigcup \B {\C F}\bigg) =_{\mathsmaller{\C E^{\Disj}(X)}}
0_{\B A} \cup \bigg[\bigcup (\B A \cap \B {\C F})\bigg],\]
where 
$\B A \cap \B {\C F} := \big\{\B B \in \C E^{\Disj}(X) \mid \exists_{\B F \in \B {\C F}}\big(\B B=_{\C E^{\Disj}(X)}  \B A \cap \B F\big)\big\}$.
Bishop's distributivity property is the constructive counterpart to the classical distributivity property
$$(D_I) \ \ \ \ \ \ \  \B A \cap \bigcup_{i \in I} \B \lambda (i) =_{\mathsmaller{\C E^{\Disj}(X)}}
\bigcup_{i \in I} \big(\B A \cap \B \lambda (i).$$
Distributivity $(D_I)$ is crucial to the classical proof that the standard relative topology on a subset of a topological space is indeed a topology. Constructively though, it cannot be accepted, in general. The constructive counterpart to the definition of the relative topology within $\cs$-topological spaces is presented in section~\ref{sec: cstop}. See~\cite{MWP23} for the proof of the following fact.

\begin{proposition}\label{prp: first2}
	\normalfont (i) 
	\itshape
	If $X := \Real$, the distributivity property $(D_\Nat)$ implies $\LPO$. \\[1mm]
	\normalfont (ii) 
	\itshape
	$(\INT)$ If $\B A \in \C E^{\Eisj}(X)$ is total, then $(D_I)$ holds, for every $X \in \SetExtIneq$. 
\end{proposition}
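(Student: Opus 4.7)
The plan is to handle both parts by reducing $(D_I)$ to a statement about second components. Unfolding the definitions gives $\B A \cap \bigcup_{i \in I} \B \lambda(i) = (A^1 \cap \bigcup_{i \in I} \lambda^1(i), \ A^0 \cup \bigcap_{i \in I} \lambda^0(i))$ and $\bigcup_{i \in I}(\B A \cap \B \lambda(i)) = (\bigcup_{i \in I} (A^1 \cap \lambda^1(i)), \ \bigcap_{i \in I} (A^0 \cup \lambda^0(i)))$. The first components are equal by the ordinary distributivity of $\cap$ over $\cup$ for extensional subsets, so $(D_I)$ is equivalent to $A^0 \cup \bigcap_{i \in I} \lambda^0(i) =_{\C E(X)} \bigcap_{i \in I} (A^0 \cup \lambda^0(i))$, and only the inclusion $\supseteq$ carries content.

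For (ii), the plan is to pick $x \in \bigcap_{i \in I} (A^0 \cup \lambda^0(i))$ and use totality of $\B A$ to split into the cases $x \in A^0$ and $x \in A^1$. In the first case $x$ already lies in the left-hand side. In the second case, for each $i$ the hypothesis yields $x \in A^0$ or $x \in \lambda^0(i)$; the alternative $x \in A^0$, combined with $x \in A^1$ and $A^1 \Disj A^0$, gives $x \neq_X x$, and Ex Falso (the only place where $\INT$ is needed) then forces $x \in \lambda^0(i)$. Since $i$ was arbitrary, $x \in \bigcap_{i \in I} \lambda^0(i)$, and hence $x$ lies in the left-hand side.

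For (i), given $\alpha \in \Cantor$, I would introduce the $x$-independent propositions $P := \exists_{n \in \Nat}(\alpha_n = 1)$ and $Q_n := (\alpha_n = 0)$, and form the extensional subsets $X_P := \{x \in \Real \mid P\}$ and $X_{Q_n} := \{x \in \Real \mid Q_n\}$, which are trivially extensional because their defining properties do not mention $x$. Then $\B A := (\emptys_{\Real}, X_P)$ and $\B \lambda(n) := (\emptys_{\Real}, X_{Q_n})$ lie in $\C E^{\Disj}(\Real)$ because $\emptys_{\Real}$ is vacuously positively disjoint from any subset. Applying $(D_\Nat)$ to this family and reading off second components yields $X_P \cup \bigcap_{n \in \Nat} X_{Q_n} =_{\C E(\Real)} \bigcap_{n \in \Nat} (X_P \cup X_{Q_n})$. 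Testing the equality at $0 \in \Real$ and using decidability of $\alpha_n = 0 \vee \alpha_n = 1$ to conclude $Q_n \vee P$ for every $n$, we have $0 \in \bigcap_{n \in \Nat} (X_P \cup X_{Q_n})$; the distributivity then provides $0 \in X_P \cup \bigcap_{n \in \Nat} X_{Q_n}$, i.e., $P \vee \forall_{n \in \Nat} Q_n$, which is $\LPO$ for $\alpha$. The main obstacle is purely one of bookkeeping: verifying the reduction to second components, being careful that $\INT$ is invoked only at the Ex Falso step in (ii), and checking that constant-in-$x$ properties give legitimate extensional subsets in (i).
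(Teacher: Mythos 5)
Your proposal is correct. Note that the paper itself does not prove Proposition~\ref{prp: first2} but defers to~\cite{MWP23}, so there is no in-text argument to compare against; your reduction of $(D_I)$ to the single inclusion $\bigcap_{i \in I}(A^0 \cup \lambda^0(i)) \subseteq A^0 \cup \bigcap_{i \in I}\lambda^0(i)$ is the right observation, since the first components agree by the constructively valid distribution of $\exists$ over $\wedge$, and only that one direction on the zero-components has content. Your part (ii) is sound: totality gives the case split $x \in A^1 \vee x \in A^0$, and in the $A^1$-branch the sub-case $x \in A^0$ is discharged by disjointness, $(\Ineq_1)$ and Ex Falso, which is indeed the only point where $\INT$ is used. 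Your part (i) is the standard Brouwerian-counterexample instantiation with $x$-independent extensional properties; the only hypotheses you should make explicit are that $(\emptys_{\Real}, X_P)$ and $(\emptys_{\Real}, X_{Q_n})$ are genuine complemented subsets, which follows within $\MIN$ from Proposition~\ref{prp: isvoid}(iii) because $\neq_{\Real}$ is a symmetric, cotransitive apartness, and that $n \mapsto (\emptys_{\Real}, X_{Q_n})$ respects $=_{\Nat}$, which is immediate. With those two remarks the argument is complete.
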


\section{Complemented points}
\label{sec: cp}

Standard topology is point-set topology, in which points play a crucial role for example, in the sudy of separation axioms of topological spaces. In the framework of complemented subsets points are certain complemented subsets that we call complemented points. In this way we can generalise the standard equivalence 
%The notion of point
$x \in A \TOT \{x\} \subseteq A$, where $\{x\} := \{y \in X \mid y =_X x\}$, to the inclusion relation between complemented points and complemented subsets.

\begin{definition}\label{def: cpoint} Let $\C X := (X, =_X, \neq_X) \in \SetExtIneq$. 
	A complemented point
	%or c-point of $\C X$, 
	is a complemented subset $\B x_P := (\{x\}, P)$, where $x \in X$. We also write $\B x_P := (x, P)$.
If $\B A := (A^1, A^0) \in \C E^{\Disj}(X)$, where $A^1 := X_{P^1}$ and $A^0 := X_{P^0}$, for some extensional properties $P^1, P^0$
	on $X$, we define the following elementhood and non-elementhood coditions
	$$(x, P) \cin \B A :\TOT (x, P) \subseteq \B A \TOT P^1(x) \wedge A^0 \subseteq P,$$
	$$(x, P) \ \ncin \B A :\TOT  (x, P) \cin (-\B A) \TOT  P^0(x) \wedge A^1 \subseteq P.$$
	We call 
	$(x, P)$ total, if it is a total complemented subset i.e., $\{x\} \cup P =_{\C E(X)} X$. Let $\Point(\C X)$ be the set of points of $\C X$, $\Point(x)$ the set of complemented points over $x$,
	%$x \in X$, 
	and $\Point(\B A)$ the set of points of $\B A$ i.e., 
	$$\Point(\C X) := \big\{(X_{Q^1}, X_{Q^0}) \in \C E^{\Disj}(X) \mid \exists_{x \in X}\big(X_{Q^1} =_{\C E(X)} \{x\}\big)\big\},$$
	$$\Point(x) := \big\{(X_{Q^1}, X_{Q^0}) \in \C E^{\Disj}(X) \mid X_{Q^1} =_{\C E(X)} \{x\}\big\},$$
	$$\Point(\B A) := \big\{(X_{Q^1}, X_{Q^0}) \in \C E^{\Disj}(X) \mid \exists_{x \in X}\big(X_{Q^1} =_{\C E(X)} \{x\} \wedge P^1(x) \wedge X_{P^0} \subseteq X_{Q^0}\big)\big\}.$$
	If $(x, P) \in \Point(\C X)$, then $(P, x)$  
	%a complemented point
	 %$(x, P)$ 
	 is a complemented copoint of $\C X$. Let $-\Point(\C X), -\Point(c)$, and  $-\Point(\B A)$ be the corresponding sets. We call $(x, P)$ a potential point of $\B A$, if $A^0 \subseteq P$. If $\B {\C F}$ is an extensional subset of $\C E^{\Disj}(X)$, then $(x, P)$ is a potential point of $\B {\C F}$, if it is a potential point of every 
	% complemented subset in 
	 element of $\B {\C F}$.
\end{definition}

\begin{definition}\label{def: notation1}
	If $\phi(x, P)$ is a formula in $\BST$, we introduce the folowing notational conventions:
	$$\forall_{(x,P) \ccin \C X}\phi(\B x) := \forall_{(x,P) \in \Point(\C X)}\big(\phi(x,P)\big), \ \ \ \ 
	\exists_{(x,P) \ccin \C X}\phi(\B x) := \exists_{(x,P) \in \Point(\C X)}\big(\phi(x,P)\big),$$
	$$\forall_{(x,P) \ccin \B A}\phi(\B x) := \forall_{(x,P) \in \Point(\B A)}\big(\phi(x,P)\big), \ \ \ \ 
\exists_{(x,P) \ccin \B A}\phi(\B x) := \exists_{(x,P) \in \Point(\B A)}\big(\phi(x,P)\big).$$
	\end{definition}

Although we can prove $(x, P) \cin \B A \cap \B B \TOT (x, P) \cin \B A \wedge (x, P) \cin \B B$, we cannot show always that $(x, P) \cin \B A \cup \B B \TOT (x, P) \cin \B A \vee (x, P) \cin \B B$. If we had used the operations between complemented subsets of type (II), then the former equivalence also fails. 

\begin{proposition}\label{prp: cpoint1}
	Let $\C X := (X, =_X, \neq_X) \in \SetExtIneq$, $x \in X$, $\B A := (A^1, A^0) \in \C E^{\Disj}(X)$ and 
	$\B B := (B^1, B^0) \in \C E^{\Disj}(X)$, where $A^1 := X_{P^1}, A^0 := X_{P^0}$ and
	$B^1 := X_{Q^1}, B^0 := X_{Q^0}$, for some extensional properties $P^1, P^0, Q^1$ and $Q^0$
	on $X$. Let also $\B {\C F}$ be an extensional subset of $\C E^{\Disj}(X)$. \\[1mm]
		\normalfont (i) 
	\itshape $\B A \subseteq \B B \TOT \forall_{(x,P) \ccin \C X}\big((x,P) \cin \B A \To (x,P) \cin \B B\big)$.\\[1mm]
		\normalfont (ii) 
	\itshape $(x, P) \cin \B A \vee (x, P) \cin \B B \To (x, P) \cin \B A \cup \B B$.\\[1mm]
		\normalfont (iii) 
	\itshape $(x, P) \cin \B A \cup \B B \TOT (x, P) \cin (A^1, A^0 \cap B^0) \vee (x, P) \cin (B^1, A^0 \cap B^0)$.\\[1mm]
		\normalfont (iv) 
	\itshape If $(x, P)$ is a potential point of $\B A$ and $\B B$, then $(x, P) \cin \B A \cup \B B \To (x, P) \cin \B A \vee (x, P) \cin \B B$.\\[1mm]
		\normalfont (v) 
	\itshape $(x, P) \cin \B A \cap \B B \TOT (x, P) \cin \B A \wedge (x, P) \cin \B B$.\\[1mm]
		\normalfont (vi) 
	\itshape $(x, P) \ \ncin \B A \cup \B B \TOT (x, P) \ \ncin \B A \wedge (x, P) \ \ncin \B B$.\\[1mm]
		\normalfont (vii) 
	\itshape $(x, P) \ \ncin \B A \vee (x, P) \ \ncin \B B \To (x, P) \ \ncin \B A \cap \B B$.\\[1mm]
		\normalfont (viii) 
	\itshape $(x, P) \nncin \B A \cap \B B \TOT (x, P) \cin (A^0, A^1 \cap B^1) \vee (x, P) \cin (B^0, A^1 \cap B^1)$.\\[1mm]
	\normalfont (ix) 
	\itshape If $(x, P)$ is a potential point of $-\B A$ and $-\B B$, then
	$(x, P) \ \ncin \B A \cap \B B \To (x, P) \ \ncin \B A \vee (x, P) \ \ncin \B B$.\\[1mm]
		\normalfont (x) 
	\itshape In general, the following hold:
	$$(x, P) \cin \bigcup \B {\C F} \TOT \exists_{X_{P^1} \in \C F^1}\bigg((x, P) \cin 
\bigg(X_{P^1}, \bigcap \C F^0\bigg)\bigg),$$
$$(x, P) \cin \bigcap \B {\C F} \TOT \forall_{(X_{P^1}, X_{P^0}) \in\B {\C F}}\big((x, P) \cin 
(X_{P^1}, X_{P^0})\big),$$ 
$$(x, P) \ \ncin \bigcup \B {\C F} \TOT \forall_{(X_{P^1}, X_{P^0}) \in\B {\C F}}\big((x, P) \ \ncin (X_{P^1}, X_{P^0})\big),$$
$$(x, P) \ \ncin \bigcap \B {\C F} \TOT \exists_{ X_{P^0} \in \C F^0}\bigg((x, P)  \cin \bigg(X_{P^0}, \bigcap \C F^1\bigg)\bigg).$$
	\normalfont (xi) 
\itshape If $(x, P)$ is a potential point of $\B {\C F}$, 
%$(X_{P^1}, X_{P^0})$, for every $(X_{P^1}, X_{P^0}) \in\B {\C F}$, 
the following hold:
$$(x, P) \cin \bigcup \B {\C F} \TOT \exists_{(X_{P^1}, X_{P^0}) \in\B {\C F}}\big((x, P) \cin 
(X_{P^1}, X_{P^0})\big),$$
$$(x, P) \ \ncin \bigcap \B {\C F} \TOT \exists_{(X_{P^1}, X_{P^0}) \in\B {\C F}}\big((x, P) \ \ncin (X_{P^1}, X_{P^0})\big).$$
%	$(x, P) \ \ncin \bigcup \B {\C F} \TOT \forall_{X_{P^1} \in \C F^1}\big(P^1(x)\big)$.\\[1mm]
	\normalfont (xii) 
\itshape If $\B B \in \B {\C F}$, then $\B B \subseteq  \bigcup \B {\C F} $.
\end{proposition}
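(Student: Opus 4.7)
The overall strategy is that nearly every claim reduces to unfolding the definition of $\cin$ and $\ncin$ together with the definitions of $\B A \cup \B B$, $\B A \cap \B B$, $-\B A$, $\bigcup \B{\C F}$ and $\bigcap \B{\C F}$; the real content is in organising which claims are unconditional and which require a potential-point hypothesis, and in using duality $-(-\B A) =_{\mathsmaller{\C E^{\Disj}(X)}} \B A$ to derive the $\ncin$-statements from the $\cin$-statements. Throughout, I would write $(x,P)\cin \B A$ as the conjunction $P^1(x)\wedge A^0\subseteq P$ and proceed.

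For (i), the forward direction is a direct chain of inclusions: if $A^1\subseteq B^1$ and $B^0\subseteq A^0$, and if $P^1(x)\wedge A^0\subseteq P$, then $Q^1(x)\wedge B^0\subseteq P$. For the converse, I would instantiate with the distinguished complemented point $(x,A^0)$ for each $x\in A^1$ (which lies in $\Point(\C X)$ because $\{x\}\Disj A^0$ follows from $A^1\Disj A^0$); it belongs to $\B A$, hence to $\B B$ by hypothesis, yielding $x\in B^1$ and $B^0\subseteq A^0$. For (ii) and (v) I would just unfold: in (v) the key point is that $A^0\cup B^0\subseteq P$ is equivalent to $A^0\subseteq P\wedge B^0\subseteq P$. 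For (iii), note that $\B A\cup\B B:=(A^1\cup B^1,A^0\cap B^0)$; so $(x,P)\cin\B A\cup\B B$ gives $x\in A^1\vee x\in B^1$ and $A^0\cap B^0\subseteq P$, which distributes into the stated disjunction (and conversely each disjunct yields $\B A\cup\B B$-membership because $\{x\}\subseteq A^1\cup B^1$ or $\{x\}\subseteq A^1\cup B^1$). Item (iv) is the only place where a hypothesis genuinely saves us: the disjunction $x\in A^1\vee x\in B^1$ from $(x,P)\cin\B A\cup\B B$ can be upgraded to $(x,P)\cin\B A\vee(x,P)\cin\B B$ precisely because the potential-point assumption supplies $A^0\subseteq P$ and $B^0\subseteq P$ separately, not merely $A^0\cap B^0\subseteq P$.

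Items (vi)--(ix) I would obtain for free by applying (ii)--(v) to the complemented subsets $-\B A$ and $-\B B$, using the De Morgan identities $-(\B A\cup\B B)=(A^0\cap B^0,A^1\cup B^1)=-\B A\cap-\B B$ and $-(\B A\cap\B B)=-\B A\cup-\B B$, together with the definition $(x,P)\ncin\B A:\TOT(x,P)\cin-\B A$. Concretely, (vi) is (v) for $-\B A,-\B B$; (vii) is (ii) for $-\B A,-\B B$; (viii) is (iii) for $-\B A,-\B B$; and (ix) is (iv) for $-\B A,-\B B$ (the hypothesis that $(x,P)$ is a potential point of $-\B A$ and $-\B B$ is exactly what (iv) requires after dualisation, since a potential point of $-\B A$ means $A^1\subseteq P$).

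For (x) I would simply unfold $\bigcup\B{\C F}:=(\bigcup\C F^1,\bigcap\C F^0)$ and $\bigcap\B{\C F}:=(\bigcap\C F^1,\bigcup\C F^0)$ from Definition~\ref{def: union}, together with the standard equivalences $x\in\bigcup\C F^1\TOT\exists_{X_{P^1}\in\C F^1}(P^1(x))$ and $\bigcup\C F^0\subseteq P\TOT\forall_{X_{P^0}\in\C F^0}(X_{P^0}\subseteq P)$; the four equivalences then fall out by direct inspection. The last two of these reduce to the first two by applying the same argument to $-\B{\C F}:=\{-\B F\mid \B F\in\B{\C F}\}$. For (xi), the potential-point hypothesis $\forall_{(X_{P^1},X_{P^0})\in\B{\C F}}(X_{P^0}\subseteq P)$ lets me promote the witness $X_{P^1}\in\C F^1$ from (x) to a witness $(X_{P^1},X_{P^0})\in\B{\C F}$ by picking any companion $X_{P^0}$ and observing $X_{P^0}\subseteq P$; the second equivalence follows by dualising. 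Finally (xii) is immediate: if $\B B:=(B^1,B^0)\in\B{\C F}$, then $B^1\in\C F^1$ whence $B^1\subseteq\bigcup\C F^1$, and $B^0\in\C F^0$ whence $\bigcap\C F^0\subseteq B^0$, which is exactly $\B B\subseteq\bigcup\B{\C F}$. The only mild obstacle is pedantic bookkeeping to check that all complemented pairs constructed along the way, e.g.\ $(A^1,A^0\cap B^0)$ in (iii) and $(X_{P^1},\bigcap\C F^0)$ in (x), are genuinely in $\C E^{\Disj}(X)$; this is routine since $A^1\Disj A^0$ forces $A^1\Disj A^0\cap B^0$, and similarly inside the family.
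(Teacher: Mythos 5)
Your proposal is correct and follows essentially the same route as the paper: unfolding the definitions of $\cin$, $\ncin$ and the operations, using the distinguished point $(x,A^{0})$ for the converse of (i), and exploiting the potential-point hypothesis exactly where the paper does in (iv), (ix) and (xi). The only cosmetic difference is that you obtain all of (vi)--(ix) uniformly by dualising (ii)--(v) via $-(\B A\cup\B B)=(-\B A)\cap(-\B B)$, whereas the paper does this explicitly only for (vi) and proves (vii)--(ix) by repeating the direct arguments of (ii)--(iv); both are equivalent.
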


\begin{proof}
(i) Suppose first that $A^1 \subseteq B^1$ and $B^0 \subseteq A^0$, and let $(x, P) \subseteq \B A$ i.e., 
$P^1(x)$ and $A^0 \subseteq P$. As $P^1(x) \To Q^1(x)$ and $B^0 \subseteq A^0 \subseteq P$, we conclude that $(x, P) \cin \B B$. For the converse inclusion let $x \in A^1$. As $(x, A^0) \cin \B A$, we get $(x, A^0) \cin \B B$ i.e., $x \in B^1$ and $B^0 \subseteq A^0$. As $x \in A^1$ is arbitrary, we conclude that $A^1 \subseteq B^1$, hence $\B A \subseteq \B B$. \\
(ii) If $x \in A^1$ and $A^0 \subseteq P$, then $x \in A^1 \cup B^1$ and $A^0 \cap B^0 \subseteq A^0 \subseteq P$ i.e., $(x, P) \cin \B A \cup \B B$. If $x \in B^1$ and $B^0 \subseteq P$, we work similarly.\\
(iii) We have that $(x, P) \cin (A^1 \cup B^1, A^0 \cap B^0) :\TOT x \in A^1 \cup B^1 \wedge A^0 \cap B^0 \subseteq P$. If $x \in A^1$, then $(x, P) \cin (A^1, A^0 \cap B^0)$, while if $x \in B^1$, then $(x, P) \cin (B^1, A^0 \cap B^0)$. Conversely, $(x, P) \cin (A^1, A^0 \cap B^0)$, then $x \in A^1 \subseteq A^1 \cup B^1$, and as $A^0 \cap B^0 \subseteq P$, we get $(x, P) \cin \B A \cup \B B$. If $(x, P) \cin (B^1, A^0 \cap B^0)$, we work similarly.\\
(iv) We have that $(x, P) \cin (A^1 \cup B^1, A^0 \cap B^0) :\TOT x \in A^1 \cup B^1 \wedge A^0 \cap B^0 \subseteq P$. If $x \in A^1$, and as $A^0 \subseteq P$, we get $(x, P) \cin \B A$. Similarly, if $x \in B^1$, we get $(x, P) \cin \B B$.\\
(v) We have that $(x, P) \cin (A^1 \cap B^1, A^0 \cup B^0) :\TOT x \in A^1 \cap B^1 \wedge A^0 \cup B^0 \subseteq P$, hence $x \in A^1 \wedge A^0 \subseteq P$ and $x \in B^1 \wedge B^0 \subseteq P$. Conversely, if $x \in A^1 \wedge A^0 \subseteq P$ and $x \in B^1 \wedge B^0 \subseteq P$, then $ x \in A^1 \cap B^1 \wedge A^0 \cup B^0 \subseteq P$.\\
(vi) By the use of (v) in the third equivalence that follows we have that
\begin{align*}
(x, P) \ \ncin \B A \cup \B B & :\TOT (x, P) \cin -(\B A \cup \B B)\\
& \TOT (x, P) \cin (-\B A) \cap (-\B B)\\
& \TOT (x, P) \cin (-\B A) \wedge (x, P) \cin (-\B B)\\
& \TOT: (x, P) \ \ncin \B A \wedge (x, P) \ \ncin \B B.
\end{align*}
(vii)-(ix) We work as in the proof of (ii)-(iv).\\
(x)-(xi) These are obvious generalisations of the previous finite cases and their proofs are similar.\\
(xii) It follows easily from the definition of $\bigcup \B {\C F}$.
\end{proof}

By Proposition~\ref{prp: cpoint1}(i) we get
%have that 
$\B A =_{\C E^{\Disj}(X)} \B B \TOT \forall_{(x,P) \ccin \C X}\big(\B x \cin \B A \TOT \B x \cin \B B\big).$
Complemented points help us also to define an
inequality\footnote{The inequality between complemented subsets is treated differently in~\cite{MWP23}.} on $\C E^{\Disj}(X)$ applying strong negation on the previous equality condition of complemented subsets and considering the ``strong negation'' of $\B x \cin \B A$ to be $\B x \nncin \B A$, and vice versa.

\begin{definition}\label{def: csineq}
If $\B A, \B B \in  \C E^{\Disj}(X)$, let the inequality relation
$$\B A \neq_{\C E^{\Disj}(X)} \B B :\TOT \exists_{(x,P) \ccin \C X}\big[(\B x \cin \B A \wedge \B x \nncin \B B) \vee 
(\B x \cin \B B \wedge \B x \nncin \B A)\big].$$
	\end{definition}

The following iinequality is immediate to show.

\begin{corollary}\label{cor: csineq1}
If  $\C X \in \SetExtIneq$ and $X$ is inhabited, then $0_{\B X} \neq_{\C E^{\Disj}(X)} 1_{\B X}$. Moroever, if $(x, P) \neq_{\C E^{\Disj}(X)} (y, Q)$, then $x \neq_X y$.
\end{corollary}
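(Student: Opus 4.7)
The plan is to prove both claims by explicitly unfolding the definitions of $\cin$, $\nncin$, and the inequality $\neq_{\C E^{\Disj}(X)}$, and then exhibiting (or extracting) an appropriate complemented point as witness.

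For the first claim, I would use inhabitedness of $X$ to fix some $x_0 \in X$ and consider the complemented point $\B x_0 := (x_0, \emptys_X) = (\{x_0\}, \emptys_X)$. This is a legitimate element of $\Point(\C X)$ because Proposition~\ref{prp: isvoid}(iv), which holds in $\INT$, guarantees $\{x_0\} \Disj \emptys_X$. Now one checks directly that $\B x_0 \cin 1_{\B X} = (X, \emptys_X)$, because $x_0 \in X$ and $\emptys_X \subseteq \emptys_X$; and that $\B x_0 \nncin 0_{\B X} = (\emptys_X, X)$, which by the definition of $\nncin$ unfolds to $\B x_0 \cin -0_{\B X} = (X, \emptys_X)$, i.e.\ exactly the same condition just verified. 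Thus $\B x_0$ satisfies the second disjunct in the definition of $\neq_{\C E^{\Disj}(X)}$, witnessing $0_{\B X} \neq_{\C E^{\Disj}(X)} 1_{\B X}$.

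For the second claim, suppose $(x, P) \neq_{\C E^{\Disj}(X)} (y, Q)$. Unfolding Definition~\ref{def: csineq} yields a complemented point $\B z := (z, R)$ satisfying either (a) $\B z \cin (x, P)$ and $\B z \nncin (y, Q)$, or (b) $\B z \cin (y, Q)$ and $\B z \nncin (x, P)$. In case (a), the condition $\B z \cin (x, P)$ gives $z \in \{x\}$, i.e.\ $z =_X x$; and $\B z \nncin (y, Q)$ unfolds to $\B z \cin -(y, Q) = (Q, \{y\})$, which in particular forces $\{y\} \subseteq R$, i.e.\ $y \in R$. Since $\B z$ is a valid complemented point, $\{z\} \Disj R$, so $z \neq_X y$; combining this with $z =_X x$ via extensionality of $\neq_X$ gives $x \neq_X y$. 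Case (b) is handled symmetrically.

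The main point to be careful about is the overloaded notation $(x, P)$, where $P$ is simultaneously the subset playing the role of the ``second component'' of the complemented point and appears in the membership condition through $A^0 \subseteq P$: one must keep straight the difference between $\B x \cin \B A$ (which references the defining properties of $\B A$) and the complemented point's own second component. Beyond this bookkeeping, the argument is essentially a direct unfolding, and the only substantive ingredient beyond definitions is the positive description of $\emptys_X \Disj A$ from Proposition~\ref{prp: isvoid}(iv), needed to certify that the witness $\B x_0$ truly lies in $\C E^{\Disj}(X)$.
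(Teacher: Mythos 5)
Your proof is correct and is essentially the argument the paper has in mind: the paper offers no proof (it calls the corollary ``immediate''), and the intended route is exactly your unfolding --- pick a complemented point over some $x_0\in X$ and observe that it lies $\cin 1_{\B X}$ and $\nncin 0_{\B X}$ (this is in fact already recorded as Proposition~\ref{prp: cpoint2}(vi)), and for the second claim extract the witnessing point from Definition~\ref{def: csineq} and use the disjointness built into complemented points plus extensionality of $\neq_X$. One small caveat on your phrase ``Case (b) is handled symmetrically'': a literal mirror of your case (a) argument yields $y \neq_X x$, and since objects of $\SetExtIneq$ are only required to satisfy $(\Ineq_1)$ and $(\Ineq_2)$ --- not symmetry $(\Ineq_4)$ --- this is not automatically $x \neq_X y$. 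The fix stays inside your toolbox: in case (b) use the other conjunct of $\B z \nncin (x,P)$, namely $z \in P$, together with the disjointness $\{x\} \Disj P$ of the point $(x,P)$ itself, to get $x \neq_X z$ in the right order, and then transport along $z =_X y$ by extensionality.
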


%
%\begin{proof}
%(i) 	
%	
%\end{proof}

In analogy to
% the case of
subsets, the disjointness of two complemented subsets is derived from strong negation of the corresponding overlapping relation, where strong negation of the elementhood $(x, P) \cin \B A$ is understood to be the non-elementhood $(x, P) \nncin \B A$, and vice versa.

\begin{definition}\label{def: cineq}
	If $\C X := (X, =_X, \neq_X) \in \SetIneq$ and $\B A, \B B \in \C E^{\Disj}(X)$, let 
	$$\B A \between \B B :\TOT \exists_{(x, P) \ccin \C X}\big((x, P) \cin \B A \wedge (x, P) \cin \B B\big),$$
	$$\B A \Disj \B B :\TOT \forall_{(x, P) \ccin \C X}\big((x, P) \nncin \B A \vee (x, P) \nncin \B B\big).$$

\end{definition}

The above overlapping relation is equivalent to the following condition
$$\exists_{(x,P) \ccin \B A}\exists_{(y,Q) \ccin \B B}\big((x,P) =_{\C E^{\Disj}(X)} (y,Q)\big).$$
Moreover, it is easy to show that $(x,P) \ \Disj \ (y,Q) \To x \neq_X y$.
%If $\B A \subseteq \B B$ and $\B B$ has no complemented point that is not in $\B A$, then $\B B \subseteq \B A$, where we use a strong formulation, in order to avoid the aforementioned negation.

\begin{proposition}\label{prp: cpoint2}
		Let $\C X := (X, =_X, \neq_X), \C Y := (Y, =_Y, \neq_Y) \in \SetExtIneq$, $(x, P) \in \Point(\C X)$,
		$(y, Q) \in \Point(\C Y)$, $\B A := (A^1, A^0) \in \C E^{\Disj}(X)$, and $\B C := (C^1, C^0) \in \C E^{\Disj}(Y)$.\\[1mm]
%	$(x, P) \cin \bigcap \B {\C F} \TOT \forall_{X_{P^1} \in \C F^1}\big(P^1(x)\big)$.\\[1mm]
	\normalfont (i) 
	\itshape $\B A =_{\C E^{\Disj(X)}} \bigcup \Point(\B A)$.\\[1mm]
%	\normalfont (ii) 
%	\itshape $(x, \{x\}^{\neq_X}) \subseteq (x, P) \subseteq (x, \emptys_X)$.\\[1mm]
	\normalfont (ii) 
	\itshape $\Point(x)$ is closed under $($arbitrary$)$ unions and intersections.\\[1mm]
	\normalfont (iii) 
	\itshape If $\B A$ is inhabited and $\B A \subseteq (x, P)$, then $\B A \in \Point(x)$.\\[1mm]
%		\normalfont (v) 
%	\itshape $(x, \{x\}^{\neq_X}) \cin \B A \TOT x \in A^1$ and $(x, \{x\}^{\neq_X}) \nncin \B A \TOT x \in A^0$.\\[1mm]
%		\normalfont (vi) 
%	\itshape $(x, \{x\}^{\neq_X}) \cin \B A \cup \B B \To (x, \{x\}^{\neq_X}) \cin \B A \vee (x, \{x\}^{\neq_X}) \cin \B B$.\\[1mm]
%		\normalfont (vii) 
%	\itshape $(x, \{x\}^{\neq_X}) \nncin \B A \cap \B  B \To (x, \{x\}^{\neq_X}) \nncin \B A \vee (x, \{x\}^{\neq_X}) \nncin \B B$.\\[1mm]
		\normalfont (iv) 
	\itshape $(x, \emptys_X) \cin \B A \TOT x \in A^1 \wedge A^0 =_{\C E(X)} \emptys_X$.\\[1mm]
		\normalfont (v) 
	\itshape If $(x, P) \cin \B A$ and $(y, Q) \cin \B C$, then $(x, P) \times (y, Q) \cin \B A\times \B C$.\\[1mm]
		\normalfont (vi) 
	\itshape $(\INT)$ $(x, P) \cin (X, \emptys_X)$ and $(x, P) \nncin (\emptys_X, X)$.\\[1mm]
		\normalfont (vii) 
	\itshape If $(x, P) \cin \B A$ and $\B A \subseteq \B B$, then $(x,P) \cin \B B$.\\[1mm]
	\normalfont (viii) 
	\itshape If $(x, P) \nncin \B B$ and $\B A \subseteq \B B$, then $(x,P) \nncin \B A$.
%	\normalfont (xiii)
%	\itshape  $0_{\B X} \Disj 1_{\B X}$.
\end{proposition}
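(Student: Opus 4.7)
The plan is to handle each of the eight items by unfolding the definitions of $\cin$, $\nncin$, of the operations on complemented subsets (Definition~\ref{def: opers}), and of the union/intersection of a class of complemented subsets (Definition~\ref{def: union}). Most items reduce to straightforward bookkeeping on the two components of a complemented subset, so the chief work is arranging the arguments so that the more delicate items, which depend on $\INT$ and on previously proven facts about the empty subset, are clearly separated from the purely minimal-logic ones.

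For (i), I would compute $\bigcup \Point(\B A)$ directly. Setting $\C F^1$ and $\C F^0$ as in Definition~\ref{def: union}, one checks that $\C F^1 = \{\{x\} \mid x \in A^1\}$, whose union equals $A^1$, and that $\C F^0$ is the collection of extensional subsets $Q$ with $A^0 \subseteq Q$ (for $A^1$ inhabited). The intersection of $\C F^0$ then contains $A^0$ because every member does, and is contained in $A^0$ because $A^0$ itself is a member (take $Q := A^0$). For (ii), unions and intersections in $\Point(x)$ collapse to $(\{x\}, \bigcap Q_i)$ and $(\{x\}, \bigcup Q_i)$ respectively, and the positive disjointness $\{x\} \Disj X_{Q_i}$ propagates in both directions using only monotonicity of $\Disj$ with respect to subset inclusion. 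For (iii), inhabitedness of $\B A$ together with $A^1 \subseteq \{x\}$ yields some $a \in A^1$ with $a =_X x$; the extensionality of the defining property of $A^1$ then gives $x \in A^1$, hence $A^1 =_{\C E(X)} \{x\}$, and the second components are handled by the reverse inclusion in $\B A \subseteq (x,P)$.

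Items (vii) and (viii) are pure unfolding: $(x,P) \cin \B A$ gives $x \in A^1$ and $A^0 \subseteq P$; combining with $A^1 \subseteq B^1$ and $B^0 \subseteq A^0$ yields $(x,P) \cin \B B$, and dually for (viii) after passing through $-\B B$ and $-\B A$. For (v), I would expand $\B A \times \B C$ and $(x,P) \times (y,Q)$ using Definition~\ref{def: opers}, note that $\{x\} \times \{y\} \subseteq A^1 \times C^1$ from the singleton parts, and verify $(A^0 \times Y) \cup (X \times C^0) \subseteq (P \times Y) \cup (X \times Q)$ by checking each summand using $A^0 \subseteq P$ and $C^0 \subseteq Q$.

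Items (iv) and (vi) are where I expect the only real subtlety, and where $\INT$ enters. For (iv), the forward direction gives $x \in A^1$ and $A^0 \subseteq \emptys_X$; to upgrade the inclusion $A^0 \subseteq \emptys_X$ to the equality $A^0 =_{\C E(X)} \emptys_X$ one needs the converse $\emptys_X \subseteq A^0$, which requires Ex Falso. For (vi), the inclusion $\emptys_X \subseteq P$ needed to witness $(x,P) \cin (X, \emptys_X)$ is again exactly an instance of Ex Falso, and the second assertion follows by noting that $-(\emptys_X, X) = (X, \emptys_X)$. The main obstacle throughout is that, because complemented points are pairs, every inclusion requires two separate inclusions of extensional subsets to be verified, and one must be vigilant in distinguishing the cases that require $\INT$ (those crossing through $\emptys_X$) from those provable in $\MIN$.
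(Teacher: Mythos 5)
Your proposal is correct and follows essentially the same route as the paper, which itself only writes out (i), (ii) and (v) --- via the observation that $(x,A^0) \cin \B A$ for every $x \in A^1$, the explicit formulas $(x,P)\cup(x,Q):=(x,P\cap Q)$ and $(x,P)\cap(x,Q):=(x,P\cup Q)$, and the componentwise inclusions for the product --- and declares the remaining cases easy unfoldings, which is exactly what you supply. Your observation that the forward direction of (iv) and both claims of (vi) rest on the Ex Falso inclusion $\emptys_X \subseteq Q$ is accurate and, if anything, slightly more careful than the paper, which marks only (vi) with $(\INT)$.
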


\begin{proof}
(i) As $(x, A^0) \cin \B A$, for every $x \in A^1$, we get
$$\bigcup \Point(\B A) :=  \bigg(\bigcup \Point^1(\B A), \bigcap \Point^0(\B A)\bigg)=_{\C E^{\Disj}(X)} \B A.$$
(ii) We use the definitions
$(x, P) \cup (x, Q) := (x, P \cap Q)$ and $(x, P) \cap (x, Q) := (x, P \cup Q)$.\\
%(v) $(x, \{x\}^{\neq_X}) \cin (A^1, A^0) :\TOT x \in A^1 \wedge A^0 \subseteq \{x\}^{\neq_X} \TOT x \in A^1$.  The second equivalence follows similarly, while 
%(vi) and (vii) follow immediately from (v), and (viii) follows trivially. To show 
(v) The hypotheses $x \in A^1 \wedge A^0 \subseteq P$ and $y \in C^1 \wedge C^0 \subseteq Q$ imply that 
$$\big((x,y), (P \times Y) \cup (X \times Q)\big) \cin \big(A^1 \times C^1, (A^0 \times Y) \cup (X \times C^0)\big),$$
as $(x, y) \in A^1 \times C^1$, $(A^0 \times Y) \subseteq  (P \times Y)$, and $(X \times C^0) \subseteq (X \times Q)$. 
The rest cases follow easily.
%cases are straightforward to show.
%proofs of (x) and (xi) are straightforward, and (xii) follows immediately from (xi).
\end{proof}

%Notice that (ix) does not hold in general, 
If the product of two complemented subsets is 
defined according to the second type of operations on them (see~\cite{PW22, MWP23}), then
Proposition~\ref{prp: cpoint2}(v) doesn't hold in general. 
The following propositions are straightforward to show.

\begin{proposition}\label{prp: finvpoint1}
	Let $\C X, \C Y, \fXY$ in $(\SetExtIneq, \StrExtFun)$ with a left inverse $g \colon Y \to X$.\\[1mm]
	\normalfont (i)
	\itshape $f$ inverses points i.e., $f^{-1}(y, Q) := (g(y), f^{-1}(Q)) $ is in $\Point(\C X)$, for every $(y, Q) \in \Point(\C Y)$.\\[1mm]
		\normalfont (ii)
	\itshape If $\B B \in \C E^{\Disj}(Y)$, and $(y,Q) \cin \B B$, then $f^{-1}(y, Q) \cin f^{-1}(\B B)$.\\[1mm]
	\normalfont (iii)
	\itshape If $\B B \in \C E^{\Disj}(Y)$, and $(y,Q) \nncin \B B$, then $f^{-1}(y, Q) \nncin f^{-1}(\B B)$.
	
\end{proposition}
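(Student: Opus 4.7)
The plan is to use the defining identity $f \circ g =_{\D F(Y, Y)} \id_Y$ of a left inverse $g$, together with the strong extensionality of $f$ and the monotonicity of the complemented inverse image $f^{-1}$ (Proposition~\ref{prp: finvproperties}(ii, vi)). By Proposition~\ref{prp: leftinv}(i), $f$ is a surjection and $g$ is a strong injection, but I will only need $f \circ g = \id_Y$ and the strong extensionality of $f$ in what follows.

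For (i), the only thing to check is that the pair $(\{g(y)\}, f^{-1}(Q))$ satisfies the disjointness condition $\{g(y)\} \Disj f^{-1}(Q)$, which by extensionality of $\neq_X$ reduces to proving $g(y) \neq_X x'$ for every $x' \in f^{-1}(Q)$. First I will unfold $x' \in f^{-1}(Q)$ to extract some $q \in Q$ with $f(x') =_Y q$. Since $(y, Q) \in \Point(\C Y)$ gives $y \neq_Y q$, extensionality of $\neq_Y$ yields $y \neq_Y f(x')$. Combining this with $f(g(y)) =_Y y$ and once more using extensionality of $\neq_Y$ produces $f(g(y)) \neq_Y f(x')$, after which strong extensionality of $f$ delivers $g(y) \neq_X x'$, as required.

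For (ii), unfolding $(y, Q) \cin \B B$ supplies $y \in B^1$ and $B^0 \subseteq Q$. From $f(g(y)) =_Y y \in B^1$ I obtain $g(y) \in f^{-1}(B^1)$ directly from the definition of the inverse image, while monotonicity of $f^{-1}$ transfers $B^0 \subseteq Q$ to $f^{-1}(B^0) \subseteq f^{-1}(Q)$. These are exactly the two clauses of $f^{-1}(y, Q) \cin f^{-1}(\B B)$. Part (iii) will be obtained by the same argument applied to $-\B B := (B^0, B^1)$ in place of $\B B$, using the definitional unfolding $(y, Q) \nncin \B B :\TOT (y, Q) \cin -\B B$ and the identity $f^{-1}(-\B B) =_{\C E^{\Disj}(X)} -f^{-1}(\B B)$ from Proposition~\ref{prp: finvproperties}(vi).

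The main obstacle, mild though it is, will be the disjointness verification in (i): one must resist the temptation to simplify $f^{-1}(\{y\})$ to $\{g(y)\}$ as extensional subsets of $X$, since $g \circ f$ is not known to equal $\id_X$ under the left-inverse hypothesis. Instead, one must work at the level of complemented points and push the inequality from $Y$ to $X$ through the chain $y \neq_Y q$, then $y \neq_Y f(x')$, then $f(g(y)) \neq_Y f(x')$, and finally $g(y) \neq_X x'$. No logical principle beyond minimal logic is needed.
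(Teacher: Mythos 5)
Your proof is correct, and since the paper states this proposition without proof (it is declared ``straightforward to show''), your argument simply supplies the routine verification the author leaves to the reader: the chain $y \neq_Y q \To y \neq_Y f(x') \To f(g(y)) \neq_Y f(x') \To g(y) \neq_X x'$ for the disjointness in (i), the identity $f(g(y)) =_Y y$ plus monotonicity of $f^{-1}$ for (ii), and the reduction of (iii) to (ii) via $-\B B$. Your remark that one must not conflate $\{g(y)\}$ with $f^{-1}(\{y\})$ is apt, since only $f \circ g =_{\D F(Y,Y)} \id_Y$ is assumed.
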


\begin{proposition}\label{prp: finvpoint2}
	Let $\C X, \C Y, \fXY$ and $g \colon Y \to X$ in $(\SetExtIneq, \StrExtFun)$. The following are equivalent:\\[1mm]
	\normalfont (i)
	\itshape $g$ is a left inequality-adjoint to $f$.\\[1mm] 
	\normalfont (ii)
	\itshape $f$ inverses points with $g$ as a modulus of inversion i.e., $f^{-1}(y, Q) := (g(y), f^{-1}(Q)) \in \Point(\C X)$, for every $(y, Q) \in \Point(\C Y)$, and $g$ inverses points with $f$ as a modulus of inversion i.e., $g^{-1}(x, P) := (f(x), g^{-1}(P)) \in \Point(\C Y)$, for every $(x, P) \in \Point(\C X)$.
	
\end{proposition}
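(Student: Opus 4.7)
The plan is to unfold both conditions into elementary inequalities between points, using singletons as the ``minimal'' complemented points that package a single inequality. The key observation is that a lone inequality $y \neq_Y f(x)$ suffices to make $(y, \{f(x)\})$ a well-formed element of $\Point(\C Y)$, and that, dually, the fixed element $x$ always lies in $f^{-1}(\{f(x)\})$ via the reflexivity identity $f(x) =_Y f(x)$, so it can be used to test any disjointness condition on $f^{-1}(\{f(x)\})$. The same remark, with the roles of $f$ and $g$ swapped, drives the dual arguments.

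For (i) $\To$ (ii) I would verify the two conjuncts of (ii) directly and independently. Given $(y, Q) \in \Point(\C Y)$ and $z \in f^{-1}(Q)$, pick $q \in Q$ with $f(z) =_Y q$; from $\{y\} \Disj Q$ I obtain $y \neq_Y q$, and the extensionality of $\neq_Y$ transports this to $y \neq_Y f(z)$, whence the biconditional $g \dashv f$ yields $g(y) \neq_X z$. This establishes $\{g(y)\} \Disj f^{-1}(Q)$, i.e.\ $(g(y), f^{-1}(Q)) \in \Point(\C X)$. The second conjunct, concerning $(f(x), g^{-1}(P))$ for $(x, P) \in \Point(\C X)$, is formally dual, using the extensionality of $\neq_X$ and the other direction of the biconditional in $g \dashv f$. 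For (ii) $\To$ (i) I would reverse the construction: to show $y \neq_Y f(x) \To g(y) \neq_X x$ I form $(y, \{f(x)\}) \in \Point(\C Y)$ (well-defined precisely because $y \neq_Y f(x)$), apply the first conjunct of (ii) to obtain $(g(y), f^{-1}(\{f(x)\})) \in \Point(\C X)$, and specialise the resulting disjointness to $x \in f^{-1}(\{f(x)\})$. The converse implication is symmetric, built on $(x, \{g(y)\}) \in \Point(\C X)$, the second conjunct of (ii), and the specialisation $y \in g^{-1}(\{g(y)\})$.

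The main subtlety I expect concerns the order of arguments inside $\Disj$: since $\{a\} \Disj \{b\}$ unfolds to $a \neq b$ rather than $b \neq a$, matching this against the particular shape of the biconditional $g \dashv f$ requires some care in the dual steps. For the canonical symmetric inequalities of section~\ref{sec: setineq} (on $\Real$, metric spaces, and function-induced inequalities) the issue disappears automatically; in general one reads $g \dashv f$ as supplying both directions of the required equivalence, so that the two halves of the biconditional together with the extensionality of the two inequalities suffice to carry out the dualisations. No use of strong extensionality of $f$ or $g$ is needed in the argument itself: both conditions concern only how $f$ and $g$ interact with inequalities point-by-point, and the work is entirely in bookkeeping of disjointness and extensionality.
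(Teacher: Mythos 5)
Your unfolding is correct and is precisely the ``straightforward'' argument the paper has in mind: it states Proposition~\ref{prp: finvpoint2} without proof, and the intended verification is exactly your reduction of both conditions to single inequalities via singleton second components, using extensionality of $\neq_X$, $\neq_Y$ and the two directions of the biconditional $g \dashv f$. The ordering subtlety you flag (that $\Disj$ places the first component on the left of $\neq$, while the adjointness condition fixes a particular orientation) is real but is absorbed by the paper's implicit treatment of inequalities as symmetric in practice, so your proof stands as written.
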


\begin{proposition}\label{prp: fimagepoint}
Let $\C X, \C Y$ and $\fXY$ in $\SetExtIneq$. The following are equivalent:\\[1mm]
\normalfont (i)
\itshape There is $g \colon Y \to X$ in $\StrExtFun$, such that $g \circ f$ is an injection.\\[1mm]
\normalfont (ii)
\itshape $f$ is an injection.\\[1mm]
\normalfont (iii)
\itshape $f$ preserves points i.e., $f(x, P) := (f(x), f(P))$ is in $\Point(\C Y)$, for every $(x,P) \in \Point(\C X)$.
	
\end{proposition}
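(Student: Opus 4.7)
The plan is to establish the cyclic chain of implications $\text{(i)} \Rightarrow \text{(ii)} \Rightarrow \text{(iii)} \Rightarrow \text{(i)}$, drawing on the defining properties of injections, strong extensionality, and complemented points.

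For $\text{(i)} \Rightarrow \text{(ii)}$, I begin with a witnessing $g \in \StrExtFun$ such that $g \circ f$ is an injection. Taking $x \neq_X x'$, the injection property of $g \circ f$ yields $g(f(x)) \neq_X g(f(x'))$, and the strong extensionality of $g$ then produces $f(x) \neq_Y f(x')$, so $f$ is itself an injection.

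For $\text{(ii)} \Rightarrow \text{(iii)}$, I start from a complemented point $(x, P) \in \Point(\C X)$, for which $\{x\} \Disj P$ unfolds as $x \neq_X y$ for every $y \in P$. Injectivity of $f$ gives $f(x) \neq_Y f(y)$, and for an arbitrary $z \in f(P)$, written $z =_Y f(y)$ with $y \in P$, extensionality of $\neq_Y$ upgrades this to $f(x) \neq_Y z$. Thus $\{f(x)\} \Disj f(P)$, so the image $(f(x), f(P))$ belongs to $\Point(\C Y)$.

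For $\text{(iii)} \Rightarrow \text{(i)}$, I first recover $\text{(ii)}$ from $\text{(iii)}$: given $x \neq_X x'$, the pair $(x, \{x'\})$ is a complemented point of $\C X$, so by $\text{(iii)}$ its image $(f(x), f(\{x'\}))$ is a complemented point of $\C Y$; since $f(x') \in f(\{x'\})$, the disjointness condition forces $f(x) \neq_Y f(x')$, and $f$ is an injection. The main obstacle is then producing an explicit witnessing $g \colon Y \to X$ in $\StrExtFun$ whose composite with $f$ is an injection; I expect the argument to define $g$ by inverting $f$ on its image $f(X)$ and extending constantly on the rest, with the strong extensionality of $g$ and the injectivity of the composite $g \circ f$ both flowing from the injection property of $f$.
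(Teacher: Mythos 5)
Your arguments for (i) $\To$ (ii), (ii) $\To$ (iii) and (iii) $\To$ (ii) are correct, and since the paper lists this proposition among those it declares ``straightforward to show'' and gives no proof, there is nothing more specific to compare them against. The genuine gap is the return to (i): you never actually produce the strongly extensional $g \colon Y \to X$, and the construction you sketch is not available here. Defining $g$ by ``inverting $f$ on its image $f(X)$ and extending constantly on the rest'' requires (a) deciding, for a given $y \in Y$, whether $y \in f(X)$, which is not constructively possible; (b) an inverse of $f$ on $f(X)$, which presupposes that $f$ is an embedding, i.e.\ $f(x) =_Y f(x') \To x =_X x'$ --- but in this paper an injection is the implication $x \neq_X x' \To f(x) \neq_Y f(x')$ between the \emph{inequalities}, which from $f(x) =_Y f(x')$ yields only $\neg(x \neq_X x')$ and hence $x =_X x'$ only when $\neq_X$ is tight $(\Ineq_3)$, an assumption not made; and (c) an inhabitant of $X$ for the constant part, which is also not given.

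Moreover, the implication (ii) $\To$ (i) cannot be recovered from injectivity of $f$ alone: if $X$ is not inhabited while $Y$ is, then $f$ is vacuously an injection and vacuously preserves points, yet there is no function $Y \to X$ at all. The situations in which a suitable $g$ does exist are those of Proposition~\ref{prp: leftinv}, where a left inverse $g_l$ of $f$ is supplied as data; there $g_l \circ f =_{\D F(X,X)} \id_X$ is an injection and, when $f$ is an injection, $g_l$ is strongly extensional by Proposition~\ref{prp: leftinv}(ii). To close your proof you would therefore need either to import such a hypothesis or to exhibit a construction of $g$ that works in $\BISH$, which I do not see; as written, your proposal establishes only (i) $\To$ (ii) $\TOT$ (iii).
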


\begin{remark}
	\normalfont
If $f$ preserves points, $\B A \in \C E^{\Disj}(X)$ and $(x, P) \cin \B A$, then $f(x, P) \cin f(\B A)$.
Contrary to the case of classical subsets, where $f(x) \notin f(A) \To x \notin A$, we have that if $f(x, P) \nncin f(\B A)$, then $x \in f^{-1}(f(A^0))$ and $f^{-1}(f(A^1)) \subseteq P$, which does not imply, in general, that $x \in A^0$ and $A^1 \subseteq P$ i..e, $(x,P) \nncin \B A$, unless $f$ is an embedding (see Proposition~\ref{prp: extempty1}(xiv)), hence unless $f$ is a strong injection.
\end{remark}
%
%\begin{proof}
%(i) $\To$ (ii) 
%	
%	
%\end{proof}

%
%
%\section{Canonical complemented points}
%\label{sec: ccp}

The behaviour of complemented points e.g., with respect to union of complemented subsets causes several problems, especially when one wants to pass from the union of complemented subsets to local properties, as in the case of the local characterisation of a base in standard topology. It turns out that we need to concentrate on fewer, more well-behaved complemented points.
For that, we introduce and study the so-called canonical complemented points, which have a simple form and suit perfectly to the formulation of equivalent local version of several concepts in $\cs$-topology.

\begin{definition}[Canonical complemented points]\label{def: canonicalpoint}
	Let $\C X := (X, =_X, \neq_X) \in \SetExtIneq$. A complemented point of the form $(x, \{x\}^{\neq_X})$, where $x \in X$, is called canonical. Its complement is a canonical copoint. We also denote the canonical point by $\B x$, and the corresponding copoint by $-\B x$. 	
	Their sets are denoted by $\cPoint(X)$ and $-\cPoint(\C X)$, respectively. If $\B A \in \C E^{\Disj}(X)$, the sets of canonical points and copoints of $\B A$ are denoted by $\cPoint(\B A)$ and $-\cPoint(\B A)$, respectively.
	
\end{definition}

It is imeediate to see that for canonical complemented points, and in connection to Corollary~\ref{cor: csineq1}, we have that $\B x \neq_{\C E^{\Disj}(X)} \B y \TOT x \neq_X y$. Next propoeition shows how much better the behaviour of canonical complemented points is in comparison to arbitrary complemented points. Furthermore, case (viii) is used in the proof of Proposition~\ref{prp: weak}.

\begin{proposition}\label{prp: ccpoint1}
	Let $\C X := (X, =_X, \neq_X), \C Y := (Y, =_Y, \neq_Y) 
	\in \SetExtIneq$, $x \in X$, 
%	$(x, P) \in \Point(\C X)$, 	$(y, Q) \in \Point(\C Y)$, 
	$\B A \in \C E^{\Disj}(X)$, $(\B G_i)_{i \in I}$ a family of complemented subsets of $\C X$ indexed by some set $(I, =_I)$, and $f \colon X \to Y$ strongly extensional.\\[1mm]
	%	$(x, P) \cin \bigcap \B {\C F} \TOT \forall_{X_{P^1} \in \C F^1}\big(P^1(x)\big)$.\\[1mm]
		\normalfont (i) 
	\itshape $\B x \cin \B A \TOT x \in A^1$ and $\B x \nncin \B A \TOT x \in A^0$.\\[1mm]
	\normalfont (ii) 
	\itshape $\B x \subseteq (x, P) \subseteq (x, \emptys_X)$, for every $(x, P) \in \Point(\C X)$.\\[1mm]
	\normalfont (iii) 
	\itshape $\B A \subseteq \bigcup \cPoint(\B A)$.\\[1mm]
%	\normalfont (iii) 
%	\itshape $\Point(x)$ is closed under $($arbitrary$)$ unions and intersections.\\[1mm]
		\normalfont (iv) 
	\itshape $\B x \cin \B A \cup \B B \To \B x \cin \B A \vee \B x \cin \B B$.\\[1mm]
	\normalfont (v) 
	\itshape $\B x \nncin \B A \cap \B  B \To \B x \nncin \B A \vee \B x \nncin \B B$.\\[1mm]
	\normalfont (vi) 
	\itshape $\B x \cin \bigcup_{i \in I}\B G_i \TOT \exists_{i \in I}(\B x \cin \B G_i)$.\\[1mm]
		\normalfont (vii) 
	\itshape $\B x \nncin \bigcap_{i \in I}\B G_i \TOT \exists_{i \in I}(\B x \nncin \B G_i)$. \\[1mm]
		\normalfont (viii) 
	\itshape If $\B H \in \C E^{\Disj}(Y)$ and $x \in X$, then $\B x \cin f^{-1}(\B H) \TOT \B {f(x)} \cin \B H$. 
	
\end{proposition}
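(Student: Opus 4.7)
The plan is to prove (i) first and then leverage it together with the algebraic identities recorded in Proposition~5.6 and the operations in Definition~4.6. The fundamental point is that a canonical complemented point $\B x := (x, \{x\}^{\neq_X})$ carries the \emph{maximal} possible copoint compatible with $x$: the condition $A^0 \subseteq \{x\}^{\neq_X}$ in $\B x \cin \B A$ is automatic whenever $x \in A^1$, because the disjointness $A^1 \Disj A^0$ (together with symmetry of $\neq_X$) forces each element of $A^0$ into $\{x\}^{\neq_X}$. Once (i) is established, almost every other item reduces to a calculation on the positive components only, which collapses the complemented setting to ordinary subset reasoning.

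For (i), one direction is immediate from the definition of $\cin$; the converse takes $y \in A^0$ and applies $A^1 \Disj A^0$ to get $x \neq_X y$, then symmetry to conclude $y \in \{x\}^{\neq_X}$. The $\nncin$ equivalence follows by applying the first part to $-\B A$. For (ii), $\B x \subseteq (x,P)$ requires only $P \subseteq \{x\}^{\neq_X}$, which holds because $\{x\} \Disj P$ by the very definition of complemented point; and $(x,P) \subseteq (x, \emptys_X)$ reduces to $\emptys_X \subseteq P$, which holds in $\INT$ by ex falso from $y \neq_X y$. For (iii), using (i) we identify $\cPoint(\B A) = \{(x, \{x\}^{\neq_X}) \mid x \in A^1\}$, so by Definition~4.6
\[
\bigcup \cPoint(\B A) \ =_{\mathsmaller{\C E^{\Disj}(X)}}\ \bigg(A^1,\ \bigcap_{x \in A^1}\{x\}^{\neq_X}\bigg);
\]
the positive components coincide, and $A^0 \subseteq \bigcap_{x\in A^1}\{x\}^{\neq_X}$ by exactly the same disjointness argument as in (i), which yields the required inclusion.

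Items (iv)--(viii) all follow by applying (i) to reduce membership of canonical points to membership of $x$ in positive parts. For (iv), $\B x \cin \B A \cup \B B$ unfolds via (i) to $x \in A^1 \cup B^1$, then case analysis and a second use of (i) give the disjunction. Part (v) is obtained from (iv) by the de Morgan identity $-(\B A \cap \B B) =_{\mathsmaller{\C E^{\Disj}(X)}} (-\B A) \cup (-\B B)$ from Proposition~5.6(vi), since $\B x \nncin \B C \TOT \B x \cin -\B C$ by definition. For (vi), both sides collapse under (i) to $\exists_{i \in I}(x \in G_i^1)$, and (vii) is dual via complementation. For (viii), two applications of (i) chain together with the definition $f^{-1}(\B H) = (f^{-1}(H^1), f^{-1}(H^0))$ to yield $\B x \cin f^{-1}(\B H) \TOT x \in f^{-1}(H^1) \TOT f(x) \in H^1 \TOT \B{f(x)} \cin \B H$, where strong extensionality of $f$ is what ensures $f^{-1}(\B H) \in \C E^{\Disj}(X)$ to begin with (Proposition~4.8(i)).

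The main obstacle is simply pinning down the symmetry of $\neq_X$ needed to pass from the disjointness condition $\forall_{a^1 \in A^1}\forall_{a^0 \in A^0}(a^1 \neq_X a^0)$ to the symmetric form used in $\{x\}^{\neq_X}$; in the extensional setting of $\SetExtIneq$ this is either assumed or is inherited from the apartness structure of the inequalities used in practice (cf.\ the canonical inequalities in Definition~2.5 and the examples following Definition~2.7). Once this is granted, every item above is a short calculation that traces membership of a canonical point through the operations on complemented subsets.
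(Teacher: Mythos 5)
Your proof is correct and follows essentially the same route as the paper's: establish (i) by observing that the disjointness $A^1 \Disj A^0$ makes the condition $A^0 \subseteq \{x\}^{\neq_X}$ automatic once $x \in A^1$, and then reduce every remaining item to a statement about the positive components. You are in fact slightly more explicit than the paper, which silently uses the symmetry of $\neq_X$ that you flag, and your derivation of (viii) by two applications of (i) is a clean shortcut for the paper's direct verification of the same equivalence.
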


\begin{proof}
	(i) $(x, \{x\}^{\neq_X}) \cin (A^1, A^0) :\TOT x \in A^1 \wedge A^0 \subseteq \{x\}^{\neq_X} \TOT x \in A^1$. Case (ii) is trivial, and (iii) follows from the observation $A^0 \subseteq \bigcap_{x \in A^1}\{x\}^{\neq_X} = (A^1)^{\neq_X}$. Cases (iv) and  follow immediately from (i), while (vi) and (vii) are trivial generalisations of (iv) and (v), respectively. Condition $\B x \cin f^{-1}(\B H)$ in (viii) is the conjunction $x \in f^{-1}(H^1) $ and $f^{-1}(H^0) \subseteq \{x\}^{\neq_X}$, while  $\B {f(x)} \cin \B H$ is the conjunction $f(x) \in H^1$ and $H^0 \subseteq \{f(x)\}^{\neq_X}$. Clearly, $x \in f^{-1}(H^1) \TOT f(x) \in H^1$. If $y \in H^0$, then $y \neq_Y f(x)$, as $f(x) \in H^1$ i.e., in this proof we didn't use the hypothesis $f^{-1}(H^0) \subseteq \{x\}^{\neq_X}$. If $x{'} \in f^{-1}(H^0)$ i.e., there is some $y \in H^0$ with $f(x{'}) =_Y y$, then by the hypothesis $H^0 \subseteq \{f(x)\}^{\neq_X}$ and the extensionality of $\neq_Y$ we get $f(x{'}) \neq_Y f(x)$, and, as $f$ is strongly extensional, we conclude that $x{'} \neq_X x$.
\end{proof}

\begin{definition}\label{def: notation2}
	If $\phi(x, \{x^{\neq_X}\})$ is a formula in $\BST$, let the notational conventions:
	$$\forall_{\B x \ccin \C X}\phi(\B x) := \forall_{(x,\{x^{\neq_X}\}) \in \cPoint(\C X)}\big(\phi(x,\{x^{\neq_X}\})\big), \ \ \ \ 
	\exists_{\B x \ccin \C X}\phi(\B x) := \exists_{(x,\{x^{\neq_X}\}) \in \cPoint(\C X)}\big(\phi(x,\{x^{\neq_X}\})\big),$$
	$$\forall_{\B x \ccin \B A}\phi(\B x) := \forall_{(x,\{x^{\neq_X}\}) \in \cPoint(\B A)}\big(\phi(x,\{x^{\neq_X}\})\big), \ \ \ \ 
	\exists_{\B x \ccin \B A}\phi(\B x) := \exists_{(x,\{x^{\neq_X}\}) \in \cPoint(\B A)}\big(\phi(x,\{x^{\neq_X}\})\big).$$
\end{definition}

Understanding $\B x \nncin \B A$ as a sort of strong negation of $\B x \cin \B A$, then the canonical complemented points characterise the inclusion relation between complemeneted subsets, if we interpret the implication in Proposition~\ref{prp: cpoint1}(i) in the strong way. 
Next fact follows immediately from Proposition~\ref{prp: ccpoint1}(i).

\begin{proposition}\label{prp: ccpoint2}
	If $\C X := (X, =_X, \neq_X) \in \SetExtIneq$ and $\B A, \B B \in \C E^{\Disj}(X)$,  then
	$$\B A\subseteq \B B \TOT \forall_{\B x \ccin \C X}\big[(\B x \cin \B A \To \B x \cin \B B) \wedge (\B x \nncin \B B \To \B x \nncin \B A)\big).$$
\end{proposition}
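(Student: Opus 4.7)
The plan is to reduce the quantified condition on the right-hand side to the literal definition of $\B A \subseteq \B B$ given in Definition~\ref{def: extapartsubsets}, namely the conjunction $A^1 \subseteq B^1$ and $B^0 \subseteq A^0$. The whole argument rests on the two equivalences for canonical complemented points that have already been established in Proposition~\ref{prp: ccpoint1}(i):
\[\B x \cin \B A \TOT x \in A^1, \qquad \B x \nncin \B A \TOT x \in A^0,\]
and of course the analogous equivalences for $\B B$.

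First I would rewrite the right-hand side of the claimed equivalence, coordinatewise, by substituting these equivalences: the clause $\B x \cin \B A \To \B x \cin \B B$ becomes $x \in A^1 \To x \in B^1$, and the clause $\B x \nncin \B B \To \B x \nncin \B A$ becomes $x \in B^0 \To x \in A^0$. Since $\forall_{\B x \ccin \C X}$ is by Definition~\ref{def: notation2} precisely universal quantification over $x \in X$ (each $x$ giving rise to a canonical point $(x, \{x\}^{\neq_X})$), the right-hand side reduces literally to
\[\forall_{x \in X}(x \in A^1 \To x \in B^1) \ \wedge \ \forall_{x \in X}(x \in B^0 \To x \in A^0),\]
which by Definition~\ref{def: extsubset} is $A^1 \subseteq B^1 \wedge B^0 \subseteq A^0$, i.e.\ $\B A \subseteq \B B$.

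Both directions of the stated equivalence follow at once from this chain of equivalences; no separate direction is harder than the other, and no use of intuitionistic logic beyond $\MIN$ is required, since all substitutions rest on the provable equivalences of Proposition~\ref{prp: ccpoint1}(i). There is no real obstacle here: the statement is essentially a restatement of the defining inclusions for complemented subsets in the language of canonical points, made possible by the sharp ``iff'' behaviour of canonical points which, unlike general complemented points (cf.\ Proposition~\ref{prp: cpoint1}(i), whose reverse implication requires running over all $P$), detects membership and non-membership on the nose.
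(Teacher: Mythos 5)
Your proof is correct and is exactly the paper's argument: the paper states that the proposition ``follows immediately from Proposition~\ref{prp: ccpoint1}(i)'', which is precisely the reduction you carry out, rewriting $\B x \cin \B A$ as $x \in A^1$ and $\B x \nncin \B A$ as $x \in A^0$ so that the quantified condition collapses to $A^1 \subseteq B^1 \ \& \ B^0 \subseteq A^0$. Nothing further is needed.
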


\section{The canonical $\cs$-topology induced by a metric}
\label{sec: csmetric}

Metric spaces are important examples of a topological space in classical topology, and crucial case-studies to all approaches to constructive topology.
Many of the concepts of $\cs$-topological spaces introduced in later sections need to be tested on metric spaces first. In this section we combine classical topological notions, within the constructive complemented subsets-point of view, with the constructive theory of metric spaces.  

\begin{remark}\label{rem: order1}
\normalfont 
Two facts from the basic constructive theory of real numbers that are provable within $\MIN$ are the following: the condition $a \geq b$ is equivalent to the formula $\neg{(a < b)}$ (Lemma 2.18 in~\cite{BB85}, p.~26), and  the constructive version of trichotomy of reals, according to which if $a < b$, then for every $c \in \Real$ either $c < b$ or $c > a$ (Corollary 2.17 in~\cite{BB85}, p.~26).
\end{remark}

First we define the canonical $\cs$-topology induced by some metric $d$ on a set $(X, =_X)$. 
In accordance to the canonical inequality $\neq_{(X,d)}$ (see Definition~\ref{def: canonical}), the empty subset of a metric space is 
defined by
%the following extensional subset of $X$:
$$\emptys_{(X,d)} := \{y \in X \mid d(y, y) > 0\}.$$
If $A \subseteq X$, its $\neq_{(X,d)}$-complement is defined by
$$A^{\neq_{(X,d)}} := \{x \in X \mid \forall_{a \in A}(d(x, a) > 0)\}.$$
 Only a small part of the proof of the next proposition requires $\INT$. The use of the extensional empty subset $\emptys_{(X,d)}$ allows us to provide several proofs concerning it, but not all, within $\MIN$. \textit{Throughout this section $(X, =_X, d)$, or simply $X$, is a metric space with a metric $d$, and $\epsilon \in \Real$ with $\epsilon > 0$}.

\begin{definition}[Complemented ball]\label{def: ball} 
	If $x \in X$,
	%and $\epsilon > 0$, 
	 the $\epsilon$-complemented ball around $x$ is defined by
	$$\B B(x, \epsilon) := \big([d_{x} < \epsilon], [d_{x} \geq \epsilon]\big),$$
	where 
	$$[d_{x} < \epsilon] := \{x{'} \in X \mid d(x, x{'}) < \epsilon\}, \ \ \ \ [d_{x} \geq \epsilon] := \{x{''} \in X \mid d(x, x{''}) \geq \epsilon\}.$$
\end{definition}

%The pair 
\begin{remark}\label{rem: ball1}
	\normalfont (i)
	\itshape $\B B(x, \epsilon)$ is a complemented subset of $(X, =_X, \neq_{(X,d)})$ with $\B x \cin \B B(x, \epsilon)$.	\\[1mm]
		\normalfont (ii)
	\itshape $[d_{x} \geq \epsilon] =_{\C E(X)} [d_{x} < \epsilon]^{\neq_{(X,d)}}$, and $\B B(x, \epsilon)$ is a $1$-tight complemented subset.\\[1mm]
		\normalfont (iii)
	\itshape If $\epsilon_1, \epsilon_2 > 0$, then we have that 
	$\epsilon_1 \leq \epsilon_2 \To \B B(x, \epsilon_1) \subseteq \B B(x, \epsilon_2)$.
	
\end{remark}

\begin{proof}
(i) If $x{'} \in [d_{x} < \epsilon]$ and $x{''} \in [d_{x} \geq \epsilon]$, then by the triangle inequality we have that $(x, x{''}) \leq d(x, x{'}) + d(x{'}, x{''})$, hence $d(x{'}, x{''}) \geq d(x, x{''}) - d(x, x{'}) > 0$.
 Clearly, $\B x \cin \B B(x, \epsilon)$ if and only if $x \in [d_{x} < \epsilon]$, which is trivially the case. \\
 (ii) The inclusion $[d_{x} \geq \epsilon] \subseteq [d_{x} < \epsilon]^{\neq_{(X,d)}}$ follows by case (i). For the converse inclusion, let $y \in [d_{x} < \epsilon]^{\neq_{(X,d)}}$ with $d(x, y) < \epsilon$ i.e., $y \in [d_x < \epsilon]$. By hypothesis on $y$ we get $d(y,y) > 0$, which is impossible, hence, $d(x,y)\geq \epsilon$. $1$-tightness of $\B B(x, \epsilon)$ follows from the implication $\neg{(d(x, x{'}) < \epsilon)} \To d(x, x{'}) \geq \epsilon$.\\
 (iii) If $\epsilon_1 \leq \epsilon_2$, then $[d_{x} < \epsilon_1] \subseteq [d_{x} < \epsilon_2]$ and $[d_{x} \geq \epsilon_2] \subseteq [d_{x} \geq \epsilon_1]$.
\end{proof}

\begin{definition}[The canonical $\cs$-topology induced by a metric]\label{def: canonicalcs}
If $\B G \in \C E^{\Disj}(X)$, let the formula
$$T_d(\B G) := \forall_{\B x \ccin \B G}\exists_{\epsilon > 0}\big(\B B(x, \epsilon) \subseteq \B G\big) :\TOT 
\forall_{\B x \ccin \B G}\exists_{\epsilon > 0}\big([d_{x} < \epsilon] \subseteq G^1 \wedge G^0 \subseteq [d_{x} \geq \epsilon]\big).$$
If $T_d(\B G)$, we call $\B G$ a $\B {\C T}_{d}$-open complemented subset of $X$. Let their set $\B {\C T}_{d}$ be the canonical $\cs$-topology on $X$ induced by its metric $d$. If $\B G$ is a $\B {\C T}_{d}$-open complemented subset of $X$, a modulus of openness $\op_{\B G}$ for $\B G$ is a function $\op_{\B G} \colon G^1 \to (0, +\infty)$, such that
$$\forall_{\B x \ccin \B G}\big(\B B(x, \op_{\B G}(x)) \subseteq \B G\big).$$ 
If $\B G, \B H \in \B {\C T}_d$, such that $\B G =_{\C E^{\Disj}(X)} \B H$, we require that $\op_{\B G} =_{\X F\big(G^1, (0, +\infty)\big)} \op_{\B H}$.
\end{definition}

The  extensionality of the relations $<, \leq$ in $\Real$ and the function-property of the assignment routine $d_x \colon X \sto \Real$, where $d_x(x{'}) := d(x, x{'})$, for every $x{'} \in X$, imply that $T_d(\B G)$ is an extensional property on $\C E^{\Disj}(X)$.
By Proposition~\ref{prp: ccpoint2}(iii) quantification on the canonical points of $\B G$ in the formula $T_d(\B G)$ and in the definition of the modulus $\op_{\B G}$ can be replaced by quantification on $G^1$.  The introduction of a modulus of openness is an instance of the  so-called ``modular'' approach within the constructive theory of metric space, according to which, functions that are induced by the axiom of choice are part of the witnessing data that accompany a fully constructive formulation of the related concept.

\begin{proposition}\label{prp: metric}
The set $\B {\C T}_{d}$-open complemented subsets of $X$ contains $(X, \emptys_X), (\emptys_X, X)$, it is closed under finite unions and arbitrary unions of set-indexed familes of $\B {\C T}_{d}$-open complemented subsets of $X$. 
\end{proposition}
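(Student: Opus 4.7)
The plan is to verify each assertion in turn, exploiting the disjunction-reflecting behaviour of canonical complemented points established in Proposition~\ref{prp: ccpoint1}, together with the monotonicity of complemented balls from Remark~\ref{rem: ball1}(iii). In every case, given a canonical point $\B x$ in the alleged $\cs$-open set, I would exhibit a radius $\epsilon>0$ whose complemented ball $\B B(x,\epsilon)$ sits inside it.

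For the two distinguished sets, $(X,\emptys_X)$ is handled by choosing any $\epsilon>0$, say $\epsilon:=1$: the first-component inclusion $[d_{x}<1]\subseteq X$ is immediate, while the second-component inclusion $\emptys_X\subseteq[d_{x}\geq 1]$ follows inside $\INT$ by Ex Falso, since any $y\in\emptys_X$ would satisfy $d(y,y)>0$ against $d(y,y)=0$. Dually, for $(\emptys_X,X)$ the openness condition holds vacuously: by Proposition~\ref{prp: ccpoint1}(i), $\B x \cin (\emptys_X,X)$ entails $x\in\emptys_X$, which is again impossible within $\INT$, so any radius works by Ex Falso.

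For finite unions it suffices to treat the binary case and iterate. Given $\B G,\B H\in\B{\C T}_d$ and $\B x \cin \B G\cup\B H$, Proposition~\ref{prp: ccpoint1}(iv) yields the disjunction $\B x \cin \B G$ or $\B x \cin \B H$; in either branch the $\cs$-openness of the corresponding summand produces an $\epsilon>0$ with $\B B(x,\epsilon)$ contained there, and hence in $\B G\cup\B H$ by the immediate inclusions $\B G,\B H\subseteq\B G\cup\B H$ (which unpack to $A^1,B^1\subseteq A^1\cup B^1$ and $A^0\cap B^0\subseteq A^0,B^0$). The arbitrary-union case for a set-indexed family $(\B G_i)_{i\in I}$ proceeds identically, now using Proposition~\ref{prp: ccpoint1}(vi) to convert $\B x\cin\bigcup_{i\in I}\B G_i$ into the existence of some $i\in I$ with $\B x \cin \B G_i$; the same radius then witnesses openness at the union, since $\B G_i\subseteq\bigcup_{i\in I}\B G_i$ by Proposition~\ref{prp: cpoint1}(xii).

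I do not anticipate a serious obstacle: each argument reduces to unpacking the definitions together with the canonical-point lemmas cited above. The only delicate point worth flagging is that the second-component clauses in the two distinguished cases appeal to Ex Falso, so the proof proceeds within $\INT$ rather than $\MIN$. If one wishes to equip the unions with explicit moduli of openness, one may take $\op\equiv 1$ for $(X,\emptys_X)$, and for a union $\bigcup_{i\in I}\B G_i$ set $\op(x):=\op_{\B G_{i(x)}}(x)$ for any index $i(x)$ with $x\in G_{i(x)}^1$, though extracting such an index uniformly depends on additional structure on the indexing set $I$.
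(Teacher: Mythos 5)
Your handling of $(X,\emptys_X)$ and of unions is essentially the paper's: the choice $\epsilon:=1$, the appeal to Ex Falso for $\emptys_X\subseteq[d_x\geq 1]$, and the reduction of the union case to Proposition~\ref{prp: ccpoint1}(vi) together with $\B G_i\subseteq\bigcup_{i\in I}\B G_i$ all match. But for $(\emptys_X,X)$ you diverge in a way that matters. You dismiss the case as vacuous by Ex Falso, whereas the paper deliberately gives a positive proof within $\MIN$: from $\B x\cin(\emptys_X,X)$ one has $d(x,x)>0$, one sets $\epsilon_x:=\frac{d(x,x)}{2}$, and the triangle inequality yields both $[d_x<\epsilon_x]\subseteq\emptys_X$ and $X\subseteq[d_x\geq\epsilon_x]$. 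Your shortcut is logically admissible in $\INT$, but it discards exactly the witnessing data the paper extracts and reuses downstream: the modulus of openness $\op_{(\emptys_X,X)}(x):=\frac{d(x,x)}{2}$ of Remark~\ref{rem: modmetric1}(ii), the base-modulus $\beta_{\ \emptys_X}$ of Remark~\ref{rem: metricbase}, and the covering-inclusion of Proposition~\ref{prp: basemodule1}(i) all refer back to this concrete radius. In the paper's ``modular'' setting, ``any radius works'' is not an adequate substitute for ``this radius works''.

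The more substantial gap is that the paper's proof also establishes closure under finite \emph{intersections} --- the clause $(\cTop_2)$ of Definition~\ref{def: ctop} that $\B{\C T}_d$ must satisfy to be a $\cs$-topology, and the only part of the proposition needing an argument beyond unpacking definitions. There, given $\B x\cin\B G\cap\B H$ with radii $\epsilon_1,\epsilon_2$, one takes $\epsilon:=\epsilon_1\wedge\epsilon_2$, notes that $\epsilon>0$ constructively, and uses Remark~\ref{rem: ball1}(iii) together with Corollary~\ref{cor: csoperations1}(i) to conclude $\B B(x,\epsilon)\subseteq\B G\cap\B H$; this is also what feeds Remark~\ref{rem: modmetric1}(iii), where $\op_{\B G\cap\B H}:=\op_{\B G}\wedge\op_{\B H}$. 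The word ``unions'' in the first conjunct of the statement is evidently a slip for ``intersections'' (a binary union is already an instance of the set-indexed union clause, so the statement as printed would be redundant). By reading it literally you in effect prove the union clause twice and omit the intersection case entirely, so your argument does not establish that $\B{\C T}_d$ is a $\cs$-topology.
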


\begin{proof}
	 To show $T_d(X, \emptys_{(X,d)})$,
%	where 	$$\emptys_{(X,d)} := \{y \in X \mid d(y, y) > 0\},$$
	let $\B x \cin (X, \emptys_X)$ and take e.g., $\epsilon := 1$. Clearly, we have that $[d_{x} < 1] \subseteq X$. The inclusion $\emptys_{(X,d)} \subseteq [d_{x} \geq 1]$ follows trivially in $\INT$.
	The proof though, of
	%of the inclusion
	 $T_d(\emptys_{(X,d)}, X)$ can be given within $\MIN$ as follows. 
	%	To show $T_d(\emptys_{(X,d)}, X)$, 
	Let $\B x \cin (\emptys_{(X,d)}, X)$ i.e., $x \in \emptys_X$ and $X \subseteq \{x\}^{\neq_X}$. If $\epsilon_x := \frac{d(x,x)}{2} > 0$, we show that 
	$[d_{x} < \epsilon_x] \subseteq \emptys_X$ and $X \subseteq [d_{x} \geq \epsilon_x]$. For the first inclusion, if $x{'} \in X$ with $d(x, x{'}) < \frac{d(x,x)}{2}$, then 
	$2 d(x{'},x) < d(x,x) \TOT d(x,x) - 2 d(x{'}, x) > 0.$ As 
	$$d(x,x) \leq d(x,x{'}) + d(x{'}, x) \leq d(x,x{'}) + d(x{'},x{'}) + d(x{'},x),$$
	we get $d(x,x) - 2d(x, x{'}) \leq d(x{'}, x{'})$, hence $0 < d(x,x) - 2 d(x{'}, x) \leq d(x{'}, x{'})$ i.e., $x{'} \in \emptys_{(X, d)}$. For the second inclusion, if $x{''} \in X$, the required inequality $d(x{''}, x) \geq \frac{d(x,x)}{2}$ follows immediately by the triangle inequality. 	
	Next we show that if $T_d(\B G)$ and $T_d(\B H)$, then $T_d(\B G \cap \B H)$. If $\B x \cin \B G \cap \B H$, then $\B x \cin \B G$ and $\B x \cin \B H$. Let $\epsilon_1 > 0$ with $\B B(x, \epsilon_1) \subseteq \B G$, and let $\epsilon_2 > 0$ with $\B B(x, \epsilon_2) \subseteq \B H$. If 
	$$\epsilon := \min\{\epsilon_1, \epsilon_2\} =: \epsilon_1 \wedge \epsilon_2,$$
	 then $\epsilon > 0$ too (see~\cite{BV06}, p.~57). By Remark~\ref{rem: ball1}(iii) and since
 $\epsilon \leq \epsilon_1$ and $\epsilon \leq \epsilon_2$, we have that
	$\B B(x, \epsilon) \subseteq \B B(x, \epsilon_1) \subseteq \B G$ and $\B B(x, \epsilon) \subseteq \B B(x, \epsilon_2) \subseteq \B H$, and hence by Corollary~\ref{cor: csoperations1}(i) 
	$\B B(x, \epsilon) \subseteq \B G \cap \B H$. 
	If $T_d(\B G_i)$, for every $i \in I$, and 
	$\B x \cin \bigcup_{i \in I}\B G_i $, then by Proposition~\ref{prp: ccpoint2}(vi) 
	%we have that 
	$\exists_{i \in I}(\B x \cin \B G_i)$. If $\epsilon > 0$ with $\B B(x, \epsilon) \subseteq \B A_i$, then by Corollary~\ref{cor: csoperations1}(ii) we get $\B B(x, \epsilon) \subseteq \bigcup_{i \in I}\B G_i$.	
\end{proof}

By inspection of the previous proof, we get the following moduli of openness. Notice that for the modulus of openness for $(X, \ \emptys_X)$ any constant function to $(0, +\infty)$ would do.

\begin{remark}\label{rem: modmetric1}
\normalfont (i)
\itshape  Let $\op_{(X, \ \emptys_X)} \colon X \to (0, +\infty)$ be the constant function $1$.\\[1mm]
\normalfont (ii)
\itshape Let $\op_{( \ \emptys_X, X)} \colon \emptys_X \to (0, + \infty)$ be defined by the rule $x \mapsto \frac{d(x,x)}{2}$, for every $x$ $\in$ $\emptys_{(X,d)}$.\\[1mm]
\normalfont (iii)
\itshape If $\op_{\B G}$ and $\op_{\B H}$ are  is a moduli of openness for $\B G$ and $\B H$ , respectively, let $\op_{\B G \cap \B H} := \op_{\B G} \wedge \op_{\B H}$ be a modulus of openness for $\B G \cap \B H$, where $(\op_{\B G} \wedge \op_{\B H})(x) := \op_{\B G}(x) \wedge \op_{\B H}(x)$, for every $x \in G^1 \cap H^1$.  
 \end{remark}

Notice that the assignment routine $x \mapsto d(x,x)$, where $x \in \emptys_{(X,d)}$, takes values in (or is a term of set) $(0, +\infty)$, as by hypothesis $x$ is a variable of set $\emptys_{(X,d)}$. What the axiom of metric space $\forall_{x \in X}\big(d(x,x) =_{\Real} 0\big)$ guarantees is that there will be no constant of set $\emptys_{(X,d)}$, but the assignment routine $x \mapsto d(x,x)$ is only a rule that transforms a variable of set $\emptys_{(X,d)}$ to a term of set $(0, +\infty)$, independently from the fact that domain of this rule cannot be inhabited by some constant. 
 To determine a modulus of openness for the union $\bigcup_{i \in I}\B G_i$ with $\op_{\B G_i}$ a modulus of openness for $\B G_i$, for every $i \in I$, we need to add more information to the union. One can add, for example, a projection function $\pr \colon \bigcup_{i \in I}G^1_i \to I$, such that $x \in G_{\pr(x)}^1$, for every  $x \in \bigcup_{i \in I}G^1_i $. In this way the original union $\bigcup_{i \in I}G^1_i $ is equal to the disjoint union $\bigcup_{x \in \bigcup_{i \in I}G_i^1}G_{\pr(x)}$. And then one can define $\op_{\bigcup_{i \in I}G^1_i }(x) := \op_{G_{\pr(x)}}(x)$, for every $x \in \bigcup_{i \in I}G^1_i$.
Another approach will be presented in Remark~\ref{rem: modunionmetric}.

\begin{definition}\label{def: dtop}
Let $\B {\C T}_{d} := \{\B G \in \C E^{\Disj}(X) \mid T_d(\B G)\}$ be the canonical $\cs$-topology  on $X$ induced by
 %the metric 
 $d$, and  
 $$\B B(X) := \{\B G \in \C E^{\Disj}(X) \mid \exists_{x{'} \in X}\exists_{\epsilon{'} > 0}\big(\B G =_{\C E^{\Disj}(X)} \B B(x{'}, \epsilon{'})\}$$
  the set of complemented balls of $X$. If $\B G$ is an arbitrary open complemented subset in $\B {\C T}_d$, we call a function $\beta_{\B G} \colon G^1 \to \B B(X)$ satisfying $\B y \cin \beta_{\B G}(y)$ and $\beta_{\B B(x, \epsilon)}(y) \subseteq \B B(x, \epsilon)$, for every $y \in G^1$, 
   a base-module for $\B G$, or a $\B B(X)$-module for $\B G$. We call such a base-module covering, if
   $$\B G =_{\C E^{\Disj}(X)} \bigcup_{y \in G^1}\beta_{\B G}(y).$$ 
    A  family 
  % of functions
    $(\beta_{\B G})_{\B G \in \B {\C T}_d}$ with $\beta_{\B G}$ a $\B B(X)$-module for $\B G$, for every $\B G \in \B {\C T}_d$, is $\B B(X)$-module for $\B {\C T}_d$. If $\op_{\B G}$ is a module of openness for $\B G$, the canonical $\B B(X)$-module for $\B G$ induced by $\op_{\B G}$ is defined by $\beta_{\B G}(x) := \B B(x, \op_{\B G}(x))$, for every $x \in G^1$.
\end{definition}

Clearly, a base-modulus $\beta_{\B G}$ for $\B G$ defines a module of openness $\op_{\B G}$ for $\B G$ by taking the radius of the corresponding complemented ball i.e., the two concepts are equivalent for metric spaces. 

\begin{remark}\label{rem: ball1}
\normalfont (i)
\itshape The complemented ball $\B B(x, \epsilon)$ is a $\B {\C T}_{d}$-open complemented subset. \\[1mm]
\normalfont (ii)
\itshape The canonical $\B B(X)$-module 
%$\beta_{\B B(x, \epsilon)} \colon [d_x < \epsilon] \to \B B(X)$ 
for $\B B(x, \epsilon)$ 
%, which is induced by the canonical modulus of openness for $\B B(x, \epsilon)$ and it 
is covering.
\end{remark}

\begin{proof}
	(i) If $y \in [d_x < \epsilon]$, let $\op_{\B B(x, \epsilon)}(y) := \epsilon - d(x, y) > 0$. For $\B B(y, \op_{\B B(x, \epsilon)}(y))$
	 %:= \big([d_y < \epsilon_y], [d_y \geq \epsilon_y]\big)$.
	  we have that 
	$$[d_y < \op_{\B B(x, \epsilon)}(y)] \subseteq [d_x < \epsilon] \ \ \& \ \ [d_x \geq \epsilon] \subseteq [d_y \geq \op_{\B B(x, \epsilon)}(y)].$$
	For the first inclusion we have that, if $d(y, z) < \op_{\B B(x, \epsilon)}(y)$, then $d(x, z) \leq d(x, y) + d(y, z) < d(x, y) +  \epsilon - d(x, y) < \epsilon$, and for the second inclusion, if $d(x, z) \geq \epsilon$, then  $\epsilon - d(x, y) \leq d(x, z) - d(x, y) \leq d(y, z)$.\\
	(ii) Let 
	%the \textit{canonical} $\B B(X)$-module for $\B B(x, \epsilon)$ be
	 the assignment routine $y \mapsto \beta_{\B B(x, \epsilon)}(y) := \op_{\B B(x, \epsilon)}(y)$, which is a function, as
	% the metric 
	 $d$ is a function. Clearly, $\B y \cin  \B B(y, \op_{\B B(x, \epsilon)}(y))$, and 
	 by case (i) 
%	we have that 
	$ \B B(y, \op_{\B B(x, \epsilon)}(y)) \subseteq \B B(x, \epsilon)$.
	%(iii) 
	By definition of the canonical base-module for $ \B B(x, \epsilon)$ we have that
	$$\bigcup_{y \in [d_x < \epsilon]}\beta_{\B B(x, \epsilon)}(y) := \bigcup_{y \in [d_x < \epsilon]}\B B(y, \op_{\B B(x, \epsilon)}(y)) := \bigg(\bigcup_{y \in [d_x < \epsilon]}[d_y < \op_{\B B(x, \epsilon)}(y)], \bigcap_{y \in [d_x < \epsilon]}[d_y \geq \op_{\B B(x, \epsilon)}(y)]\bigg).$$
	If $y \in [d_x < \epsilon]$, then 
	%clearly 
	$[d_x < \epsilon] =_{\C E(X)} \bigcup_{y \in [d_x < \epsilon]}[d_y < \op_{\B B(x, \epsilon)}(y)]$, and as by (i) $[d_y < \op_{\B B(x, \epsilon)}(y)] \subseteq [d_x < \epsilon]$ and $[d_x \geq \epsilon] \subseteq [d_y \geq \op_{\B B(x, \epsilon)}(y)]$, we get
	%also have that 
	$[d_x \geq \epsilon] \subseteq \bigcap_{y \in [d_x < \epsilon]}[d_y \geq \op_{\B B(x, \epsilon)}(y)].$
	Next we show the converse inclusion
	$$\bigcap_{y \in[d_x < \epsilon]}[d_y \geq \op_{\B B(x, \epsilon)}(y)] \subseteq [d_x \geq \epsilon].$$
	Let $z \in \bigcap_{y \in[d_x < \epsilon]}[d_y \geq \op_{\B B(x, \epsilon)}(y)]$ and suppose that $d(x,z) < \epsilon$ i.e., $z \in [d_x < \epsilon]$. By the hypothesis on $z$ we 
	%then 
	get $z \in [d_z \geq \op_{\B B(x, \epsilon)}(z)]$ i.e., $d(z, z) \geq \op_{\B B(x, \epsilon)}(z) > 0$, which is absurd, hence $d(x, z) \geq \epsilon$.
\end{proof}

\begin{remark}\label{rem: expl1}
	\normalfont
If $\B G$ is an element of $\B {\C T}_d$ and $\beta_{\B G} \colon G^1 \to \B B(X)$ is a base-module for $\B G$, then in order to show that the family of complemented balls 
 $\big(\beta_{\B G}(y))_{y \in G^1}$ covers $\B G$ i.e., 
$$\B G =_{\C E^{\Disj}(X)} \bigcup_{y \in G^1}\beta_{\B G}(y) := \bigg(\bigcup_{y \in G^1}[d_y < \op_{\B G}(y)], \bigcap_{y \in G^1}[d_y \geq \op_{\B G}(y)]\bigg),$$
we only need to show, as all other inclusions follow easily, the so-called  here \textit{covering-inclusion} for $\beta_{\B G}$:
$$(\CI_{\B G}) \ \ \ \ \ \ \ \ \ \ \ \ \ \ \ \ \ \ \ \ \ \ \ \ \ \ \ \ \ \ \ \ \ \ \ \ \ \ \ \ \ \ \ \ \ \ \ \  \bigcap_{y \in G^1}[d_y \geq \op_{\B G}(y)] \subseteq G^0. \ \ \ \ \ \ \ \ \ \ \ \ \ \ \ \ \ \ \ \ \ \ \ \ \ \ \ \ \ \ \ \ \ \ \ \ \ \ \ \ \ \ \ \ \ \ \ \ \ \ \ \ \ \ \ \ \  \ \ \ \ \ \ \ \ \ \ \ \ \ \ \ \ \ $$
%as all other inclusions follow easily. 
%We call this inclusion the, and 
If $(\CI_{\B G})$ holds, then we say that $\beta_{\B G}$ is a \textit{covering} base-module for $\B G$. 
\end{remark}

\begin{remark}\label{rem: expl2}
	\normalfont
Even if $\beta_{\B G}$ and $\beta_{\B H}$ are  covering base-modules for $\B G$ and $\B H$, respectively, we cannot show, in general, that these modules induce a covering base-module for $\B G \cap \B H$ by the method generated in the proof of Proposition~\ref{prp: metric}. Suppose that  
$$\B H =_{\C E^{\Disj}(X)} \bigcup_{z \in H^1}\beta_{\B H}(z) := \bigg(\bigcup_{z \in H^1}[d_z < \op_{\B H}(z)], \bigcap_{z \in H^1}[d_z \geq \op_{\B H}(z)]\bigg).$$
In order to show the crucial inequality 
$$\bigcap_{w \in G^1 \cap H^1}[d_w \geq \op_{\B G}(w) \wedge \op_{\B H}(w)] \subseteq G^0 \cup H^0,$$
it suffices that $\B G \cap \B H$ is $1$-tight i.e., $\neg{(x \in G^1 \cap H^1)} \To x \in G^0 \cup H^0$. To show this, suppose $k \in \bigcap_{w \in G^1 \cap H^1}[d_w \geq \op_{\B G}(w) \wedge \op_{\B H}(w)]$ and $k \in x \in G^1 \cap H^1$. Then $k \in [d_w \geq \op_{\B G}(w) \wedge \op_{\B H}(w)] \TOT d(k,k) \geq \op_{\B G}(k) \wedge \op_{\B H}(k) > 0$, which is absurd, hence $k \in G^0 \cup H^0$.
\end{remark}

As we show next, the covering inclusion holds for many 
%open complemented subsets in 
elements of $\B {\C T}_d$.
For the case of intersection of two complemented balls in Proposition~\ref{prp: basemodule1}(iii) we need case (ii) of the following lemma, while for the union of two complemented balls we need case (iv) of the following lemma.

\begin{lemma}\label{lem: neglemma}
If $x, x{'}, y, y{'}\in \Real$, the following hold:\\[1mm]
\normalfont (i)
\itshape $\neg{(x < 0 \wedge y < 0)} \To (x \geq 0 \vee y \geq 0)$.\\[1mm]
\normalfont (ii)
\itshape $\neg{(x < y \wedge x{'} < y{'})} \To (x \geq y \vee x{'} \geq y{'})$.\\[1mm]
\normalfont (iii)
\itshape $\neg{(x < 0 \vee y < 0)} \To (x \geq 0 \wedge y \geq 0)$.\\[1mm]
\normalfont (iv)
\itshape $\neg{(x < y \vee x{'} < y{'})} \To (x \geq y \wedge x{'} \geq y{'})$.
\end{lemma}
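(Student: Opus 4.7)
My plan is to dispose of (iii) and (iv) first, as they are instances of the constructively valid De Morgan law $\neg(A \vee B) \To \neg A \wedge \neg B$. Combined with the equivalence $a \geq b \TOT \neg(a < b)$ from Remark~\ref{rem: order1}, case (iii) extracts $\neg(x < 0)$ and $\neg(y < 0)$ from $\neg(x < 0 \vee y < 0)$ and rewrites them as $x \geq 0$ and $y \geq 0$. Case (iv) is identical with the thresholds $0$ replaced by $y$ and $y'$.

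For (i) and (ii), I expect the proof to be more delicate, since these have the shape of the classical De Morgan move $\neg(A \wedge B) \To \neg A \vee \neg B$, which is not intuitionistically valid in general. My plan is to exploit the constructive trichotomy for $<$ on $\Real$ recalled in Remark~\ref{rem: order1}. First I would apply it to $-1 < 0$ and the point $x$, obtaining $x < 0 \vee x > -1$; in the first branch, feeding $x < 0$ into the hypothesis $\neg(x < 0 \wedge y < 0)$ yields, via Ex Falso ($\INT$), the stable negation $\neg(y < 0) \TOT y \geq 0$, so the right disjunct of the conclusion holds. A symmetric split on $y$ handles the case $y < 0$. Case (ii) should reduce to (i) by translation: setting $u := y - x$ and $v := y' - x'$, the equivalences $x < y \TOT -u < 0$ and $x \geq y \TOT -u \geq 0$ let one apply (i) to $-u, -v$.

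The hard part will be the residual case in which both trichotomy splits land in the ``$> -1$'' branches, leaving us only with $x > -1$ and $y > -1$, neither of which implies $x \geq 0$ or $y \geq 0$. To close this I expect one must either sharpen the cotransitivity calls (iterating the threshold at arbitrarily small scales and invoking a Markov-style principle for $\Real$), or identify a more ingenious use of trichotomy that I have not yet pinned down. Pinning down this step is where I expect to spend most of the effort.
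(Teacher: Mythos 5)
Your treatment of (iii) and (iv) is correct and essentially the paper's own: from $\neg(x < 0 \vee y < 0)$ one assumes $x < 0$, obtains $x < 0 \vee y < 0$ and hence $\bot$, and converts the resulting $\neg(x<0)$ into $x \geq 0$ via the equivalence $a \geq b \TOT \neg(a<b)$ of Remark~\ref{rem: order1}; case (iv) reduces to (iii) by passing to $x-y$ and $x'-y'$, exactly as you do.

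For (i) and (ii), however, there is a genuine gap, which you yourself flag: the cotransitivity split on $-1 < 0$ leaves the residual branch $x > -1 \wedge y > -1$, from which neither disjunct of the conclusion follows, and iterating the threshold does not help, since the disjunction $x \geq 0 \vee y \geq 0$ can never be secured branch by branch this way. The paper's device is different and avoids case analysis entirely: the conjunction of strict inequalities is rewritten as a \emph{single} strict inequality about the maximum, $x < 0 \wedge y < 0 \TOT x \vee y := \max\{x,y\} < 0$, so the hypothesis $\neg(x<0 \wedge y<0)$ yields $\neg(x \vee y < 0)$, hence $x \vee y \geq 0$ by Remark~\ref{rem: order1}; the disjunction $x \geq 0 \vee y \geq 0$ is then extracted by appeal to a property of the order on $\Real$ cited from~\cite{BV06}, p.~57. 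Case (ii) follows by applying (i) to $x - y$ and $x' - y'$, just as in your reduction. So the idea you are missing is the $\max$-reformulation; note also that your instinct that something delicate lurks here is sound, since all of the constructive content is concentrated in the final cited step $x \vee y \geq 0 \To (x \geq 0 \vee y \geq 0)$, which the paper does not prove but imports as a known fact.
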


\begin{proof}
(i) For all properties on the order of reals used in this proof we refer to~\cite{BV06}, p. 57, Ex.~3. We have that $x < 0 \wedge y < 0 \TOT x \vee y := \max\{x, y\} < 0$, hence $\neg{(x < 0 \wedge y < 0)} \To \neg{(x \vee y  < 0)}$. By Remark~\ref{rem: order1} we get $x \vee y \geq 0$, which in turn implies $x \geq 0$ or $y \geq 0$.\\
(ii) The hypothesis is equivalent to $\neg{(x - y < 0 \wedge x{'} -  y{'} < 0)}$, and we use case (i).\\
(iii) We suppose $x < 0$, and we get $x < 0 \vee y < 0$, hence $\bot$. We get $x \geq 0$ by Remark~\ref{rem: order1}. We work similarly, if we suppose $y < 0$.\\
(iv) The hypothesis is equivalent to $\neg{(x - y < 0 \vee x{'} -  y{'} < 0)}$, and we use case (iii).
\end{proof}

\begin{proposition}\label{prp: basemodule1}
Let $\C I := (I, =_I, \neq_I) \in \SetExtIneq$, such that $i \neq_I j$ is discrete $(\Ineq_6)$, and 
let 
$\B G_i$ 
be $\B {\C T}_d$-open, every $i \in I$, such that $\B G_i \cap \B G_j \subseteq (\emptys_X, X)$, for every $i, j \in I$ with $i \neq_I j$.\\[1mm]
\normalfont (i)
\itshape The canonical $\B B(X)$-modules for $(X, \emptys_X)$ and $(\emptys_X, X)$ are covering.\\[1mm]
%\normalfont (ii)
%\itshape If the $\B B(X)$-modules for $\B G$ and $\B H$ are covering, then it is also covering for $\B G \cup \B H$.\\[1mm]
\normalfont (ii)
\itshape  $($Myhill's unique choice principle$)$ If for every $i \in I$ the canonical $\B B(X)$-module for $\B G_i$ is covering, then it is 
%also 
covering for $\bigcup_{i \in I}\B G_i$.\\[1mm]
\normalfont (iii)
\itshape The canonical $\B B(X)$-module for $\B B(x, \epsilon) \cap \B B(y, \delta)$ is covering.\\[1mm]
\normalfont (iv)
\itshape The canonical modulus of opennes for $\B B(x, \epsilon) \cup \B B(y, \delta)$ is defined by the rule
$$\op_{\B B(x, \epsilon) \cup \B B(y, \delta)}(z) := [\epsilon -d(x,z)] \vee [\delta - d(y,z)] =: \eta_{\zeta},$$
for every $z \in [d_x < \epsilon] \cup [d_y < \delta]$, and the canonical $\B B(X)$-module for $\B B(x, \epsilon) \cup \B B(y, \delta)$ is covering.\\[1mm]
\normalfont (v)
\itshape A canonical complemented copoint $(x^{\neq_{(X,d)}}, x)$ is $\B {\C T}_d$-open with a covering $\B B(X)$-module for it.
\end{proposition}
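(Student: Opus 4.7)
My plan is to verify, in each part, the nontrivial covering-inclusion $(\CI_{\B G})$ of Remark~\ref{rem: expl1}, since the other inclusions needed for $\B B(z,\op_{\B G}(z)) \subseteq \B G$ and for the first components to coincide with $G^1$ are routine consequences of the triangle inequality and the trivial observation $z \in [d_z < \op_{\B G}(z)]$. A unifying ``plug-in'' trick dispatches parts (i), (iii), (iv), and (v): to establish $\bigcap_{z \in G^1}[d_z \geq \op_{\B G}(z)] \subseteq G^0$, one takes $w$ in the intersection and notes that $w \in G^1$ would force, by specialising at $z = w$, the inequality $d(w,w) \geq \op_{\B G}(w) > 0$, contradicting $d(w,w) = 0$; the appropriate instance of Lemma~\ref{lem: neglemma} (or Remark~\ref{rem: order1} in part (v)) then turns this negation into the positive statement placing $w$ in $G^0$.

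Concretely, for (i) the constant modulus $1$ on $(X,\emptys_X)$ makes the plug-in yield $d(z,z) \geq 1 > 0$, placing $z$ directly in $\emptys_X$; for $(\emptys_X, X)$ with modulus $d(x,x)/2$ the covering-inclusion into $X$ is vacuous, and the first-component identity reuses the inclusion $[d_x < d(x,x)/2] \subseteq \emptys_X$ established in the proof of Proposition~\ref{prp: metric}. For (iii), with $\xi_z := [\epsilon - d(x,z)] \wedge [\delta - d(y,z)]$, the plug-in produces $\neg\big(d(x,w) < \epsilon \wedge d(y,w) < \delta\big)$, and Lemma~\ref{lem: neglemma}(ii) yields the required disjunction $d(x,w) \geq \epsilon \vee d(y,w) \geq \delta$. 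For (iv) one first observes $\eta_z > 0$ from the hypothesis $z \in [d_x < \epsilon] \cup [d_y < \delta]$, which supplies a disjunction in which at least one of $\epsilon - d(x,z)$ or $\delta - d(y,z)$ is positive; the inclusion $\B B(z,\eta_z) \subseteq \B B(x,\epsilon) \cup \B B(y,\delta)$ then follows from the constructive equivalence $a < \max(b,c) \TOT (a < b \vee a < c)$ for the first component and from the triangle inequality for the second, while the covering-inclusion closes via Lemma~\ref{lem: neglemma}(iv). For (v), setting $\op_{-\B x}(z) := d(x,z)/2$ makes both inclusions of $\B B(z, d(x,z)/2) \subseteq (\{x\}^{\neq_{(X,d)}}, \{x\})$ immediate by the triangle inequality, and the plug-in now yields $\neg(d(x,w) > 0)$; Remark~\ref{rem: order1} upgrades this to $d(x,w) = 0$, whence the metric-space axiom gives $w =_X x$, so $w \in \{x\}$.

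Part (ii) is the most delicate and is where Myhill's unique choice genuinely enters. The discreteness of $\neq_I$ together with the disjointness hypothesis $\B G_i \cap \B G_j \subseteq (\emptys_X, X)$ for $i \neq_I j$ shows that every $w \in \bigcup_{i \in I} G_i^1$ lies in a \emph{unique} $G_i^1$: otherwise, with $w \in G_i^1 \cap G_j^1$ for indices with $i \neq_I j$, disjointness would put $w \in \emptys_X$, contradicting $w =_X w$. Myhill's principle then extracts a selector $\pr \colon \bigcup_{i \in I} G_i^1 \to I$ satisfying $w \in G_{\pr(w)}^1$, and the modulus on the union is defined by $\op_{\bigcup_{i \in I}\B G_i}(w) := \op_{\B G_{\pr(w)}}(w)$. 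For the covering-inclusion, fix $i_0 \in I$ and let $z$ belong to the global intersection; for every $w \in G_{i_0}^1$, uniqueness of $\pr$ forces $\pr(w) =_I i_0$, so $z \in \bigcap_{w \in G_{i_0}^1}[d_w \geq \op_{\B G_{i_0}}(w)] \subseteq G_{i_0}^0$ by the covering hypothesis on $\B G_{i_0}$, and since $i_0$ was arbitrary, $z \in \bigcap_{i \in I} G_i^0$. The main obstacle in the proof is precisely this step: orchestrating the disjointness and discreteness hypotheses so that Myhill's unique choice yields a selector compatible with the canonical moduli of the individual $\B G_i$'s.
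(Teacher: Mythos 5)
Your proposal is correct and follows essentially the same route as the paper's proof: the ``plug-in'' contradiction at $z=w$ followed by Lemma~\ref{lem: neglemma}(ii)/(iv) (or Remark~\ref{rem: order1}) is exactly how the paper establishes the covering-inclusions in (i), (iii), (iv), (v), and your treatment of (ii) — extracting the selector from discreteness plus disjointness via Myhill's unique choice and then reducing to the covering-inclusion of each $\B G_{i_0}$ — is the paper's argument. The only (harmless) deviation is in (iv), where you obtain the second-component inclusion $[d_x \geq \epsilon] \cap [d_y \geq \delta] \subseteq [d_z \geq \eta_z]$ directly from the triangle inequality rather than by the paper's contradiction argument.
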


\begin{proof}
(i) $(\MIN)$ We need to show the covering-inclusion $\bigcap_{x \in X}[d_x \geq 1] \subseteq \emptys_X$. Let $y \in X$ such that $d(x, y) \geq 1$, for every $x \in X$. Consequently, $d(y, y) \geq 1 > 0$, hence $y \in \emptys_X$. For the case of $(\emptys_X, X)$, the proof of the covering-inclusion $\bigcap_{y \in \ {\emptys_X}}[d_y \geq \frac{d(y,y)}{2}] \subseteq X$ is shown in the proof of Proposition~\ref{prp: metric}. For the following left inclusion
$$\bigcup_{y \in \ {\emptys_X}}\bigg[d_y < \frac{d(y,y)}{2}\bigg] \subseteq {\emptys_X} \ \ \& \ \ \emptys_X \subseteq \bigcup_{y \in \ {\emptys_X}}\bigg[d_y < \frac{d(y,y)}{2}\bigg],$$ 
we use the fact shown in the proof of Proposition~\ref{prp: metric} that $[d_y < \frac{d(y,y)}{2}] \subseteq \emptys_X$, for every $y \in \emptys_X$. For the second inclusion, each element $y$ of $\emptys_X$ is in $[d_y < \frac{d(y,y)}{2}]$, and hence in their union.\\
(ii) For every $i \in I$ let the covering-inclusion $\bigcap_{y_i \in G_i^1}[d_{y_i} \geq \op_{\B G_i}(y_i)] \subseteq G_i^0$. We define $\op_{\bigcup} \colon \bigcup_{i \in I}G_i^1 \to (0, +\infty)$ by $\op_{\bigcup}(x) := \op_{\B G_i}(x)$, where $i$ is the unique (up to equality) element of $I$ sith $x \in G_i^1$. If $x \in G_j^1$ with $G_i^1 \cap G_j^1 \subseteq \emptys_X$, then we get a contradiction i.e., $\neg{(i \neq_I j)}$, hence by $(\Ineq_6)$ $i =_I j$. By our requirement in Definition~\ref{def: canonicalcs} we get $\op_{\B G_i}(x) =_{\Real} \op_{\B G_j}(x)$. The required covering-inclusion is $$\bigcap_{z  \in \bigcup_{i \in I}G_i^1}[d_{z} \geq \op_{\bigcup}(z)] \subseteq \bigcap_{i \in I}G_i^0.$$
If $k \in \bigcap_{z  \in \bigcup_{i \in I}G_i^1}[d_{z} \geq \op_{\bigcup}(z)] $, then for every $z \in G_i^1$ we get $d(z, k) \geq \op_{\B G_i}(z)$, hence $k \in G_i^0$. As $i \in I$ is arbitray, the required covering-inclusion follows.\\
(iii) The covering-inclusion for $\B B(x, \epsilon) \cap \B B(y, \delta) = \big([d_x < \epsilon] \cap [d_y < \delta], [d_x \geq \epsilon] \cup [d_y \geq \delta]$ is
$$\bigcap_{v \in [d_x < \epsilon] \cap [d_y < \delta]}[d_v \geq \op_{\B B(x, \epsilon)}(v) \wedge \op_{\B B(y, \delta)}(v)] \subseteq [d_x \geq \epsilon] \cup [d_y \geq \delta].$$
Let $k \in \bigcap_{v \in [d_x < \epsilon] \cap [d_y < \delta]}[d_v \geq \op_{\B B(x, \epsilon)}(v) \wedge \op_{\B B(y, \delta)}(v)]$, and suppose that $k \in [d_x < \epsilon] \cap [d_y < \delta] \TOT d(x, k) < \epsilon \wedge d(y, k) < \delta$. Then $k \in [d_k \geq \op_{\B B(x, \epsilon)}(k) \wedge \op_{\B B(y, \delta)}(k)]$ i.e., $d(k, k) \geq \op_{\B B(x, \epsilon)}(k) \wedge \op_{\B B(y, \delta)}(k) > 0$, which is impossible. Hence, $\neg{\big(d(x, k) < \epsilon \wedge d(y, k) < \delta\big)}$. By Lemma~\ref{lem: neglemma}(ii) we get the required disjunction $d(x, k) \geq \epsilon$ or $d(y, k) \geq \delta$.\\
(iv) As $z \in [d_x < \epsilon] \cup [d_y < \delta]$, we have that $\epsilon - d(x,z) > 0$ or $\delta - d(y,z) > 0$, hence $\eta_z > 0$. First we show that $[d_z < \eta_z] \subseteq [d_x < \epsilon] \cup [d_y < \delta]$. Let $w \in X$ with $d(z, w) < \eta_z$. By the property of real numbers $a \vee b > c \TOT a > c \vee b > c$ (see~\cite{BV06}, p.~57) we get
$$d(z, w) < \epsilon - d(x,z) \  \vee \  d(z, w) < \delta - d(y,z).$$
In the first case we have that $d(x, w) \leq d(x, z) + d(z,w)  < \epsilon$, and hence $w \in [d_x < \epsilon]$, while in the second we have that $d(y, w) \leq d(y, z) + d(z,w)  < \delta$, and hence $w \in [d_y < \delta]$. Notice that for this inclusion we didn't use the infromation $z \in [d_x < \epsilon]$ or $z \in [d_y < \delta]$. Next we show that $[d_x < \epsilon] \cap [d_y < \delta] \subseteq [d_z \geq \eta_z]$.
Let $u \in [d_x < \epsilon] \cap [d_y < \delta]$, and we suppose that $d(z,u) < \eta_z$.  By the same argument $d(z, u) < \epsilon - d(x,z)$ or $d(z, u) < \delta - d(y,z)$, which implies $d(x, u) < \epsilon$ or $d(y, u) < \delta$. In both cases we get a contradiction, hence $d(z, u) \geq \eta_z$. To show that the 
canonical $\B B(X)$-module for $\B B(x, \epsilon) \cup \B B(y, \delta)$ is covering, it suffices to show the inclusion 
$$\bigcap_{z \in [d_x < \epsilon] \cup [d_y < \delta]}[d_z \geq \eta_z] \ \subseteq \ [d_x < \epsilon] \cap [d_y < \delta].$$
Let $k \in \bigcap_{z \in [d_x < \epsilon] \cup [d_y < \delta]}[d_z \geq \eta_z] $. We show that 
$$\neg{\big[d(x,k) < \epsilon \vee d(y,k) < \delta\big]}.$$
If $d(x,k) < \epsilon \vee d(y,k) < \delta$, then by our hypothesis in $k$ we get $d(k,k) \geq \eta_k > 0$, which is a contradiction. Hence by Lemma~\ref{lem: neglemma}(iv) we get the required conjunction $d(x, k) \geq \epsilon$ and $d(y, k) \geq \delta$.\\
(v) Let the modulus of opennes $\op_{(x^{\neq_{(X,d)}}, x)} \colon x^{\neq_{(X,d)}} \to (0, +\infty)$, defined by the rule $y \mapsto \frac{d(x,y)}{2}$. We show that if $z \in X$ with $d(z, y) <  \frac{d(x,y)}{2}$, then $d(z, x) > 0$. As $0 < d(x, y) \leq d(x,z) + d(y,z)$, we get
$$d(x,z) \geq d(x,y) - d(y,z) > d(x, y) - \frac{d(x,y)}{2} = \frac{d(x,y)}{2} > 0.$$
The corresponding covering inclusion for this copoint is 
$$\bigcap_{y \in x^{\neq_{(X,d)}}} \bigg[d_y \geq \frac{d(x,y)}{2}\bigg] \subseteq \{x\}.$$
Let $k \in \bigcap_{y \in x^{\neq_{(X,d)}}} \big[d_y \geq \frac{d(x,y)}{2}\big]$. If  $d(k, x) > 0$, then by our hypothesis on $k$ we have that $k \in \big[d_k \geq \frac{d(x,k)}{2}\big] \TOT d(k, k) \geq \frac{d(x,k)}{2} > 0$, which is absurd. Hence, $d(x, k) \leq 0 \TOT d(x, k) = 0$, and hence $x =_{X} k$.
\end{proof}

Notice that in Proposition~\ref{prp: basemodule1}(iv) the modulus of openness $\op_{\B B(x, \epsilon) \cup \B B(y, \delta)}$ is defined by the rule 
$$\op_{\B B(x, \epsilon) \cup \B B(y, \delta)}(z) := \op^{\cup}_{\B B(x, \epsilon)}(z) \vee \op^{\cup}_{\B B(y, \delta)}(z),$$
where $\op^{\cup}_{\B B(x, \epsilon)} \colon X \to \Real$ is defined by the rule $w \mapsto \epsilon - d(x,w)$, and $\op^{\cup}_{\B B(y, \delta)} \colon X \to \Real$ is defined by the rule $w \mapsto \delta - d(y,w)$. I.e., $\op^{\cup}_{\B B(x, \epsilon)}$ and $\op^{\cup}_{\B B(y, \delta)}$ are the extensions of $\op_{\B B(x, \epsilon)}$ and $\op_{\B B(y, \delta)}$ to $X$, respectively. Of course, it suffices to consider their extensions to $[d_x < \epsilon] \cup [d_y < \delta]$. This description of the modulus of openness for the union of two complemented balls motivates the following approach to the modulus of openness for the union of two open complemented subsets of a metric space.

\begin{remark}\label{rem: modunionmetric}
Let $\B G$ and $\B H$ in $\B {\C T}_d$ with moduli of openness $\op_{\B G}$ and $\op_{\B H}$, respectively, such that:\\[1mm]
%\]atisfying the following conditions:\\[1mm]
\normalfont (a) 
\itshape There exist extensions $\op_{\B G}^{\cup} \colon G^1 \cup H^1 \to \Real$ and $\op_{\B H}^{\cup} \colon G^1 \cup H^1 \to \Real$ of $\op_{\B G}$ and $\op_{\B H}$, respectively.\\[1mm]
\normalfont (b) 
\itshape If $z \in G^1 \cup H^1$, then $[d_z < \op_{\B G}^{\cup}(z) \vee \op_{\B H}^{\cup}(z)] \subseteq G^1 \cup H^1$.\\[1mm]
\normalfont (c) 
\itshape $\B G \cup \B H$ is $1$-tight i.e., $\neg{(k \in G^1 \cup H^1)} \To k \in G^0 \cap H^0$.\\[1mm]
Then $\op_{\B G \cup \B H} := \op_{\B G}^{\cup} \vee \op_{\B H}^{\cup}$ is a modulus of openness for $\B G \cup \B H$, 
%and its 
with a covering 
%and its
% induced 
%canonical 
$\B B(X)$-module 
%is covering.
\end{remark}
	
\begin{proof}
The proof is similar, actually simpler, to the proof of Proposition~\ref{prp: basemodule1}(iv).	
\end{proof}

Similarly, if $(\B G_i)_{i \in I}$ is an $I$-family in $\B {\C T}_d$, a modulus of openness for their union is defined
$$\op_{\bigcup_{i \in I}\B G_i } := \bigvee_{i \in I}\op_{i}^{\bigcup},$$ 
with a covering $\B B(X)$-module, if the given moduli of openness $(\op_i)_{i \in I}$ satisfy the following conditions:\\[1mm]
(a) There exist extensions $\op_{i}^{\bigcup} \colon \bigcup_{i \in I}G^1_i \to \Real$ of $\op_i$, for every $i \in I$.\\[1mm]
(b) If $z \in \bigcup_{i \in I}G^1_i$, then
$$\bigvee_{i \in I}\op_{i}^{\bigcup}(z) := \sup \big\{\op_{i}^{\bigcup}(z) \mid i \in I\big\} \in \Real \ \ \ \& \ \ \ \bigg[d_z < \bigvee_{i \in I}\op_{i}^{\bigcup}(z)\bigg] \subseteq 
\bigcup_{i \in I}G^1_i.$$
(d) $\bigcup_{i \in I}\B G_i $ is $1$-tight i.e., $\neg{\big(k \in \bigcup_{i \in I}G^1_i\big)} \To k \in \bigcap_{i \in I}G^0_i$.

\section{Pointwise and uniformly continuous functions between metric spaces}
\label{sec: csmetriccont}

First we present continuity of functions between metric spaces in the standard constructive way, and then we explain how the moduli of pointwise and uniform continuity are related to the moduli of openness.

\begin{definition}\label{def: mscont}
If $(X, d)$ and $(Y, e)$ are metric spaces, a function $\fXY$ is pointwise contninuous at $x_0 \in X$ with modulus of pointwise continuity $\omega_{f, x_0} \colon (0 + \infty) \to (0 + \infty)$, if
$$\forall_{\epsilon > 0}\forall_{x \in X}\big(d(x, x_0) < \omega_{f, x_0}(\epsilon) \To e(f(x), f(x_0)) < \epsilon\big).$$
If there is a function 
$$\omega_f \colon X \to \X F\big((0, +\infty), (0, +\infty)\big), \ \ \ x \mapsto \omega_{f, x},$$
 such that $\omega_{f, x}$ is a modulus of pointwise continuity of $f$ at $x$, for every $x \in X$, then
 $f$ is called pointwise continuous on $X$ with a modulus of pointwise continuity $\omega_f$. 
Function $f$ is $($uniformly$)$ continuous with a modulus of $($uniform$)$ continuity $\Omega_f \colon (0 + \infty) \to (0 + \infty)$, if 
$$\forall_{\epsilon > 0}\forall_{x, x{'} \in X}\big(d(x, x{'}) < \Omega_f(\epsilon) \To e(f(x), f(x{'}) < \epsilon\big).$$
We may denote a pointwise continuous function with a modulus of pointwise continuity by the pair $(f, \omega_f)$, and a uniformy continuous function with a modulus of uniform continuity by the pair $(f, \Omega_f)$. Of course, equality between such pairs is reduced to the standard equality of functions. 
\end{definition}

\begin{remark}\label{rem: secont}
	\normalfont
%If $(X, d)$ and $(Y, e)$ are metric spaces, 
A pointwise continuous function $(\fXY, \omega_f)$ between metric spaces is strongly extensional.
\end{remark}

\begin{proof}
Let $e(f(x), f(x{'})) > 0$, for some $x, x{'} \in X$. By pointwise continuity of $f$ at $x$ we have that  
$\forall_{\epsilon > 0}\big(d(x, x{'}) < \omega_{x}(\epsilon) \To e(f(x), f(x{'})) < \epsilon\big)$. If $\epsilon := e(f(x), f(x{'})) > 0$, then by the contraposition of the previous implication and the equivalence $\neg {(a < b)} \TOT a \geq b$, where $a, b \in \Real$, we get 
$e(f(x), f(x{'})) \geq \epsilon \To d(x, x{'}) \geq \omega_{x}(\epsilon) > 0.$
As the hypothesis holds trivially, we get 
%the required the conclusion 
$d(x, x{'}) >0$.
% is the required inequality $x \neq_{(X, d)} x{'}$.
\end{proof}

The following facts are straightforward to show.

\begin{proposition}\label{prp: compcont}
Let $(X, d), (Y, e), (Z, \rho)$ be metric spaces, $\fXY$ and $g \colon Y \to Z$.\\[1mm]
\normalfont (i) 
\itshape The identity $\id_X \colon X \to X$ is $($pointwise$)$ uniformly continuous with modulus of $($pointwise$)$ uniform continuity $(\omega_{\id_X})$ $\Omega_{\id_X}$, given by the rule $(\omega_{\id_X, x} := \id_{[0, +\infty)}$, for every $x \in X)$ $\Omega_{\id_X} :=  \id_{[0, +\infty)}$.\\[1mm]
\normalfont (ii) 
\itshape If $\omega_f$ and $\omega_g$ are moduli of pointwise continuity for $f$ and $g$, respectively, then the map $\omega_{g \circ f}$, defined by the rule
$\omega_{g \circ f, x} := \omega_{f, x} \circ \omega_{g, f(x)},$
for every $x \in X$, is  a modulus of pointwise continuity for $g \circ f$.\\[1mm]
\normalfont (iii) 
\itshape If $\Omega_f,$ and $\Omega_g$ are moduli of uniform continuity for $f$ and $g$, respectively, then the map $\Omega_{g \circ f}$, defined by the rule
$\Omega_{g \circ f} := \Omega_{f} \circ \Omega_{g},$
is  a modulus of uniform continuity for $g \circ f$.
\end{proposition}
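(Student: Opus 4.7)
The plan is to verify each of the three claims by direct unfolding of the definition of (pointwise/uniform) continuity together with its modulus, as given in Definition~\ref{def: mscont}. Since the statement is essentially computational, the task is to check that the proposed moduli witness the required implication; no nontrivial obstacle is expected, and indeed the author labels the proposition ``straightforward to show''.

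For part (i), I would fix $x \in X$ and $\epsilon > 0$ and observe that if $x' \in X$ satisfies $d(x', x) < \omega_{\id_X, x}(\epsilon) := \epsilon$, then $d(\id_X(x'), \id_X(x)) = d(x', x) < \epsilon$, which proves pointwise continuity of $\id_X$ at $x$ with the given modulus. The uniform case is identical, taking arbitrary $x, x' \in X$ with $d(x, x') < \Omega_{\id_X}(\epsilon) := \epsilon$.

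For part (ii), I would fix $x \in X$ and $\epsilon > 0$, and then take $x' \in X$ with
\[
d(x', x) < \omega_{g \circ f, x}(\epsilon) := \omega_{f, x}\bigl(\omega_{g, f(x)}(\epsilon)\bigr).
\]
Applying pointwise continuity of $f$ at $x$ with modulus $\omega_{f, x}$ to the positive real $\omega_{g, f(x)}(\epsilon) > 0$, we obtain $e(f(x'), f(x)) < \omega_{g, f(x)}(\epsilon)$. Then, applying pointwise continuity of $g$ at $f(x)$ with modulus $\omega_{g, f(x)}$ to $\epsilon$, we conclude $\rho(g(f(x')), g(f(x))) < \epsilon$, as required. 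The function-property of the assignment $x \mapsto \omega_{f, x} \circ \omega_{g, f(x)}$ is immediate from the function-property of $\omega_f$, $\omega_g$ and the fact that $f$ itself is a function.

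For part (iii), the same argument works without reference to a base point: fix $\epsilon > 0$ and $x, x' \in X$ with $d(x, x') < \Omega_{g \circ f}(\epsilon) := \Omega_f(\Omega_g(\epsilon))$. Uniform continuity of $f$ with modulus $\Omega_f$ yields $e(f(x), f(x')) < \Omega_g(\epsilon)$, and then uniform continuity of $g$ with modulus $\Omega_g$ yields $\rho(g(f(x)), g(f(x'))) < \epsilon$. There is no real obstacle here; the only subtlety worth highlighting is that the chosen order of composition $\omega_{f, x} \circ \omega_{g, f(x)}$ (resp.\ $\Omega_f \circ \Omega_g$) is forced by the direction of the implications in Definition~\ref{def: mscont}, and reflects that moduli contract the input $\epsilon$ before it is fed into the inner function.
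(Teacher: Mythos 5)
Your proof is correct and is exactly the direct unfolding of Definition~\ref{def: mscont} that the paper has in mind when it declares these facts ``straightforward to show'' (the paper itself supplies no written proof). The order of composition of the moduli and the positivity of $\omega_{g,f(x)}(\epsilon)$ are handled properly, so nothing is missing.
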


The following categories of metric spaces are subcategories of $(\SetExtIneq, \StrExtFun)$.
\begin{definition}\label{def: metriccats}
Let $(\Metr, \pContMod)$ be the category of metric spaces and pointwise continuous functions with a modulus of pointwise continuity, and let its subcategory $(\Metr, \uContMod)$ of metric spaces and uniformly continuous functions with a modulus of uniform continuity.	
\end{definition}

Next we explain how the moduli of pointwise continuity are related to moduli of openness.

\begin{proposition}\label{prp: pcont1}
Let $(X, d)$ and $(Y, e)$ be metric spaces and $\fXY$. If $f$ is pointwise continuous on $X$ with a modulus of pointwise continuity $\omega_{f}$, then $f$ inverses open complemented subsets and their moduli of openness i.e., $f^{-1}(\B H) \in \B {\C T}_d$, for every $\B H \in \B {\C T}_e$, and if $\op_{\B H}$ is a modulus of openness for $\B H$, then the assignment routine $\op_{f^{-1}(\B H)} \colon  f^{-1}(H^1) \sto (0, +\infty)$, defined by the rule
$$\op_{f^{-1}(\B H)}(x^1) := \omega_{f,x^1}\big(\op_{\B H}(f(x^1))\big),$$
for every $x^1 \in f^{-1}(H^1) $, is a modulus of openness for $f^{-1}(\B H)$, and the following conditions hold:\\[1mm]
\normalfont (*)
\itshape If $x^1 \in f^{-1}(H^1)$, then $\op_{f^{-1}(\B H)}(x^1) =_{\Real} \op_{f^{-1}(\B B(f(x^1), \op_{\B H}(f(x^1))))}(x^1)$.\\[1mm]
\normalfont (i)
\itshape 
$\op_{\id_X^{-1}(\B G)} =_{\X F(G^1, (0, +\infty)])} \op_{\B G}$, for every $\B G \in \B {\C T}_{d}$.\\[1mm]
\normalfont (ii)
\itshape $\op_{(g \circ f)^{-1}(\B K)} =_{\X F(f^{-1}(g^{-1}(K^1)), (0, +\infty)])} \op_{f^{-1}(g^{-1}(\B K))}$, where $g \colon (Y, e) \to (Z, \rho)$ is pointwise continuous with modulus of pointwise continuity $\omega_g$, and $\B K$ is an arbitrary open complemented subset in $\B {\C T}_{\rho}$. 
\end{proposition}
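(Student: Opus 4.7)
The plan is to prove all four clauses by unfolding the definition of pointwise continuity and of a modulus of openness, relying on the fact recorded in Remark~\ref{rem: secont} that $(f, \omega_f)$ is strongly extensional. This ensures via Proposition~\ref{prp: finvproperties}(i) that $f^{-1}(\B H) := (f^{-1}(H^1), f^{-1}(H^0))$ is a complemented subset of $\C X$, so the real content is to verify that the proposed assignment $\op_{f^{-1}(\B H)}(x^1) := \omega_{f,x^1}(\op_{\B H}(f(x^1)))$ is a function to $(0, +\infty)$ witnessing openness. Note first that $x^1 \in f^{-1}(H^1)$ yields $f(x^1) \in H^1$, so $\op_{\B H}(f(x^1)) > 0$ is defined, and $\omega_{f, x^1}$ applied to it returns a positive real.

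Fixing $x^1 \in f^{-1}(H^1)$, writing $\epsilon := \op_{\B H}(f(x^1))$ and $\delta := \omega_{f,x^1}(\epsilon)$, I must check both components of the inclusion $\B B(x^1, \delta) \subseteq f^{-1}(\B H)$. The inclusion $[d_{x^1} < \delta] \subseteq f^{-1}(H^1)$ is immediate from pointwise continuity: if $d(x, x^1) < \delta$, then $e(f(x), f(x^1)) < \epsilon$, hence $f(x) \in [e_{f(x^1)} < \epsilon] \subseteq H^1$, since $\op_{\B H}$ is a modulus of openness for $\B H$. For the dual inclusion $f^{-1}(H^0) \subseteq [d_{x^1} \geq \delta]$, I take $x$ with $f(x) =_Y h$ for some $h \in H^0$. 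From $\op_{\B H}$ I also get $H^0 \subseteq [e_{f(x^1)} \geq \epsilon]$, so $e(f(x^1), h) \geq \epsilon$, and by extensionality of $e$, $e(f(x^1), f(x)) \geq \epsilon$. If one assumes $d(x, x^1) < \delta$, pointwise continuity would force $e(f(x), f(x^1)) < \epsilon$, contradicting the previous inequality. This yields $\neg(d(x^1, x) < \delta)$, which by Remark~\ref{rem: order1} is equivalent in minimal logic to $d(x^1, x) \geq \delta$, as required.

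Once the modulus property is established, clause (*) is a one-line check: from the construction in the proof of Remark~\ref{rem: ball1}(i) one has $\op_{\B B(y, \eta)}(y) = \eta - e(y,y) = \eta$ for any $y \in Y$ and $\eta > 0$, so evaluating the right-hand side of (*) gives $\omega_{f, x^1}(\op_{\B B(f(x^1), \epsilon)}(f(x^1))) = \omega_{f, x^1}(\epsilon) = \omega_{f, x^1}(\op_{\B H}(f(x^1)))$, matching the left-hand side. Clause (i) follows because Proposition~\ref{prp: compcont}(i) gives $\omega_{\id_X, x^1} = \id_{(0, +\infty)}$, so evaluating the definition at any $x^1 \in G^1$ returns $\op_{\B G}(x^1)$. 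Clause (ii) follows from Proposition~\ref{prp: compcont}(ii), which provides $\omega_{g \circ f, x^1} = \omega_{f, x^1} \circ \omega_{g, f(x^1)}$; unfolding the definition then yields the chain $\op_{(g \circ f)^{-1}(\B K)}(x^1) = \omega_{f, x^1}(\omega_{g, f(x^1)}(\op_{\B K}(g(f(x^1))))) = \omega_{f, x^1}(\op_{g^{-1}(\B K)}(f(x^1))) = \op_{f^{-1}(g^{-1}(\B K))}(x^1)$.

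The main obstacle is the second inclusion $f^{-1}(H^0) \subseteq [d_{x^1} \geq \delta]$, since a naive route via the extensional complement $(H^1)^{\neq_Y}$ or via double negation would force a move from minimal into intuitionistic logic. The key is to exploit the positive disjointness packaged \emph{inside} the modulus of openness $\op_{\B H}$, namely the explicit inclusion $H^0 \subseteq [e_{f(x^1)} \geq \epsilon]$, together with the minimal-logic equivalence $\neg(a < b) \TOT a \geq b$ from Remark~\ref{rem: order1}; in this way, contraposition of pointwise continuity alone suffices to produce the desired lower bound on $d$ without appealing to Ex Falso.
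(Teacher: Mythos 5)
Your proof is correct and follows essentially the same route as the paper's: the first inclusion is read off directly from pointwise continuity, the second is obtained by contraposition together with the minimal-logic equivalence $\neg(a<b)\TOT a\geq b$, and clauses (*), (i), (ii) are the same computations via Remark~\ref{rem: ball1}(i) and Proposition~\ref{prp: compcont}. The only cosmetic difference is that for the second inclusion you derive the contradiction from $H^0\subseteq[e_{f(x^1)}\geq\epsilon]$ directly, whereas the paper derives $f(x^0)\in H^1$ and contradicts $H^1\Disj H^0$; both are instances of the same argument.
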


\begin{proof}
%Let $f$ be pointwise continuous on $X$ with a modulus of pointwise continuity $\omega_{f}$.
 By Remark~\ref{rem: secont} $f$ is strongly extensional and the inverse image of a complemented subset under $f$ is well-defined. If $\B H \in \B {\C T}_e$, and if $x^1 \in f^{-1}(H^1) \TOT f(x^1) \in H^1$, then
if 
$$\epsilon := \op_{\B H}(f(x^1)) > 0,$$
 we have that $[e_{f(x^1)} < \epsilon] \subseteq H^1$ and $H^0 \subseteq [e_{f(x^1)} \geq \epsilon]$. We show that $\B B_d(x^1, \omega_{f,x^1}(\epsilon)) \subseteq f^{-1}(\B H)$ i.e., 
$$[d_{x^1} < \omega_{f,x^1}(\epsilon)] \subseteq f^{-1}(H^1) \ \ \& \ \  f^{-1}(H^0) \subseteq 
[d_{x^1} \geq \omega_{f,x^1}(\epsilon)].$$
For the first inclusion, let $x \in X$ with $d(x^1, x)  <  \omega_{f,x^1}(\epsilon)$. By pointwise continuity of $f$ at $x^1$ we get $e(f(x^1), f(x)) < \op_{\B H}(f(x^1))$ hence $f(x) \in H^1 \TOT x \in f^{-1}(H^1)$. For the second inclusion, let $x^0 \in f^{-1}(H^0) \TOT f(x^0) \in H^0$. 
To show $d(x^1, x^0) \geq \omega_{f,x^1}(\epsilon)$, suppose that $d(x^1, x^0) < \omega_{f,x^1}(\epsilon)$. By pointwise continuity of $f$ at $x^1$ we get  $e(f(x^1), f(x^0)) < \epsilon$, hence $f(x^0) \in H^1$, which contradicts the fact that $H^1 \Disj H^0$. Hence  $d(x^1, x^0) \geq \omega_x(\epsilon)$. The 
%above defined 
assignment routine $\op_{f^{-1}(\B H)} \colon  f^{-1}(H^1) \sto (0, +\infty)$
%$$\op_{f^{-1}(\B H)}(x^1) := \omega_{f,x^1}\big(\op_{\B H}(f(x^1))\big),$$
is a function, as $\omega_f$, $\op_{\B H}$ and $f$ are functions. Clearly, $\op_{f^{-1}(\B H)}$ is a modulus of openness for $f^{-1}(\B H)$. \\
(*) If $x^1 \in f^{-1}(H^1)$, then by the definition we have that
\begin{align*}
\op_{f^{-1}(\B B(f(x^1), \op_{\B H}(f(x))))}(x^1) & := \omega_{f,x^1}\big(\op_{\B B(f(x^1), \op_{\B H}(f(x^1)))}(f(x^1))\big)\\
& := \omega_{f,x^1}\big(\op_{\B H}(f(x^1) - e(f(x^1), f(x^1))))\big)\\
& =_{\Real} \omega_{f,x^1}\big(\op_{\B H}(f(x^1))\big)\\
& =: \op_{f^{-1}(\B H)}(x^1).
\end{align*}
(i) If $x^1 \in G^1$, then by Proposition~\ref{prp: compcont}(i) for every $x^1 \in G^1$ we have that
$$\op_{\id_X^{-1}(\B G)}(x^1) := \omega_{\id_X, x^1}(\op_{\B G}(\id_X(x^1))) := \op_{\B G}(x^1).$$
(ii) If $x^1 \in f^{-1}(g^{-1}(K^1))$, then by Proposition~\ref{prp: compcont}(ii) we have that
\begin{align*}
\op_{(g \circ f)^{-1}(\B K)}(x^1) & := \omega_{g \circ f, x^1}\big(\op_{\B K}(g(f(x^1)))\big)\\
& := \big(\omega_{f, x^1} \circ \omega_{g, f(x^1)}\big)\big(\op_{\B K}(g(f(x^1)))\big)\\
& := \omega_{f, x^1}\bigg(\omega_{g, f(x^1)}\big(\op_{\B K}(g(f(x^1)))\big)\bigg)\\
& =:   \omega_{f, x^1}\big(\op_{g^{-1}(\B K)}(f(x^1))\big)\\
& =:  \op_{f^{-1}(g^{-1}(\B K))}(x^1). \qedhere
\end{align*}
%If $y_0 \in Y$, we define $\op_{f^{-1}(\B B(y_0, \epsilon))} \colon \{x \in X \mid e(f(x), y_0) < \epsilon\} \to [0, + \infty)$ by the rule
%$$x_0 \mapsto \op_{f^{-1}(\B B(y_0, \epsilon))}(x_0) := \omega_{x_0}(\epsilon - e(y_0, f(x_0))) =: \omega_{x_0}\big(\op_{\B B(y_0, \epsilon)}(f(x_0))\big)$$
%%i.e., 
%%$$\op_{f^{-1}(\B B(y_0, \epsilon))} := \omega_{x_0} \circ \op_{\B B(y_0, \epsilon)} \circ f.$$
%We show that $\B B(x_0, \op_{f^{-1}(\B B(y_0, \epsilon))}(x_0)) \subseteq f^{-1}(\B B(y_0, \epsilon))$ i.e.,
%$$[d_{x_0} < \omega_{x_0}(\epsilon - e(y_0, f(x_0)))] \subseteq \{x \in X \mid e(f(x), y_0) < \epsilon\}$$
%and 
%$$\{x{'} \in X \mid e(f(x{'}), y_0) \geq \epsilon\} \subseteq [d_{x_0} \geq \omega_{x_0}(\epsilon - e(y_0, f(x_0)))].$$
%For the first inclusion, let $z \in X$ with $d(x_0, z) < \omega_{x_0}(\epsilon - e(y_0, f(x_0)))$. By continuity of $f$ at $x_0$ we get 
%\begin{align*}
%e(f(z), y_0) & \leq e(f(z), f(x_0))  + e(f(x_0), y_0)\\
%& < \epsilon - e(y_0, f(x_0)) + e(f(x_0), y_0)\\
%& = \epsilon.
%\end{align*}
%For the second inlcusion, let $x{'} \in X$ with $e(f(x{'}), y_0) \geq \epsilon$, and we show that $d(x_0, x{'}) \geq \omega_{x_0}(\epsilon - e(y_0, f(x_0)))$. Suppose that $d(x_0, x{'}) < \omega_{x_0}(\epsilon - e(y_0, f(x_0)))$, hence by continuity at $x_0$ we get $e(f(x_0), f(x{'})) < \epsilon - e(y_0, f(x_0))$. Working as above, we get $e(f(x{'}), y_0) \leq e(f(x{'}), f(x_0))  + e(f(x_0), y_0) < \epsilon$, which conradicts our hypothesis on $x{'}$, and $d(x_0, x{'}) \geq \omega_{x_0}(\epsilon - e(y_0, f(x_0)))$ follows by Remark~\ref{rem: order1}.\\
\end{proof}

The previous proof shows that the use of $<$ instead of $\leq$ in the definition of (pointwise) continuity of $\fXY$ is crucial. If $\B H$ is a complemented ball in $(Y, e)$, then we get
$$\op_{f^{-1}(\B B(y_0, \epsilon))}(x_0) := \omega_{x_0}(\epsilon - e(y_0, f(x_0))) =: \omega_{f,x_0}\big(\op_{\B B(y_0, \epsilon)}(f(x_0))\big),$$
for every $x_0 \in \{x \in X \mid e(f(x), y_0) < \epsilon\}$. If $\B G$ is $\B {\C T}_d$-open, let the %following 
sets of moduli of openness
$$\Mod(\B G) := \{\phi \in \X F\big(G^1, (0, +\infty)\big) \mid \phi \ \mbox{is a modulus of openness for} \ \B G)\},$$
$$\Mod^{\ast}(\B G) := \{\phi \in \Mod(\B G) \mid \phi \ \mbox{satisfies condition (*) in Proposition~\ref{prp: pcont1}}\}.$$
The disjoint union of the family $\B G \mapsto \Mod(\B G)$, defined in~\cite{Pe20}, section 3.2, is the set
$$\bigcup_{\B G \in \B {\C T}_d}\Mod(\B G)$$
that has elements pairs $(\B G, \phi)$, where $\B G \in \B {\C T}_d$ and $\phi \in \Mod(\B G)$. For the definition of equality on this set see~\cite{Pe20}. Due to condition $(*)$ above, the modulus of pointwise continuity $\omega_f$ of $f$ induces a function 
$$\op_f^* \colon \bigcup_{\B H \in \B {\C T}_e}\Mod(\B H) \to \bigcup_{\B G \in \B {\C T}_d}\Mod^*(\B G).$$
Next follows the converse to Proposition~\ref{prp: pcont1}.

\begin{proposition}\label{prp: pcont2}
	Let $(X, d)$ and $(Y, e)$ be metric spaces and let $\fXY$ be strongly extensional. If $f$ inverses open complemented subsets together with their moduli of openness i.e., if there is a function
	$$\op_f^* \colon \bigcup_{\B H \in \B {\C T}_e}\Mod(\B H) \to \bigcup_{\B G \in \B {\C T}_d}\Mod^*(\B G),$$
$$(\B H, \op_{\B H}) \mapsto (f^{-1}(\B H), \op_{f^{-1}(\B H)}),$$
then $f$ is pointwise continuous on $X$ with a modulus of pointwise continuity $\omega_{f}$, defined by 
$$\omega_{f, x}(\epsilon)  := \op_{f^{-1}(\B B(f(x), \epsilon))}(x), \ \ \ \ x \in X, \ \epsilon >0.$$
\end{proposition}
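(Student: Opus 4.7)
The plan is to extract the modulus of pointwise continuity at each point directly from the hypothesis, by applying $\op_f^*$ to the canonical modulus of openness of a complemented ball centered at $f(x)$.

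First, fix $x \in X$ and $\epsilon>0$. Consider the complemented ball $\B B(f(x),\epsilon) \in \B {\C T}_e$, equipped with its canonical modulus of openness $\op_{\B B(f(x),\epsilon)}(y) := \epsilon - e(f(x),y)$ (as produced in Remark~\ref{rem: ball1}). Applying the given function $\op_f^*$ to the pair $(\B B(f(x),\epsilon),\op_{\B B(f(x),\epsilon)})$ we obtain the pair $(f^{-1}(\B B(f(x),\epsilon)),\op_{f^{-1}(\B B(f(x),\epsilon))})$, where the first component is in $\B {\C T}_d$ and the second is a modulus of openness for it. Since $e(f(x),f(x)) =_{\Real} 0 < \epsilon$, we have $x \in f^{-1}([e_{f(x)}<\epsilon])$, so $\omega_{f,x}(\epsilon) := \op_{f^{-1}(\B B(f(x),\epsilon))}(x)$ is a well-defined positive real.

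Next I would read off pointwise continuity from the modulus-of-openness property. By definition of $\op_{f^{-1}(\B B(f(x),\epsilon))}$ as a modulus of openness at the canonical point $\B x$,
$$\B B(x,\omega_{f,x}(\epsilon)) \subseteq f^{-1}(\B B(f(x),\epsilon)),$$
and in particular the first-component inclusion $[d_x<\omega_{f,x}(\epsilon)] \subseteq f^{-1}([e_{f(x)}<\epsilon])$ holds. Unfolding, this says that for every $x{'} \in X$ with $d(x,x{'}) < \omega_{f,x}(\epsilon)$ we have $e(f(x),f(x{'})) < \epsilon$, which is precisely the pointwise-continuity condition of Definition~\ref{def: mscont} at $x$ with modulus $\omega_{f,x}$.

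It remains to check that $\omega_{f,x} \colon (0,+\infty) \to (0,+\infty)$ is a function and that $\omega_f \colon X \to \D F((0,+\infty),(0,+\infty))$, $x \mapsto \omega_{f,x}$, is a function. Both follow because the formation $\epsilon \mapsto (\B B(f(x),\epsilon),\op_{\B B(f(x),\epsilon)})$ respects equality in $\epsilon$, the formation $x \mapsto (\B B(f(x),\epsilon),\op_{\B B(f(x),\epsilon)})$ respects equality in $x$ (using that $f$ is a function and that $\op_{\B B(f(x),\epsilon)}$ is defined by the continuous assignment $y \mapsto \epsilon - e(f(x),y)$), and $\op_f^*$ is itself a function between the two disjoint unions of moduli.

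The main obstacle I anticipate is not logical but bookkeeping: making sure the evaluation of $\op_f^*$ at the pair $(\B B(f(x),\epsilon),\op_{\B B(f(x),\epsilon)})$ lands in the fibre over $f^{-1}(\B B(f(x),\epsilon))$ in the sense required by the equality on the disjoint union (see~\cite{Pe20}), so that $\omega_{f,x}(\epsilon)$ is unambiguously a number. The condition $(*)$ built into the codomain $\Mod^{\ast}$ is exactly what guarantees this coherence, so no extra work beyond unfolding definitions is required. Strong extensionality of $f$ is used implicitly, via the hypothesis that $f^{-1}$ of a complemented subset is a complemented subset, as in Proposition~\ref{prp: finvproperties}(i).
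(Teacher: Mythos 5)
Your proposal is correct and follows essentially the same route as the paper's own proof: apply the hypothesised $\op_f^*$ to the complemented ball $\B B(f(x),\epsilon)$ with its canonical modulus from Remark~\ref{rem: ball1}(i), set $\omega_{f,x}(\epsilon) := \op_{f^{-1}(\B B(f(x),\epsilon))}(x)$, and read off pointwise continuity from the first-component inclusion $[d_x<\omega_{f,x}(\epsilon)] \subseteq f^{-1}([e_{f(x)}<\epsilon])$. Your extra remarks on functionality and on the role of condition $(*)$ go slightly beyond what the paper records (the paper in fact only invokes $(*)$ later, in Proposition~\ref{prp: pcont3}(ii)), but they do not change the argument.
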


\begin{proof}
	If 
	%$f$ is strongly extensional and inverses open complemented subsets together with their moduli of openness, and 
	$x_0 \in X$ and $\epsilon > 0$, then by hypothesis on $f$ we have that 
	$$f^{-1}\big(\B B(f(x_0), \epsilon)\big) := \big(f^{-1}\big([e_{f(x_0)} < \epsilon]\big), f^{-1}\big([e_{f(x_0)} \geq \epsilon]\big)\big)$$
	is $\B {\C T}_d$-open. The real number $\omega_{f, x_0}(\epsilon)  := \op_{f^{-1}(\B B(f(x_0), \epsilon))}(x_0)$ is well-defined, as by Remark~\ref{rem: ball1}(i) the complemented ball $\B B(f(x_0), \epsilon)$ has a modulus of openness and by our hypothesis its inverse image under $f$ also has one\footnote{In principle $\Mod(\B H)$ maybe empty. What the rule $\op_f$ guarantees is that, given a modulus of opennes for $\B H$, a modulus of openness for $f^{-1}(\B H)$ is generated by $\op_f$.}. In this case we have that 
	$\B B(x_0, \omega_{f, x_0}(\epsilon)) \subseteq f^{-1}\big(\B B(f(x_0), \epsilon)\big)$ i.e., 
	$$[d_{x_0} < \omega_{f,x_0}(\epsilon)] \subseteq f^{-1}\big([e_{f(x_0)} < \epsilon]\big) \ \ \& \ \ 
	f^{-1}\big([e_{f(x_0)} \geq \epsilon]\big) \subseteq [d_{x_0} \geq \omega_{f, x_0}(\epsilon)].$$
	Hence, if $x \in X$ with $d(x_0, x) < \omega_{f, x_0}(\epsilon)$, then $x \in f^{-1}\big([e_{f(x_0)} < \epsilon]\big)$ i.e., $e(f(x_0), f(x)) < \epsilon$.
\end{proof}

The previous proof shows that it suffices that $f$ inverses the moduli of openness for the complemented balls i.e., there is a modulus of openness $\op_{f^{-1}(\B B(y_0, \epsilon))}$ for  $f^{-1}(\B B(y_0, \epsilon))$, for every $\epsilon > 0$ and $y_0 \in Y$. 
Clearly, if $f$ inverses complemented balls, then the proof of the converse does not require the inversion of moduli. Next we show that the two constructions 
described in Propositions~\ref{prp: pcont1} and~\ref{prp: pcont2}, respectively, are inverse to each other. For case (ii) below, it is essential to begin with a function $\op_f^*$ that sends a modulus of opennes for $\B H$ to a modulus of openness for $f^{-1}(\B H)$ that satisfies condition (*) in Proposition~\ref{prp: pcont1}. Actually, condition (*) was motivated by the proof of Proposition~\ref{prp: pcont3}(ii). As these are exactly the functions generated by a modulus $\omega_f$, there is no loss of generality.

\begin{proposition}\label{prp: pcont3}
\normalfont (i)
\itshape Let the constructions $(f, \omega_f) \mapsto (f, \op_f^*)$ and $(f, \op_f*) \mapsto (f, \omega_f{'})$, described in Propositions~\ref{prp: pcont1} and~\ref{prp: pcont2}, respectively. Then $\omega_f{'} = \omega_f$, where this equality is within the set $\X F\big(X, \X F((0, +\infty), (0, +\infty))\big)$.\\[1mm]
\normalfont (ii)
\itshape Let the constructions $((f, \op_f^*) \mapsto (f, \omega_f)$ and $(f, \omega_f) \mapsto (f, \op_f{'})$, described in Propositions~\ref{prp: pcont2} and~\ref{prp: pcont1}, respectively, where 
$\op_f^* \colon \bigcup_{\B H \in \B {\C T}_e}\Mod(\B H) \to \bigcup_{\B G \in \B {\C T}_d}\Mod^*(\B G)$. 
Then $\op_f{'} =\op^*_f$, where this equality is within the set $\X F\big(\bigcup_{\B H \in \B {\C T}_e}\Mod(\B H), \bigcup_{\B G \in \B {\C T}_d}\Mod^*(\B G)\big)$.
\end{proposition}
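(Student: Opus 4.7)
The plan is to prove both items by unfolding the definitions of the two constructions and exploiting the fact that the canonical modulus of openness of a complemented ball, evaluated at its centre, returns the radius. Recall from Remark~\ref{rem: ball1}(i) that $\op_{\B B(y_0, \epsilon)}(y) := \epsilon - e(y_0, y)$, so in particular $\op_{\B B(y_0, \epsilon)}(y_0) =_{\Real} \epsilon$. This identity, together with condition $(*)$ in Proposition~\ref{prp: pcont1}, will do essentially all the work.

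For (i), I will start from $(f, \omega_f)$ and apply Proposition~\ref{prp: pcont1} to get $\op_{f^{-1}(\B H)}(x^1) := \omega_{f, x^1}\big(\op_{\B H}(f(x^1))\big)$, then Proposition~\ref{prp: pcont2} to obtain
\[
\omega'_{f, x}(\epsilon) := \op_{f^{-1}(\B B(f(x), \epsilon))}(x) = \omega_{f, x}\big(\op_{\B B(f(x), \epsilon)}(f(x))\big) = \omega_{f, x}(\epsilon),
\]
where the last equality uses $\op_{\B B(f(x), \epsilon)}(f(x)) =_{\Real} \epsilon - e(f(x), f(x)) =_{\Real} \epsilon$. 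Since this holds for every $x \in X$ and every $\epsilon > 0$, equality of $\omega'_f$ and $\omega_f$ in $\D F\big(X, \D F((0, +\infty), (0, +\infty))\big)$ follows.

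For (ii), I will start from $(f, \op_f^{*})$, where by assumption each $\op_{f^{-1}(\B H)}$ lies in $\Mod^{*}(f^{-1}(\B H))$, i.e., satisfies condition $(*)$ of Proposition~\ref{prp: pcont1}. The modulus of pointwise continuity produced by Proposition~\ref{prp: pcont2} is $\omega_{f, x}(\epsilon) := \op_{f^{-1}(\B B(f(x), \epsilon))}(x)$, and feeding this back into Proposition~\ref{prp: pcont1} yields
\[
\op'_{f^{-1}(\B H)}(x^1) := \omega_{f, x^1}\big(\op_{\B H}(f(x^1))\big) = \op_{f^{-1}(\B B(f(x^1), \op_{\B H}(f(x^1))))}(x^1).
\]
By condition $(*)$ applied to $\op_{f^{-1}(\B H)} \in \Mod^{*}(f^{-1}(\B H))$, the right-hand side is $=_{\Real} \op_{f^{-1}(\B H)}(x^1)$, so $\op'_{f^{-1}(\B H)}(x^1) =_{\Real} \op_{f^{-1}(\B H)}(x^1)$ for every $x^1 \in f^{-1}(H^1)$; this gives the required equality inside $\D F\big(\bigcup_{\B H \in \B {\C T}_e}\Mod(\B H), \bigcup_{\B G \in \B {\C T}_d}\Mod^{*}(\B G)\big)$.

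The main subtlety, rather than an obstacle, is conceptual: neither direction is a difficult computation, but (ii) is exactly the reason why the codomain of $\op_f^{*}$ in Proposition~\ref{prp: pcont2} was restricted to $\Mod^{*}$ rather than $\Mod$. Without condition $(*)$, the value of $\op_{f^{-1}(\B H)}$ at $x^1$ could not be reconstructed from its behaviour on the complemented balls $\B B(f(x^1), \op_{\B H}(f(x^1)))$ alone, and the two constructions would fail to be mutually inverse. I will therefore make this dependence explicit in the write-up, so that the asymmetry between the two propositions is seen to be forced by the inversion statement proved here.
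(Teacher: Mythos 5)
Your proposal is correct and follows essentially the same route as the paper: part (i) is the same unfolding that reduces to $\op_{\B B(f(x),\epsilon)}(f(x)) =_{\Real} \epsilon - e(f(x),f(x)) =_{\Real} \epsilon$, and part (ii) is the same two-line computation that invokes condition $(*)$ to collapse $\op_{f^{-1}(\B B(f(x^1), \op_{\B H}(f(x^1))))}(x^1)$ back to $\op_{f^{-1}(\B H)}(x^1)$. Your closing remark on why the codomain must be $\Mod^{*}$ matches the paper's own comment that condition $(*)$ was motivated precisely by the proof of part (ii).
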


\begin{proof}
(i) If $x \in X$ and $\epsilon >0$, we have that
\begin{align*}
\omega_{f, x}{'}(\epsilon) & := \op_{f^{-1}(\B B(x, \epsilon))}(x)\\
& := \omega_{f,x}\big(\op_{\B B(x, \epsilon}(f(x)))\big)\\
& :=\omega_{f,x}\big(\epsilon - e(f(x), f(x))\big)\\
& =_{\Real} \omega_{f,x}(\epsilon).
\end{align*}
(ii) If $\B H \in \B {\C T}_e$ and $x \in f^{-1}(H^1)$, then by condition (*) we have that
\begin{align*}
\op{'}_{f^{-1}(\B H)}(x) & := \omega_{f,x}\big(\op_{\B H}(f(x))\big)\\
& := \op_{f^{-1}(\B B(f(x), \op_{\B H}(f(x))))}(x)\\
& =_{\Real} \op_{f^{-1}(\B H)}(x).  \ \ \ \ \ \qedhere 
\end{align*}
\end{proof}

\begin{remark}\label{rem: pcont4}
Let $(X, d)$ and $(Y, e)$ be metric spaces and $(\fXY, \omega_f)$ pointwise continuous on $X$.\\[1mm]
\normalfont (i) 
\itshape $\omega_{f, x}(1) =_{\Real} 1$, for every $x \in X$.\\[1mm]
\normalfont (i)
\itshape $\omega_{f, x}\big(\op_{\B K}(x) \wedge \op_{\B L}((f(x))\big) =_{\Real} \omega_{f, x}\big(\op_{\B K}(f(x))\big) \wedge \omega_{f, x}\big(\op_{\B L}(f(x))\big)$, for every $\B K, \B L \in \B {\C T}_e$ and every $x \in K^1 \cap L^1$.
\end{remark}

\begin{proof}
(i) If $x \in X$, then $1 =: \op_{(X, \ \emptys_X)}(x)  =_{\Real} \op_{f^{-1}(Y, \ \emptys_Y)}(x) := \omega_{f,x}\big(\op_{(Y, \ \emptys_Y)}(f(x))\big) := \omega_{f,x}(1)$.\\
(ii) 
%If $\B K, \B L \in \B {\C T}_e$ and $x \in K^1 \cap L^1$, then 
Since by definition equal open complemented subsets have equal moduli of openness, we get
\begin{align*}
\omega_{f, x}\big(\op_{\B K}(x) \wedge \op_{\B L}((f(x))\big) & =: \omega_{f, x}\big(\op_{\B K \cap \B L}(f(x))\big)\\
& =: \op_{f^{-1}(\B K \cap \B L)}(x)\\
& =_{\Real}   \op_{f^{-1}(\B K)}(x) \wedge  \op_{f^{-1}(\B L)}(x)\\
& =:  \omega_{f, x}\big(\op_{\B K}(f(x))\big) \wedge \omega_{f, x}\big(\op_{\B L}(f(x))\big). \qedhere
\end{align*}
\end{proof}

The uniform case of Proposition~\ref{prp: pcont1} is shown similarly using Proposition~\ref{prp: compcont}(iii).

\begin{proposition}\label{prp: ucont1}
	Let $(X, d)$ and $(Y, e)$ be metric spaces and $\fXY$. If $f$ is uniformly continuous on $X$ with a modulus of uniform continuity $\Omega_{f}$, then $f$ inverses open complemented subsets and their moduli of openness, where if $\op_{\B H}$ is a modulus of openness for $\B H$, then the function
	$$\op_{f^{-1}(\B H)} := \Omega_{f} \circ \op_{\B H} \circ f_{|f^{-1}(H^1)},$$
	is a modulus of opennness for $f^{-1}(\B H)$, such the following conditions hold:\\[1mm]
	\normalfont (*)
	\itshape If $x^1 \in f^{-1}(H^1)$, then $op_{f^{-1}(\B H)}(x^1) =_{\Real} \op_{f^{-1}(\B B(f(x^1), \op_{\B H}(f(x))))}(x^1)$.\\[1mm]
	\normalfont (i)
	\itshape $\op_{\id_X^{-1}(\B G)} =_{\X F(G^1, (0, +\infty)])} \op_{\B G}$, for every $\B G \in \B {\C T}_{d}$.\\[1mm]
	\normalfont (ii)
	\itshape $\op_{(g \circ f)^{-1}(\B K)} =_{\X F(f^{-1}(g^{-1}(K^1)), (0, +\infty)])} \op_{f^{-1}(g^{-1}(\B K))}$, where $g \colon (Y, e) \to (Z, \rho)$ is uniformly continuous with modulus of uniform continuity $\Omega_g$, and $\B K$ is an arbitrary open complemented subset in $\B {\C T}_{\rho}$. 
\end{proposition}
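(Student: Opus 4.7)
The plan is to mirror closely the argument of Proposition~\ref{prp: pcont1}, exploiting the fact that uniform continuity is the ``pointwise continuity with a constant (point-independent) modulus''. Since $f$ is uniformly continuous, it is in particular pointwise continuous at every $x_0 \in X$ with modulus $\omega_{f,x_0} := \Omega_f$, so by Remark~\ref{rem: secont} $f$ is strongly extensional and $f^{-1}(\B H) \in \C E^{\Disj}(X)$ is well-defined. To see $f^{-1}(\B H) \in \B {\C T}_d$, fix $\B x \cin f^{-1}(\B H)$, set $\epsilon := \op_{\B H}(f(x)) > 0$, and verify the inclusions
\[ [d_x < \Omega_f(\epsilon)] \subseteq f^{-1}(H^1) \ \ \& \ \  f^{-1}(H^0) \subseteq [d_x \geq \Omega_f(\epsilon)], \]
exactly as in the proof of Proposition~\ref{prp: pcont1}: the first inclusion uses $e(f(x),f(x')) < \epsilon$ from the uniform-continuity estimate, and the second uses contraposition together with the disjointness $H^1 \Disj H^0$. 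This shows that the composite $\op_{f^{-1}(\B H)} := \Omega_f \circ \op_{\B H} \circ f_{|f^{-1}(H^1)}$ is a well-defined function into $(0,+\infty)$ and serves as a modulus of openness for $f^{-1}(\B H)$.

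For condition $(*)$, the key observation is that $\op_{\B B(f(x^1), \op_{\B H}(f(x^1)))}(f(x^1)) =_{\Real} \op_{\B H}(f(x^1)) - e(f(x^1), f(x^1)) =_{\Real} \op_{\B H}(f(x^1))$; applying $\Omega_f$ to both sides yields the equality with $\op_{f^{-1}(\B H)}(x^1)$. This is a direct transcription of the corresponding calculation in Proposition~\ref{prp: pcont1}, with $\omega_{f,x^1}$ replaced by the constant-in-$x$ function $\Omega_f$.

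For $(i)$, I invoke Proposition~\ref{prp: compcont}(i), which gives $\Omega_{\id_X} = \id_{[0,+\infty)}$, so that for every $x^1 \in G^1$
\[ \op_{\id_X^{-1}(\B G)}(x^1) := \Omega_{\id_X}\big(\op_{\B G}(\id_X(x^1))\big) := \op_{\B G}(x^1). \]
For $(ii)$, I apply Proposition~\ref{prp: compcont}(iii) to write $\Omega_{g \circ f} = \Omega_f \circ \Omega_g$, so that for $x^1 \in f^{-1}(g^{-1}(K^1))$
\begin{align*}
\op_{(g \circ f)^{-1}(\B K)}(x^1) &:= \Omega_{g \circ f}\big(\op_{\B K}(g(f(x^1)))\big)\\
&= \Omega_f\big(\Omega_g(\op_{\B K}(g(f(x^1))))\big)\\
&=: \Omega_f\big(\op_{g^{-1}(\B K)}(f(x^1))\big)\\
&=: \op_{f^{-1}(g^{-1}(\B K))}(x^1).
\end{align*}

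No step is genuinely hard; the proof is essentially the ``degenerate'' case of Proposition~\ref{prp: pcont1} when the pointwise modulus is constant in the point. The only subtlety worth flagging is that in the proof of the key inclusion $f^{-1}(H^0) \subseteq [d_x \geq \Omega_f(\epsilon)]$ one must proceed by contradiction (assuming $d(x,x^0) < \Omega_f(\epsilon)$, deriving $f(x^0) \in H^1$, and contradicting $H^1 \Disj H^0$), and this requires intuitionistic logic ($\INT$) via the equivalence $\neg(a<b) \TOT a \geq b$ on $\Real$, exactly as in Proposition~\ref{prp: pcont1}.
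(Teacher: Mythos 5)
Your proof is correct and follows exactly the route the paper intends: the paper gives no separate argument for this proposition, stating only that ``the uniform case of Proposition~\ref{prp: pcont1} is shown similarly using Proposition~\ref{prp: compcont}(iii)'', which is precisely your strategy of specialising the pointwise proof to the constant-in-the-point modulus $\omega_{f,x} := \Omega_f$. The only quibble is your final remark: by Remark~\ref{rem: order1} the equivalence $\neg(a<b) \TOT a \geq b$ is available already in $\MIN$, so the contraposition step does not by itself force $\INT$.
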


%As  for pointwise continuous functions, d
Due to condition $(*)$ above, the modulus of uniform continuity $\Omega_f$ of $f$ induces a function 
$$\Op_f^* \colon \bigcup_{\B H \in \B {\C T}_e}\Mod(\B H) \to \bigcup_{\B G \in \B {\C T}_d}\Mod^*(\B G).$$
 If $(f, \Omega_f)$ is uniformly continuous, then for every $y \in Y$, $\epsilon > 0$ and $x \in f^{-1}(\B B(x, \epsilon))$, we have that 
$$\op_{f^{-1}(\B B(y, \epsilon))}(x) := \Omega_f\big(\op_{\B B(y, \epsilon)}(f(x)) := \Omega_f\big(\epsilon - e(y, f(x))\big).$$
Hence, for every $x \in X$ we get
$$\op_{f^{-1}(\B B(f(x), \epsilon))}(x) =_{\Real} \Omega_f(\epsilon).$$
This observation motivates the formulation of the converse to Proposition~\ref{prp: ucont1} that follows.
We need $X$ to be an inhabited metric space, in order to define the induced modulus of uniform continuity.

\begin{proposition}\label{prp: ucont2}
	Let $(X, d)$ and $(Y, e)$ be metric spaces, $x_0 \in X$, and $\fXY$ strongly extensional. If $f$ inverses open complemented subsets 
	%together with 
	and their moduli of openness i.e., if there is a function
	$$\Op_f^* \colon \bigcup_{\B H \in \B {\C T}_e}\Mod(\B H) \to \bigcup_{\B G \in \B {\C T}_d}\Mod^*(\B G),$$
	$$(\B H, \op_{\B H}) \mapsto (f^{-1}(\B H), \op_{f^{-1}(\B H)}),$$
	such that for every $\epsilon > 0$ it satisfies
	$$\forall_{x, x{'} \in X}\big(\op_{f^{-1}(\B B(f(x), \epsilon))}(x) =_{\Real} \op_{f^{-1}(\B B(f(x{'}), \epsilon))}(x{'}) \big),$$
	then $f$ is uniformly continuous on $X$ with a modulus of uniform continuity $\Omega_{f}$, defined by 
	$$\Omega_{f}(\epsilon)  := \op_{f^{-1}(\B B(f(x_0), \epsilon))}(x_0).$$
\end{proposition}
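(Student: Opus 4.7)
The plan is to mimic closely the proof of Proposition~\ref{prp: pcont2}, the pointwise analogue, and then use the extra uniformity condition to convert the pointwise witness into a uniform one independent of the basepoint. First, I would note that $\Omega_f(\epsilon)$ is well-defined and strictly positive: since $\B B(f(x_0), \epsilon)$ is $\B {\C T}_e$-open by Remark~\ref{rem: ball1}(i) and carries its canonical modulus of openness $\op_{\B B(f(x_0), \epsilon)}(y) := \epsilon - e(f(x_0), y)$, the hypothesis on $\Op_f^*$ produces a modulus of openness $\op_{f^{-1}(\B B(f(x_0), \epsilon))} \in \Mod^{\ast}\big(f^{-1}(\B B(f(x_0), \epsilon))\big)$, whose value at $x_0 \in f^{-1}([e_{f(x_0)} < \epsilon])$ is a positive real. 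Hence $\Omega_f \colon (0, +\infty) \to (0, +\infty)$ is a genuine function (definable because $X$ is inhabited via $x_0$).

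Next, I would exploit the extra uniformity hypothesis to pass from $x_0$ to an arbitrary point $x \in X$: for every $\epsilon > 0$ we have
\[
\Omega_f(\epsilon) := \op_{f^{-1}(\B B(f(x_0), \epsilon))}(x_0) =_{\Real} \op_{f^{-1}(\B B(f(x), \epsilon))}(x).
\]
Since $\op_{f^{-1}(\B B(f(x), \epsilon))}$ is, by assumption, a modulus of openness for the $\B {\C T}_d$-open complemented subset $f^{-1}(\B B(f(x), \epsilon))$, this yields the inclusion
\[
\B B\big(x, \Omega_f(\epsilon)\big) \subseteq f^{-1}\big(\B B(f(x), \epsilon)\big),
\]
i.e., $[d_x < \Omega_f(\epsilon)] \subseteq f^{-1}\big([e_{f(x)} < \epsilon]\big)$ (and the dual inclusion for the $0$-components, which we will not need here).

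Finally, to conclude uniform continuity, take arbitrary $\epsilon > 0$ and $x, x' \in X$ with $d(x, x') < \Omega_f(\epsilon)$. Applying the above inclusion at the point $x$, we obtain $x' \in [d_x < \Omega_f(\epsilon)] \subseteq f^{-1}\big([e_{f(x)} < \epsilon]\big)$, which by the definition of the inverse image unfolds to $e(f(x), f(x')) < \epsilon$, proving uniform continuity with modulus $\Omega_f$.

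I do not foresee any serious obstacle: the whole argument is a direct reading of Proposition~\ref{prp: pcont2} with the single extra ingredient that the uniformity hypothesis on $\Op_f^*$ makes $\op_{f^{-1}(\B B(f(x), \epsilon))}(x)$ independent (up to $=_{\Real}$) of $x$, which is precisely what transforms the pointwise modulus $\omega_{f,x}(\epsilon) := \op_{f^{-1}(\B B(f(x), \epsilon))}(x)$ into a uniform modulus $\Omega_f(\epsilon)$. The mildest subtlety is noting that strong extensionality of $f$ is needed (as in Proposition~\ref{prp: pcont2}) only to ensure that $f^{-1}$ is a well-defined operation on complemented subsets, so that the hypothesis on $\Op_f^*$ is meaningful; this is already assumed.
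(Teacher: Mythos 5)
Your proposal is correct and follows essentially the same route as the paper's proof: the paper also fixes $\epsilon > 0$, uses the equality condition to rewrite $\Omega_f(\epsilon)$ as $\op_{f^{-1}(\B B(f(x), \epsilon))}(x)$ for an arbitrary $x$, and then invokes the argument of Proposition~\ref{prp: pcont2} to get $x{'} \in \B B(x, \Omega_f(\epsilon)) \subseteq f^{-1}\big(\B B(f(x), \epsilon)\big)$ and hence $e(f(x), f(x{'})) < \epsilon$. Your added remarks on the well-definedness and positivity of $\Omega_f(\epsilon)$ merely make explicit what the paper delegates to the proof of Proposition~\ref{prp: pcont2}.
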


\begin{proof}
	We fix $\epsilon > 0$. By the crucial equality condition satisfied by $\Op_f$, let $x, x{'} \in X$ such that
	$$d(x,x{'}) < \Omega_f(\epsilon) := \op_{f^{-1}(\B B(f(x_0), \epsilon))}(x_0) =_{\Real} \op_{f^{-1}(\B B(f(x), \epsilon))}(x).$$
	Working as in the proof of Proposition~\ref{prp: pcont2}, we get $x{'} \in \B B_d(x, \Omega_f(\epsilon)) \subseteq f^{-1}\big(\B B_e(f(x), \epsilon)\big)$.
	%, hence $e(f(x), f(x{'})) < \epsilon$.
	\end{proof}

%The proof that  is similar to the proof of 
Similarly to the proof of Proposition~\ref{prp: pcont3}, the two constructions above are inverse to each other.

\begin{proposition}\label{prp: ucont3}
	\normalfont (i)
	\itshape Let the constructions $(f, \Omega_f) \mapsto (f, \Op_f^*)$ and $(f, \Op_f^*) \mapsto (f, \Omega_f{'})$, described in Propositions~\ref{prp: ucont1} and~\ref{prp: ucont2}, respectively. Then $\Omega_f{'} =_{\mathsmaller{\X F((0, +\infty), (0, +\infty))}} \Omega_f$.\\[1mm]
	\normalfont (ii)
	\itshape Let the constructions $((f, \Op_f^*) \mapsto (f, \Omega_f)$ and $(f, \Omega_f) \mapsto (f, \Op_f{'})$, described in Propositions~\ref{prp: ucont2} and~\ref{prp: ucont1}, respectively, where 
	$\Op_f^* \colon \bigcup_{\B H \in \B {\C T}_e}\Mod(\B H) \to \bigcup_{\B G \in \B {\C T}_d}\Mod^*(\B G)$. 
	Then $\Op_f{'} =\Op^*_f$, where this equality is within the set $\X F\big(\bigcup_{\B H \in \B {\C T}_e}\Mod(\B H), \bigcup_{\B G \in \B {\C T}_d}\Mod^*(\B G)\big)$.
\end{proposition}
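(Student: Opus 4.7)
The plan is to mirror the proof of Proposition~\ref{prp: pcont3} almost verbatim, replacing the pointwise modulus $\omega_{f,x}$ by the uniform modulus $\Omega_f$ and exploiting the defining formula $\op_{f^{-1}(\B H)} := \Omega_f \circ \op_{\B H} \circ f_{|f^{-1}(H^1)}$ from Proposition~\ref{prp: ucont1}, together with the point-independent choice $\Omega_f(\epsilon) := \op_{f^{-1}(\B B(f(x_0),\epsilon))}(x_0)$ from Proposition~\ref{prp: ucont2}. Both directions reduce to unfolding the two definitions and cancelling the term $e(f(x_0),f(x_0)) = 0$ (or $e(f(x), f(x)) = 0$) that arises from evaluating the modulus of openness of a complemented ball at its centre.

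For (i), I would fix $\epsilon > 0$ and compute, using Proposition~\ref{prp: ucont2} applied to the map $\Op_f^*$ obtained in Proposition~\ref{prp: ucont1}:
\begin{align*}
\Omega_f{'}(\epsilon) &:= \op_{f^{-1}(\B B(f(x_0), \epsilon))}(x_0)\\
&=_{\Real} \Omega_f\big(\op_{\B B(f(x_0), \epsilon)}(f(x_0))\big)\\
&=_{\Real} \Omega_f\big(\epsilon - e(f(x_0), f(x_0))\big)\\
&=_{\Real} \Omega_f(\epsilon),
\end{align*}
which gives the required equality in $\D F((0, +\infty), (0, +\infty))$.

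For (ii), I would fix $\B H \in \B {\C T}_e$ and $x \in f^{-1}(H^1)$, and use first the definition from Proposition~\ref{prp: ucont1} applied to the $\Omega_f$ produced by Proposition~\ref{prp: ucont2}, then the point-independence hypothesis satisfied by $\Op_f^*$, and finally condition $(*)$ from Proposition~\ref{prp: ucont1}:
\begin{align*}
\op{'}_{f^{-1}(\B H)}(x) &:= \Omega_f\big(\op_{\B H}(f(x))\big)\\
&:= \op_{f^{-1}(\B B(f(x_0),\, \op_{\B H}(f(x))))}(x_0)\\
&=_{\Real} \op_{f^{-1}(\B B(f(x),\, \op_{\B H}(f(x))))}(x)\\
&=_{\Real} \op_{f^{-1}(\B H)}(x).
\end{align*}
Here the key step is the third equality, which is exactly the ``uniform'' equality condition imposed on $\Op_f^*$ in the hypothesis of Proposition~\ref{prp: ucont2}; it is the uniform analogue of condition $(*)$ that forced the codomain of $\Op_f^*$ to land in $\Mod^*$ rather than $\Mod$.

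I do not expect any real obstacle: the only subtle point is bookkeeping, namely checking that the hypothesis on $\Op_f^*$ in Proposition~\ref{prp: ucont2} is strong enough to close the loop in (ii). This is precisely why the target of $\Op_f^*$ was restricted to moduli satisfying $(*)$ and why the point-independent equality was inserted into the hypothesis of Proposition~\ref{prp: ucont2}; without either requirement the constructions would fail to be mutually inverse, just as happens in the pointwise case of Proposition~\ref{prp: pcont3}.
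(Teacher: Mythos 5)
Your proof is correct and is exactly the argument the paper intends: the paper gives no explicit proof of this proposition, stating only that one argues ``similarly to the proof of Proposition~\ref{prp: pcont3}'', and your computations---cancelling $e(f(x_0),f(x_0))=0$ in (i), and chaining the definition of $\Omega_f$ from Proposition~\ref{prp: ucont2}, the point-independence hypothesis, and condition $(*)$ in (ii)---are precisely the uniform analogues of the two displayed calculations in that proof.
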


%Different behaviour of pointwise continuous and uniformly continuous functions with respect to the inversion of moduli in specific cases. Clearly, uniformly continuous functions have a better behaviour...
In~\cite{BB85} one can find many examples of uniformly continuous functions between metric spaces, such as the constant functions, the distance from a point $x_0 \in X$, $x \mapsto d(x, x_0)$, and the distance from a located subset $A$ of $X$, $x \mapsto d(x, A)$, for every $x \in X$. It is straightforward tio calculate the uniform inversion of the corresponding moduli of openness for them.

\section{General $\cs$-topologies}
\label{sec: cstop}

Next we define the notions of a $\cs$-topology and a $\cs$-topological space. This is the first step in the use of complemented subsets in constructive topology. The immediate generalisation of the canonical $\cs$-topology induced by some metric pertains to $\cs$-spaces with a given base, which is presented in section~\ref{sec: csbtop}.

\begin{definition}\label{def: ctop}
Let $\C X : = (X, =_X, \neq_X) \in \SetExtIneq$. If $T(\B G)$ is an extensonal property on $\C E^{\Disj}(X)$ and $\B {\C T}$ 
%, or a $c$-topology, 
is the extensional subset 
$\B {\C T} := \{\B G \in \C E^{\Disj}(X) \mid T(\B G)\}$
of $\C E^{\Disj}(X)$, then $\B {\C T}$ is a topology of open complemented subsets, or a $\cs$-topology, if the following conditions are sastified:\\[1mm]
$(\cTop_1)$ $(X, \emptys_X) \in \B {\C T}$ and $(\emptys_X, X) \in \B {\C T}$.\\[1mm]
$(\cTop_2)$ If $\B G, \B H \in \B {\C T}$, then 
%$\B G \cup \B H,$
$\B G \cap \B H \in \B {\C T}$.\\[1mm]
$(\cTop_3)$ If $I$ is a given set and $(\B G_i)_{i \in I}$ is an $I$-family of elements of $\B {\C T}$, then
$\bigcup_{i \in I}\B G_i \in \B {\C T}$.\\[2mm]
We call the pair $(X, \B {\C T})$ a $\cs$-topological space.
\end{definition}
An alternative formulation of $(\cTop_3)$ is $(\cTop_3{'})$, according to which, if 
$F(\B G)$ 
%X_{P^1}, X_{P^0})$ 
is any extensional property on $\C E^{\Disj}(X)$,
% and 
%$\B {\C F} := \{(X_{P^1}, X_{P^0}) \in \C E^{\Disj}(X) \mid F(X_{P^1}, X_{P^0})\},$
such that $F(\B G) \To T(\B G)$, for every $\B G \in \C E^{\Disj}(X)$, then 
$$\bigcup \B {\C F} := \bigg(\bigcup \C F^1, \bigcap \C F^0\bigg) \in \B {\C T},$$
where $\B {\C F}, \C F^1$ and $\C F^0$ are introduced in Definition~\ref{def: union}.
%$$\C F^1 := \big\{ X_P \in \C E(X) \mid \exists_{X_Q \in \C E(X)}\big(F(X_P, X_Q)\big)\big\},$$
%$$\C F^0 := \big\{ X_Q \in \C E(X) \mid \exists_{X_P \in \C E(X)}\big(F(X_P, X_Q)\big)\big\}.$$
%We define
%$$\bigcup \B {\C F} := \bigg(\bigcup \C F^1, \bigcap \C F^0\bigg), \ \ \ \ \ \bigcap \B {\C F} := \bigg(\bigcap \C F^1, \bigcup \C F^0\bigg).$$

\begin{example}\label{ex: top1}
	\normalfont
The trivial topology of open complemented subsets is the set $\{(X, \emptys_X), (\emptys_X, X)\}$, and the discrete topology is $\C E^{\Disj}(X)$ itself.
	
\end{example}

\begin{example}[The Sierpinski $\cs$-space]\label{ex: top2}
	\normalfont 
Let the booleans $\X 2 := \{0, 1\}$ equipped with the obvious equality and inequality. The Sierpinski $\cs$-topology on $\X 2$ is the set
$\B {\Si} := \{(\X 2, \emptys_{\X 2}), (\emptys_{\X 2}, \X 2), (\{0\}, \{1\})\}$.
	
\end{example}

\begin{definition}\label{def: ccl}
A complemented subset $\B F$ of a $\cs$-topological space $(X, \B {\C T})$ is closed, 
if $- \B F \in \B {\C T}$.  We denote by $- \B {\C T}$ the set of closed complemented subsets of $(X, \B {\C T})$ i.e., $- \B {\C T} := \{ \B F \in \C E^{\Eisj}(X) \mid T(- \B F)\}$.  A
complemented subset $\B G$ of $\C X$ is called clopen, if $\B G$ is open and closed. Let $\Clop(\C X) := \{\B G \in \C E^{\Eisj}(X) \mid T(\B G) \wedge T(-\B G)\}$ be the set of clopen subsets of $\C X$.
\end{definition}

The classical duality between open and
closed subsets of a topology of open subsets is recovered in the ``complemented'' framework.
If $(X, T)$ is a standard topological space, then the pairs $(G, H)$, where $G \in T$ and $H$ closed with $G \Disj H$, form, using classical logic, a topology of open complemented subsets. The pairs of the form $(G, G^c)$, where $G^c$ is the standard (logical) complement of $G$ in $X$, also form classically such a topology. If $\B {\C T}$ is a topology of open coplemented subsets though, its first components ${\C T}^1$ form a standard topology, the closed sets of which need not be in ${\C T}^0$. 
Next we find the swap-analogue to the classical result that the clopen sets of a topological space form a Boolean algebra. 
%A swap algebra is defined in~\cite{MWP23} and it is the abstract version of the structure of $\C E^{\Disj}(X)$.....

\begin{corollary}\label{cor: clopswap}
	If $(X, \B {\C T})$ is a $\cs$-topological space, its clopen comlemented subsets $\Clop(X, \B {\C T})$ is a swap algebra of type $(\ti)$. The total elements $\TotClop(X, \B {\C T})$ of $\Clop(X, \B {\C T})$ form a Boolean algebra.
\end{corollary}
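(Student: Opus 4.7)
The plan is to verify, in order, that $\Clop(X,\B{\C T})$ is closed under all the operations of a swap algebra, that the axioms $(\swapa_1)$--$(\swapa_{10})$ and $(\swapa_{\ti})$ are satisfied, and finally that the total clopen subsets collapse the two ``empty" and ``coempty" layers onto the constants $0_{\B X}$ and $1_{\B X}$, yielding a Boolean algebra.

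First, I would check closure. The elements $0_{\B X}=(\emptys_X,X)$ and $1_{\B X}=(X,\emptys_X)$ are clopen by $(\cTop_1)$, since $-0_{\B X}=1_{\B X}$ and $-(1_{\B X})=0_{\B X}$ (Proposition~\ref{prp: isswapa1}(ii)). For $\B A,\B B\in\Clop(X,\B{\C T})$, the unions and intersections $\B A\cup\B B$ and $\B A\cap\B B$ belong to $\B{\C T}$ by $(\cTop_2)$--$(\cTop_3)$, and their complements $(-\B A)\cap(-\B B)$, respectively $(-\B A)\cup(-\B B)$, obtained via the de Morgan-type identity $(\swapa_8)$, also belong to $\B{\C T}$; hence $\B A\cup\B B$ and $\B A\cap\B B$ are clopen. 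Complement is trivially preserved since $-(-\B A)=\B A$. Finally, by Proposition~\ref{prp: isswapa1}(iii) we have $0_{\B A}=_{\mathsmaller{\C E^{\Disj}(X)}}\B A\cap(-\B A)$ and $1_{\B A}=_{\mathsmaller{\C E^{\Disj}(X)}}\B A\cup(-\B A)$, so both $0_{\B A}$ and $1_{\B A}$ are clopen as finite intersections/unions of clopens, with $-0_{\B A}=1_{\B A}$.

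Next I would verify the axioms. Conditions $(\swapa_1)$--$(\swapa_4)$ and $(\swapa_{\ti})$ (idempotence, commutativity, associativity, distributivity, absorption) reduce to the corresponding identities for $\cup$ and $\cap$ on each component of a complemented subset, by unfolding Definition~\ref{def: opers}. Axioms $(\swapa_6)$--$(\swapa_8)$ (De Morgan, double complement, $-0=1$) are immediate from the swap definition of $-$. Axioms $(\swapa_9)$ and $(\swapa_{10})$ are exactly Proposition~\ref{prp: isswapa1}(iii) and (iv). The inequality $(\swapa_5)$, i.e.\ $0_{\B X}\neq_{\mathsmaller{\C E^{\Disj}(X)}}1_{\B X}$, follows from Corollary~\ref{cor: csineq1}(i) under the mild assumption that $X$ is inhabited, which I would flag explicitly (the carrier of a $\cs$-topological space will standardly be assumed inhabited).

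For the second assertion, write $\TotClop(X,\B{\C T})$ for the total clopen complemented subsets. Closure under $\cup$, $\cap$ and $-$ is straightforward: if $\dom(\B A)=_{\C E(X)}X$ and $\dom(\B B)=_{\C E(X)}X$, then for each $x\in X$ one of $P^1_A(x),P^0_A(x)$ holds and one of $P^1_B(x),P^0_B(x)$ holds, and a four-case analysis shows that $x\in(A^1\cup B^1)\cup(A^0\cap B^0)=\dom(\B A\cup\B B)$; the intersection and complement cases are similar. Now for $\B A\in\TotClop(X,\B{\C T})$, since $A^1\cup A^0=_{\C E(X)}X$, one obtains $1_{\B A}=(X,\emptys_X)=1_{\B X}$ and $0_{\B A}=(\emptys_X,X)=0_{\B X}$; thus axioms $(\swapa_9)$ and $(\swapa_{10})$ degenerate to $a\vee(-a)=1$, $a\wedge(-a)=0$ and $a=0\vee a$, which, together with the already established $(\swapa_1)$--$(\swapa_4)$, $(\swapa_7)$, $(\swapa_8)$, $(\swapa_{\ti})$, constitute a complete axiomatisation of a Boolean algebra.

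The only delicate step is really the clopenness of $0_{\B A}$ and $1_{\B A}$; once these are expressed via $\B A\cup(-\B A)$ and $\B A\cap(-\B A)$, everything reduces to the finitary topological axioms. The rest is bookkeeping with Definition~\ref{def: opers} and Proposition~\ref{prp: isswapa1}.
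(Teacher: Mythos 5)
Your proof is correct, but it takes a more pedestrian route than the paper. The paper's proof is a one-liner: it observes that $\Clop(X, \B {\C T})$ is a \emph{field of complemented subsets} in the sense of Definition~\ref{def: cfield} (it contains $(X, \emptys_X)$ and is closed under $-$ and $\cup$, which by the remark following that definition already gives closure under $\cap$, $1_{\B A}$ and $0_{\B A}$), and then invokes Remark~\ref{rem: cfield1}, which says that every field of complemented subsets is a swap algebra of type $(\ti)$. You instead verify closure and the axioms $(\swapa_1)$--$(\swapa_{10})$, $(\swapa_{\ti})$ directly from Definition~\ref{def: opers} and Proposition~\ref{prp: isswapa1}; the mathematical content is the same, since Remark~\ref{rem: cfield1} is itself ``immediate to show'' by exactly the componentwise computations you carry out, but the paper's factorisation through the field notion is shorter and reusable. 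Two of your observations are genuinely worth keeping: you flag explicitly that $(\swapa_5)$, i.e.\ $0_{\B X} \neq_{\mathsmaller{\C E^{\Disj}(X)}} 1_{\B X}$, needs $X$ inhabited via Corollary~\ref{cor: csineq1}(i) --- a caveat the paper states only in the discussion preceding Definition~\ref{def: swapalgebra}, not in the proof of the corollary itself --- and you actually prove the second assertion (closure of $\TotClop$ and the collapse $1_{\B A} =_{\mathsmaller{\C E^{\Disj}(X)}} 1_{\B X}$, $0_{\B A} =_{\mathsmaller{\C E^{\Disj}(X)}} 0_{\B X}$ for total elements), whereas the paper leaves it to the general statement in section~\ref{sec: csub} that the total elements of any swap algebra form a Boolean algebra.
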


\begin{proof}
Clearly, $\Clop(X, \B {\C T})$ is a field of complemented sets of type $(\ti)$, hence by Remark~\ref{rem: cfield1} $\Clop(X, \B {\C T})$ is a swap algebra of type $(\ti)$.
\end{proof}

Strong extensionality of functions is crucial to our topological framework, as in Bishop-Cheng Measure Theory. The category $(\csTop, \csCont)$ of $\cs$-topological
spaces, and $\cs$-continuous functions, defined next, is a subcategory of $(\SetExtIneq, \StrExtFun)$.

\begin{definition}\label{def: ccont}
	If $(X, \B {\C T})$ to $(Y, \B {\C S})$ are $\cs$-topological spaces, a $\cs$-continuous function from $(X, \B {\C T})$ to $(Y, \B {\C S})$ is a \textit{strongly extensional} function $f \colon X \to Y$ such that
	$f^{-1}(\B H) \in \B {\C T}$, for every $\B H \in \B {\C S}$.  Let $\csC(\C X, \C Y)$ be the set of $\cs$-continuous functions from $\C X$ to $\C Y$, and let $(\csTop, \csCont)$ be the category of $\cs$-spaces and $\cs$-continuous functions.
\end{definition}

%EXAMPLES...
The $\subseteq$-relations
% found 
in the next proposition are between extensional subsets of $\C E^{\Disj}(X)$. The proof of the second inclusion in (ii) is due to Proposition~\ref{prp: finvproperties}(ii).

\begin{proposition}\label{prp: finvcont}
	\normalfont (i) 
	\itshape
	The inverse image under $f$ of a field of complemented sets in $\C Y$ is a field of complemented sets in $\C X$.\\[1mm]
	\normalfont (ii) 
	\itshape $f^{-1}\big(\Clop(\C Y)\big) \subseteq \Clop(\C X)$ and $f^{-1}\big(\TotClop(\C Y)\big) \subseteq \TotClop(\C X)$.
%	\normalfont (iii) 
%	\itshape
%	
\end{proposition}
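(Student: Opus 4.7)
The plan is to derive both parts essentially as direct consequences of Proposition~\ref{prp: finvproperties}, which already records how $f^{-1}$ interacts with the operations $\cup$, $\cap$, $-$ on complemented subsets, together with the $\cs$-continuity hypothesis.

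For part (i), let $\B {\C F}$ be a field of complemented subsets of $\C Y$ in the sense of Definition~\ref{def: cfield}, and consider $f^{-1}(\B {\C F}) := \{f^{-1}(\B B) \mid \B B \in \B {\C F}\}$, which is an extensional subset of $\C E^{\Disj}(X)$ (monotonicity of $f^{-1}$ with respect to $=_{\C E^{\Disj}(X)}$ follows from Proposition~\ref{prp: finvproperties}(ii)). I would verify the three closure clauses in turn. First, from $F(Y, \emptys_Y)$ and Proposition~\ref{prp: finvproperties}(viii), $f^{-1}(Y, \emptys_Y) =_{\C E^{\Disj}(X)} (X, \emptys_X)$, so $(X, \emptys_X) \in f^{-1}(\B {\C F})$. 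Second, if $f^{-1}(\B B) \in f^{-1}(\B {\C F})$ with $\B B \in \B {\C F}$, then since $-\B B \in \B {\C F}$, Proposition~\ref{prp: finvproperties}(vi) gives $f^{-1}(-\B B) =_{\C E^{\Disj}(X)} -f^{-1}(\B B)$, hence $-f^{-1}(\B B) \in f^{-1}(\B {\C F})$. Third, given $f^{-1}(\B B_1), f^{-1}(\B B_2) \in f^{-1}(\B {\C F})$, using $\B B_1 \cup \B B_2 \in \B {\C F}$ and Proposition~\ref{prp: finvproperties}(iv), $f^{-1}(\B B_1) \cup f^{-1}(\B B_2) =_{\C E^{\Disj}(X)} f^{-1}(\B B_1 \cup \B B_2) \in f^{-1}(\B {\C F})$.

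For part (ii), let $\B H \in \Clop(\C Y)$, so by Definition~\ref{def: ccl} we have $T_{\C S}(\B H)$ and $T_{\C S}(-\B H)$, i.e., both $\B H, -\B H \in \B {\C S}$. By $\cs$-continuity of $f$, both $f^{-1}(\B H) \in \B {\C T}$ and $f^{-1}(-\B H) \in \B {\C T}$. By Proposition~\ref{prp: finvproperties}(vi), $f^{-1}(-\B H) =_{\C E^{\Disj}(X)} -f^{-1}(\B H)$, hence the extensional property $T_{\C T}$ applied to $-f^{-1}(\B H)$ holds, which shows $f^{-1}(\B H) \in \Clop(\C X)$. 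For the second inclusion, if moreover $\B H$ is total (i.e., $\dom(\B H) =_{\C E(Y)} Y$), then Proposition~\ref{prp: finvproperties}(iii) yields that $f^{-1}(\B H)$ is total in $\C X$, so $f^{-1}(\B H) \in \TotClop(\C X)$.

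There is no serious obstacle: the only thing to watch is that the clauses in Definition~\ref{def: cfield} are meant up to $=_{\C E^{\Disj}(X)}$ and that the defining properties $T_{\C T}$ and $T_{\C S}$ are extensional on $\C E^{\Disj}$, so the equalities provided by Proposition~\ref{prp: finvproperties}(iv), (vi), (viii) transport membership in $\B {\C T}$, $\B {\C S}$, and $f^{-1}(\B{\C F})$ correctly. All uses of strong extensionality of $f$ are absorbed in Proposition~\ref{prp: finvproperties}(i), which is what guarantees that $f^{-1}$ of a complemented subset is itself a complemented subset in the first place, so no additional assumption on $f$ beyond $\cs$-continuity is needed.
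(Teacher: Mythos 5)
Your proof is correct and follows exactly the route the paper intends: the paper omits a detailed argument and simply points to Proposition~\ref{prp: finvproperties}, whose clauses (iv), (vi), (viii), (iii) are precisely the ones you invoke for the field axioms, clopenness, and totality. (The paper's remark attributes the totality claim to Proposition~\ref{prp: finvproperties}(ii), but your citation of clause (iii) is the correct one.)
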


%\begin{proof}
%	The proof of the second inclusion in (ii) is due to Proposition~\ref{prp: finvproperties}(ii).	? or straightforward..
%\end{proof}

\begin{definition}\label{def: homeo}
	A homeomorphism between $\cs$-spaces $(X, \B {\C T})$ and $(Y, \B {\C S})$, or a $\cs$-homeomorphism, is a pair $(f \colon X \to Y, g \colon Y \to X)$ of inverse $\cs$-continuous functions. A function $\fXY$ is $\cs$-open, if $f(\B G) \in \B {\C S}$, for every $\B G \in \B {\C T}$, and it is $\cs$-closed, if $f(\B F) \in - \B {\C S}$, for every $\B G \in  - \B {\C T}$.
	\end{definition}

\begin{definition}\label{def: relative}
	If $(X, \B {\C T})$ is a $\cs$-topological space and $\B A := (A, \emptys_X)$, the relative $\cs$-topology of $\C T$ on $\B A$ is $\C T_{\B A} := \{ 0_{\B A} \cup (\B A \cap \B G) \mid \B G \in \C T\}$.
\end{definition}

If $\B A := (X, \emptys_X)$, the only total complemenetd subset of this form, then $\C T_{\B A}
= \{\B A \cap \B G \mid \B G \in \C T\}$ has the standard form of the relative topology in the $1$-diamensional framework. 

\begin{proposition}\label{prp: relcstop}
$(\INT)$	If $\B A := (A, \emptys_X)$, then $\C T_{\B A}$ 
%	relative $\cs$-topology of $\C T$ on $\B A$
	 is a $\cs$-topology on $\B A$.
\end{proposition}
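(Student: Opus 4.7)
The plan is to verify, in turn, the three axioms $(\cTop_1)$, $(\cTop_2)$, $(\cTop_3)$ for $\C T_{\B A}$. The main obstacle is $(\cTop_3)$: classically one would reduce $\bigcup_{i\in I}(\B A \cap \B G_i)$ to $\B A \cap \bigcup_{i \in I}\B G_i$ using the full distributivity $(D_I)$, which is not available constructively by Proposition~\ref{prp: first2}(i). The asymmetric prefix $0_{\B A}\cup(-)$ in the definition of $\C T_{\B A}$ is precisely what allows Bishop's distributivity, recalled in Section~\ref{sec: csub}, to take over and supply the required equality; intuitionistic logic is invoked via Proposition~\ref{prp: isswapa1} to handle the various manipulations with $\emptys_X$.

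For $(\cTop_1)$, I would exhibit $\B A$ (the intended top) and $0_{\B A}$ (the intended bottom) as members. Choosing $\B G := (X, \emptys_X) \in \C T$, Proposition~\ref{prp: isswapa1}(vii) gives $\B A \cap (X, \emptys_X) = \B A$, and Proposition~\ref{prp: isswapa1}(iv) gives $0_{\B A} \cup \B A = \B A$, so $\B A \in \C T_{\B A}$. Choosing $\B G := (\emptys_X, X) \in \C T$, a direct computation with the operations of Definition~\ref{def: opers}, together with the intuitionistic inclusions $\emptys_X \subseteq A$ and $\emptys_X \subseteq X$, reduces $0_{\B A} \cup (\B A \cap (\emptys_X, X))$ to $0_{\B A}$, so $0_{\B A} \in \C T_{\B A}$ as well.

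For $(\cTop_2)$, given $\B G_1, \B G_2 \in \C T$, I would expand
\[(0_{\B A} \cup (\B A \cap \B G_1)) \cap (0_{\B A} \cup (\B A \cap \B G_2))\]
into four terms via the distributive law of $\cap$ over $\cup$ inherent to the concrete operations of Definition~\ref{def: opers} on $\C E^{\Disj}(X)$. Three of these four terms are contained in $0_{\B A}$, while the fourth equals $(\B A \cap \B G_1) \cap (\B A \cap \B G_2) = \B A \cap (\B G_1 \cap \B G_2)$. Absorbing the smaller terms into $0_{\B A}$, the total intersection collapses to $0_{\B A} \cup (\B A \cap (\B G_1 \cap \B G_2))$; since $\B G_1 \cap \B G_2 \in \C T$ by $(\cTop_2)$ for $\C T$, this lies in $\C T_{\B A}$.

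For $(\cTop_3)$, let $(\B G_i)_{i \in I}$ be an $I$-family in $\C T$ and set $\B U_i := 0_{\B A} \cup (\B A \cap \B G_i)$. A direct computation with the union operation of Definition~\ref{def: opers} shows that
\[\bigcup_{i \in I} \B U_i \ =_{\C E^{\Disj}(X)} \  0_{\B A} \cup \bigcup_{i \in I} (\B A \cap \B G_i),\]
since the intersection of the second components factors the common term $A \cup \emptys_X$ out of $\bigcap_{i \in I}\big[(A \cup \emptys_X) \cap (\emptys_X \cup G_i^0)\big]$. Bishop's distributivity now rewrites the right-hand side as $0_{\B A} \cup (\B A \cap \bigcup_{i \in I}\B G_i)$, which belongs to $\C T_{\B A}$ because $\bigcup_{i \in I}\B G_i \in \C T$ by $(\cTop_3)$ for $\C T$. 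This is the decisive step where the presence of $0_{\B A}$ in the definition of the elements of $\C T_{\B A}$ is essential, as the corresponding identity without the prefix would amount to $(D_I)$ itself.
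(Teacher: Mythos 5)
Your proof is correct and follows essentially the same route as the paper's: verify the three axioms directly, observing that $0_{\B A}$ coincides with $-\B A = (\emptys_A, A)$ via $\emptys_A =_{\C E(X)} \emptys_X$ (using Ex Falso), and handle arbitrary unions by first pulling $0_{\B A}$ out of $\bigcup_{i}\big[0_{\B A}\cup(\B A\cap\B G_i)\big]$ and then invoking Bishop's distributivity — exactly the paper's computation read in the opposite direction. The only cosmetic difference is in $(\cTop_2)$, where you expand and absorb rather than compute componentwise after the simplification $0_{\B A}\cup(\B A\cap\B G) =_{\C E^{\Disj}(X)} (A\cap G^1, A\cap G^0)$.
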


\begin{proof}
	First we notice that\footnote{If instead of $(A, \emptys_X)$, we consider an arbitrary complemented subset $(A^1, A^0)$ in the definition of the relative topology, then $0_{\B A} \cup (\B A \cap \B G) := (A^1 \cap G^1, A^0 \cup (A^1 \cap G^0))$. Hence $0_{\B A} \cup (\B A \cap (\ \emptys_X, X)) =_{\C E^{\Disj}(X)} (\ \emptys_X, A^0 \cup A^1)$. Consequently, if $A^0$ is inhabited, we get a pair unequal to $(\ \emptys_X, A^1)$. This is the reason we restrict the definition of relative topology to complemented subsets of $X$ with $\emptys_X$ as their $0$-component.}
	$$0_{\B A} \cup (\B A \cap \B G) := (A \cap G^1, {\emptys_X} \cup (A \cap G^0)) =_{\C E^{\Disj}(X)} (A \cap G^1, A \cap G^0).$$
	Next we show that $\B A \in \C T_{\B A}$, as
	$0_{\B A} \cup (\B A \cap \B (X, \emptys_X)) =_{\C E^{\Disj}(X)} (A \cap X, A \cap {\emptys_X}) 
	=_{\C E^{\Disj}(X)} \B A$.
	To show that $- \B A := ({\emptys_A}, A) \in \C T_{\B A}$, we observe that ${\emptys_A} =_{\C E(X)} {\emptys_X}$: if the extensional property $A(x)$ defines $A$, then 
	$$\emptys_A := \{a \in A \mid a \neq_A a\} =_{\C E(X)} \{x \in X \mid A(x) \wedge x \neq_X x\} \subseteq {\emptys_X}.$$
	The converse inclusion follows from Ex Falso (if $x \neq_X x$, then $A(x)$). Consequently, 
	$0_{\B A} \cup (\B A \cap \B (\ \emptys_X, X)) =_{\C E^{\Disj}(X)} (\ \emptys_X, A) 
	=_{\C E^{\Disj}(X)} (\ \emptys_A, A) =: -\B A$. If $\B G, \B H \in \B {\C T}$, then 
	\begin{align*}
	[0_{\B A} \cup (\B A \cap \B G)] \cap [0_{\B A} \cup (\B A \cap \B H)] & =_{\mathsmaller{\C E^{\Disj}(X)}}(A \cap G^1, A \cap G^0) \cap (A \cap H^1, A \cap H^0)\\
	& =_{\mathsmaller{\C E^{\Disj}(X)}}  ((A \cap (G^1 \cap H^1)), (A \cap G^0) \cup (A \cap H^0))\\
	& =_{\mathsmaller{\C E^{\Disj}(X)}}  ((A \cap (G^1 \cap H^1)), (A \cap (G^0 \cap H^0))\\
	&=_{\mathsmaller{\C E^{\Disj}(X)}} 0_{\B A} \cup (\B A \cap (\B G \cap \B H)).		
	\end{align*}
	If $(\B G_i)_{i \in I}$ is a family of open complemented subsets in $\B {\C T}$ indexed by some set $I$, then by Bishop's distributivity property we have that
	\begin{align*}
0_{\B A} \cup \bigg(\B A \cap \bigcup_{i \in I} \B G(i)\bigg) & =_{\mathsmaller{\C E^{\Disj}(X)}}
0_{\B A} \cup \bigg[\bigcup_{i \in I} \big(\B A \cap \B G(i)\big)\bigg]\\
& := 0_{\B A} \cup \bigg[\bigcup_{i \in I} \big(A \cap G(i)^1, G(i)^0\big)\bigg]\\
& := 0_{\B A} \cup \bigg(\bigcup_{i \in I} A \cap G(i)^1, \bigcap_{i \in I}G(i)^0\bigg)\\
& =_{\mathsmaller{\C E^{\Disj}(X)}} \bigg(A \cap \bigg(\bigcup_{i \in I} A \cap G(i)^1\bigg), A \cap \bigcap_{i \in I}G(i)^0\bigg)\\
& =_{\mathsmaller{\C E^{\Disj}(X)}} \bigg(\bigcup_{i \in I} \big(A \cap G(i)^1\big), \bigcap_{i \in I}\big(A \cap G(i)^0\big)\bigg)\\
& =: \bigcup_{i \in I}\big(A \cap G(i)^1, A \cap G(i)^0\big)\\
& =_{\mathsmaller{\C E^{\Disj}(X)}}  \bigcup_{i \in I}[0_{\B A} \cup (A \cap \B G(i))]. \ \ \ \ \qedhere
\end{align*}
%	in order to show $(\cTop_3)$ for the relative $\cs$-topology.
\end{proof}

The classical definition of quotient topology is translated smoothly in the context of $\cs$-spaces. The proof that the $\cs$-quotient topology is actually a $\cs$-topology is straightfowrad and it is based on the properties of the inverse image of a strongly extensional function (Proposition~\ref{prp: finvproperties}).

\begin{definition}\label{def: csquotient}
		If $(X, \B {\C T})$ is in $\csTop$, $\C Y \in \SetExtIneq$ and $\phi \colon X \to Y$ is strongly extensional, the quotient $\cs$-topology in $\C Y$ determined by $\phi$ is given by
		$$\B {\C T}_{\phi} := \{\B H \in \C E^{\Disj}(Y) \mid \phi^{-1}(\B H) \in \B {\C T}\}.$$ 
\end{definition}
The study of the quotient $\cs$-topology is along the line of classical quotient topology of open subsets. Based on the properties of the direct image of a strong injection (Proposition~\ref{prp: fimageproperties}), the dual construction gives rise to the $\cs$-co-quotient topology on $\C X \in \SetExtIneq$ determined by some surjection and strong injection $\theta \colon X \to Y$, where $(\C Y, \B {\C S}) \in \csTop$ and $\B {\C T}_{\theta} := \{\B G \in \C E^{\Disj}(X) \mid \theta(\B G) \in \B {\C S}\}$.

\section{$\cs$-Topologies induced by bases}
\label{sec: csbtop}

The natural generalisation of a $\cs$-topology $\B {\C T}_d$ induced by some metric $d$ on $X$ is the notion of a $\cs$-topology $\B {\C T}_{\B {\C B}}$ induced by some base $\B B$ on $X$. In the case of metric spaces the equality $x =_X x{'}$ is equivalent to the induced by the metric equality $d(x, x{'}) =_{\Real} 0$, the (strong) negation of which is $d(x, x{'}) \neq_{\Real} 0$, which amounts to the inequality $d(x, x{'})> 0$. In the case of $\cs$-topologies induced by a base $\B {\C B}$ we start from a set with an inequality,  which is necessary to the definition of complemented subsets, and then we define the canonical equality and inequality induced by $\B {\C B}$. The relation of these induced relations to the original ones is presented next. 

\textit{Throughout this section $\C X := (X, =_X, \neq_X), \C Y := (Y, =_Y, \neq_Y) \in \SetExtIneq$, $\C B(\B B)$ is an extensional property on $\C E^{\Disj}(X)$,  $\C C(\B C)$ is an extensional property on $\C E^{\Disj}(Y)$, $\B {\C B} := \{\B B \in \C E^{\Disj}(X) \mid \C B(\B B)\}$, and $\B {\C C} := \{\B C \in \C E^{\Disj}(Y) \mid \C C(\B C)\}$.}

\begin{definition}\label{def: eqineqB}
%Let $\C X := (X, =_X, \neq_X) \in \SetExtIneq$, and $\C B(\B G)$ an extensional property on $\C E^{\Disj}(X)$. If $\B {\C B} := \{\B B \in \C E^{\Disj}(X) \mid \C B(\B B)\}$, 
The 
%canonical 
equality and inequality relations on $X$ induced by $\B {\C B}$ are defined, respectively, by
$$x =_{\B {\C B}} x{'} :\TOT \forall_{\B B \in \B {\C B}}\big(\B x \cin \B B \TOT x{'} \cin \B B\big),$$ 
$$x \neq_{\B {\C B}} x{'} :\TOT \exists_{\B B \in \B {\C B}}\big[\big(\B x \cin \B B \wedge \B x{'} \nncin \B B\big) \vee \big(\B x{'} \cin \B B \wedge \B x \nncin \B B\big)\big].$$
If $\B B \in \B {\C B}$ witnesses the inequality $x \neq_{\B {\C B}} x{'}$, we write $\B B \colon x \neq_{\B {\C B}} x{'}$, or $x \neq_{\B B} x{'}$. We say that $\B {\C B}$ separates the points of $X$, and in this case we call the inequality $x \neq_{\B {\C B}} x{'}$ $0$-tight if,
% , we have that
$$x =_{\B {\C B}} x{'} \To x =_X x{'},$$
for every $x, x{'} \in X$. We say that $\B {\C B}$ co-separates the points of $X$, if, for every $x, x{'} \in X$, 
%we have that
$$x \neq_X x{'} \To x \neq_{\B {\C B}} x{'}.$$
We call $\B {\C B}$ cotransitive, if the induced inequality $\neq_{\B {\C B}}$ is cotransitive. Let also $\emptys_{\B {\C B}} := \{x \in X \mid x \neq_{\B {\C B}} x \}$.
\end{definition}

The extensionality of $\neq_{\B {\C B}}$ follows from the extensionality of $B^1, B^0$ and the extensionality of the relations $\B x \cin \B B$ and $\B x \nncin \B B$.

\begin{remark}\label{rem: B1}
	\normalfont (i) 
	\itshape For every $x, x{'} \in X$ we have that $x =_X x{'} \To x =_{\B {\C B}} x{'}$.\\[1mm]
	\normalfont (ii) 
	\itshape For every $x, x{'} \in X$ we have that $x \neq_{\B {\C B}} x{'} \To x =_X x{'}$.\\[1mm]
	\normalfont (iii) 
	\itshape If ${\B {\C B}}$ separates the points of $X$, the equality $=_X$ is equivalent to the induced equality  $=_{\B {\C B}}$
	on $X$.\\[1mm]
	\normalfont (iv) 
	\itshape If every element $\B B$ of $\B {\C B}$ is $0$-tight, then $\neq_{\B {\C B}}$ is tight with respect to $=_{\B {\C B}}$ on $X$.\\[1mm]
	\normalfont (v) 
	\itshape If every element $\B B$ of $\B {\C B}$ is total, then $\B {\C B}$ is cotransitive.\\[1mm]
%	\normalfont (vi) 
%	\itshape If $\neq_X$ is equivalent to $\neq_{\B {\C B}}$, then $\neq_X$ is an apartness relation.\\[1mm]
		\normalfont (vi) 
	\itshape  If $\neq_{\B {\C B}}$ is tight, then $\neq_{\B {\C B}}$ is $0$-tight .\\[1mm]
	\normalfont (vii) 
	\itshape  If $\neq_{\B {\C B}}$ is $0$-tight and every element $\B B$ of $\B {\C B}$ is $0$-tight, then 
	$\neq_{\B {\C B}}$ is tight.\\[1mm]
\normalfont (viii) 
\itshape $(\MIN)$ $\emptys_{\B {\C B}} \subseteq \emptys_X$ and $(\INT)$ $\emptys_X \subseteq \emptys_{\B {\C B}}$.\\[1mm]
\normalfont (ix) 
\itshape
 $(\MIN)$ If $\B {\C B}$ co-separates the points of $X$, then $\emptys_X \subseteq \emptys_{\B {\C B}}$.
\end{remark}

\begin{proof}
Case (i) follows from the extensionality of the relation $x \cin \B B$. Case (ii) follows from the trivial fact that if $\B B \colon x \neq_{\C {\B B}} x{'}$, then  $(x \in ^1 \wedge x{'} \in B^0) \vee (x{'} \in B^1\wedge x \in B^0)$, hence in both cases $x \neq_X x{'}$.
Cases (iii) follows immediately by case (i) and the definition of separation of points by $\B {\C B}$.
For the proof of case (iv) see the argument in the proof of case (viii). 
Case (v) is immediate to show.  
For the proof of (vi), suppose that $x =_{\B {\C B}} x{'}$. In order to show $x =_X x{'}$, it suffices to show $\neg(x \neq_{\B {\C B}} x{'})$, which follows trivially by our hypothesis $x =_{\B {\C B}} x{'}$.
For the proof of (vii), suppose that $x =_{\B {\C B}} x{'} \To x =_X x{'}$, and let $\neg(x \neq_{\B {\C B}} x{'})$. We show that $x =_X x{'}$. Let $\B B \in \B {\C B}$, such that $x \in B^1$. By $0$-tightness of $\B B$, to show that $x{'} \in B^1$ too, it suffices to show $\neg{(x{'} \in B^0)}$, which follows immediately by our hypothesis $\neg(x \neq_{\B {\C B}} x{'})$. If $x{'} \in B^1$, then we show similarly that $x \in B^1$ too. The first inclusion of case (viii) follows immediately from case (ii), while for the secong we use Ex Falso. The inclusion of case (ix) follows immediately from the implication $x \neq_X x \To x \neq_{\B {\C B}} x$.
\end{proof}

As already mentioned after Definition~\ref{def: canonical}, $=_{\B {\C B}}$ and $\neq_{\B {\C B}}$ are  %the above notions of equality and inequality induced by $\B {\C B}$ are
 the set-theoretic version of the notions of equality and inequality induced by an extensional $F \subseteq \X F(X)$ (see~\cite{Pe24} for a 
 %comprehensive 
 study of these concepts)
$$x =_{F} x{'} :\TOT \forall_{f \in F}\big(f(x) =_{\Real} f(x{'})\big),$$
$$x \neq_{F} x{'} :\TOT \exists_{f \in F}\big(f(x) \neq_{\Real} f(x{'})\big).$$
\textit{A complemented subset can be seen as the set-theoretic version of a strongly extensional function}. If $(B, C)$ is a pair of complemented subsets of $X$, we may call it strongly extensional, if for every $x, x{'} \in X$ we have that $x \neq_{(B, C)} x{'} \To x \neq_X x{'}$, where 
$$x \neq_{(B, C)} x{'} \TOT (x \in B \wedge x{'} \in C) \vee (x{'} \in B \wedge x \in C).$$
Strong extensionality of $(B, C)$ implies %that 
$B \Disj C$, and conversely every complemented subset is strongly extensional. Hence,  Remark~\ref{rem: B1}(ii) is the set-theoretic counterpart to the fact that if every function in $F$ is strongly extensional, then $x \neq_F x{'} \To x \neq_X x{'}$ (Remark 3.2(vii) in~\cite{Pe24}). Since 
%Due to the fact that 
$\neq_{\Real}$ is an apartness relation, the inequality $\neq_{F}$ is always cotransitive, hence an apartness relation, while cases (vi) and (vii) of Remark~\ref{rem: B1} are the set-theoretic version of the fact that for $\neq_{F}$ is tight $(\Ineq_3)$ if and only $F$ separates the points of $X$.
In metric spaces the set of complemented balls $\B B(X)$ has the role of $\B {\C B}$.

\begin{remark}\label{rem: metriceqineq}
If $(X, =_X, d)$ is a metric space, and $x, x{'} \in X$, the following hold:\\[1mm]
\normalfont (i)
\itshape $x =_X x{'} \TOT x =_{\B B(X)} x{'}$.\\[1mm]
\normalfont (ii)
\itshape $x \neq_{(X,d)} x{'} \TOT x \neq_{\B B(X)} x{'}$.\\[1mm]
\normalfont (iii)
\itshape $\B B(X)$ is a cotransitive set.
\end{remark}

\begin{proof}
For the proof of the implication $x =_X x{'} \To x =_{\B B(X)} x{'}$ of the case (i), we use the extensionality of complemented balls, while for the converse implication we suppose that $x =_{\B B(X)} x{'}$ and 
%we suppose that 
$\epsilon := d(x, x{'}) > 0$. Consequently, $x \in [d_x < \frac{\epsilon}{2}]$ and $x{'} \in [d_x \geq \frac{\epsilon}{2}]$. The implication $x \neq_{(X,d)} x{'} \To x \neq_{\B B(X)} x{'}$ of case (ii) follows by exactly the previous argument, and for the converse implication, let $y \in X$ and $\epsilon > 0$, such that $d(x, y) < \epsilon$ and $d(x{'}, y) \geq \epsilon$. As $) < \epsilon \leq d(x{'}, y)  \leq d(x, x{'})  + d(x, y)$, we get $0 < \epsilon - d(x,y) \leq d(x, x{'})$. Case (iii) follows from the fact that $x \neq_{(X,d)} x{'}$ is cotransitive.
\end{proof}

%As in the case of metric spaces, the empty set ...

\begin{definition}[Base for a $\cs$-topology]\label{def: csbase}
A base for a $\cs$-topology is a structure $(\B {\C B}, \beta_X, \beta_{\ \emptys_X}, \beta_{\B {\C B}})$, where $\beta_X \colon X \to \B {\C B}$, $\beta_{\ \emptys_X} \colon \emptys_X \to \B {\C B}$, and 
$$ \beta_{\B {\C B}} := \big(\beta_{\B B, \B C}\big)_{\B B, \B C \in  \B {\C B}}, \ \ \ \ 
\beta_{\B B, \B C} \colon B^1 \cap C^1 \to \B {\C B},$$
such that the following conditions hold:\\[1mm]
$(\Base_1)$ $\B x \cin \beta_X(x)$, for every $x \in X$.\\[1mm]
$(\Base_2)$ $\B x \cin \beta_{\ \emptys_X}(x)$ and $\beta_{\ \emptys_X}(x) \subseteq (\emptys_X, X)$, for every $x \in \emptys_X$.\\[1mm]
$(\Base_3)$ $\B x \cin \beta_{\B B, \B C}(x)$ and $\beta_{\B B, \B C}(x) \subseteq \B B \cap \B C$, for every $x \in B^1 \cap C^1$.\\[1mm]
We call the base-moduli $\beta_X$ and $\beta_{\ {\emptys_X}}$ covering, if 
$$\bigcup_{x \in X}\beta_X(x) =_{\C E^{\Disj}(X)} (X, \emptys_X), \ \ \ \ \ \bigcup_{x \in \ \emptys_X}\beta_{\ \emptys_X}(x) =_{\C E^{\Disj}(X)} (\emptys_X, X),$$
respectively. Let also $\beta_X(x) := (B_x^1, B_x^0)$, for every $x \in X$, and $\beta_{\ \emptys_X}(x) := (E_x^1, E_x^0)$, for every $x \in \emptys_X$. We call the base-modulus $\beta_{\B {\C B}}$ covering, if for every $\B B, \B C \in \B {\C B}$, we have that 
$$\B B \cap \B C =_{\C E^{\Disj}(X)} \bigcup_{z^1 \in B^1 \cap C^1}\beta_{\B B, \B C}(z^1).$$
Let $\beta_{\B B, \B C}(z^1) := (BC_{z^1}^1, BC_{z^1}^0)$, for every $z^1 \in B^1 \cap C^1$.
\end{definition}

Clearly, the base-moduli $\beta_X, \beta_{\ \emptys_X}$, and $\beta_{\B {\C B}}$ are covering, if the following inclusions hold, respectively,
$$\bigcap_{x \in X}B_x^0 \subseteq {\emptys_X}, \ \ \ \ \ X \subseteq \bigcap_{x \in \ {\emptys_X}}E_x^0, \ \ \ \ \ \ \bigcap_{z^1 \in B^1 \cap C^1}BC_{z^1}^0 \subseteq B^0 \cup C^0.$$

\begin{remark}\label{rem: metricbase}
	If $d$ is a metric on $X$,
	%$(X, =_X, d)$ is a metric space,
	then $(\B B(X), \beta_X, \beta_{\ \emptys_X}, \beta_{\B B(X)})$ is a base for 
	%the $\cs$-topology 
	$\C {\B T}_d$,
	%induced by $d$, 
	where $\beta_X \colon X \to \B B(X)$, $\beta_{\ \emptys_X} \colon \emptys_X \to \B B(X)$, and $\beta_{\B B(x, \epsilon), \B B(y, \delta)} \colon [d_x < \epsilon] \cap [d_y < \delta] \to \B B(X)$ are defined, respectively, by
	$$\beta_X(x) := \B B(x, 1), \ \ \ \ x \in X,$$
	$$\beta_{\ \emptys_X}(x) := \B B\bigg(x, \frac{d(x,x)}{2}\bigg), \ \ \ \ \ x \in \emptys_X,$$
	$$\beta_{\B B(x, \epsilon), \B B(y, \delta)}(z) := \B B(z, \zeta_z), \ \ \ \ \zeta_z := [\epsilon - d(x,z)] \wedge [\delta - d(y,z)], \ \ \ \ z \in [d_x < \epsilon] \cap [d_y < \delta].$$
	Moreover, $\beta_X, \beta_{\ \emptys_X}$ and $\beta_{\B B(X)}$ are covering base-moduli. 
\end{remark}

\begin{proof}
Condition $(\Base_1)$ follows trivially, while the proof of condition $(\Base_2)$ is within the proof of Proposition~\ref{prp: metric}. For the proof of condition $(\Base_3)$, it suffices to show that $[d_z < \zeta_z] \subseteq [d_x < \epsilon] \cap [d_y < \delta]$ and $[d_x \geq \epsilon] \cup [d_y \geq \delta] \subseteq [d_z \geq \zeta_z]$. If $x{'} \in X$ with $d(z, x{'}) < \zeta_z$, then we have that
$$d(x, x{'}) \leq d(x,z) + d(z, x{'}) < d(x, z)  + \zeta_z \leq d(x,z) + \epsilon - d(x, z) = \epsilon,$$
$$d(y, x{'}) \leq d(y,z) + d(z, x{'}) < d(y,z) + \zeta_z \leq d(y, z) + \delta - d(y, z) = \delta.$$
If $x{''}, y{'} \in X$ with $d(x, x{''}) \geq \epsilon$ and $d(y, y{'}) \geq \delta$, then we have that
$$d(x{''}, z) \geq d(x, x{''}) - d(x,z) \geq \epsilon - d(x, z) \geq \zeta_z,$$
$$d(y{'}, z) \geq d(y{'}, y) - d(y,z) \geq \delta - d(y, z) \geq \zeta_z.$$
The covering properties of $\beta_X, \beta_{\ \emptys_X}$ and $\beta_{\B B(X)}$ are shown in Proposition~\ref{prp: basemodule1}(i) and (iii).
\end{proof}

\begin{remark}\label{rem: pcont5}
Let $(X, d)$ and $(Y, e)$ be metric spaces and $(f, \omega_f)$ pointwise continuous on $X$. Then for every $y, y{'} \in Y, \epsilon, \delta > 0$, and for every $u \in f^{-1}\big(\B B(y, \epsilon) \cap \B B(y{'}, \delta)\big)$ we have that 
$$\omega^*_{f, u}\big(\beta_{\B B(y, \epsilon), \B B(y{'}, \delta)}(f(u))\big) =_{\B B(X)} 
\beta_{f^{-1}\big(\B B(y, \epsilon)\big), f^{-1}\big(\B B(y{'}, \delta)\big)}(u).$$
\end{remark}

\begin{proof}
By Remark~\ref{rem: pcont4}(ii) we have that
\begin{align*}
\omega^*_{f, u}\big(\beta_{\B B(y, \epsilon), \B B(y{'}, \delta)}(f(u))\big) & := 
\omega^*_{f, u}\big(\B B\big(f(u), \op_{\B B(y, \epsilon)}(f(u)) \wedge \op_{ \B B(y{'}, \delta)}(f(u))\big)\big)\\
 & :=  \B B\big(u, \omega_{f,u}\big(\op_{\B B(y, \epsilon)}(f(u)) \wedge \op_{ \B B(y{'}, \delta)}(f(u))\big)\big)\\
  & =_{\B B(X)} \B B\big(u, \omega_{f,u}\big(\op_{\B B(y, \epsilon)}(f(u))\big) \wedge \omega_{f,u}\big(\op_{ \B B(y{'}, \delta)}(f(u))\big)\big)\\
  & =: \B B\big(u, \op_{f^{-1}(\B B(y, \epsilon))}(u) \wedge \op_{f^{-1}(\B B(y, \delta))}(u)\big)\\
  & =: \beta_{f^{-1}\big(\B B(y, \epsilon)\big), f^{-1}\big(\B B(y{'}, \delta)\big)}(u). \qedhere
\end{align*}
\end{proof}

The following fact on the base of a relative topology induced by the base for the original space is straightforward to show. The covering conditions are also inherited from $\B {\C B}$ to $\B {\C B_A}$.

\begin{remark}\label{rem: baserel}
If $(\B {\C B}, \beta_X, \beta_{\ \emptys_X}, \beta_{\C {\B B}})$ is a base for $X$, then $(\B {\C B_A}, \beta_A, \beta_{\ \emptys_A}, \beta_{\C {\B B_A}})$ is a base for $A \subseteq X$, where if $\B A := (A, \emptys_A)$, $\beta_X (a) := (B_a^1, B_a^0)$, for every $a \in A$, $\beta_{\ \emptys_X}(u) :=  (B_u^1, B_u^0)$, for every $u \in  \emptys_A \subseteq  \emptys_X$, let
%we have that 
% and if 
$$\B {\C B_A} := \{\B B_A := 0_{\B A} \cup (\B A \cap \B B) =_{\mathsmaller{\C E^{\Disj}(X)}} (A \cap B^1, A \cap B^0) \mid \B B \in \C {\B B}\},$$
$$\beta_A \colon A \to \B {\C B_A}, \ \ \ \beta_A (a) := 0_{\B A} \cup (\B A \cap \beta_X(a)) =_{\mathsmaller{\C E^{\Disj}(X)}} (A \cap B_a^1, A \cap B_a^0),$$
$$\beta_{\ \emptys_A} \colon \emptys_A \to \B {\C B_A}, \ \ \ \beta_{\ \emptys_A} (u) := 0_{\B A} \cup (\B A \cap \beta_{\ \emptys_X}(a)) =_{\mathsmaller{\C E^{\Disj}(X)}} (A \cap B_u^1, A \cap B_u^0),$$
$$\beta_{\B B_A \B C_A} \colon A \cap (B^1 \cap C^1) \to \B {\C B_A}, \ \ \ \beta_{\B B_A \B C_A}(a) := 0_{\B A} \cup (\B A \cap \beta_{\B B \B C}(a)).$$
 \end{remark}

\begin{definition}[The canonical $\cs$-topology induced by a base $\B {\C B}$]\label{def: canonicalcs}
%	If $\B {\C B}$ is a base for a $\cs$-topology, l
	Let the formula
	$$T_{\B {\C B}}(\B G) := \forall_{\B x \ccin \B G}\exists_{\B B \in \B {\C B}}\big(x \cin \B B \wedge \B B \subseteq \B G\big).$$
	If $T_{\B {\C B}}(\B G)$, we call $\B G$ a $\B {\C T}_{\B {\C B}}$-open complemented subset of $X$. Let their set $\B {\C T}_{\B {\C B}}$ be the canonical $\cs$-topology on $X$ induced by the base $\B {\C B}$. We also call $\B {\C T}_{\B {\C B}}$ a $\csb$-topology, and the structure $(X, =_X, \neq_X; \B B, \beta_X, \beta_{\ \emptys_X}, \beta_{\B B})$, or simpler $(X, \B B)$, a $\csb$-topological space.	
	If $\B G$ is a $\B {\C T}_{\B {\C B}}$-open complemented subset of $X$, a modulus of openness $\op_{\B G}$ for $\B G$ is a function $\op_{\B G} \colon G^1 \to \B {\C B}$, such that
	$$\forall_{\B x \ccin \B G}\big(\B x \cin \op_{\B G}(x) \wedge \op_{\B G}(x) \subseteq \B G\big).$$ 
	If $\B G, \B H \in \B {\C T}_{\B {\C B}}$, such that $\B G =_{\C E^{\Disj}(X)} \B H$, we require that $\op_{\B G} =_{\X F\big(G^1, \B {\C B}\big)} \op_{\B H}$. A modulus of openness $\op_{\B G}$ for $\B G$ is called covering, if $\B G =_{\C E^{\Disj}(X)} \bigcup_{x^1 \in G^1}\op_{\B G}(x^1)$.
	\end{definition}

The extensionality of the relations $\B x \cin \B B$ and $\B B \subseteq \B G$ implies the extensionality of the property $T_{\B {\C B}}(\B G)$. The use of moduli of openness for $\B {\C T}_{\B {\C B}}$-open complemented subsets of $X$, together with the moduli $\beta_X, \beta_{\ \emptys_X}$, and $\beta_{\B {\C B}}$ in the definition of a base for a $\cs$-topology reflect the extension of the modular approach to the constructive theory of metric spaces to the constructive theory of $\cs$-topological spaces.

\begin{proposition}\label{prp: cstopb}
$\B {\C T}_{\B {\C B}}$ is a $\cs$-topology. Moreover, the following hold:\\[1mm]
\normalfont (i)
\itshape $\op_{(X, \ {\emptys_X})} := \beta_X$ is a module of openness for $(X, {\emptys_X})$, which is covering if 
%and only if 
$\beta_X$ is covering.\\[1mm]
\normalfont (ii)
\itshape $\op_{(\ {\emptys_X}, X)} := \beta_{\ \emptys_X}$ is a module of openness for $({\emptys_X}, X)$, which is covering if
% and only if 
 $\beta_{\ \emptys_X}$ is covering.\\[1mm]
\normalfont (iii)
\itshape If $\op_{\B G} \colon G^1 \to \B {\C B}$ and $\op_{\B H} \colon H^1 \to \B {\C B}$ are moduli of openness for $\B G$ and $\B H$, respectively, then $\op_{\B G \cap \B H} \colon G^1 \cap H^1 \to \B {\C B}$, defined by the rule  
$$\op_{\B G \cap \B H}(z^1) := \beta_{\op_{\B G}(z^1), \op_{\B H}(z^1)}(z^1), \ \ \ \ \ \ \ z^1 \in G^1 \cap H^1,$$
is a modulus of openness for $\B G \cap \B H$.\\[1mm]
\normalfont (iv)
\itshape If $\B B \in \B {\C B}$, then $\B B \in \B {\C T}_{\B {\C B}}$ and $\op_{\B B} \colon B^1 \to \B {\C B}$, defined by $\op_{\B B}(x^1) := \B B$, for every $x^1 \in B^1$, is a covering modulus of openness for $\B B$.\\[1mm]
\normalfont (v)
\itshape If $\beta_{\B {\C B}}$ is a covering base-modulus, then for every $\B B, \B C \in \B {\C B}$ the modulus of openness $\op_{\B B \cap \B C}$, defined in \normalfont (iii)\itshape, is covering.\\[1mm]
\normalfont (vi)
\itshape $($Myhill's unique choice principle$)$ Let $\C I := (I, =_I, \neq_I) \in \SetExtIneq$, such that $i \neq_I j$ is discrete, and $\B G_i \in \B {\C T}_{\B {\C B}}$, for every $i \in I$, such that $\B G_i \cap \B G_j \subseteq (\emptys_X, X)$, for every $i, j \in I$ with $i \neq_I j$. If $\op_{\B G_i}$ is a covering module of openness for $\B G_i$, then $\op_{\bigcup} \colon \bigcup_{i \in }G_i^1 \to \B {\C B}$, defined by $\op_{\bigcup}(x) := \op_{\B G_i}(x)$, where $i$ is the unique(up to equality) element of $I$ with $x \in G_i$, is a covering modulus of openness for $\bigcup_{i \in I}\B G_i$.
\end{proposition}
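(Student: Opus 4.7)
The strategy is to verify the three $\cs$-topology axioms $(\cTop_1)$--$(\cTop_3)$ by producing, in each case, an explicit modulus of openness; the clauses (i)--(iii) then supply exactly the needed moduli, while (iv)--(vi) refine the picture to the canonical covering moduli. I would handle (i)--(iii) first, because together they already yield $(\cTop_1)$ and $(\cTop_2)$: item (i) shows $(X,\emptys_X)\in\B{\C T}_{\B{\C B}}$, item (ii) shows $(\emptys_X,X)\in\B{\C T}_{\B{\C B}}$, and item (iii) closes $\B{\C T}_{\B{\C B}}$ under binary intersection. The remaining axiom $(\cTop_3)$ (arbitrary unions) follows pointwise: given $(\B G_i)_{i\in I}$ in $\B{\C T}_{\B{\C B}}$ and $\B x \cin \bigcup_{i\in I}\B G_i$, Proposition~\ref{prp: ccpoint1}(vi) delivers some $i$ with $\B x \cin \B G_i$, and a witnessing $\B B\in\B{\C B}$ for $\B G_i$ also witnesses openness for the union by Corollary~\ref{cor: csoperations1}(ii).

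For (i), by $(\Base_1)$ we have $\B x\cin\beta_X(x)$, and the inclusion $\beta_X(x)\subseteq(X,\emptys_X)$ reduces to $B_x^1\subseteq X$ (trivial) and $\emptys_X\subseteq B_x^0$, which uses Ex~Falso. Item (ii) is simpler, since $(\Base_2)$ already builds both $\B x\cin\beta_{\ \emptys_X}(x)$ and the inclusion $\beta_{\ \emptys_X}(x)\subseteq(\emptys_X,X)$ into the definition of the base, so no appeal to $\INT$ is required. For (iii), given $z^1\in G^1\cap H^1$ I would note that $\op_{\B G}(z^1),\op_{\B H}(z^1)\in\B{\C B}$ with $z^1$ in each of their $1$-components, then apply $(\Base_3)$ at the pair $\op_{\B G}(z^1),\op_{\B H}(z^1)$ to obtain an element of $\B{\C B}$ containing $\B z$ and contained in $\op_{\B G}(z^1)\cap\op_{\B H}(z^1)\subseteq\B G\cap\B H$. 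The covering statements at the end of (i) and (ii) are restatements of the hypothesis that $\beta_X$, resp.\ $\beta_{\ \emptys_X}$, is covering.

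Item (iv) uses the trivial modulus $\op_{\B B}(x^1):=\B B$: that $\B B\subseteq\B B$ and $\B x\cin\B B$ are immediate, and the covering property reduces to $\bigcup_{x^1\in B^1}\B B=_{\C E^{\Disj}(X)}\B B$, which is verified componentwise using that $y\in B^1$ already witnesses the existential $\exists x^1\in B^1$. Item (v) is then the composition of (iii) and (iv): $\op_{\B B\cap\B C}(z^1)=\beta_{\B B,\B C}(z^1)$, and the covering property is exactly the hypothesis that $\beta_{\B{\C B}}$ is covering.

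The only genuinely delicate point is (vi). I would first argue that the assignment $x\mapsto\op_{\B G_i}(x)$ with $i$ such that $x\in G_i^1$ is well-defined up to equality: if $x\in G_i^1\cap G_j^1$ and $i\neq_I j$, then by the disjointness hypothesis $x\in\emptys_X$, giving $x\neq_X x$, contradicting $x=_X x$; hence $\neg(i\neq_I j)$, and by discreteness $(\Ineq_6)$ of $\neq_I$ we get $i=_I j$, whereupon the equality requirement on moduli in Definition~\ref{def: canonicalcs} yields $\op_{\B G_i}(x)=_{\B{\C B}}\op_{\B G_j}(x)$. The passage from this unique-existence statement to a function $\op_{\bigcup}$ is the place Myhill's unique choice principle is invoked, and this is indicated in the statement. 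Once $\op_{\bigcup}$ is produced, that it is a modulus of openness for $\bigcup_{i\in I}\B G_i$ is pointwise: $\B x\cin\op_{\bigcup}(x)=\op_{\B G_i}(x)\subseteq\B G_i\subseteq\bigcup_{j\in I}\B G_j$. Finally, covering follows by computing both components of $\bigcup_{x\in\bigcup_i G_i^1}\op_{\bigcup}(x)$: the $1$-components give $\bigcup_i\bigcup_{x\in G_i^1}(\op_{\B G_i}(x))^1=\bigcup_i G_i^1$ by the covering of each $\op_{\B G_i}$, and the $0$-components dualise in the same way, using once more that disjointness lets one identify the unique $i$ locally. The main obstacle is isolating exactly where discreteness of $\neq_I$ and unique choice are used, and presenting the argument so that these two distinct hypotheses are visibly playing their separate roles.
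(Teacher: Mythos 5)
Your plan follows exactly the route the paper intends: the published proof of this proposition is a one-line deferral to the proofs of Proposition~\ref{prp: metric} and Proposition~\ref{prp: basemodule1}(ii), and your write-up is a faithful (and considerably more detailed) transcription of those arguments to the base setting. In particular, your treatment of $(\cTop_3)$ via Proposition~\ref{prp: ccpoint1}(vi) and Corollary~\ref{cor: csoperations1}(ii), your observation that (ii) needs no Ex Falso because $(\Base_2)$ already packages the inclusion $\beta_{\ \emptys_X}(x)\subseteq(\emptys_X,X)$, and your isolation of where discreteness of $\neq_I$ versus Myhill's unique choice enter in (vi) all match the paper's intentions. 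One cosmetic remark: to obtain $(\cTop_2)$ you cannot literally quote (iii), since membership in $\B {\C T}_{\B {\C B}}$ only supplies existential witnesses rather than moduli; but the same argument, applying $(\Base_3)$ to the witnesses, gives $T_{\B {\C B}}(\B G\cap\B H)$, so nothing is lost.

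The one step I would not accept as written is the covering claim in (iv). Of the inclusions making up $\B B =_{\C E^{\Disj}(X)}\bigcup_{x^1\in B^1}\op_{\B B}(x^1)$, three are indeed immediate, but the covering-inclusion $\bigcap_{x^1\in B^1}B^0\subseteq B^0$ is not: an element $y$ of the left-hand side only satisfies the implication ``$x^1\in B^1\To y\in B^0$'' for every $x^1$, and without a point of $B^1$ at which to instantiate it one cannot conclude $y\in B^0$. Your phrase ``$y\in B^1$ already witnesses the existential'' handles the $1$-components but is silent on exactly this inclusion. In the metric model the analogous fact for a complemented ball is proved by a genuine argument exploiting the ball's $1$-tightness (assume $d(x,z)<\epsilon$, derive $d(z,z)>0$, absurd); in the abstract setting you need either that $B^1$ is inhabited or that $\B B$ is $1$-tight to push this through, and you should say which. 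Everything else, including (v) (which only uses the covering of $\beta_{\B {\C B}}$, not the covering part of (iv)) and the well-definedness argument in (vi), is sound.
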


\begin{proof}
The proofs are similar to the proofs of Proposition~\ref{prp: metric} and Proposition~\ref{prp: basemodule1}(ii).
\end{proof}

\section{Pointwise-like and uniformly-like continuous functions between $\csb$-spaces}
\label{sec: csbcont}

In this section we translate the notions of pointwise and uniform continuity of functions between metric spaces to the case of $\csb$-spaces. The notions of pointwise-like and uniform-like continuity between
$\csb$-spaces do not form a direct generalisation of pointwise and uniform continuity between metric spaces, because the codomain of a modulus of openness for a complemented open set in a $\csb$-space is its base $\B {\C B}$, while the codomain of a modulus of openness for a complemented open set in a metric space is the interval $[0, + \infty)$. The behaviour of the moduli $\omega_{f,x}$ and $\Omega_f$ in the case of metric spaces cannot be extended to similar functions on basic complemented balls. In the case of $\csb$-spaces we work directly with similar functions on the corresponding sets of basic sets.

\begin{definition}\label{def: csbpcont} Let $(X, \B {\C B})$ and $(Y, \B {\C C})$ be $\csb$-topological spaces, and let $\B {\C T}_{\B {\C B}}$ and $\B {\C T}_{\B {\C C}}$ be their induced $\cs$-topologies, respectively. A continuous function, or a $\csb$-continuous function from $(X, \B {\C B})$ to $(Y, \B {\C C})$ is a pair $(f, \op_f)$, where $\fXY$ is a strongly extensional function, and 
$$\op_f \colon \bigcup_{\B H \in \B {\C T}_{\B {\C C}}}\Mod(\B H) \to \bigcup_{\B G \in \B {\C T}_{\B {\C B}}}\Mod(\B G),$$
$$(\B H, \op_{\B H}) \mapsto \big(f^{-1}(\B H), \op_{f^{-1}(\B H)}\big).$$
 Let $\csbC(X, Y)$ be their set, equipped with the equality of $\X F(X,Y)$. If $(Z, \B {\C D})$ is a $\csb$-space and $(g, \op_g)$ is a $\csb$-continuous function from $(Y, \B {\C C})$ to $(Z, \B {\C D})$, let $(g, \op_g) \circ (f, \op_f) := (g \circ f, \op_{g \circ f})$, where
 $$\op_{g \circ f} \colon \bigcup_{\B K \in \B {\C T}_{\B {\C D}}}\Mod(\B K) \to \bigcup_{\B G \in \B {\C T}_{\B {\C B}}}\Mod(\B G),$$
 $$(\B K, \op_{\B K}) \mapsto \big(f^{-1}(g^{-1}(\B K)), \op_{f^{-1}(g^{-1}(\B K))}\big).$$
Let $(\csbTop, \csbCont)$ be the category of $\csb$-topological spaces and $\csb$-continuous functions. We call a $\csb$-continuous function $(f, \op_f)$ from $(X, \B {\C B})$ to $(Y, \B {\C C})$ pointwise-like continuous on $X$, if there is a modulus of pointwise-like continuity 
 $\omega_f := (\omega_{f,x})_{x \in X}$, where for every $x \in X$ we have that 
 $$\omega_{f, x} \colon \B {\C C}(f(x)) \to \B {\C B}(x),$$
 $$\B {\C C}(f(x)) := \big\{\B C \in \B {\C C} \mid f(x) \in C^1\big\}, \ \ \ \B {\C B}(x) :=  \big\{\B B \in \B {\C B} \mid x \in B^1\big\},$$
such that for every $\B H \in \B {\C T}_{\B {\C C}}$ and every $x \in f^{-1}(H^1)$ we have that
	$$\op_{f^{-1}(\B H)}(x) =_{\B {\C B}(x)} \omega_{f,x}\big(\op_{\B H}(f(x))\big).$$
		\begin{center}
		\begin{tikzpicture}
			
			\node (E) at (0,0) {$f^{-1}(H^1) \ni x$};
			\node[right=of E] (K) {};
				\node[right=of K] (L) {};
			\node[right=of L] (H) {$f(x) \in H^1$};
			\node[below=of E] (B) {$\omega_{f,x}\big(\op_{\B H}(f(x))\big) \ni \B {\C B}(x)$};
			\node[below=of H] (C) {$\op_{\B H}(f(x)) \in \B {\C C}(f(x))$};

			\draw[|->] (E)--(H) node [midway,above] {$f$};
			\draw[|->] (E)--(B) node [midway,left] {$\op_{f^{-1}(\B H)}$};
			%\draw[->,bend right] (E) to node [midway,below] {$\id_Y$} (Y);
			\draw[|->] (H)--(C) node [midway,right] {$\op_{\B H}$};
			\draw[|->] (C)--(B) node [midway,below] {$\omega_{f,x}$};
			
		\end{tikzpicture}
	\end{center}
Let $\csbPointC(X, Y)$ be their set, equipped with the equality of $\X F(X,Y)$.
If $(g, \op_g)$ is a pointwise-like $\csb$-continuous function from $(Y, \B {\C C})$ to $(Z, \B {\C D})$, the modulus of pointwise-like continuity  for $g \circ f$ given by 
$\omega_{g \circ f, x} := \omega_{f,x} \circ \omega_{g, f(x)}$, for every $x \in X$.  
Let $(\csbTop, \csbPointCont)$ be the category of $\csb$-topological spaces and pointwise-like $\csb$-continuous functions.

A $\csb$-continuous function $(f, \op_f)$ from $(X, \B {\C B})$ to $(Y, \B {\C C})$ is uniformly-like continuous, if there is a modulus of uniform-like continuity 
	$$\Omega_f  \colon \B {\C C}(f(X)) \to \B {\C B},$$
	$$\B {\C C}(f(X)) := \big\{\B C \in \B {\C C} \mid \exists_{x \in X}\big(f(x) \in C^1\big)\big\},$$
	such that for every $\B H \in \B {\C C}$ we have that
	$$\op_{f^{-1}(\B H)} =_{\X F\big(f^{-1}(H^1), \B {\C B}\big)} \Omega_{f} \circ \op_{\B H} \circ f_{|f^{-1}(H^1)}.$$
			\begin{center}
		\begin{tikzpicture}
			
			\node (E) at (0,0) {$f^{-1}(H^1)$};
				\node[right=of E] (K) {};
			\node[right=of K] (H) {$H^1$};
			\node[below=of E] (B) {$\B {\C B}$};
				\node[below=of H] (C) {$\B {\C C}(f(X))$.};

			\draw[->] (E)--(H) node [midway,above] {$f_{|f^{-1}(H^1)}$};
			\draw[->] (E)--(B) node [midway,left] {$\op_{f^{-1}(H)^1}$};
			%\draw[->,bend right] (E) to node [midway,below] {$\id_Y$} (Y);
			\draw[->] (H)--(C) node [midway,right] {$\op_{\B H}$};
			\draw[->] (C)--(B) node [midway,below] {$\Omega_f$};
			
		\end{tikzpicture}
	\end{center}
	Let $\csbUnifC(X, Y)$ be their set, equipped with the equality of $\X F(X,Y)$.
	If $(g, \op_g)$ is a uniformly-like $\csb$-continuous function from $(Y, \B {\C C})$ to $(Z, \B {\C D})$, we define the modulus of uniform-like continuity  for the composite function $g \circ f$ by 
	$\Omega_{g \circ f}:= \Omega_{f} \circ \Omega_{g}$.  
		Let $(\csbTop, \csbUnifCont)$ be the category of $\csb$-topological spaces and uniformly-like $\csb$-continuous functions.
\end{definition}

If $\fXY$ is $\cs$-continuous, then if $\B C \colon f(x) \neq_{\B {\C C}} f(x{'})$, then $\op_{\B C}$ is defined on $C^1$, and $\op_{f^{-1}(\B C)}(x) \colon x \neq_{\B {\C B}} x{'}$. 
Clearly, we have the following subcategory-relations:
$$(\csbTop, \csbUnifCont) \leq (\csbTop, \csbPointCont) \leq (\csbTop, \csbCont) \leq (\csTop, \csCont).$$

\begin{proposition}\label{prp: csbpcont1}
Let $(f, \op_f)$ be a $\csb$-continuous function from $(X, \B {\C B})$ to $(Y, \B {\C C})$.\\[1mm]
\normalfont (i) 
\itshape $\omega_{f, x}(\B C) := \op_{f^{-1}(\B C)}(x)$, for every $\B C \in \B {\C C}(f(x))$, and $x \in f^{-1}(C^1)$.\\[1mm]
\normalfont (ii)
\itshape If $\B H \in \B {\C T}_{\B {\C C}}$ such that for every $y \in H^1$ we have that $\op_{\B H}(y) =_{\B {\C C}} \op_{\op_{\B H}(y)}(y)$, then for every $x \in f^{-1}(H^1)$ we have that $\op_{\B f^{-1}(H)}(x) =_{\B {\C B}} \op_{f^{-1}(\op_{\B H}(f(x))}(x)$.\\[1mm]
\normalfont (iii)
\itshape $\beta_X(x) := \omega_{f, x}\big(\beta_Y(f(x))\big)$, for every $x \in X$.\\[1mm]
\normalfont (iv)
\itshape $\beta_{\ \emptys_X}(x) := \omega_{f, x}\big(\beta_{\ \emptys_Y}(f(x))\big)$, for every $x \in \emptys_X$.\\[1mm]
\normalfont (v)
\itshape If $\B K, \B L \in \B {\C T}_{\B {\C C}}$ with moduli of openness $\op_{\B K}$ and $\op_{\B L}$, respectively, then for every $x \in f^{-1}(K^1) \cap f^{-1}(L^1)$ we have that
$$\beta_{\op_{f^{-1}(\B K), f^{-1}(\B L)}}(x) \  =_{\B {\C B}(x)}  \ \omega_{f, x}\big(\beta_{\op_{\B K}, \op_{\B L}}(f(x))\big).$$
\end{proposition}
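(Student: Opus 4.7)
My plan is to verify each of the five identities by direct computation using the definition of $\csb$-continuity, the base structure (Definition~\ref{def: csbase}), the canonical modulus of openness for a base element (Proposition~\ref{prp: cstopb}(iv) says that $\op_{\B B}(x^1) := \B B$ is a covering modulus for any $\B B \in \B{\C B}$), and the functorial properties of the complemented inverse image (Proposition~\ref{prp: finvproperties}).

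For (i), I verify well-definedness of $\omega_{f,x} \colon \B{\C C}(f(x)) \to \B{\C B}(x)$. Given $\B C \in \B{\C C}$, Proposition~\ref{prp: cstopb}(iv) places $\B C \in \B{\C T}_{\B{\C C}}$ with canonical modulus $\op_{\B C}(y) := \B C$, so the $\csb$-continuity datum $\op_f$ produces $\op_{f^{-1}(\B C)}$ as a modulus of openness for $f^{-1}(\B C) \in \B{\C T}_{\B{\C B}}$. Evaluating at $x \in f^{-1}(C^1)$ returns an element of $\B{\C B}$ containing $\B x$ (by Definition~\ref{def: canonicalcs}), hence a member of $\B{\C B}(x)$, and the assignment is a function because $\op_f$ is.

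For (ii), the hypothesis is a consistency condition saying that $\op_{\B H}(y)$ is self-reproducing as its own modulus at $y$ (which always holds if we choose the canonical constant modulus of the base element $\op_{\B H}(y)$ per Proposition~\ref{prp: cstopb}(iv)). The conclusion compares the modulus of openness of $f^{-1}(\B H)$ at $x$ with that of its local patch $f^{-1}(\op_{\B H}(f(x)))$ at $x$. Applying $\op_f$ to the pair $(\op_{\B H}(f(x)), \op_{\op_{\B H}(f(x))})$ and using (i) gives $\op_{f^{-1}(\op_{\B H}(f(x)))}(x) = \omega_{f,x}(\op_{\B H}(f(x)))$, while applying $\op_f$ to $(\B H, \op_{\B H})$ gives a value at $x$ that the hypothesis forces to agree with this, by the rule $\op_f \colon \bigcup \Mod(\B H) \to \bigcup \Mod(\B G)$ being a function that respects the identification of $\op_{\B H}$ with its own value at $f(x)$ in the disjoint union.

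Items (iii), (iv), (v) are then direct consequences of (i) and (ii) combined with the standard identities. For (iii), Proposition~\ref{prp: cstopb}(i) gives $\beta_X = \op_{(X, \ \emptys_X)}$ and $\beta_Y = \op_{(Y, \ \emptys_Y)}$, Proposition~\ref{prp: finvproperties}(viii) gives $f^{-1}(Y, \emptys_Y) =_{\mathsmaller{\C E^{\Disj}(X)}} (X, \emptys_X)$, so by (ii) applied to $\B H := (Y, \emptys_Y)$ and $\op_{\B H} := \beta_Y$ (whose hypothesis holds since $\beta_Y(f(x)) \in \B{\C C}$ has canonical constant modulus) we obtain $\beta_X(x) = \op_{(X, \ \emptys_X)}(x) = \op_{f^{-1}(\beta_Y(f(x)))}(x) = \omega_{f,x}(\beta_Y(f(x)))$. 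Case (iv) is analogous, using $(\emptys_Y, Y)$ in place of $(Y, \emptys_Y)$ and Proposition~\ref{prp: cstopb}(ii). For (v), I expand the intersection modulus via Proposition~\ref{prp: cstopb}(iii), use Proposition~\ref{prp: finvproperties}(v) for $f^{-1}(\B K \cap \B L) =_{\mathsmaller{\C E^{\Disj}(X)}} f^{-1}(\B K) \cap f^{-1}(\B L)$, and chase through (i) and (ii) applied to $\B K$ and $\B L$ separately.

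The main obstacle is (ii), where the subtle behavior of $\op_f$ as a function on the disjoint union $\bigcup \Mod$ must be unpacked carefully—once this is done, the other four parts become essentially bookkeeping that combines (i), (ii), and the preservation properties of $f^{-1}$.
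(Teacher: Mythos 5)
Your argument is correct and takes essentially the same route as the paper, which proves only case (v) by exactly the chain you describe: unfold the intersection modulus via Proposition~\ref{prp: cstopb}(iii), pass from $\op_{f^{-1}(\B K)\cap f^{-1}(\B L)}$ to $\op_{f^{-1}(\B K\cap \B L)}$ using Proposition~\ref{prp: finvproperties}(v) together with the requirement that equal open complemented subsets get equal moduli, and then apply the modulus-inversion identity once to $\B K\cap\B L$ (a single application to the intersection, rather than to $\B K$ and $\B L$ ``separately'' as you phrase it, though via your (i) and (ii) this amounts to the same thing). Your fuller treatment of (i)--(iv), which the paper leaves implicit, is consistent with the intended reading.
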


\begin{proof}
We only show case (v). We have that
\begin{align*}
\beta_{\op_{f^{-1}(\B K), f^{-1}(\B L)}}(x) & =: \op_{f^{-1}(\B K), f^{-1}(\B L)}(x)\\
& =_{\B {\C B}(x)} \op_{f^{-1}(\B K \cap \B L)}(x)\\
& := \omega_{f, x}\big(\op_{\B K \cap \B L}(f(x))\big)\\
& := \omega_{f, x}\big(\beta_{\op_{\B K}, \op_{\B L}}(f(x))\big). \qedhere
\end{align*}
\end{proof}

\section{The product of two $\csb$-spaces and the weak $\csb$-topology}
\label{sec: csbprod}

In this section we define the product of two $\csb$-spaces, and we show how our choices in the description of the corresponding bases affect the type of continuity of the projection-functions.

\begin{definition}\label{def: prod1}
If $\C X := (X, =_X, \neq_X ; \B {\C B}, \beta_X, \beta_{\ \emptys_X}, \beta_{\B {\C B}})$ and $\C Y := (Y, =_Y, \neq_Y ; \B {\C C}, \beta_Y, \beta_{\ \emptys_Y}, \beta_{\B {\C C}})$ are in $\csbTop$, then the product $\csb$-space is the structure
$$\C {X \times Y} := (X \times Y, =_{X \times Y}, \neq_{X \times Y} ; \B {\C B \times \C C}, \beta_{X \times Y}, \beta_{ \ \emptys_{X \times Y}}, \beta_{\B {\C B \times \C C}}),$$
where $=_{X \times Y}$ is given in Definition~\ref{def: canonicalineq}, and the product-base is defined by
$$\B {\C B \times \C C} := \{\B B \times \B C \mid \B B \in \B {\C B}, \C C \in \B {\C C}\}.$$
Moreover, let $\beta_{X \times Y} \colon X \times Y \to \B {\C B \times \C C}$, defined, for every $(x, y) \in X \times Y$, by
$$\beta_{X \times Y}(x, y) := \beta_X(x) \times \beta_Y(y).$$
Let $\beta_{\ \emptys_{X \times Y}} \colon \emptys_{X \times Y} \to \B {\C B \times \C C}$, defined, for every $(u, w) \in \emptys_{X \times Y}$, by
$$\beta_{\ \emptys_{X \times Y}}(u, w) := \beta_{\ {\emptys_X}}(u) \times \beta_{\ \emptys_Y}(w).$$
Finally, let $\beta_{\B B_1 \times \B C_1, \B B_2 \times \B C_2} \colon [(\B B_1 \times \B C_1) \cap(\B B_2 \times \B C_2)]^1\to \B {\C B \times \C C}$, defined, by
$$\beta_{\B B_1 \times \B C_1, \B B_2 \times \B C_2}(x, y) := \beta_{\B B_1, \B B_2}(x) \times \beta_{\B C_1, \B C_2}(y),$$
for every $(x, y)$ in 
$$[(\B B_1 \times \B C_1) \cap(\B B_2 \times \B C_2)]^1 =_{\C E (X \times Y)} [(\B B_1 \cap \B B_2) \times (\B C_1 \times \B C_2)]^1 =_{\C E (X \times Y)} (B ^1_1 \cap B_2^1) \times (C^1_1 \cap C^1_2).$$
\end{definition}

The above definitions of the base-moduli for $\C {X \times Y}$ are well-defined by Propositions~\ref{prp: emptyprod}(iii, iv) and Proposition~\ref{prp: complemented4}(iv). As we have already mentioned in section~\ref{sec: setineq}, $\pr_X, \pr_Y$ are strongly extensional. Next we define a modulus of openness for $\pr_X$ (and for $\pr_Y$ we work similarly),
$$\op_{\pr_X} \colon \bigcup_{\B G \in \B {\C T_{\B {\C B}}}}\Mod(\B G) \to \bigcup_{\B H \in \B {\C T}_{\B {\C B \times \C C}}}\Mod(\B H),$$
$$(\B G, \op_{\B G}) \mapsto \big(\pr_X^{-1}(\B G), \op_{\pr_X^{-1}(\B G)}\big),$$
where $\op_{\B G} \colon G^1 \to \B {\C B}$, such that for every $x \in G^1$, we have that $x \in \op_{\B G}(x) := (B_x^1, B_x^0) \subseteq \B G$, and $\pr_X^{-1}(\B G) := \big(\pr_X^{-1}(G^1), \pr_X^{-1}(G^0)\big)$, where
$\pr_X^{-1}(G^1) =_{\C E(X \times Y)} G^1 \times Y$ and $\pr_X^{-1}(G^0) =_{\C E(X \times Y)} G^0 \times Y$. Let $\op_{\pr_X^{-1}(\B G)} \colon G^1 \times Y \to \B {\C B} \times \B {\C C}$, defined by
$$\op_{\pr_X^{-1}(\B G)}(x, y) := \op_{\B G}(x) \times \beta_Y (y),$$
for every $(x, y) \in G^1 \times Y$. If $ \beta_Y (y) := (C_y^1, C_y^0)$, we have that
$$\op_{\B G}(x) \times \beta_Y (y) := (B_x^1, B_x^0) \times (C_y^1, C_y^0) := \big(B_x^1 \times C_y^1, (B_x^0 \times Y) \cup (X \times C_y^0)\big).$$
Clearly, $(x, y) \in B_x^1 \times C_y^1$. Next we show that $\op_{\B G}(x) \times \beta_Y (y) \subseteq \pr_X^{-1}(\B G)$. By hypothesis $B_x^1 \subseteq G^1$, $G^0 \subseteq B_x^0$, $C_y^1 \subseteq Y$, and $\emptys_Y \subseteq C_y^0$. Hence, $B_x^1 \times C_y^1 \subseteq G^1 \times Y$ and $G^0 \times Y \subseteq B_x^0 \times Y \subseteq (B_x^0 \times Y) \cup (X \times C_y^0)$. Hence, we get the following result.

\begin{proposition}\label{prp: pcontproj}
If we define $\omega_{\pr_X, (x,y)} \colon \B {\C B}(\pr_X(x,y)) \to (\B {\C B} \times \B {\C C})(x,y)$ by
$$\omega_{\pr_X, (x,y)}(\B D_x) := \B D_x \times \beta_Y(y),$$
for every $\B D_x \in  \B {\C B}(x)$, then $\pr_X$ is pointwise-like continuous with modulus of pointwise-like continuity $\omega_{\pr_X, (x,y)}$. 
\end{proposition}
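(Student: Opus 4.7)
The plan is to verify the two conditions needed for pointwise $\csb$-continuity according to Definition \ref{def: csbpcont}: (a) the pair $(\pr_X,\op_{\pr_X})$, with $\op_{\pr_X}$ defined immediately before the proposition statement, is a $\csb$-continuous function from $\C{X\times Y}$ to $\C X$; (b) the family $\omega_{\pr_X,(x,y)}$ stated in the proposition serves as a modulus of pointwise continuity, i.e.\ it has the correct type and satisfies the defining equation. Strong extensionality of $\pr_X$ is already recorded in Section \ref{sec: setineq}.

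First I would check that $\op_{\pr_X}$ really is well-defined as a modulus of openness. Given $\B G\in\B{\C T}_{\B{\C B}}$ with modulus $\op_{\B G}\colon G^1\to\B{\C B}$, and $(x,y)\in\pr_X^{-1}(G^1)=G^1\times Y$, the candidate value $\op_{\B G}(x)\times\beta_Y(y)$ lies in $\B{\C B}\times\B{\C C}$ and contains $(x,y)$ because $x\in(\op_{\B G}(x))^1$ by the definition of modulus of openness and $y\in(\beta_Y(y))^1$ by $(\Base_1)$. The inclusion $\op_{\B G}(x)\times\beta_Y(y)\subseteq\pr_X^{-1}(\B G)$ was already verified in the discussion preceding the proposition, from $B_x^1\times C_y^1\subseteq G^1\times Y$ and $G^0\times Y\subseteq(B_x^0\times Y)\cup(X\times C_y^0)$. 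This establishes (a).

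Next I would verify (b). The map $\omega_{\pr_X,(x,y)}\colon\B{\C B}(\pr_X(x,y))\to(\B{\C B}\times\B{\C C})((x,y))$ is well-typed: if $\B D_x\in\B{\C B}(x)$, then $x\in D_x^1$ and $y\in(\beta_Y(y))^1$, so $(x,y)\in D_x^1\times(\beta_Y(y))^1=(\B D_x\times\beta_Y(y))^1$, hence $\B D_x\times\beta_Y(y)\in(\B{\C B}\times\B{\C C})((x,y))$. For the defining equation, fix $\B H\in\B{\C T}_{\B{\C B}}$ with modulus $\op_{\B H}$ and $(x,y)\in\pr_X^{-1}(H^1)$. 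Since $\op_{\B H}(x)\in\B{\C B}(x)$ (as $x\in H^1$ and $\op_{\B H}(x)$ is a basic open neighbourhood of $x$), the composite $\omega_{\pr_X,(x,y)}\bigl(\op_{\B H}(\pr_X(x,y))\bigr)$ makes sense and, unwinding definitions,
\[
\omega_{\pr_X,(x,y)}\bigl(\op_{\B H}(\pr_X(x,y))\bigr)
=\omega_{\pr_X,(x,y)}\bigl(\op_{\B H}(x)\bigr)
:=\op_{\B H}(x)\times\beta_Y(y)
=:\op_{\pr_X^{-1}(\B H)}(x,y),
\]
which is precisely the pointwise continuity equation of Definition \ref{def: csbpcont}.

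There is no real obstacle here beyond bookkeeping: the entire argument is a direct substitution of definitions, reflecting the deliberate choice of base $\B{\C B}\times\B{\C C}$ and of the base-modulus $\beta_{X\times Y}(x,y):=\beta_X(x)\times\beta_Y(y)$. The only conceptual point worth flagging is that the constant use of $\beta_Y(y)$ in the second coordinate of $\omega_{\pr_X,(x,y)}$ is what makes the continuity \emph{pointwise} rather than uniform: the choice depends on $y$, so one cannot in general extract a single modulus independent of the second variable, which will then be the contrast with Proposition \ref{prp: ucontproj}.
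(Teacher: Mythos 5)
Your proposal is correct and follows essentially the same route as the paper: the paper's proof consists of the discussion immediately preceding the proposition, which establishes that $\op_{\pr_X^{-1}(\B G)}(x,y) := \op_{\B G}(x)\times\beta_Y(y)$ is a well-defined modulus of openness via the inclusions $B_x^1\times C_y^1\subseteq G^1\times Y$ and $G^0\times Y\subseteq(B_x^0\times Y)\cup(X\times C_y^0)$, after which the pointwise-continuity equation is read off exactly as in your final display. Your additional remark on why the $y$-dependence of $\beta_Y(y)$ makes the continuity pointwise rather than uniform correctly anticipates the contrast with Proposition~\ref{prp: ucontproj}.
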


\begin{definition}\label{def: uniormbasemod}
Let $\C X \in \csbTop$. The base-modulus $\beta_X$ is called uniform, if $(X, \emptys_X) \in \B {\C B}$, and $\beta_X(x) := (X, \emptys_X)$, for every $x \in X$. Similarly, the base-modulus 
$\beta_{\ \emptys_X}$ is called uniform, if $(\emptys_X, X) \in \B {\C B}$, and 
$\beta_{\ \emptys_X}(u) := (\emptys_X, X)$,  for every $u \in \emptys_X$.
\end{definition}

Next result follows immediately from the discussion above.

\begin{proposition}\label{prp: ucontproj} If $\beta_Y$ is a a uniform base-modulus for $\C Y$, 
	let $\Omega_{\pr_X} \colon \B {\C B} \to (\B {\C B} \times \B {\C C})$, defined by
	$$\Omega_{\pr_X}(\B C) := \B C \times (Y, \emptys_Y),$$
	for every $\B C \in  \B {\C B}$. Then $\pr_X$ is uniformly-like continuous with modulus of uniform-like continuity $\Omega_{\pr_X}$. 
\end{proposition}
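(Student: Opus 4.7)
The plan is essentially to unfold the definitions and observe that the uniformity assumption on $\beta_Y$ eliminates the dependence on the $Y$-coordinate in the modulus constructed just before Proposition~\ref{prp:  pcontproj}. Specifically, to establish uniform continuity of $(\pr_X, \op_{\pr_X})$ with modulus $\Omega_{\pr_X}$, I need to verify that for every $\B G \in \B {\C T}_{\B {\C B}}$ with modulus of openness $\op_{\B G} \colon G^1 \to \B {\C B}$, the equality
\[
\op_{\pr_X^{-1}(\B G)} \ =_{\D F\big(\pr_X^{-1}(G^1),\, \B {\C B} \times \B {\C C}\big)} \ \Omega_{\pr_X} \circ \op_{\B G} \circ (\pr_X)_{|\pr_X^{-1}(G^1)}
\]
holds. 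Thus, fixing an arbitrary $(x,y) \in \pr_X^{-1}(G^1) =_{\C E(X \times Y)} G^1 \times Y$, it suffices to show the pointwise equality in $\B {\C B} \times \B {\C C}$ of the two sides evaluated at $(x,y)$.

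First, I would invoke the explicit definition of $\op_{\pr_X^{-1}(\B G)}$ given in the paragraph preceding Proposition~\ref{prp:  pcontproj}, namely
\[
\op_{\pr_X^{-1}(\B G)}(x,y) \ := \ \op_{\B G}(x) \times \beta_Y(y).
\]
Next, I would unfold the right-hand side: by definition of composition and of $\pr_X$ we have
\[
\big(\Omega_{\pr_X} \circ \op_{\B G} \circ (\pr_X)_{|\pr_X^{-1}(G^1)}\big)(x,y) \ = \ \Omega_{\pr_X}\big(\op_{\B G}(x)\big) \ := \ \op_{\B G}(x) \times (Y, \emptys_Y),
\]
using the definition of $\Omega_{\pr_X}$ in the statement. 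Finally, I would apply the hypothesis that $\beta_Y$ is a uniform base-modulus in the sense of Definition~\ref{def: uniormbasemod}: this means $(Y, \emptys_Y) \in \B {\C C}$ and $\beta_Y(y) = (Y, \emptys_Y)$ for every $y \in Y$. Substituting, the two expressions coincide, which proves the required equality.

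There is essentially no obstacle here; the statement is designed so that uniformity of $\beta_Y$ exactly removes the $y$-dependence that prevented the modulus $\op_{\pr_X^{-1}(\B G)}$ in Proposition~\ref{prp:  pcontproj} from factoring through $\op_{\B G} \circ \pr_X$. The only small points to check are that $\Omega_{\pr_X}$ is well-defined as a function into $\B {\C B} \times \B {\C C}$ (which follows since $(Y, \emptys_Y) \in \B {\C C}$ by uniformity), and that the assignment routine $\B G \mapsto (\pr_X^{-1}(\B G), \op_{\pr_X^{-1}(\B G)})$ is the same $\op_{\pr_X}$ whose $\cs$-continuity was already established; both are immediate from the constructions.
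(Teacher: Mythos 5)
Your proof is correct and matches the paper's intent exactly: the paper gives no separate proof, stating only that the result ``follows immediately from the discussion above,'' namely the construction $\op_{\pr_X^{-1}(\B G)}(x,y) := \op_{\B G}(x) \times \beta_Y(y)$, which under the uniformity hypothesis $\beta_Y(y) = (Y, \emptys_Y)$ collapses to $\Omega_{\pr_X}\big(\op_{\B G}(\pr_X(x,y))\big)$ as you show. Nothing is missing.
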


As one can add $(X, \emptys_X)$ in any given base, one can use a uniform base-modulus $\beta_X$ for $\C X$. Consequently, the category of these spaces with uniformly-like continuous functions as arrows has products. Next we define the weak $\csb$-topology and we show that it is the least $\csb$-topology that turns all given functions into uniformly-like continuous ones.

\begin{proposition}\label{prp: weak}
If $\C X \in \SetExtIneq$, $\C Y_i  := (Y_i, =_i, \neq_i ; \B {\C C_i}, \beta_{Y_i}, \beta_{\ \emptys_{Y_i}}, \beta_{\B {\C C_i}})\in \csbTop$, for every $i \in I$,  
and if $f_i \colon X \to Y_i$ are strongly extensional functions, then  $\C X := (X, =_X, \neq_X ; \B {\C B}, \beta_X, \beta_{\ \emptys_X}, \beta_{\B {\C B}}) \in \csbTop$, where  
$$\B {\C B} := \bigg\{\bigcap_{k = 1}^n f_{i_k}^{-1}(\B H_{i_k}) \mid n \in \Nat^+, i_1, \ldots, i_n \in I, \B H_{i_1} \in \B {\C T}_{\B {\C B_{i_1}}}, \ldots, \B H_{i_n} \in  \B {\C T}_{\B {\C B_{i_n}}}\bigg\} \cup \{(X, \emptys_X), (\ \emptys_X, X)\},$$
$\beta_X(x) := (X, \emptys_X)$, for every $x \in X$, $\beta_{\ \emptys_X}(u) := (\ \emptys_X, X)$, for every $u \in \emptys_X$, and $\beta_{\B B, \B B{'}}(x) := \B B \cap \B B{'} \in \B {\C B}$, for every $x \in [\B B \cap \B B{'}]^1$. With this $\csb$-topology on $X$ $f_i$ is uniformly-like continuous, for every $i \in I$. 
\end{proposition}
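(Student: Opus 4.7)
The strategy is to verify directly that the proposed data satisfies the three axioms $(\Base_1)$--$(\Base_3)$ of Definition~\ref{def: csbase}, and then to exhibit an explicit uniform modulus for each $f_i$ in the sense of Definition~\ref{def: csbpcont}. The base $\boldsymbol{\mathcal{B}}$ has been engineered precisely so that $f_i^{-1}(\boldsymbol{C})$ is itself a basic set for every $\boldsymbol{C} \in \boldsymbol{\mathcal{C}_i}$; this will make the uniform modulus essentially tautological.

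First, I verify the base axioms. Since $\beta_X$ and $\beta_{\ \emptys_X}$ are the uniform base-moduli of Definition~\ref{def: uniormbasemod}, the conditions $(\Base_1)$ and $(\Base_2)$ reduce to the trivial facts $\boldsymbol{x} \cin (X, \emptys_X)$ for $x \in X$ and $\boldsymbol{u} \cin (\emptys_X, X)$ with $(\emptys_X, X) \subseteq (\emptys_X, X)$ for $u \in \emptys_X$. For $(\Base_3)$, given $\boldsymbol{B}, \boldsymbol{B}' \in \boldsymbol{\mathcal{B}}$, I need that $\beta_{\boldsymbol{B}, \boldsymbol{B}'}(x) := \boldsymbol{B} \cap \boldsymbol{B}'$ actually lies in $\boldsymbol{\mathcal{B}}$. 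If both $\boldsymbol{B}$ and $\boldsymbol{B}'$ are finite intersections of sets of the form $f_i^{-1}(\boldsymbol{H}_i)$, then so is $\boldsymbol{B} \cap \boldsymbol{B}'$ by concatenating the two finite intersections. The remaining cases involving $(X, \emptys_X)$ or $(\emptys_X, X)$ are handled by the absorption identities of Proposition~\ref{prp: isswapa1}(v,vii) applied to $1_{\boldsymbol{X}}$ and $0_{\boldsymbol{X}}$. The inclusion $\beta_{\boldsymbol{B}, \boldsymbol{B}'}(x) \subseteq \boldsymbol{B} \cap \boldsymbol{B}'$ and the membership $\boldsymbol{x} \cin \beta_{\boldsymbol{B}, \boldsymbol{B}'}(x)$ hold by construction. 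Hence $(\boldsymbol{\mathcal{B}}, \beta_X, \beta_{\ \emptys_X}, \beta_{\boldsymbol{\mathcal{B}}})$ is a base for a $\cs$-topology, and the induced $\boldsymbol{\mathcal{T}}_{\boldsymbol{\mathcal{B}}}$ is a $\cs$-topology by Proposition~\ref{prp: cstopb}.

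Next, fix $i \in I$ and define the uniform modulus of $f_i$ by
\[
\Omega_{f_i} \colon \boldsymbol{\mathcal{C}_i}(f_i(X)) \to \boldsymbol{\mathcal{B}}, \qquad \Omega_{f_i}(\boldsymbol{C}) := f_i^{-1}(\boldsymbol{C}).
\]
This is well-defined: $f_i$ is strongly extensional by hypothesis, so Proposition~\ref{prp: finvproperties}(i) ensures $f_i^{-1}(\boldsymbol{C}) \in \C E^{\Disj}(X)$, and it lies in $\boldsymbol{\mathcal{B}}$ as the singleton intersection $\bigcap_{k=1}^{1} f_i^{-1}(\boldsymbol{C})$. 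For an arbitrary $\boldsymbol{H} \in \boldsymbol{\mathcal{T}}_{\boldsymbol{\mathcal{C}_i}}$ with modulus $\op_{\boldsymbol{H}} \colon H^1 \to \boldsymbol{\mathcal{C}_i}$, I define
\[
\op_{f_i^{-1}(\boldsymbol{H})}(x) := f_i^{-1}\big(\op_{\boldsymbol{H}}(f_i(x))\big), \qquad x \in f_i^{-1}(H^1),
\]
which by design equals $(\Omega_{f_i} \circ \op_{\boldsymbol{H}} \circ {f_i}_{|f_i^{-1}(H^1)})(x)$. To check this really is a modulus of openness for $f_i^{-1}(\boldsymbol{H})$, I need $\boldsymbol{x} \cin f_i^{-1}(\op_{\boldsymbol{H}}(f_i(x)))$ and $f_i^{-1}(\op_{\boldsymbol{H}}(f_i(x))) \subseteq f_i^{-1}(\boldsymbol{H})$. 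The first follows from $\boldsymbol{f_i(x)} \cin \op_{\boldsymbol{H}}(f_i(x))$ via Proposition~\ref{prp: ccpoint1}(viii); the second follows from $\op_{\boldsymbol{H}}(f_i(x)) \subseteq \boldsymbol{H}$ by monotonicity of $f_i^{-1}$ (Proposition~\ref{prp: finvproperties}(ii)). Hence $f_i^{-1}(\boldsymbol{H}) \in \boldsymbol{\mathcal{T}}_{\boldsymbol{\mathcal{B}}}$ and the pair $(f_i, \op_{f_i})$ is $\csb$-continuous with the equality condition for uniform continuity holding by definition.

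The main obstacle, such as it is, is purely notational: one must carefully distinguish between the base $\boldsymbol{\mathcal{B}}$ and its induced topology $\boldsymbol{\mathcal{T}}_{\boldsymbol{\mathcal{B}}}$, and check that the assignment $x \mapsto f_i^{-1}(\op_{\boldsymbol{H}}(f_i(x)))$ is a genuine function (which it is, since $f_i$ and $\op_{\boldsymbol{H}}$ are functions, and the inverse-image operation $f_i^*$ of Proposition~\ref{prp: finvproperties}(ii) is a function on $\C E^{\Disj}(Y_i)$). There is no substantive difficulty beyond unfolding definitions, because the base was constructed so that pullbacks of open complemented subsets of $Y_i$ automatically serve as uniform witnesses.
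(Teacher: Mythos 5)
Your proof is correct and follows essentially the same route as the paper: the uniform modulus $\Omega_{f_i}(\B C) := f_i^{-1}(\B C)$, the induced modulus of openness $\op_{f_i^{-1}(\B H)}(x) := f_i^{-1}\big(\op_{\B H}(f_i(x))\big)$, and the appeal to Proposition~\ref{prp: ccpoint1}(viii) are exactly the paper's argument. You additionally spell out the verification of $(\Base_1)$--$(\Base_3)$, which the paper omits as routine.
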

	
\begin{proof}
We only show the uniform-like continuity of $f_i$, where $i \in I$. We define a modulus
$$\op_{f_i} \colon \bigcup_{\B H_i \in \B {\C T}_{\B {\C C_i}}}\Mod(\B H_i) \to \bigcup_{\B G \in \B {\C T}_{\B {\C B}}}\Mod(\B G),$$
$$(\B H_i, \op_{\B H_i}) \mapsto \big(f_i^{-1}(\B H_i), \op_{f_i^{-1}(\B H_i)}\big),$$
where $\op_{f_i^{-1}(\B H_i)} \colon f_i^{-1}(\B H_i) \to \B {\C B}$, and for every $x \in  f_i^{-1}(H_i^1) \TOT f_i(x) \in H_i^1$ let
$$\op_{f_i^{-1}(\B H_i)} (x) := \Omega_{f_i}\big(\op_{\B H_i}(f_i(x))\big) := f_i^{-1}\big(\op_{\B H_i}(f_i(x))\big),$$
where $ \Omega_{f_i} \colon \B {\C C_i} \to \B {\C B}$ is a modulus of uniform-like continuity for $f_i$, given by the rule $\B C \mapsto f_i^{-1}(\B C)$.
As $\B {f(x_i)} \cin \op_{\B H_i}(f_i(x)) \subseteq \B H_i$, by Proposition~\ref{prp: ccpoint1}(viii) we get $\B x_i \cin f_i^{-1}\big(\op_{\B H_i}(f_i(x))\big) \subseteq f_i^{-1}(\B H_i)$.
\end{proof}

One can avoid the above uniform base-moduli and get a proof completely within $(\MIN)$, if there $i_0 \in I$, such that $f_{i_0}$ is also an injection. Then one can define $\beta_X(x) := f_{i_0}^{-1}(Y_{i_0}, \emptys_{Y_{i_0}})$ and $\beta_{\ \emptys_X}(u) := f_{i_0}^{-1}(\ \emptys_{Y_{i_0}}, Y_{i_0})$. Then $\B {\C B}$ is not necessary to contain $(X, \emptys_X)$ and $( \emptys_X, X)$.

%
%
%\section{New $\cs$-topological spaces from given ones}
%\label{sec: csnew}

%
%
%\section{Separation axioms}
%\label{sec: cssep}
%
%
%

\section{Concluding comments}
\label{sec: concl}

In~\cite{Bi73}, p.~28, Bishop described two obstacles in the constructivisation of general topology:
\begin{quote}
The constructivization of general topology is impeded by two obstacles. First, the classical notion of a topological space is not constructively viable. Second, even for metric spaces the classical notion of a continuous function is not constructively viable; the reason is that there is no constructive proof that a (pointwise) continuous function from a compact metric space to R is uniformly continuous. Since uniform continuity for functions on a compact space is the useful concept, pointwise continuity (no longer useful for proving uniform continuity) is left with no useful function to perform. Since uniform continuity cannot be formulated in the context of a general topological space, the latter concept also is left with no useful function to perform.
\end{quote}
A discussion on these two obstacles is already offered in~\cite{Pe23a}, as a motivation to the theory of Bishop spaces, a constructive, function-theoretic alternative to classical topology. Here we addressed Bishop's above formulation of the problem of constructivising general topology in a completely different way. Using his own deep idea of complemented subsets, instead of subsets, in case complementation is crucial to the fundamental concepts of a classical mathematical theory, we offered a new approach to both obstacles. The notion of a topology of open sets is not constructively viable, but, as we showed here, the notion of a topology of open complemented subsets is constructively viable. Apart from the impredicativity in some of the notions employed, such as the extensional powerset and the definition of extensional subsets of it through the scheme of separation, our elaboration of the theory of $\cs$-spaces is completely constructive. Regarding the second obstacle, we proposed the notion of  uniform-like continuity as a notion of continuity of uniform-type for complemented topological spaces. Although our analysis of continuity of functions here is within the ``$2$-dimensional'' framework of complemeted subsets, all concepts and facts on pointwise-like and uniform-like continuity can be translated to the ``$1$-dimensional'' framework of subsets, given the use of moduli of openness in the definition of an open subset with respect to a given base. This development is, in our view, interesting to a classical mathematician too, and shows that constructive mathematics is not always ``behind'' classical mathematics.

This paper is only the starting point of a constructive elaboration of the theory of topologies of open complemented subsets. Next we list some of the problems and tasks that arise naturally from this work.

\begin{enumerate}
	\item To study the relation between the $1$-dimensional subset theory and the $2$-dimensional complemented subset theory. $2$-dimensional dualities that are not possible to trace in the $1$-dimensional framework e.g., empty and coempty complemented subsets, inhabited and coinhabited complemented subsets etc., need to be investigated further, together with possibly new ones. Moreover, the lattice and algebraic properties of the various sets of tight subsets need to be determined.
	
	\item To examine the various categories of sets and functions introduced. The variety of these categories show how rich the constructive framework is, and how many concepts are revealed by this strong, ``negationless'' approach to mathematics.

	\item Many classical results on subsets hold constructively for total complemented subsets, and similarly many classical results on open subsets hold constructively for total, open complemented subsets. It is natural to investigate whether there is some common ``metamathematical'' formulation of this phenomenon. 
	
	\item The theory of $\cs$-topologies seems to be a natural framework for the study of continuous \textit{partial} functions, a topic also interesting form the classical topology-point of view!
	
	\item To examine the relation between the categories $(\csTop, \csCont)$ and $(\csbTop, \csbCont)$, and to investigate the properties of $(\csbTop, \csbPointCont)$ and $(\csbTop, \csbUnifCont)$.
	
	\item To delve into the big question of compactness of $\cs$-topologies. Can the $2$-dimensional framework of complemented subset theory offer new constructive answers to the  notion of a compact space?
	
	\item To include the study of separation axioms in the theory of $\cs$-spaces. As we have seen already, all related notions, such as point, belonging, and not belongin relations, disjointness etc., are positively translated in our $2$-dimensional framework. For example, a Hausdorff $\cs$-space 
	is a $\cs$-space such that for every $\B x, \B y \in \cPoint(X)$ with $x \neq_X y$ there are $\B G_x, \B G_y \in \B {\C T}$, such that $\B x \cin \B G_x, \B y \cin \B G_y$ and $\B G_x \Disj \B G_y$. If the original space is a $\csb$-space, then the Hausdorff-condition for $x, y \in X$ implies also that $x \neq_{\B {\C B}} y$.

	\item To incorporate into the theory of $\cs$-spaces furthe concepts and results from classical topological spaces. Here, although new insights were revealed, we only scratched the surface.
	
	\item To revisit many classical topological results with the distinction between pointwise-like and uniformly-like continuous functions at hand. It is expected that most of the continuous functions defined or shown to exist are uniformly-like continuous, rather than pointwise-like continuous. This was known to Bishop in the case of continuous functions between metric spaces, but many case-studies in our general framework indicate that the concept of uniform-like continuity pervades topological spaces too.

	\end{enumerate}

%
%
%
%If $f$ is uniformly continuous what extra properties does the inverse image and the direct image of $f$ possess?\\
%Functions that inverse or preserve balls?\\
%Inversion and preservation of points?\\
%Can we show  the inclusion $\emptys_{(X,d)} \subseteq [d_{x} \geq 1]$ in MIN?\\
%Can every $\cs$-space be seen as a $\csb$-space? Moreover, is every $\cs$-continuous also $\csb$-continuous i.e., can we always find a modulus of pointwise continuity??? Probably with the axiom of choice??

\end{document}